\numberwithin{equation}{section}
\theoremstyle{plain} 
\newtheorem{theorem}{Theorem}[section]
\newtheorem{lemma}[theorem]{Lemma}
\newtheorem{corollary}[theorem]{Corollary}
\newtheorem{proposition}[theorem]{Proposition}
\newtheorem{assumption}[theorem]{Assumption}
\newtheorem{definition}[theorem]{Definition}
\theoremstyle{remark}
\newtheorem{remark}[theorem]{Remark}
\newcommand{\R}{{\mathbb R }}
\newcommand{\bs}{\boldsymbol}
\newif\if@gather@prefix 
\preto\place@tag@gather{% 
  \if@gather@prefix\iftagsleft@ 
    \kern-\gdisplaywidth@ 
    \rlap{\gather@prefix}% 
    \kern\gdisplaywidth@ 
  \fi\fi 
} 
\appto\place@tag@gather{% 
  \if@gather@prefix\iftagsleft@\else 
    \kern-\displaywidth 
    \rlap{\gather@prefix}% 
    \kern\displaywidth 
  \fi\fi 
  \global\@gather@prefixfalse 
} 
\preto\place@tag{% 
  \if@gather@prefix\iftagsleft@ 
    \kern-\gdisplaywidth@ 
    \rlap{\gather@prefix}% 
    \kern\displaywidth@ 
  \fi\fi 
} 
\appto\place@tag{% 
  \if@gather@prefix\iftagsleft@\else 
    \kern-\displaywidth 
    \rlap{\gather@prefix}% 
    \kern\displaywidth 
  \fi\fi 
  \global\@gather@prefixfalse 
} 
\def\math@cr@@@align{%
  \ifst@rred\nonumber\fi
  \if@eqnsw \global\tag@true \fi
  \global\advance\row@\@ne
  \add@amps\maxfields@
  \omit
  \kern-\alignsep@
  \if@gather@prefix\tag@true\fi
  \iftag@
    \setboxz@h{\@lign\strut@{\make@display@tag}}%
    \place@tag
  \fi
  \ifst@rred\else\global\@eqnswtrue\fi
  \global\lineht@\z@
  \cr
}
\newcommand*{\beforetext}[1]{% 
  \ifmeasuring@\else
  \gdef\gather@prefix{#1}% 
  \global\@gather@prefixtrue 
  \fi
} 
\renewcommand{\mathbf}[1]{\bs{#1}}
\begin{document}

 \begin{minipage}{0.85\textwidth}
 \vspace{2.5cm}
 \end{minipage}
\begin{center}
\large\bf
The support of the free additive convolution of multi-cut measures
\end{center}

\renewcommand{\thefootnote}{\fnsymbol{footnote}}	
\vspace{1cm}
\begin{center}
 \begin{minipage}{0.40\textwidth}
 \begin{center}
Philippe Moreillon\\
\footnotesize 
{KTH Royal Institute of Technology}\\
{\it phmoreil@kth.se}
\end{center}
\end{minipage}
\begin{minipage}{0.40\textwidth}
 \begin{center}
Kevin Schnelli\footnotemark[1]\\
\footnotesize 
{KTH Royal Institute of Technology}\\
{\it schnelli@kth.se}
\end{center}
\end{minipage}
\footnotetext[1]{Supported by the Swedish Research Council Grant VR-2021-04703, and the Knut and Alice Wallenberg Foundation.}

\renewcommand{\thefootnote}{\fnsymbol{footnote}}	

\end{center}

\vspace{1cm}

\begin{center}
 \begin{minipage}{0.83\textwidth}\footnotesize{
 {\bf Abstract.}    
We consider the free additive convolution $\mu_\alpha\boxplus\mu_\beta$ of two probability measures $\mu_\alpha$ and $\mu_\beta$, supported on respectively $n_\alpha$ and $n_\beta$ disjoint bounded intervals on the real line, and derive a lower bound and an upper bound that is strictly smaller than $2n_\alpha n_\beta$, on the number of connected components in its support. We also obtain the corresponding results for the free additive convolution semi-group $\{\mu^{\boxplus t}\,:\, t\ge 1\}$. Throughout the paper, we consider classes of probability measures with power law behaviors at the endpoints of their supports with exponents ranging from $-1$ to $1$. Our main theorem generalizes a result of Bao, Erd\H{o}s and Schnelli~\cite{Bao20} to the multi-cut setup.  }
\end{minipage}
\end{center}

 \vspace{5mm}
 
 {\small
\footnotesize{\noindent\textit{Date}: March 27, 2022}\\
\footnotesize{\noindent\textit{Keywords}: Free additive convolution, Jacobi measures.}\\
\footnotesize{\noindent\textit{AMS Subject Classification (2010)}: 46L54, 60B20, 30A99.}
 
 \vspace{2mm}

 }

\thispagestyle{headings}

\section{Introduction and statement of results}

A fundamental concept in Voiculescu's free probability theory is the free addition of non-commutative random variables. Let $\mu_X$ and $\mu_Y$ be two Borel probability measures on the real line. Then the free additive convolution~\cite{Voiculescu 1} of $\mu_X$ and $\mu_Y$, denoted $\mu_X\boxplus\mu_Y$, is the law of $X+Y$ where $X$ and $Y$ are freely independent, self-adjoint, non-commutative random variables with laws $\mu_X$ and $\mu_Y$. In analogy to the classical addition of independent random variables, the $n$-fold free convolution $\mu^{\boxplus n}=\mu\boxplus\cdots\boxplus \mu$ of a probability measure $\mu$ satisfies a central limit theorem~\cite{Voi83} and upon rescaling converges to Wigner's semicircle law. The free convolution powers can be embedded~\cite{Ber95,Nic96} in a one-parameter family $\{\mu^{\boxplus t}\,:\, t\ge 1\}$, called the free additive convolution semi-group, such that $\mu^{\boxplus t_1}\boxplus\mu^{\boxplus t_2}=\mu^{\boxplus (t_1+t_2)}$, $t_1,t_2\ge 1$. 

Although conceptually alike, a fundamental difference between the free additive convolution and the classical convolution are their regularizing properties. For example if $\mu=\frac{1}{2}\delta_{-1}+\frac{1}{2}\delta_1$, then the classical convolution of $\mu$ with itself is discrete, but in contrast, the free additive convolution of $\mu$ with itself yields the arcsine law, an absolutely continuous distribution. In general, the free additive convolution can only be determined explicitly in a few cases. A main reason for that is the lack of a simple formula for the free additive convolution measures, they are obtained as implicit solutions to certain systems of equations. It is thus difficult to deduce qualitative properties of the free convolution measures based on intuition or heuristics. 

In this paper we study properties of the support of the free additive convolution measures. For sufficiently regular probability measures $\mu$, $\mu_\alpha$ and $\mu_\beta$ on $\R$, we aim to determine a lower bound and an upper bound on the number of connected components in the support of the measures $\mu_\alpha\boxplus\mu_\beta$ and $\mu^{\boxplus t}$. Belinschi, Bercovici and Voiculescu extensively studied in~\cite{Bel06,Bel1,BeV98}~regularity properties for the free additive convolution and in~\cite{Bel05,Bel04} for the convolution semi-group. For compactly supported distributions it is further known~\cite{Ber95,Kargin2007,Wan10} that the $n$-fold free additive convolution $\mu^{\boxplus n}$ is absolutely continuous for $n$ sufficiently large. Much less is known about support properties of the free convolutions. Biane showed in~\cite{B} that the number of components in the support of the semi-circular flow $\mu_t:=\mu\boxplus \sigma_t$, where $\sigma_t$ is the semicircle law with variance $t$ and $\mu$ is an arbitrary probability measure, is a non-increasing function of $t$. Huang proved in~\cite{Huang15} that the number of connected components in the support of $\mu^{\boxplus t}$ is non-increasing in $t>1$. For the free additive convolution it was shown~\cite{Bao20}, for a big class of absolutely continuous measures supported on single intervals, that the support of the free additive convolution also consists of a single interval. This result was extended in~\cite{Hon21} to the setup of the free multiplicative convolution. 

The main contribution of this paper is to derive an upper bound on the number of connected components in the support of the free additive convolution measures in the setting of multi-cut measures. The class of measures we consider are multi-cut Jacobi type measures, that is, absolutely continuous measures supported on finitely many disjoint bounded intervals with densities behaving as power laws with exponents between $-1$ and $1$ near the endpoints of the supports; see Assumption~\ref{main assumption} and Assumption~\ref{main assumption2}. It follows from our main result that the number of connected components in the support of the free convolution is strictly less than twice the product of the numbers of components in the original measures. Theorem~\ref{main theorem} gives an even more precise estimate. For the free convolution semi-group we obtain in Theorem~\ref{thm1} a similar upper bound, yet in this setup we even allow the measures to contain finitely many pure points in addition to the continuous parts.

Even though they were first introduced as algebraic operations, the free additive convolution measures can be studied using complex analysis. The Cauchy-Stieltjes transform of the free additive convolution is related to the Cauchy-Stieltjes transforms of the initial measures through analytic subordination: Given two Borel probability measures $\mu_\alpha$ and $\mu_\beta$ on $\mathbb{R}$, the Cauchy-Stieltjes transform of their free addition is given by $m_{\mu_\alpha}\circ \omega_\beta=m_{\mu_\beta}\circ\omega_\alpha$, where~$\omega_\alpha$ and~$\omega_\beta$ are analytic self-mappings of the complex upper half-plane, and where~$m_{\mu_\alpha}$ and~$m_{\mu_\beta}$ denote the Cauchy-Stieltjes transforms of the initial measures. This subordination phenomenon was observed by Voiculescu in~\cite{Voi93} and further extended by Biane in~\cite{Bia98}. It was shown by Belinschi and Bercovici~\cite{Bel Ber} and by Chistyakov and G\"otze~\cite{Chis} that the analytic subordination property can be used to define the free convolution in an analytic way; see Subsection~\ref{subsection: free addition definition} below. The existence of an analytic subordination function for the free additive convolution semi-group was derived in~\cite{Bel05,Bel04}. Function theory provides powerful tools to study the free additive convolution and its regularization properties in great generality, that is, for very general Borel probability measures; see~\cite{Bel06,Bel1,BB-GG,BeV98,B,Huang15} and references therein.

 We first consider the free additive convolution semi-group whose definition involves only one subordination function denoted by $\omega_t$; see Subsection~\ref{subsection: free additive convolution semi-group}. In order to determine an upper bound on the number of connected components in the support of $\mu^{\boxplus t}$, we derive a correspondence among the number of connected components in the support of $\mu^{\boxplus t}$, the number of connected components in the support of $\mu$ and the zeroes of the Cauchy-Stieltjes transform of $\mu$. An important observation is that the subordination function $\omega_t$ is bounded on compact sets under Assumption~\ref{main assumption}. 
 
 The analysis of the free additive convolution of two distinct measures is considerably more involved because the defining subordination equations form a two by two system. As we will see in Section~\ref{section: proof free addition part 1}, the subordination functions in the multi-cut setup are not necessarily bounded and may diverge at some isolated points on the real line; such divergences are absent in the case of measures supported on single intervals~\cite{Bao20}. In the multi-cut case, we link divergences of the subordination functions to the zeros of the Cauchy-Stieltjes transforms of the initial measures in order to explore a mechanism that can create additional components in the support. This analytic part of our analysis is carried out in Section~\ref{section: proof free addition part 1} where we describe the qualitative behavior of the subordination functions. The estimate on the number of component in the support stated in  Theorems~\ref{main theorem 1} and \ref{main theorem} are then obtained by solving new combinatorial problems in Section~\ref{section: proof part 2}.

Our work is partly motivated from random matrix theory. The free convolution measures naturally arise as the limiting laws of the following random matrix model. Consider sequences of matrices $(A_{N})_{N}$, $(B_{N})_{N}$ and $(U_{N})_{N}$, such that $A_N$ and $B_N$ are deterministic Hermitian $N\times N$ random matrices and such that $U_N$ is Haar distributed on the unitary group of order $N$. Further assume that the empirical spectral distributions of~$A_N$ and of~$B_N$ converge weakly to two deterministic measures $\mu_\alpha$ and $\mu_\beta$. Then $A_N$ and $U_N B_N U_N^*$ are asymptotically free and consequently the distribution of $A_N+U_N B_N U_N^*$ converges weakly to the free additive convolution of~$\mu_\alpha$ and~$\mu_\beta$ as $N$ tends to infinity. This convergence can be quantified by deriving local laws for the associated Green function~\cite{Bao17,BES16,BES15b}. Such analyses heavily rely on the detailed regularity properties of the limiting measures and associated subordination functions. 

The methods of this paper localize the subordination functions in the multi-cut setup. A next natural step is to determine the qualitative behavior of the subordination functions near the endpoints of the support. For the one-cut measures under Assumption~\ref{main assumption2}, it was proved in~\cite{Bao20} that the free additive convolution exhibits the ubiquitous square root behavior at the two endpoints of the support. In the multi-cut setup there can also arise isolated zeros of the density inside the support, so-called cusp points, where the density has a cubic root behavior. A similar phenomenon occurs for Wigner type random matrices with general variance profiles as proved in~\cite{AEK,AEK18}. In upcoming work we will study the qualitative behavior of the free convolutions measures at the endpoints of the support and interior cusp points.

\textit{Organization of the paper:} In Section 1, we state our precise assumptions and give our main results. In Section 2, we give the analytic definitions of the free additive convolution semi-group and of the free additive convolution of two probability measures.
In Section 3, we prove our results on the free additive convolution semi-group. We obtain differential properties of the subordination function and then localize the regions on the real line where its imaginary part is positive. 
In Section 4, we localize the subordination functions for the free additive convolution on the real line. In particular, we derive asymptotics when one of the subordination functions approaches zeroes of the Cauchy-Stieltjes transforms. This information is then used in Section 5 to solve a combinatorial problem resulting in the proof of our main result, Theorem~\ref{main theorem}.

\textit{Notation:} Let $\mu$ be a Borel measure on $\mathbb{R}$, then we define its support, $\mathrm{supp}(\mu)$,  as the smallest closed set with full measure. By extension, we define the support of the density $\rho$ of its absolutely continuous part as the closure of the set $\lbrace x\in\mathbb{R}:\ \rho(x)>0 \rbrace$. Furthermore we denote by $c$ and $C$  strictly positive constants, whose values may change
from line to line. If $(a_n)_{n\geq 1}$ and $(b_n)_{n\geq 1}$ are two sequences in $\mathbb{R}$, we say that $a_n\sim b_n$ if $C^{-1}a_n\leq b_n \leq C a_n$, for some $C\geq 1$ and for any $n$ sufficiently large. Finally, we denote by~$\mathbb{C}^+$ the upper half-plane in~$\mathbb{C}$, i.e., $\mathbb{C}^+ := \lbrace z\in\mathbb{C}:\  \mathrm{Im}\,z > 0\rbrace$ and by $\mathbb{R}_{\geq 0}$ the set of non-negative real numbers.

\subsection{The results on $\mu^{\boxplus t}$}
Let $\mu$ be a Borel probability measure on $\mathbb{R}$. Bercovici and Voiculescu proved in \cite{Ber95} that the set $\lbrace \mu^{\boxplus t}:\ t\geq 1\rbrace$ has a semi-group structure. Moreover Huang showed in \cite{Huang15} that the number of connected components in the support of $\mu^{\boxplus t}$ is non-increasing as the parameter $t>1$ increases. We want to determine an explicit upper bound on the number of connected components in the support of $\mu^{\boxplus t}$ for the following class of measures.

\begin{assumption}\label{main assumption}
Let $n_{\mathrm{ac}}\geq 1$, $n_{\mathrm{pp}}\geq 1$ and $n_{\mathrm{pp}}^{\mathrm{out}}\geq 1$ be integers. We consider a compactly supported Borel probability measure $\mu$, whose singular part is supported on a finite collection of points $\lbrace x_1,...,x_{n_{\mathrm{pp}}}\rbrace$ and whose absolutely continuous part is supported on $n_{\mathrm{ac}}$ intervals, such that exactly $n_{\mathrm{pp}}^{\mathrm{out}}$ pure points of $\mu$ lie outside of the support of its absolutely continuous part. We further assume that $\mu$ admits no pure point on the endpoints of the support of its absolutely continuous part and that its density satisfies a power law near its endpoints. More precisely, if we denote by $\rho$ the density function of the absolutely continuous part of $\mu$, then for every $j=1,...,n_{\mathrm{ac}}$, there exist exponents $-1<t_j^{-},t_j^{+}<1$ and a constant $C_j\geq 1$ such that
\begin{align}\label{Jacobi}
C_j^{-1}<\frac{\rho(x)}{(x-E_j^{-})^{t_j^{-}}(E_j^{+}-x)^{t_j^{+}}}<C_j,\qquad \text{ for a.e }x\in[E^{-}_j,E^{+}_j].
\end{align}
\end{assumption}

For any $t>1$, the set of pure points of $\mu^{\boxplus t}$ was characterized in~\cite{Huang15}; see also~\cite{BeV98} for the case of the free additive convolution. In order to state our result on the support of $\mu^{\boxplus t}$, we introduce the following notations. For any finite Borel measure $\mu$ on $\mathbb{R}$, we denote by 
\begin{align*}
m_{\mu}(z):=\int_{\mathbb{R}}\frac{1}{x-z}\,\mu(\mathrm{d}x),\qquad z\in\mathbb{C}^+\cup\mathbb{R}\backslash\mathrm{supp}(\mu),
\end{align*}
its Cauchy-Stieltjes transform. Furthermore we use $\rho_t$ to denote the density of the absolutely continuous part of~$\mu^{\boxplus t}$. In our first main result, we look at the number of connected components in the support of $\rho_t$, at the number of interior points at which the density~$\rho_t$ vanishes and at the number of points at which the density $\rho_t$ diverges. We obtain the following explicit upper bound. 

\begin{theorem}\label{thm1}
Let $\mu$ satisfy Assumption \ref{main assumption} and let $t>1$. We define $\mathrm{I}_t$ to be the number of connected components in the support of $\mu_{\mathrm{ac}}^{\boxplus t}$, $\mathrm{C}_t^{0}$ to be the number of points in the interior of the support of $\mu_{\mathrm{ac}}^{\boxplus t}$, at which $\rho_t$ vanishes and $\mathrm{C}_t^{\infty}$ to be the number of points in the support of $\mu^{\boxplus t}_{\mathrm{ac}}$ that are non-atomic and at which the density $\rho_t$ diverges. If we set
\begin{align}
\mathcal{Z}_{\mu}:=\Big\lbrace E\in\mathbb{R}\backslash\mathrm{supp}(\mu):\ m_{\mu}(E)=0\Big\rbrace,
\end{align}
then we have the bound
\begin{align}\label{statementboundfreeconvgroup}
\mathrm{I}_t+\mathrm{C}_t^{0}\leq n_\mathrm{ac}+\big|\mathcal{Z}_\mu\big|\leq 2n_{\mathrm{ac}}+n_{\mathrm{pp}}^{\mathrm{out}}-1.
\end{align}
Furthermore $\mathrm{C}_t^{\infty}$ is exactly the number of pure points of $\mu$ with mass equal to $1-\frac{1}{t}$.
\end{theorem}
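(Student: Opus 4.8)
The plan is to reduce all three assertions in Theorem~\ref{thm1} to the analytic properties of the single subordination function $\omega_t$ introduced in Subsection~\ref{subsection: free additive convolution semi-group} and to the combinatorial count of sign changes of the relevant functions on the real line. Recall that the Cauchy--Stieltjes transform of $\mu^{\boxplus t}$ satisfies $m_{\mu^{\boxplus t}}(z)=m_\mu(\omega_t(z))$ with $\omega_t\colon\C^+\to\C^+$ analytic and $\omega_t(z)=z-(t-1)K_\mu(m_{\mu^{\boxplus t}}(z))$ for the appropriate functional inverse $K_\mu$. The first step is to show that, under Assumption~\ref{main assumption}, $\omega_t$ extends continuously (possibly with value $\infty$) to $\R$, is real-valued there away from $\mathrm{supp}(\mu^{\boxplus t})$, and—crucially—is bounded on compact sets, as noted in the introduction. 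The density $\rho_t(x)$ is then $\frac1\pi\,\Im m_\mu(\omega_t(x))$, so $x$ lies in the interior of $\mathrm{supp}(\mu^{\boxplus t}_{\mathrm{ac}})$ precisely when $\omega_t(x)$ lies in the interior of $\mathrm{supp}(\mu_{\mathrm{ac}})$, and $\rho_t$ vanishes (resp.\ diverges) at an interior point $x$ exactly when $\omega_t(x)$ hits an endpoint of a component of $\mathrm{supp}(\mu_{\mathrm{ac}})$ or one of the $\mathcal Z_\mu$-type points (resp.\ a pure point of $\mu$), using the power-law bounds~\eqref{Jacobi} to transfer the local behavior.

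The heart of the argument is the bound $\mathrm I_t+\mathrm C_t^0\le n_{\mathrm{ac}}+|\mathcal Z_\mu|$. I would argue as follows. On each bounded component $\R\setminus\mathrm{supp}(\mu^{\boxplus t}_{\mathrm{ac}})$ the real function $x\mapsto\omega_t(x)$ is monotone (this is where the differential properties of $\omega_t$ derived in Section~3 enter: $\omega_t'$ has a sign determined by an identity of the form $\omega_t'(x)\big(1-(t-1)m_\mu'(\omega_t(x))\cdot(\text{something positive})\big)=1$, forcing $\omega_t'>0$ on the complement of the support). Consequently, as $x$ traverses $\R$ from left to right, $\omega_t(x)$ is a monotone-on-each-gap map, and the total number of connected components of $\mathrm{supp}(\mu^{\boxplus t}_{\mathrm{ac}})$ plus interior zeros of $\rho_t$ is controlled by the number of maximal intervals on which $\omega_t$ sweeps across $\mathrm{supp}(\mu_{\mathrm{ac}})$ or a point of $\mathcal Z_\mu$. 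Since $\mathrm{supp}(\mu_{\mathrm{ac}})$ has $n_{\mathrm{ac}}$ components and $|\mathcal Z_\mu|$ is finite, and since each "arch" of $\omega_t$ over the real line that produces a new support component must, by monotonicity and boundedness, be separated from the next by $\omega_t$ re-entering a region where $\Im m_\mu\circ\omega_t=0$, a counting of endpoints yields the claimed inequality. The second inequality $n_{\mathrm{ac}}+|\mathcal Z_\mu|\le 2n_{\mathrm{ac}}+n_{\mathrm{pp}}^{\mathrm{out}}-1$ is purely a statement about $m_\mu$: between consecutive components of $\mathrm{supp}(\mu)$ (there are $n_{\mathrm{ac}}+n_{\mathrm{pp}}^{\mathrm{out}}-1$ bounded gaps after merging atoms with the a.c.\ support appropriately) the function $m_\mu$ is real, smooth and strictly increasing, hence has at most one zero per gap, giving $|\mathcal Z_\mu|\le n_{\mathrm{ac}}+n_{\mathrm{pp}}^{\mathrm{out}}-1$; adding $n_{\mathrm{ac}}$ gives the bound. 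Care is needed because atoms lying outside the a.c.\ support subdivide gaps and because $m_\mu$ could in principle vanish in the unbounded components—but there $m_\mu$ has a fixed sign (it tends to $0^-$ at $+\infty$ and $0^+$ at $-\infty$ and is monotone), so no zeros arise there.

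For the last assertion, that $\mathrm C_t^\infty$ equals the number of pure points of $\mu$ with mass exactly $1-\frac1t$: the density $\rho_t(x)=\frac1\pi\Im m_\mu(\omega_t(x))$ can only diverge at a non-atomic point $x$ of $\mathrm{supp}(\mu^{\boxplus t}_{\mathrm{ac}})$ if $\omega_t(x)$ approaches a pure point $x_i$ of $\mu$, since away from pure points $m_\mu$ is bounded near $\R$ by the Jacobi exponents being $<1$. Near such $x_i$ one has $m_\mu(w)\approx \frac{a_i}{x_i-w}$ with $a_i=\mu(\{x_i\})$, and feeding this into the fixed-point relation $\omega_t(x)=x-(t-1)K_\mu(m_{\mu^{\boxplus t}}(x))$ together with the known characterization from~\cite{Huang15} of atoms of $\mu^{\boxplus t}$, one finds that $\omega_t(x)\to x_i$ with $\rho_t(x)\to\infty$ happens exactly when the atom $x_i$ fails the atom-creation threshold by the critical amount, i.e.\ $a_i=1-\frac1t$; larger mass produces an atom of $\mu^{\boxplus t}$ (not a non-atomic divergence) and smaller mass produces a finite density. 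I expect the main obstacle to be the rigorous justification of the monotonicity/boundedness of $\omega_t$ on the real gaps and the bookkeeping in the combinatorial count—in particular ruling out pathological coincidences where several phenomena (an endpoint of $\mathrm{supp}(\mu_{\mathrm{ac}})$, a point of $\mathcal Z_\mu$, and a divergence of $\omega_t$) collide at one point—which is precisely the delicate case-analysis deferred to Sections~3 and~5.
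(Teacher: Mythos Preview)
Your treatment of the second inequality in~\eqref{statementboundfreeconvgroup} and of $\mathrm C_t^\infty$ is correct and agrees with the paper. The argument for the main inequality $\mathrm I_t+\mathrm C_t^0\le n_{\mathrm{ac}}+|\mathcal Z_\mu|$, however, is built on a false picture of $\omega_t$. You claim that $x$ lies in the interior of $\mathrm{supp}(\mu^{\boxplus t}_{\mathrm{ac}})$ precisely when $\omega_t(x)\in\mathrm{supp}(\mu_{\mathrm{ac}})$, and that interior zeros of $\rho_t$ occur where $\omega_t(x)$ hits an endpoint of $\mathrm{supp}(\mu_{\mathrm{ac}})$ or a point of $\mathcal Z_\mu$. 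Both statements are false: under Assumption~\ref{main assumption} the paper proves (Lemmas~\ref{zero} and~\ref{distance}) that $\omega_t$ remains at \emph{uniformly positive distance} from $\mathrm{supp}(\widehat\mu)\supseteq\mathrm{supp}(\mu_{\mathrm{ac}})\cup\mathcal Z_\mu$ on the relevant spectral domain (away from preimages of atoms of $\mu$). When $x$ is in the interior of the support one has $\omega_t(x)\in\C^+$ with $\Im\omega_t(x)>0$; when $\rho_t(x)=0$ the value $\omega_t(x)$ is real but sits strictly inside a gap of $\mathrm{supp}(\widehat\mu)$, never at an endpoint of $\mathrm{supp}(\mu_{\mathrm{ac}})$. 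There is therefore no ``$\omega_t$ sweeping across $\mathrm{supp}(\mu_{\mathrm{ac}})$'' to count, and the bookkeeping you describe has no foothold.

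The paper's mechanism is an endpoint equation. Proposition~\ref{edgecaract} shows that $E\in\mathcal V_t:=\partial\{x:\rho_t(x)>0\}$ (or $\omega_t(E)$ is an atom of mass $1-\tfrac1t$) if and only if
\[
F_\mu'\bigl(\omega_t(E)\bigr)-1=\int_{\R}\frac{\widehat\mu(\dd x)}{(x-\omega_t(E))^2}=\frac{1}{t-1},
\]
and for such $E$ the value $\omega_t(E)$ is real and lies in $\R\setminus\mathrm{supp}(\widehat\mu)$. On each gap of $\mathrm{supp}(\widehat\mu)$ the map $u\mapsto\int(x-u)^{-2}\widehat\mu(\dd x)$ is strictly convex (since $F_\mu'''(u)=\int(x-u)^{-4}\widehat\mu(\dd x)>0$), diverges at the gap boundaries and tends to $0$ at $\pm\infty$; hence the level $\tfrac{1}{t-1}$ is met at most twice on each bounded gap and at most once on each unbounded gap. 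Coupled with the fact that $\Re\omega_t$ is a strictly increasing homeomorphism of $\R$ (Proposition~\ref{increasing}), so that distinct points of $\mathcal V_t$ yield distinct values of $\omega_t$, this gives the bound. Your monotonicity and boundedness ingredients are correct and are indeed established in the paper, but the counting must be done on solutions of this convex level-set equation in the gaps of $\mathrm{supp}(\widehat\mu)$, not on crossings of $\mathrm{supp}(\mu_{\mathrm{ac}})$ by $\omega_t$.
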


The second inequality in \eqref{statementboundfreeconvgroup} follows from observing that a point $E\in\mathcal{Z}_{\mu}$ is necessarily between two connected components of the support of $\mu$ and that there is at most one such point between two given connected components of the support of $\mu$. The estimates in~\eqref{statementboundfreeconvgroup} do not explicitly depend on $t$, thus in view of the Huang's result, the maximal number of components in the support of the free additive convolution semi-group is achieved for small~$t>1$. 

\subsection{The results on $\mu_{\alpha}\boxplus\mu_{\beta}$}
Let $\mu_{\alpha}$ and $\mu_{\beta}$ be two Borel probability measures on $\mathbb{R}$. If they are both absolutely continuous, supported on one interval and  their densities satisfy some power law behavior at the endpoints of their respective supports, with exponents ranging from $-1$ to $1$, a study of the support and of regularity properties of $\mu_{\alpha}\boxplus\mu_{\beta}$ was carried out in \cite{Bao20}. The main purpose of this paper is to extend this result to a larger class of probability measures that can be supported on several intervals.
We consider probability measures of the following form.

\begin{assumption}\label{main assumption2}
Let $n_{\alpha},\,n_{\beta}\geq 1$ be integers. We consider two absolutely continuous Borel probability measures on $\mathbb{R}$ denoted by $\mu_\alpha$ and $\mu_\beta$ such that
\begin{enumerate}[label=(\roman*)]
\item $\mu_\alpha$ is centered and supported on $n_{\alpha}$ disjoint intervals $[A_j^{-},A_j^{+}],\ 1\leq j \leq n_{\alpha}$.
\item $\mu_\beta$ is centered and supported on $n_{\beta}$ disjoint intervals $[B^{-}_j,B^{+}_j],\ 1\leq j \leq n_{\beta}$.
\end{enumerate}
We further assume that their respective densities $\rho_\alpha$ and $\rho_\beta$ behave as power laws with exponents strictly between $-1$ and $1$ near the endpoints of their supports. More precisely,
\begin{enumerate}[label=(\roman*)]
\item for every $j=1,...,n_\alpha$, there exist exponents $-1<s_j^{-},s_j^{+}<1$ and a constant $C_j^{\alpha}\geq 1$ such that
\begin{align}\label{Jacobi2}
\big(C_j^{\alpha}\big)^{-1}<\frac{\rho_{\alpha}(x)}{(x-A_j^{-})^{s_j^{-}}(A_j^{+}-x)^{s_j^{+}}}<C_j^{\alpha},\qquad \text{ for a.e }x\in[A^{-}_j,A^{+}_j],
\end{align}
\item for every $j=1,...,n_\beta$, there exist exponents $-1<t_j^{-},t_j^{+}<1$ and a constant $C_j^{\beta}\geq 1$ such that
\begin{align}\label{Jacobi22}
\big(C_j^{\beta}\big)^{-1}<\frac{\rho_{\beta}(x)}{(x-B_j^{-})^{t_j^{-}}(B_j^{+}-x)^{t_j^{+}}}<C_j^{\beta},\qquad \text{ for a.e }x\in[B_j^{-},B_j^{+}].
\end{align}
\end{enumerate}
\end{assumption}

Let $\rho_{\alpha\boxplus\beta}$ denote the density of $\mu_\alpha\boxplus\mu_\beta$. As we will see in Section 2,  Assumption \ref{main assumption2} ensures that $\mu_\alpha\boxplus\mu_\beta$ is absolutely continuous with continuous and bounded density. We denote by $\mathrm{I}_{\alpha\boxplus\beta}$ the number of connected components in $\mathrm{supp}\big(\mu_\alpha\boxplus\mu_\beta \big)$ and by $\mathrm{C}_{\alpha\boxplus\beta}$ the number of points in the interior of $\mathrm{supp}(\mu_\alpha\boxplus\mu_\beta)$ at which $\rho_{\alpha\boxplus\beta}$ vanishes.  We also let  $\mathcal{E}^{\alpha}$ denote the set of zeroes of  the Cauchy-Stieltjes transform $m_{\mu_\alpha}$ that lie at strictly positive distance from $\mathrm{supp}(\mu_\alpha)$. Similarly  $\mathcal{E}^{\beta}$ denotes the set of zeroes of $m_{\mu_\beta}$ that lie at strictly positive distance from $\mathrm{supp}(\mu_\beta)$. We further define 
\begin{align*}\nonumber
&\mathcal{P}^{\alpha}:=\Big\lbrace E\in\mathcal{E}^{\alpha}:  \int_{\mathbb{R}}x^2\,\mu_\beta(\mathrm{d}x)\,m_{\mu_\alpha}'(E)\leq 1\Big\rbrace,\\
&\mathcal{P}^{\beta}:=\Big\lbrace E\in\mathcal{E}^{\beta}:  \int_{\mathbb{R}}x^2\,\mu_\alpha(\mathrm{d}x)\,m_{\mu_\beta}'(E)\leq 1\Big\rbrace,
\end{align*}
and
\begin{align*}
&\mathcal{N}_{\mu_\alpha}:=\Big\lbrace 1\leq i\leq n_\alpha-2: \mathrm{Re}\,m_{\mu_\alpha}<0\text{ on }\big(A_i^+,A_{i+1}^{-}\big)\text{ and }\mathrm{Re}\,m_{\mu_\alpha}>0\text{ on }\big(A_{i+1}^{+},A_{i+2}^{-}\big)\Big\rbrace.
\end{align*}

In the next theorem, we first extend the main theorem of \cite{Bao20} by considering the case where one of the initial measures is supported on a single interval. 

\begin{theorem}\label{main theorem 1}
Let $\mu_{\alpha}$  and $\mu_{\beta}$ satisfy Assumption \ref{main assumption2}. We assume that $n_\alpha\geq 1$ and that $n_\beta=1$. 
Then $1+\big|\mathcal{P}^{\alpha}\big|\leq \mathrm{I}_{\alpha\boxplus\beta}+\mathrm{C}_{\alpha\boxplus\beta}\leq n_\alpha-\big| \mathcal{N}_{\mu_\alpha} \big|$.
\end{theorem}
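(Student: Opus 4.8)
The plan is to reduce the statement to a combinatorial count for the subordination function $\omega_\beta$ on the real line, fed by the analytic input of Section~\ref{section: proof free addition part 1}. Recall (see Section~\ref{subsection: free addition definition} and Section~\ref{section: proof free addition part 1}) that $\mu_\alpha\boxplus\mu_\beta$ is absolutely continuous with bounded continuous density $\rho_{\alpha\boxplus\beta}=\tfrac1\pi\,\im m_{\mu_\alpha\boxplus\mu_\beta}$ on $\R$, vanishing at only finitely many points of the interior of its support, and that the subordination functions $\omega_\alpha,\omega_\beta\colon\C^+\to\C^+$ satisfy $m_{\mu_\alpha}(\omega_\beta(z))=m_{\mu_\beta}(\omega_\alpha(z))=m_{\mu_\alpha\boxplus\mu_\beta}(z)$ together with $\omega_\alpha(z)+\omega_\beta(z)-z=-1/m_{\mu_\alpha\boxplus\mu_\beta}(z)$. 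Setting $\Omega\deq\{x\in\R:\rho_{\alpha\boxplus\beta}(x)>0\}$ and letting $N_\Omega$ denote its number of connected components, removing the $\mathrm{C}_{\alpha\boxplus\beta}$ interior zeros from the interior of $\mathrm{supp}(\mu_\alpha\boxplus\mu_\beta)$, which has $\mathrm{I}_{\alpha\boxplus\beta}$ connected components, produces exactly $\Omega$; hence $N_\Omega=\mathrm{I}_{\alpha\boxplus\beta}+\mathrm{C}_{\alpha\boxplus\beta}$, and it suffices to bound $N_\Omega$.

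When $n_\beta=1$ the analytic picture, to be developed in Section~\ref{section: proof free addition part 1}, simplifies: since $\mu_\beta$ is supported on a single interval, $m_{\mu_\beta}$ has no zero off its support, so $\omega_\beta$ extends to a continuous function on $\overline{\C^+}$ that is finite on $\R$, and only $\omega_\alpha$ may blow up on $\R$, precisely at the points where $\omega_\beta$ takes a value in the zero set $\mathcal{E}^\alpha$ of $m_{\mu_\alpha}$ (there $m_{\mu_\alpha\boxplus\mu_\beta}$ vanishes and $\rho_{\alpha\boxplus\beta}=0$). On $\mathcal{O}\deq\R\setminus\Omega=\{x:\rho_{\alpha\boxplus\beta}(x)=0\}$ the function $\omega_\beta$ is real-valued and takes values in $\R\setminus\mathrm{supp}(\mu_\alpha)$ (because $m_{\mu_\alpha}$ maps $\C^+$ strictly into $\C^+$ and has positive imaginary part on the interiors of the support intervals of $\mu_\alpha$), and it obeys $\Psi(\omega_\beta(x))=x$, where $\Psi(w)\deq w+H_{\mu_\beta}(F_{\mu_\alpha}(w))-F_{\mu_\alpha}(w)$ with $F_\mu\deq-1/m_\mu$ and $H_{\mu_\beta}\deq F_{\mu_\beta}^{-1}$. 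This $\Psi$ is real-analytic on $\R\setminus\mathrm{supp}(\mu_\alpha)$ and extends real-analytically across each $E\in\mathcal{E}^\alpha$: the expansion $H_{\mu_\beta}(s)=s+(\int_\R x^2\,\mu_\beta(\dd x))\,s^{-1}+O(s^{-3})$ as $s\to\infty$ (valid since $\mu_\beta$ is centered) gives $\Psi(E)=E$ and $\Psi'(E)=1-(\int_\R x^2\,\mu_\beta(\dd x))\,m_{\mu_\alpha}'(E)$, so $E\in\mathcal{P}^\alpha$ exactly when $\Psi'(E)\ge0$. Finally, $\omega_\beta-\mathrm{id}$ being a Pick function bounded at infinity, the boundary values of $\omega_\beta$ are non-decreasing where real and satisfy $\omega_\beta'\ge1$ on $\mathcal{O}$ (as in~\cite{Bao20}); combined with $\Psi\circ\omega_\beta=\mathrm{id}$, this makes $\omega_\beta|_{\mathcal{O}}$ a strictly increasing injection with $0<\Psi'\le1$ on its image, so that $\omega_\beta$ maps the $N_\Omega+1$ components of $\mathcal{O}$ onto $N_\Omega+1$ pairwise disjoint (possibly degenerate) subintervals of the $n_\alpha+1$ intervals forming $\R\setminus\mathrm{supp}(\mu_\alpha)$.

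For the lower bound, the points of $\mathcal{P}^\alpha$ lie in distinct gaps of $\mathrm{supp}(\mu_\alpha)$, and the local expansion around $E$ carried out in Section~\ref{section: proof free addition part 1} shows that $\omega_\beta(E)=E$ whenever $\Psi'(E)\ge0$, so $\rho_{\alpha\boxplus\beta}(E)=\tfrac1\pi\im m_{\mu_\alpha}(E)=0$ for $E\in\mathcal{P}^\alpha$. Writing $\mathcal{P}^\alpha=\{E_1<\dots<E_p\}$ and setting $E_0\deq-\infty$, $E_{p+1}\deq+\infty$: for each $j\in\{0,\dots,p\}$ there is a support interval of $\mu_\alpha$ strictly between $E_j$ and $E_{j+1}$; by continuity $\re\omega_\beta$ takes a value in the interior of that interval at some $x\in(E_j,E_{j+1})$, and such $x$ cannot lie in $\mathcal{O}$ — where $\omega_\beta$ is real-valued outside $\mathrm{supp}(\mu_\alpha)$ — hence $x\in\Omega$, i.e. $\rho_{\alpha\boxplus\beta}(x)>0$. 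Since no component of $\Omega$ can contain an $E_j$ (where $\rho_{\alpha\boxplus\beta}=0$), the $p+1$ components of $\Omega$ thus found are distinct, so $N_\Omega\ge p+1=|\mathcal{P}^\alpha|+1$; the case $\mathcal{P}^\alpha=\varnothing$ is covered by $N_\Omega\ge1$, which holds since $\mathrm{supp}(\mu_\alpha\boxplus\mu_\beta)\ne\varnothing$.

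The upper bound is the combinatorial heart of the argument and the step I expect to be hardest. Using the monotonicity of $\Psi$ on the gaps of $\mu_\alpha$ away from $\mathcal{E}^\alpha$ (as in~\cite{Bao20}) together with the local behaviour of $\omega_\beta$ near a point $E\in\mathcal{E}^\alpha$ — where, according to $\operatorname{sgn}\Psi'(E)$, either $\omega_\beta$ runs through $E$ staying real, opening a genuine gap of $\mu_\alpha\boxplus\mu_\beta$ (or, in the borderline case $\Psi'(E)=0$, an interior cubic-root zero), or $\omega_\beta$ must detour around $E$ through $\C^+$ — one first bounds the number of components of $\mathcal{O}$ that can map into each of the $n_\alpha+1$ intervals of $\R\setminus\mathrm{supp}(\mu_\alpha)$, hence bounds $N_\Omega$. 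The passage from this preliminary estimate to the sharp $N_\Omega\le n_\alpha-|\mathcal{N}_{\mu_\alpha}|$ is the combinatorial problem solved in Section~\ref{section: proof part 2}: one matches each component of $\Omega$ to the support interval(s) of $\mu_\alpha$ that its real part traverses, and tracks which of the $n_\alpha+1$ intervals of $\R\setminus\mathrm{supp}(\mu_\alpha)$ actually receive an $\omega_\beta$-image; the sign of $m_{\mu_\alpha}$ on a gap (equivalently that of $F_{\mu_\alpha}$ there) decides whether $\omega_\beta$ can become real in that gap, and the configurations recorded by $\mathcal{N}_{\mu_\alpha}$ — a gap on which $m_{\mu_\alpha}<0$ followed, across one support interval, by a gap on which $m_{\mu_\alpha}>0$ — are precisely those forcing that intermediate support interval to be absorbed into an adjacent component of $\Omega$ rather than to generate its own, each such configuration lowering the count by one. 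Combining the two bounds yields $1+|\mathcal{P}^\alpha|\le\mathrm{I}_{\alpha\boxplus\beta}+\mathrm{C}_{\alpha\boxplus\beta}\le n_\alpha-|\mathcal{N}_{\mu_\alpha}|$. The two delicate ingredients are the local analysis of $\omega_\beta$ at the zeros in $\mathcal{E}^\alpha$ and the global combinatorial reconciliation of the disjoint real intervals $\omega_\beta(\mathcal{O})$ with the excursions of $\omega_\beta$ into $\C^+$.
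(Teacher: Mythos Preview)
Your setup and the lower bound are essentially the paper's argument: the identification $N_\Omega=\mathrm{I}_{\alpha\boxplus\beta}+\mathrm{C}_{\alpha\boxplus\beta}$, the boundedness of $\omega_\beta$ when $n_\beta=1$, and the fact (Lemma~\ref{visiting a zero}) that $\omega_\beta(E)=E$ for each $E\in\mathcal{P}^\alpha$, whence these are zeros of $\rho_{\alpha\boxplus\beta}$ separating at least $|\mathcal{P}^\alpha|+1$ positivity components, are exactly what the paper invokes. Your function $\Psi$ is the map $z(\omega)$ appearing in the proofs of Lemma~\ref{visiting a zero} and Proposition~\ref{virtual proposition}, so that part is a faithful reworking.

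The upper bound, however, has a genuine gap. Your assertion that ``the sign of $m_{\mu_\alpha}$ on a gap decides whether $\omega_\beta$ can become real in that gap'' is not correct: $\omega_\beta$ can and does become real in gaps of either sign (e.g.\ both unbounded gaps). What actually produces the $|\mathcal{N}_{\mu_\alpha}|$ correction is a constraint on \emph{sign transitions across a single jump}, and this constraint comes from the monotonicity of $\omega_\alpha$, which your $\Psi$-analysis, being focused only on $\omega_\beta$, never invokes. Concretely: since $n_\beta=1$, $F_{\mu_\beta}$ is negative to the left of $\mathrm{supp}(\mu_\beta)$ and positive to the right; since $\omega_\alpha(E_1)<\omega_\alpha(E_2)$ at the endpoints of any jump $[E_1,E_2]$ (Proposition~\ref{increasing alpha} applied to $\omega_\alpha$), the sign pair of $\bigl(F_{\mu_\beta}(\omega_\alpha(E_1)),F_{\mu_\beta}(\omega_\alpha(E_2))\bigr)$ can be $(-,-)$, $(-,+)$ or $(+,+)$, but never $(+,-)$. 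By the subordination identity $F_{\mu_\beta}(\omega_\alpha(E))=F_{\mu_\alpha}(\omega_\beta(E))$, this forbids a jump whose left endpoint has $\omega_\beta$ in a gap with $\re\,m_{\mu_\alpha}<0$ and right endpoint in the next gap with $\re\,m_{\mu_\alpha}>0$, which is precisely the configuration indexed by $\mathcal{N}_{\mu_\alpha}$. Combined with Proposition~\ref{virtual proposition} (each jump must cover some component of $\mathrm{supp}(\widehat{\mu}_\alpha)$) and the monotonicity of $\omega_\beta$, this directly yields at most $n_\alpha-|\mathcal{N}_{\mu_\alpha}|$ jumps. You need to bring $\omega_\alpha$ back into the argument to see this; the $\Psi$-picture alone does not deliver the sharp bound.
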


Whenever $n_\beta>1$, the analysis of the upper bound on $\mathrm{I}_{\alpha\boxplus\beta}+\mathrm{C}_{\alpha\boxplus\beta}$ gives rise to a combinatorial problem. The outcome of this analysis is the content of the next theorem.

\begin{theorem}\label{main theorem}
Let $\mu_{\alpha}$  and $\mu_{\beta}$ satisfy Assumption \ref{main assumption2} and assume that $n_\alpha\geq n_\beta>1$. Then
\begin{align}
1+\big|\mathcal{P}^{\alpha}\big|+\big|\mathcal{P}^{\beta}\big|\leq \mathrm{I}_{\alpha\boxplus\beta}+\mathrm{C}_{\alpha\boxplus\beta}\leq \big(\big|\mathcal{P}^{\beta}\big|+1\big)(n_\alpha-1)+\big(\big|\mathcal{P}^{\alpha}\big|+1\big)(n_\beta-1)+1.
\end{align}
\end{theorem}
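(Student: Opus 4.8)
The plan is to connect the geometry of $\mathrm{supp}(\mu_\alpha\boxplus\mu_\beta)$ to the boundary behavior of the subordination functions $\omega_\alpha,\omega_\beta$ through the subordination identity $m_{\mu_\alpha}\circ\omega_\beta=m_{\mu_\beta}\circ\omega_\alpha=:m_{\alpha\boxplus\beta}$. First I would set up, using the material promised in Sections~2 and~4, the following dictionary: a connected component of the support of $\mu_\alpha\boxplus\mu_\beta$, as well as an interior zero of $\rho_{\alpha\boxplus\beta}$, corresponds to an interval (resp.\ a point) on $\R$ where $\Im\,\omega_\alpha>0$ or $\Im\,\omega_\beta>0$ (with the zero arising exactly where the imaginary parts touch zero without vanishing on a neighborhood). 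By continuity of $\omega_\alpha,\omega_\beta$ on $\R$ away from their divergence points and the vanishing of $\Im\,\omega_\alpha,\Im\,\omega_\beta$ outside a compact set, the quantity $\mathrm I_{\alpha\boxplus\beta}+\mathrm C_{\alpha\boxplus\beta}$ is the number of maximal intervals where $\min(\Im\,\omega_\alpha,\Im\,\omega_\beta)>0$, counted together with the ``pinch'' points where this minimum momentarily hits zero. This reduces the theorem to an upper bound on the number of sign-constancy intervals of the pair $(\Im\,\omega_\alpha,\Im\,\omega_\beta)$ on the real line.

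Next I would bring in the structural information from Section~\ref{section: proof free addition part 1}: on each ``gap'' of the original supports, $\omega_\alpha$ and $\omega_\beta$ are real-analytic except possibly at isolated divergence points, and these divergences are tied to the zeros of $m_{\mu_\alpha}$ and $m_{\mu_\beta}$ — precisely the reason $\mathcal E^\alpha,\mathcal E^\beta$ (and their ``stable'' subsets $\mathcal P^\alpha,\mathcal P^\beta$) enter. The key monotonicity/convexity facts to extract are: (i) on a gap of $\mathrm{supp}(\mu_\alpha)$, away from a divergence point, $\omega_\beta$ traverses a gap of $\mathrm{supp}(\mu_\beta)$ monotonically, so it can only pick up a new interval of positive imaginary part when it crosses an endpoint of $\mathrm{supp}(\mu_\beta)$ or when it resets after a divergence at a point of $\mathcal P^\beta$; (ii) symmetrically with $\alpha\leftrightarrow\beta$; (iii) on a gap of $\mu_\alpha$ with no divergence, the interval where $\Im\,\omega_\beta>0$ is essentially unique. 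Combining (i)–(iii) with the combinatorics of how the $n_\alpha-1$ gaps of $\mu_\alpha$, the $n_\beta-1$ gaps of $\mu_\beta$, and the divergence points in $\mathcal P^\alpha,\mathcal P^\beta$ interleave yields the counting: each gap of $\mu_\alpha$ can be ``cut'' into at most $|\mathcal P^\beta|+1$ pieces by the $\beta$-divergences falling inside it, and vice versa, plus one for the single unbounded component coming from the outermost region — which is exactly the claimed right-hand side $(|\mathcal P^\beta|+1)(n_\alpha-1)+(|\mathcal P^\alpha|+1)(n_\beta-1)+1$. I would phrase this cleanly as a combinatorial lemma about interval systems with marked ``break points,'' to be proved in Section~\ref{section: proof part 2}, and then apply it.

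For the lower bound $1+|\mathcal P^\alpha|+|\mathcal P^\beta|\le \mathrm I_{\alpha\boxplus\beta}+\mathrm C_{\alpha\boxplus\beta}$, the argument is more direct: the support always contains at least one interval (the measure is a nondegenerate probability measure), giving the $1$. Then for each $E\in\mathcal P^\alpha$ I would show, using the asymptotics near zeros of $m_{\mu_\alpha}$ from Section~\ref{section: proof free addition part 1} and the defining inequality $\int x^2\,\mu_\beta(\dd x)\,m_{\mu_\alpha}'(E)\le 1$, that $\omega_\beta$ necessarily re-enters $\C^+$ on a small interval near the corresponding divergence point of $\omega_\alpha$ — equivalently, this forces either a genuine new connected component or at least an interior zero of $\rho_{\alpha\boxplus\beta}$ — and each such contribution is distinct because the divergence points of $\mathcal P^\alpha$ and $\mathcal P^\beta$ are distinct points on $\R$. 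Summing these disjoint contributions gives the lower bound.

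The main obstacle I expect is establishing the monotonicity claim (i)/(iii) with enough precision to make the counting tight, i.e.\ showing that between two consecutive break points a subordination function contributes \emph{at most one} interval of positive imaginary part. This requires careful control of $\omega_\alpha,\omega_\beta$ as real functions on each gap — their derivatives, their behavior as they approach endpoints of the opposite support, and especially the blow-up/reset structure at divergence points — which is exactly the delicate analytic work flagged in the introduction as absent in the one-cut case~\cite{Bao20}. A secondary subtlety is correctly bookkeeping the distinction between a true new component and a mere vanishing point of the density (so that $\mathrm C_{\alpha\boxplus\beta}$ absorbs the ``near-misses''), which is why the statement bounds the sum $\mathrm I_{\alpha\boxplus\beta}+\mathrm C_{\alpha\boxplus\beta}$ rather than $\mathrm I_{\alpha\boxplus\beta}$ alone; I would handle this by working throughout with the closed level set $\{\Im\,\omega_\alpha>0\}\cup\{\Im\,\omega_\beta>0\}$ together with its isolated pinch points, where the two quantities naturally combine.
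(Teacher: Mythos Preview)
Your lower bound argument is essentially what the paper does: Lemma~\ref{visiting a zero} shows that for each $E\in\mathcal P^\alpha$ the point $E$ is either outside $\mathrm{supp}(\mu_\alpha\boxplus\mu_\beta)$ (strict inequality case) or a zero of the density (equality case), and these points are all distinct from one another and from those of $\mathcal P^\beta$, so each contributes one to $\mathrm I_{\alpha\boxplus\beta}+\mathrm C_{\alpha\boxplus\beta}$.

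For the upper bound, however, there is a genuine gap. Your scheme rests on claims (i) and (iii) --- that on a ``gap of $\mu_\alpha$ with no divergence'' the interval where $\Im\,\omega_\beta>0$ is essentially unique, the mechanism being that $\omega_\beta$ ``traverses a gap of $\mu_\beta$ monotonically'' and picks up positivity only when ``crossing an endpoint of $\mathrm{supp}(\mu_\beta)$.'' This misidentifies the roles: $\omega_\beta$ stays at positive distance from $\mathrm{supp}(\mu_\alpha)$ (Lemma~\ref{asymptotics zero}), not $\mathrm{supp}(\mu_\beta)$, and it never ``crosses'' an endpoint of either support. More importantly, monotonicity of $\Re\,\omega_\beta$ on $\R\setminus\mathrm{supp}(\mu_\alpha\boxplus\mu_\beta)$ (Proposition~\ref{increasing alpha}) by itself does \emph{not} force uniqueness of the jump interval: nothing prevents $\omega_\beta$ from returning to $\R$ many times inside one gap of $\mu_\alpha$, as long as each return is further to the right. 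Indeed, in the paper's count (Proposition~\ref{counting lemma}) there can be up to $(n_\beta-1)+(i_2-i_1)$ components between two consecutive points of $\mathcal P^\alpha$ with no $\mathcal P^\beta$ point in between --- not one.

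What your outline is missing is the pair of ingredients that actually makes the count tight. First, Proposition~\ref{virtual proposition}: every jump $[E_1,E_2]$ of $\omega_\beta$ must satisfy $[\omega_\alpha(E_1),\omega_\alpha(E_2)]\cap\mathrm{supp}(\widehat\mu_\beta)\neq\emptyset$ or $[\omega_\beta(E_1),\omega_\beta(E_2)]\cap\mathrm{supp}(\widehat\mu_\alpha)\neq\emptyset$; this is proved by a Denjoy--Wolff uniqueness argument and is the replacement for your claim~(iii). Second, the sign constraint (property~(6) in the proof): since $F_{\mu_\alpha}(\omega_\beta(E))=F_{\mu_\beta}(\omega_\alpha(E))$, the signs of these real values must match, which synchronizes the passages of $\omega_\beta$ through $\mathrm{supp}(\widehat\mu_\alpha)$ with those of $\omega_\alpha$ through $\mathrm{supp}(\widehat\mu_\beta)$ and \emph{halves} the naive count. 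The paper then partitions $\R$ by the points of $\mathcal P^\alpha$, subdivides by the points of $\mathcal P^\beta$, and uses these two facts plus monotonicity to get $\sum_{(i_1,i_2)\in\mathcal I^\alpha}\bigl(|\mathcal J^\beta_{i_1,i_2}|(n_\alpha-1)+(n_\beta-1)+(i_2-i_1)\bigr)$, which sums to the stated bound. Your heuristic ``each gap of $\mu_\alpha$ is cut into $|\mathcal P^\beta|+1$ pieces'' happens to land on the same formula but is not a valid argument: a fixed point of $\mathcal P^\beta$ lies in at most one such gap, so you cannot use all of them in every gap.
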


\begin{remark}
We assumed $\mu_\alpha$ and $\mu_\beta$ to be centered for convenience of the proof. However we can see that Theorems \ref{main theorem 1} and \ref{main theorem} still hold if it is not the case. This would only result in a trivial shift of the support of $\mu_\alpha\boxplus\mu_\beta$. Moreover since $\big|\mathcal{P}^{\alpha}\big|\leq n_\alpha-1$ and $\big|\mathcal{P}^{\beta}\big|\leq n_\beta -1$, we see that $\mathrm{I}_{\alpha\boxplus\beta}+\mathrm{C}_{\alpha\boxplus\beta}$ is strictly bounded from above by $2n_\alpha n_\beta$. This gives a more general, but less precise upper bound on the quantity $\mathrm{I}_{\alpha\boxplus\beta}+\mathrm{C}_{\alpha\boxplus\beta}$.
\end{remark}

As we will see in our proofs, the power law behavior from Assumption \ref{main assumption2} ensures that the Cauchy-Stieltjes transform of $\mu_\alpha\boxplus\mu_\beta$ is locally invertible at the endpoints of the support of $\mu_\alpha\boxplus\mu_\beta$. This condition turns out to be very useful in understanding the qualitative properties of the subordination functions and of the density $\rho_{\alpha\boxplus\beta}$ at these points. If some of the exponents $(s_j^\pm)$ and $(t_j^\pm)$ in Assumption~\ref{main assumption2}  are bigger than one, the Cauchy-Stieltjes transform of the free additive convolution may not be locally invertible at some endpoints of the support, this can result in an atypical behavior in the sense that the free additive convolution neither vanishes as a square nor a cubic root. However, we expect that the bounds on the number of components in the support also hold in this setting. For most applications motivated from random matrix theory the exponents are strictly below one.

\section{Preliminaries}

In this section, we review the definition and main properties of the Cauchy-Stieltjes transform in detail. We then define the  free convolution semi-group of a single Borel probability measure and the free additive convolution of two Borel probability measures. 
For any finite Borel probability measure $\mu$ on $\mathbb{R}$, its Cauchy-Stieltjes transform is the analytic function defined~by
\begin{align}
m_\mu(z):=\int_{\mathbb{R}} \frac{1}{x-z}\,\mu(\mathrm{d}x),\qquad z\in\mathbb{C}^+.
\end{align}

To study the free additive convolution, it is convenient to consider the negative reciprocal Cauchy-Stieltjes transform of $\mu$, denoted by $F_{\mu}$, and given by
\begin{align}\label{reciprocal}
F_\mu(z):=\frac{-1}{m_{\mu}(z)},\qquad z\in\mathbb{C}^+.
\end{align}

\begin{remark}\label{corresponding measure}
The function $F_{\mu}:\mathbb{C}^+\to \mathbb{C}^+$ is analytic and $\frac{F_{\mu}(\mathrm{i}\eta)}{\mathrm{i}\eta}{\longrightarrow} 1$ as ${\eta\to\infty}$. It turns out that if $F$ is an analytic function satisfying these two prior conditions, there exists a Borel measure $\mu$ on $\mathbb{R}$ such that $F=F_{\mu}$. We refer to \cite{Akh65} for more details.
\end{remark}

We recall that the Cauchy-Stieltjes transform of a Borel measure $\mu$ encloses information on the Lebesgue decomposition of the measure $\mu$.  In order to retrieve such information, we need the notion of non-tangential sequences.

\begin{definition}
Let $(z_n)_{n\geq 1}$ be a converging sequence of the upper half-plane such that $\lim_n z_n\in\mathbb{R}$. Then $(z_n)_{n\geq 1}$ is said to be non-tangential if there exist some constants $C>0$ and $N_0\geq 1$ such that
\begin{align}
\Big|\frac{\mathrm{Re}(z_n)-\lim_n z_n}{\mathrm{Im}\,z_n}\Big|\leq C,\qquad\text{for every }n\geq N_0.
\end{align}
\end{definition}

\begin{lemma}\label{singular continuous}
Let $\mu$ be a Borel probability measure on $\mathbb{R}$ and let $\mu_{\mathrm{ac}}$, $\mu_{\mathrm{pp}}$, and $\mu_{\mathrm{sc}}$ denote the absolutely continuous, pure point and singular continuous parts of $\mu$ respectively. Let $E\in\mathbb{R}$ and let $(z_n)_{n\geq 1}$ be a sequence converging non-tangentially to $E$. Then the following holds.
\begin{enumerate}[label=(\roman*)]
\item If $\rho_{\mu}$ denotes the density of $\mu_{\mathrm{ac}}$, $
\rho_{\mu}(E)=\frac{1}{\pi}\lim_{n} \mathrm{Im}\,m_{\mu}(z_n).
$
\item 
$
\mu(\lbrace E \rbrace)= \lim_n(E-z_n)\,m_{\mu}(z_n).
$
\item The non-tangential limit of $m_{\mu}$ at $x$ is infinite for $\mu_{\mathrm{sc}}-$almost every $x\in\mathbb{R}$. 
\end{enumerate}
\end{lemma}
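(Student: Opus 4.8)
The statement to prove is Lemma~\ref{singular continuous}, the standard Stieltjes-inversion / Fatou-type result recovering the Lebesgue decomposition of $\mu$ from non-tangential boundary limits of $m_\mu$. Let me sketch the proof.

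\medskip

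\textbf{Proof plan.}
The plan is to reduce everything to the Poisson-integral representation of $\re m_\mu$ and $\im m_\mu$ and then invoke classical theorems of Fatou, Nevanlinna and de la Vall\'ee Poussin on boundary values of Herglotz functions. First I would write, for $z=x+\ii\eta\in\C^+$,
\begin{align*}
\im m_\mu(z)=\int_\R \frac{\eta}{(t-x)^2+\eta^2}\,\mu(\dd t),\qquad
(E-z)\,m_\mu(z)=\int_\R \frac{E-z}{t-z}\,\mu(\dd t),
\end{align*}
so that $\frac1\pi\im m_\mu$ is the Poisson integral of $\mu$ against the normalized Poisson kernel $P_\eta(x-t)=\frac1\pi\frac{\eta}{(t-x)^2+\eta^2}$. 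For part~(i), I would use the fact that the Poisson integral of a finite measure converges non-tangentially, at Lebesgue-a.e.\ point $E$, to the Radon--Nikodym derivative of the absolutely continuous part of $\mu$ with respect to Lebesgue measure; equivalently, this is the Lebesgue differentiation theorem combined with the elementary estimate that the non-tangential maximal function of the Poisson integral is controlled by the Hardy--Littlewood maximal function of $\mu$. Since $\mu_{\mathrm{sc}}+\mu_{\mathrm{pp}}$ is singular, its Poisson integral tends non-tangentially to $0$ a.e., and the claim follows.

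\medskip

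For part~(ii), I would split $\mu=\mu(\{E\})\delta_E+\nu$ with $\nu=\mu-\mu(\{E\})\delta_E$ having no atom at $E$. Then $(E-z)\,m_{\delta_E}(z)=(E-z)/(E-z)=1$, while for the remaining part I must show $(E-z_n)\,m_\nu(z_n)\to 0$ along any non-tangential sequence $z_n\to E$. Writing $(E-z)m_\nu(z)=\int_\R \frac{E-z}{t-z}\,\nu(\dd t)$ and noting that, along a non-tangential sequence, $|E-z_n|\le (1+C)\,\im z_n\le (1+C)|t-z_n|$ uniformly and $|E-z_n|/|t-z_n|\to 0$ pointwise in $t\neq E$, the dominated convergence theorem (with dominating function the constant $1+C$, integrable against the finite measure $\nu$) gives the conclusion. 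Hence $\lim_n (E-z_n)m_\mu(z_n)=\mu(\{E\})$.

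\medskip

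For part~(iii), I would appeal to the Fatou--Nevanlinna theory: the Herglotz function $m_\mu$ has finite non-tangential boundary values at Lebesgue-a.e.\ point, and moreover by the theorem of de la Vall\'ee Poussin the set where $\lim_n \eta_n\,\im m_\mu(x+\ii\eta_n)>0$ (radially, hence non-tangentially along comparable sequences) is precisely the set of atoms, while the singular continuous part $\mu_{\mathrm{sc}}$ is carried by a set of Lebesgue measure zero on which, by Nevanlinna's characterization of the singular measure in terms of the boundary behavior, the non-tangential limit of $m_\mu$ is infinite $\mu_{\mathrm{sc}}$-a.e. Concretely: $\mu_{\mathrm{sc}}$ is concentrated on the set $S=\{E:\ D\mu(E)=+\infty\}$ where $D\mu$ is the symmetric derivative of $\mu$ w.r.t.\ Lebesgue measure, and on $S$ one shows $\im m_\mu(x+\ii\eta)\to+\infty$ non-tangentially by a lower bound of $\im m_\mu$ by $c\,\mu([E-\eta,E+\eta])/\eta$ together with $\mu([E-\eta,E+\eta])/\eta\to\infty$.

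\medskip

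\textbf{Main obstacle.} The only genuinely delicate point is part~(i): turning the pointwise Lebesgue-differentiation heuristic into a statement about \emph{non-tangential} (not merely radial) limits, which requires the maximal-function domination $\sup_{z\to E \text{ n.t.}}|{\frac1\pi\im m_\mu(z)}|\lesssim M\mu(E)$ and the fact that the singular part contributes nothing in the limit a.e. All of this is classical (see e.g.\ the references in Remark~\ref{corresponding measure}, or standard texts on the Hilbert transform / Herglotz functions), so I would state it with a reference rather than reproduce the covering-lemma arguments; parts~(ii) and~(iii) are then short dominated-convergence and lower-bound estimates as above.
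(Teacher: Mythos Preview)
Your sketch is correct and follows the standard route (Poisson-kernel representation, dominated convergence for the atomic part, de la Vall\'ee Poussin's symmetric-derivative characterization for the singular continuous part). The paper does not give its own proof of this lemma at all: it simply refers the reader to \cite{Bel06}, Lemma~1.10, so there is nothing to compare beyond noting that your outline is precisely the argument one finds in such references.
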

We refer the reader to e.g. \cite{Bel06}, Lemma $1.10$, for more details. 
We next enumerate some properties of Nevanlinna functions, i.e. analytic functions mapping $\mathbb{C}^+$ to $\mathbb{C}^+\cup\mathbb{R}$.

\begin{lemma}\label{Pick}
Let $f:\mathbb{C}^+\to\mathbb{C}^+\cup\mathbb{R}$ be an analytic function. Then there exists a Borel measure $\mu$ on $\mathbb{R}$, $a\in\mathbb{R}$ and $b\geq 0$ such that for all $\omega\in\mathbb{C}^+,$
\begin{align}
f(\omega)
=
a
+
b\omega
+
\int_{\mathbb{R}}\Big(\,\frac{1}{x-\omega}-\frac{x}{1+x^2}\Big)\,\mu(\mathrm{d}x).
\end{align}
Moreover
\begin{align}
\int_{\mathbb{R}}\frac{1}{1+x^2}\,\mu(\mathrm{d}x)<\infty.
\end{align}
\end{lemma}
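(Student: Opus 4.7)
The plan is to reduce the problem to the classical Herglotz representation for analytic functions with nonnegative real part on the unit disk, via the Cayley transform, and then carefully translate back to the upper half-plane.

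First, I would introduce the Cayley transform $\psi:\mathbb{D}\to\mathbb{C}^+$ given by $\psi(\zeta)=\mathrm{i}\frac{1+\zeta}{1-\zeta}$, which is a biholomorphism between the open unit disk $\mathbb{D}$ and $\mathbb{C}^+$. Setting $g(\zeta):=-\mathrm{i}\,f(\psi(\zeta))$, the hypothesis $f(\mathbb{C}^+)\subseteq\mathbb{C}^+\cup\mathbb{R}$ translates into $\mathrm{Re}\,g(\zeta)\ge 0$ on $\mathbb{D}$, i.e., $g$ is a Carathéodory function on the disk.

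Next, I would invoke the classical Herglotz representation theorem: any analytic $g:\mathbb{D}\to\{\mathrm{Re}\ge 0\}$ admits a representation
\begin{align*}
g(\zeta)=\mathrm{i}c+\int_{[0,2\pi)}\frac{\mathrm{e}^{\mathrm{i}\theta}+\zeta}{\mathrm{e}^{\mathrm{i}\theta}-\zeta}\,\nu(\mathrm{d}\theta),
\end{align*}
where $c=\mathrm{Im}\,g(0)\in\mathbb{R}$ and $\nu$ is a finite positive Borel measure on $\partial\mathbb{D}$. This is proved by applying the Poisson integral formula to the harmonic function $\mathrm{Re}\,g$ on a shrinking family of disks and extracting a weak-$\ast$ limit of the boundary traces; the imaginary part is then recovered by the Cauchy--Riemann equations up to the constant $c$. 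I would regard this as a standard input and cite, e.g., Akhiezer \cite{Akh65}.

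The final step is to push the representation back through the Cayley transform, substituting $\zeta=\psi^{-1}(\omega)=\frac{\omega-\mathrm{i}}{\omega+\mathrm{i}}$ and simplifying the Poisson kernel. The atom of $\nu$ at $\theta=0$ (which corresponds to $\omega=\infty$) produces a singular term that, after multiplying by $\mathrm{i}$, becomes exactly the linear contribution $b\omega$ with $b=\nu(\{0\})\ge 0$; all other points $\mathrm{e}^{\mathrm{i}\theta}\in\partial\mathbb{D}\setminus\{1\}$ get mapped to $\mathbb{R}$ via $x=\psi(\mathrm{e}^{\mathrm{i}\theta})$ and contribute, after pushing forward the restriction of $\nu$ and performing the trigonometric algebra, to a term of the form
\begin{align*}
\int_{\mathbb{R}}\Big(\frac{1}{x-\omega}-\frac{x}{1+x^2}\Big)\mu(\mathrm{d}x),
\end{align*}
where the counterterm $-\frac{x}{1+x^2}$ appears naturally because the change of variables yields a Jacobian proportional to $\frac{1}{1+x^2}$; this absorbs the constants from the Poisson kernel, yields the finiteness condition $\int_{\mathbb{R}}\frac{1}{1+x^2}\mu(\mathrm{d}x)<\infty$, and shifts $c$ by a real constant, leaving $a\in\mathbb{R}$.

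The main obstacle is the careful bookkeeping at the boundary point $\zeta=1$ corresponding to $\omega=\infty$: one has to isolate the possible Dirac mass of $\nu$ there to obtain the linear term $b\omega$, and then verify by a growth argument (e.g., computing $\mathrm{Im}\,f(\mathrm{i}\eta)/\eta$ as $\eta\to\infty$) that $b$ is indeed $\ge 0$ and finite. Uniqueness of the representation, if desired, follows from the Stieltjes inversion formula applied to the non-tangential boundary values of $\mathrm{Im}\,f$, in the spirit of Lemma~\ref{singular continuous}(i).
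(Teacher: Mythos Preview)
Your proposal is correct and outlines the standard classical proof of the Nevanlinna--Pick representation via the Cayley transform and the Herglotz theorem on the disk. Note, however, that the paper does not actually prove this lemma: it is stated as a well-known background result (with an implicit reference to Akhiezer~\cite{Akh65}) and used as a black box in the proof of Proposition~\ref{Nevanlinna representation}. So there is nothing to compare against; your sketch simply supplies the omitted classical argument.
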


As a consequence, we obtain the following representation for the negative reciprocal Cauchy-Stieltjes transform.  We will refer to the next lemma as the Nevanlinna representation. 

\begin{proposition}\label{Nevanlinna representation}
Let $\mu$ be a probability measure satisfying Assumption \ref{main assumption}. There exists a finite Borel measure $\widehat{\mu}$ on $\mathbb{R}$ such that
\begin{align} \label{Nevan}
F_{\mu}(\omega)-\omega
=
-\int_{\mathbb{R}} x\,\mu(\mathrm{d}x) + 
\int_{\mathbb{R}} \frac{1}{x-\omega}\,\widehat{\mu}(\mathrm{d}x),\qquad \omega\in\mathbb{C}^+.
\end{align}
Moreover
\begin{align}\label{finiteness}
&0<\widehat{\mu}(\mathbb{R})=\int_{\mathbb{R}} x^2\mu(\mathrm{d}x)<\infty\\
\beforetext{and }\label{support}
&\mathrm{supp}(\widehat{\mu})=\mathrm{supp}(\mu_{\mathrm{ac}})\cup\lbrace y_1,...,y_j \rbrace,
\end{align}
for some collection of points $\lbrace y_1,...,y_j\rbrace\subset\lbrace E\in\mathbb{R}:\ m_{\mu}(E)=0\rbrace$ and $1\leq j\leq n_{\mathrm{ac}}+n_{\mathrm{pp}}^{\mathrm{out}}-1$.
\end{proposition}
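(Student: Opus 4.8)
The plan is to apply the Nevanlinna representation of Lemma~\ref{Pick} to the function
\[
H(\omega)\deq F_\mu(\omega)-\omega,\qquad \omega\in\mathbb{C}^+ .
\]
First I would check that $H$ is a Nevanlinna function. Since $\Im m_\mu(\omega)=\int_{\mathbb{R}}\frac{\Im\omega}{|x-\omega|^2}\,\mu(\dd x)>0$ on $\mathbb{C}^+$, the map $F_\mu=-1/m_\mu$ is analytic there, and
\[
\Im F_\mu(\omega)=\frac{\Im m_\mu(\omega)}{|m_\mu(\omega)|^2}\ge \Im\omega ,
\]
because by the Cauchy--Schwarz inequality and $\mu(\mathbb{R})=1$ one has $|m_\mu(\omega)|^2\le\int_{\mathbb{R}}|x-\omega|^{-2}\,\mu(\dd x)=\Im m_\mu(\omega)/\Im\omega$. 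Hence $H\colon\mathbb{C}^+\to\mathbb{C}^+\cup\mathbb{R}$ is analytic, and Lemma~\ref{Pick} provides $a\in\mathbb{R}$, $b\ge 0$ and a Borel measure $\tau$ with $\int(1+x^2)^{-1}\tau(\dd x)<\infty$ such that $H(\omega)=a+b\omega+\int_{\mathbb{R}}\bigl(\frac{1}{x-\omega}-\frac{x}{1+x^2}\bigr)\tau(\dd x)$.

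Next I would pin down $b$ and the mass of $\tau$ using the behaviour at $\ii\infty$. Since $\mathrm{supp}(\mu)$ is compact, expanding $m_\mu(z)=-\sum_{k\ge0}\bigl(\int x^k\mu(\dd x)\bigr)z^{-k-1}$ for large $|z|$ and inverting gives
\[
H(z)=-\int_{\mathbb{R}}x\,\mu(\dd x)+\frac{\bigl(\int x\,\mu(\dd x)\bigr)^2-\int x^2\,\mu(\dd x)}{z}+O(z^{-2}),\qquad z\to\infty .
\]
In particular $\Im H(\ii\eta)=O(\eta^{-1})$, so $b=0$ (as $\Im H(\ii\eta)\ge b\eta$), and $\eta\,\Im H(\ii\eta)\to \int x^2\mu(\dd x)-\bigl(\int x\mu(\dd x)\bigr)^2$. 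On the other hand, with $b=0$, $\eta\,\Im H(\ii\eta)=\int_{\mathbb{R}}\frac{\eta^2}{x^2+\eta^2}\,\tau(\dd x)$, which increases to $\tau(\mathbb{R})$ by monotone convergence; hence $\tau$ is finite with $\tau(\mathbb{R})=\int x^2\,\mu(\dd x)$ (using that $\mu$ is centered). With $\tau$ finite, the constant $a-\int\frac{x}{1+x^2}\tau(\dd x)$ combines into a single real number $c$, so $H(\omega)=c+\int_{\mathbb{R}}\frac{\tau(\dd x)}{x-\omega}$, and letting $\omega=\ii\eta\to\infty$ (dominated convergence) identifies $c=-\int x\,\mu(\dd x)$. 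Setting $\widehat\mu\deq\tau$ yields \eqref{Nevan} and \eqref{finiteness}.

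It remains to identify $\mathrm{supp}(\widehat\mu)$. Recall that the support of the representing measure of a Nevanlinna function is precisely the set of points of $\mathbb{R}$ across which the function admits no analytic continuation; since $\omega\mapsto\omega$ is entire, this set is the same for $H$ and for $F_\mu=-1/m_\mu$. I would then analyse three regions. (i) On $\mathbb{R}\setminus\mathrm{supp}(\mu)$ the transform $m_\mu$ is real analytic and, because $m_\mu'(E)=\int_{\mathbb{R}}(x-E)^{-2}\,\mu(\dd x)>0$, strictly increasing on each component; thus $F_\mu$ continues analytically except at the zeros of $m_\mu$, each of which is simple and hence a simple pole of $F_\mu$, i.e.\ an atom of $\widehat\mu$. (ii) In the interior of an interval $[E_j^-,E_j^+]$ of $\mathrm{supp}(\mu_{\mathrm{ac}})$, Assumption~\ref{main assumption} forces $\rho>0$ a.e., so by Lemma~\ref{singular continuous}(i) $\Im m_\mu(x+\ii0)=\pi\rho(x)>0$ and $m_\mu(x+\ii0)$ is finite and nonzero for a.e.\ $x$; hence $\Im H(x+\ii0)=\pi\rho(x)/|m_\mu(x+\ii0)|^2>0$ a.e., so by Stieltjes inversion $\widehat\mu$ has a strictly positive absolutely continuous density a.e.\ on the interval, and the whole closed interval lies in $\mathrm{supp}(\widehat\mu)$; thus $\mathrm{supp}(\mu_{\mathrm{ac}})\subseteq\mathrm{supp}(\widehat\mu)$. (iii) Near a pure point $x_i$ lying outside $\mathrm{supp}(\mu_{\mathrm{ac}})$ (hence isolated in $\mathrm{supp}(\mu)$) one has $m_\mu(z)=\mu(\{x_i\})/(x_i-z)+g(z)$ with $g$ analytic and real near $x_i$, so $F_\mu(z)=(z-x_i)/\bigl(\mu(\{x_i\})+(x_i-z)g(z)\bigr)$ continues analytically across $x_i$ with $F_\mu(x_i)=0$, and $x_i\notin\mathrm{supp}(\widehat\mu)$. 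Combining the three cases, $\mathrm{supp}(\widehat\mu)=\mathrm{supp}(\mu_{\mathrm{ac}})\cup\{E\in\mathbb{R}\setminus\mathrm{supp}(\mu):m_\mu(E)=0\}$.

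Finally I would count the zeros of $m_\mu$ off its support. By the strict monotonicity in (i) there is at most one such zero in each bounded component of $\mathbb{R}\setminus\mathrm{supp}(\mu)$, and none in the two unbounded components (where $m_\mu$ keeps a constant sign). Now $\mathrm{supp}(\mu)=\mathrm{supp}(\mu_{\mathrm{ac}})\cup\{x_1,\dots,x_{n_{\mathrm{pp}}}\}$ has exactly $n_{\mathrm{ac}}+n_{\mathrm{pp}}^{\mathrm{out}}$ connected components (the $n_{\mathrm{ac}}$ intervals together with the $n_{\mathrm{pp}}^{\mathrm{out}}$ atoms outside $\mathrm{supp}(\mu_{\mathrm{ac}})$, the remaining atoms sitting in the interiors and creating no new component), hence $\mathbb{R}\setminus\mathrm{supp}(\mu)$ has $n_{\mathrm{ac}}+n_{\mathrm{pp}}^{\mathrm{out}}-1$ bounded components, giving $j\le n_{\mathrm{ac}}+n_{\mathrm{pp}}^{\mathrm{out}}-1$ in \eqref{support}. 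The delicate step is the third paragraph: one has to translate carefully the local behaviour of $F_\mu$ near each point of $\mathbb{R}$ (a pole, an analytic continuation, or a positive boundary value of the imaginary part) into the local behaviour of $\widehat\mu$, and in particular verify that the pure points of $\mu$ lying outside $\mathrm{supp}(\mu_{\mathrm{ac}})$ do not contribute to $\widehat\mu$.
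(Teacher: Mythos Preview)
Your proposal is correct and follows essentially the same route as the paper: apply Lemma~\ref{Pick} to $F_\mu(\omega)-\omega$, identify the constants and the total mass via the large-$\eta$ expansion of $F_\mu(\ii\eta)-\ii\eta$, and then determine $\mathrm{supp}(\widehat\mu)$ by analysing where $F_\mu$ does or does not continue analytically through $\mathbb{R}$. The only notable stylistic difference is in the support argument: the paper first rules out a singular continuous part of $\widehat\mu$, then characterises its atoms as zeros of $m_\mu$, and finally proves the two inclusions $\mathrm{supp}(\widehat\mu_{\mathrm{ac}})\subseteq\mathrm{supp}(\mu_{\mathrm{ac}})$ and $\mathrm{supp}(\mu_{\mathrm{ac}})\subseteq\mathrm{supp}(\widehat\mu_{\mathrm{ac}})$ separately (the latter by contradiction via Schwarz reflection), whereas you invoke the general principle that the representing measure is supported exactly where the Nevanlinna function fails to continue analytically and then check this pointwise in three regimes. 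Your route is a bit cleaner but relies on that general principle being taken as known; the paper's is more hands-on.

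One small remark: you correctly compute the $1/z$-coefficient of $H$ as $\bigl(\int x\,\mu(\dd x)\bigr)^2-\int x^2\,\mu(\dd x)$ and then write ``using that $\mu$ is centered'' to obtain $\widehat\mu(\mathbb{R})=\int x^2\,\mu(\dd x)$. Assumption~\ref{main assumption} does not actually impose centering; the paper's own computation glosses over the same point and states the second moment rather than the variance. Your care here is appropriate --- just be aware that strictly speaking one gets $\widehat\mu(\mathbb{R})=\mathrm{Var}(\mu)$ in general.
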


Equations \eqref{Nevan} and \eqref{finiteness} are well-known result, see e.g. Proposition 2.2 in \cite{Maa92}.
For convenience we include their proofs below. Moreover equation \eqref{support} was obtained in \cite{Bao20}, in the case where $\mu$ is absolutely continuous.

\begin{remark}\label{pure point de mu hat}
Before proving Proposition \ref{Nevanlinna representation}, we remark that every zero $E$ of $m_{\mu}$, which lies at positive distance from $\mathrm{supp(\mu_{\mathrm{ac}})}$, gives rise to a pure point of $\widehat{\mu}$. Indeed, we first notice that the singular part of $\widehat{\mu}$ is purely atomic, since by Assumption \ref{main assumption} and by equation \eqref{Nevan}, 
\begin{align}
\Big\lbrace E\in\mathbb{R}:\ \lim_{\eta\searrow 0}\int_{\mathbb{R}}\frac{1}{x-E-\mathrm{i}\eta}\,{\mu}(\mathrm{d}x)=0 \Big\rbrace
\end{align}
is finite. Furthermore we see that $E$ necessarily lies in the support of $\widehat{\mu}$ since by \eqref{Nevan},
\begin{align}
\lim_{\eta\searrow 0}\Big|\int_{\mathbb{R}}\frac{1}{x-E-\mathrm{i}\eta}\,\widehat{\mu}(\mathrm{d}x)\Big|=\infty
\end{align}
and since
\begin{align}
\lim_{\eta\searrow 0}\int_{\mathbb{R}}\frac{1}{x-E-\mathrm{i}\eta}\,\widehat{\mu}(\mathrm{d}x)\leq \frac{\widehat{\mu}(\mathbb{R})}{\mathrm{dist}\big(E,\mathrm{supp}(\widehat{\mu})\big)}.
\end{align}
Therefore $E$ is either a pure point of $\widehat{\mu}$ or lies in the support of $\widehat{\mu}_{\mathrm{ac}}$. However the latter is contradictory with the fact that $E$ is at positive distance from $\mathrm{supp}(\mu_{\mathrm{ac}})$ and \eqref{support}.
\end{remark}

\begin{proof}[Proof]
We shall use Lemma \ref{Pick} to show that such a representation exists. Since $\mu$ is supported on more than one point, we have that for every $\omega\in\mathbb{C}^+$,
\begin{align}
\mathrm{Im}\,F_{\mu}(\omega)-\mathrm{Im}\,\omega
=
\frac{\mathrm{Im}\,m_{\mu}(\omega)}{|m_{\mu}(\omega)|^2}
-\mathrm{Im}\,\omega
=
\mathrm{Im}\,\omega\bigg(\frac{\int_{\mathbb{R}}\frac{1}{|x-\omega|^2}\,\mu(\mathrm{d}x)}{|\int_{\mathbb{R}}\frac{1}{x-\omega}\,\mu(\mathrm{d}x)|^2}-1\bigg)>0,
\end{align}
where the inequality follows by Cauchy-Schwarz.
Therefore, by Lemma \ref{Pick}, there exists a Borel measure $\widehat{\mu}$ and some constants $a\in\mathbb{R}$, $b\geq 0$ such that
\begin{align}
F_{\mu}(\omega)-\omega
=
a+b\,\omega+\int_{\mathbb{R}}\Big(\,\frac{1}{x-\omega}-\frac{x}{1+x^2}\Big)\,\widehat{\mu}(\mathrm{d}x).
\end{align}
To deduce the value of $a$ and $b$, we take a look at the asymptotic expansion of $F_{\mu}(\mathrm{i}\eta)-\mathrm{i}\eta$. As $\eta\to\infty$,
\begin{align}
m_{\mu}(\mathrm{i}\eta)=
\int_{\mathbb{R}}\frac{1}{x-\mathrm{i}\eta}\,\mu(\mathrm{d}x)
=
\sum_{k=0}^N -\frac{1}{\mathrm{i}\eta}\int_{\mathbb{R}} \big(\frac{x}{\mathrm{i}\eta}\big)^k\,\mu(\mathrm{d}x)+O(\eta^{-(N+2)}).
\end{align}
Taking $N=2$ and plugging the latter in the definition of the negative reciprocal Cauchy-Stieltjes transform yield
\begin{align}
F_{\mu}(\mathrm{i}\eta)-\mathrm{i}\eta
=
\frac{\frac{1}{\mathrm{i}\eta}\int_{\mathbb{R}}x\,\mu(\mathrm{d}x)+\frac{1}{(\mathrm{i}\eta)^2}\int_{\mathbb{R}}x^2\,\mu(\mathrm{d}x)+O(\eta^{-3})}{-\frac{1}{\mathrm{i}\eta}(1+\frac{1}{\mathrm{i}\eta}\int_{\mathbb{R}}x\,\mu(\mathrm{d}x)+\frac{1}{(\mathrm{i}\eta)^2}\int_{\mathbb{R}}x^2\,\mu(\mathrm{d}x)+O(\eta^{-3}))}.
\end{align}
As a consequence, when $\eta\to\infty$,
\begin{align}\label{asymptotics pick}
F_{\mu}(\mathrm{i}\eta)-\mathrm{i}\eta=
-\int_{\mathbb{R}}x\mu(\mathrm{d}x)-\frac{1}{\mathrm{i}\eta}\int_{\mathbb{R}}x^2\,\mu(\mathrm{d}x)+O(\eta^{-2}).
\end{align}
We conclude, using \eqref{asymptotics pick} and Lemma \ref{Pick}, that
\begin{align}
a=\int_{\mathbb{R}}\frac{x}{1+x^2}\mu(\mathrm{d}x)-\int_{\mathbb{R}}x\mu(\mathrm{d}x)\text{ and }b=0
\end{align}
and that
\begin{align}
\widehat{\mu}(\mathbb{R})
=
\lim_{\eta\to\infty}(-\mathrm{i}\eta)\int_{\mathbb{R}}\frac{1}{x-\mathrm{i}\eta}\,\widehat{\mu}(\mathrm{d}x)
=
\int_{\mathbb{R}}x^2\,\mu(\mathrm{d}x).
\end{align}
This proves \eqref{Nevan} and \eqref{finiteness} since $\mu$ is compactly supported.

Next we recall that $\widehat{\mu}$ does not admit a singular continuous part by Lemma \ref{singular continuous}, since $m_{\mu}$ has finitely many zeroes. Hence these zeroes either belong to the support of the absolutely continuous part of $\widehat{\mu}$ or to its pure point part. Let us consider the pure point part of $\widehat{\mu}$ first.
By the Nevanlinna representation in  \eqref{Nevan} and by Lemma \ref{singular continuous}, a pure point  $E\in\mathbb{R}$ of $\widehat{\mu}$ is characterized by
\begin{align}
0<\widehat{\mu}\big(\lbrace E \rbrace\big)
=
\lim_{\eta\searrow 0}(-\mathrm{i}\eta)\,m_{\widehat{\mu}}(E+\mathrm{i}\eta)
=
\lim_{\eta\searrow 0}\frac{\mathrm{i\eta}}{m_{\mu}(E+\mathrm{i}\eta)}.
\end{align}
In particular, $E$ must be a zero of $m_{\mu}$ in order to be a pure point of $\widehat{\mu}$. By Lemma \ref{singular continuous}, $m_{\mu}$ has no zero in the interior of $\mathrm{supp}(\mu)$ and for every $E$ outside of $\mathrm{supp}(\mu)$,
\begin{align}
\lim_{\eta\searrow 0}m_{\mu}(E+\mathrm{i}\eta)
=
\int_{\mathbb{R}}\frac{1}{x-E}\,\mu(\mathrm{d}x).
\end{align}

Note that for every real number $u$ which lies on the left of the smallest endpoint of ${\mathrm{supp}(\mu)}$,
$
\lim_{\eta\searrow 0}m_\mu(u+\mathrm{i}\eta)>0
$
and for every real number $u$ which lies on the right of the largest endpoint of ${\mathrm{supp}(\mu)}$, 
$
\lim_{\eta\searrow 0}m_{\mu}(u+\mathrm{i}\eta)<0.
$
Consequently $m_{\mu}$ has no zeroes on the left of the smallest endpoint of ${\mathrm{supp}(\mu)}$, on the right of the largest endpoint of $\mathrm{supp}(\mu)$, or in  $\mathrm{supp}(\mu)$.
Additionally, $m_{\mu}$ is strictly increasing on each connected component outside of ${\mathrm{supp}(\mu)}$ since
\begin{align}
m_{\mu}'(u)=\int_{\mathbb{R}}\frac{1}{(x-u)^2}\,\mu(\mathrm{d}x)>0,\qquad u\in\mathbb{R}\backslash{\mathrm{supp}(\mu)}.
\end{align}
By Assumption \ref{main assumption}, $\mu$ is supported on finitely many intervals and its pure point part is finitely supported. Therefore $m_{\mu}$ has at most $\big(n_{\mathrm{ac}}+n_{\mathrm{pp}}^{\mathrm{out}}-1\big)$ zeroes.

We now consider the absolutely continuous part of the measure $\widehat{\mu}$. Notice that if a real number $E\in\mathrm{supp}(\widehat{\mu})$ is not a zero of $m_{\mu}$, then there exists a constant $C>0$ such that
\begin{align}
C\lim_{\eta\searrow 0}\mathrm{Im}\,m_{\mu}(E+\mathrm{i}\eta)\geq
\lim_{\eta\searrow 0}\frac{\mathrm{Im}\,m_{\mu}(E+\mathrm{i}\eta)}{|m_{\mu}(E+\mathrm{i}\eta)|^2}
=
\lim_{\eta\searrow 0}\mathrm{Im}\int_{\mathbb{R}}\frac{1}{x-E-\mathrm{i}\eta}\,\widehat{\mu}(\mathrm{d}x)>0,
\end{align}
and since 
\begin{align}
\lim_{\eta\searrow 0}\frac{\mathrm{Im}\,m_{\mu}(E+\mathrm{i}\eta)}{|m_{\mu}(E+\mathrm{i}\eta)|^2}
\leq
\lim_{\eta\searrow 0}
\frac{1}{\mathrm{Im}\,m_{\mu}(E+\mathrm{i}\eta)},
\end{align}
it follows that
$
\lim_{\eta\searrow 0}
{\mathrm{Im}\,m_{\mu}(E+\mathrm{i}\eta)}
$
is strictly positive and finite. Therefore from the Cauchy-Stieltjes inversion formula in Lemma \ref{singular continuous}, $E\in\mathrm{supp}(\mu_{\text{ac}})$. Hence 
\begin{align}
\mathrm{supp}(\widehat{\mu}_{\text{ac}})\subseteq\mathrm{supp}(\mu_{\text{ac}}). 
\end{align}
Secondly, in order to prove the reverse inclusion, we suppose that there exists some open interval $I\subset\mathrm{supp}(\mu_{\mathrm{ac}})$ such that $I\not\subset\mathrm{supp}(\widehat{\mu}_{\mathrm{ac}})$. Since $\mu$ has only finitely many pure points, there exists an open sub-interval of $I$, that we denote by $I'$, such that $I'\subset\mathrm{supp}(\mu_{\mathrm{ac}})\backslash \mathrm{supp}(\widehat{\mu}_{\mathrm{ac}})$ and such that $I'$ does not contain a pure point of $\mu$ or an endpoint of its support. 
Then by Lemma \ref{singular continuous}, 
\begin{align}\label{egalite 1}
\lim_{\eta\searrow 0}\mathrm{Im}\int_{\mathbb{R}}\frac{1}{x-E-\mathrm{i}\eta}\,\widehat{\mu}(\mathrm{d}x)=0,\qquad\text{$E\in I'$}.
\end{align}
By the Schwarz reflection principle, $m_{\widehat{\mu}}$ can be analytically continued through a neighborhood of $I'$. It then follows from \eqref{Nevan} that $m_{\mu}$ is meromorphic on the same neighborhood of $I'$ and by construction, 
\begin{align}\label{egalite 2}
\lim_{\eta\searrow 0}\frac{\mathrm{Im}\,m_{\mu}(E+\mathrm{i}\eta)}{|m_{\mu}(E+\mathrm{i}\eta)|^2}\in (0,\infty),\qquad E\in I'.
\end{align}
Therefore \eqref{egalite 1} and \eqref{egalite 2} yield a contradiction and we conclude that  $\mathrm{supp}({\mu_{\text{ac}}})\subseteq\mathrm{supp}(\widehat{\mu}_{\text{ac}})$. This yields equation \eqref{support} and concludes the proof of this proposition.
\end{proof}

\subsection{Free additive convolution semi-group}\label{subsection: free additive convolution semi-group}
In this subsection, we recall the definition of the free additive convolution semi-group and discuss its main properties.

\begin{proposition}(Equations (2.4) and (2.5) in \cite{Huang15})\label{definition mu t}
Let $t\geq 1$ and $\mu$ be a Borel measure on $\mathbb{R}$. There exists an analytic function $\omega_t:\mathbb{C}^+\to \mathbb{C}^+$ such that for all $z\in\mathbb{C}^+$,
\begin{align}\label{freeconv1}
&\mathrm{Im}\,\omega_t(z)\geq \mathrm{Im}\,z\text{ and }\frac{\omega_t(\mathrm{i}\eta)}{\mathrm{i}\eta}\to 1\text{ as }\eta\to\infty,\\\label{freeconv}
&t\omega_t(z)-z=(t-1)F_{\mu}\big(\omega_t(z)\big).
\end{align} 
The function $\omega_t$ is referred to as the subordination function.
By Remark \ref{corresponding measure}, for every $t\geq 1$, $F(z):=F_{\mu}\big(\omega_t(z)\big)$ corresponds to the negative reciprocal Cauchy-Stieltjes transform of some Borel probability measure that we denote by $\mu^{\boxplus t}$. These probability measures form the free additive convolution semi-group $\big\lbrace \mu^{\boxplus t}:\ t\geq 1 \big\rbrace.$
\end{proposition}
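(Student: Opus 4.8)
The plan is to reduce the proposition to a single substantive point---the construction of an analytic map $\omega_t\colon\mathbb{C}^+\to\mathbb{C}^+$ satisfying \eqref{freeconv1} and \eqref{freeconv}---and then to produce $\omega_t$ by a fixed-point argument. The last sentence of the proposition follows at once once $\omega_t$ is available: $F_\mu$ maps $\mathbb{C}^+$ into $\mathbb{C}^+$ (since $\mathrm{Im}\,m_\mu>0$ there), so $F\deq F_\mu\circ\omega_t$ is an analytic self-map of $\mathbb{C}^+$, and \eqref{freeconv1} gives $\omega_t(\mathrm{i}\eta)=\mathrm{i}\eta\,(1+o(1))\to\infty$, whence $F_\mu(\omega_t(\mathrm{i}\eta))/\omega_t(\mathrm{i}\eta)\to1$ and therefore $F(\mathrm{i}\eta)/\mathrm{i}\eta\to1$; Remark \ref{corresponding measure} then supplies the probability measure $\mu^{\boxplus t}$. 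To build $\omega_t$, first rewrite \eqref{freeconv}: with $\Phi_\mu\deq F_\mu-\mathrm{id}$, it is equivalent to $\omega_t(z)=z+(t-1)\Phi_\mu(\omega_t(z))$, and for $t=1$ this forces $\omega_1=\mathrm{id}$, so one may assume $t>1$. The structural inputs I would establish are that $\Phi_\mu$ is a Nevanlinna function with vanishing linear term: the Cauchy--Schwarz estimate already used in the proof of Proposition \ref{Nevanlinna representation} shows $\mathrm{Im}\,\Phi_\mu\ge0$ on $\mathbb{C}^+$, and substituting the asymptotics $F_\mu(\mathrm{i}\eta)-\mathrm{i}\eta=o(\eta)$ (which is just a restatement of Remark \ref{corresponding measure}) into the representation of Lemma \ref{Pick} for $\Phi_\mu$ forces the coefficient $b$ to vanish, so that $\Phi_\mu(w)/w\to0$ as $w\to\infty$ non-tangentially. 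Hence, for each fixed $z\in\mathbb{C}^+$, the map $h_z(w)\deq z+(t-1)\Phi_\mu(w)$ is an analytic self-map of $\mathbb{C}^+$ with $\mathrm{Im}\,h_z(w)\ge\mathrm{Im}\,z$, so in particular it is not a conformal automorphism of $\mathbb{C}^+$; by the Denjoy--Wolff theorem its iterates converge, locally uniformly, to a point $\tau(z)\in\overline{\mathbb{C}^+}\cup\{\infty\}$.

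The step I expect to be the main obstacle is to show $\tau(z)\in\mathbb{C}^+$, i.e.\ that the iteration does not escape to $\mathbb{R}\cup\{\infty\}$; this is precisely where the normalization $\Phi_\mu(\mathrm{i}\eta)=o(\eta)$ is needed. The mechanism is that if $|h_z^{\circ n}(w)|\to\infty$, then, because $\Phi_\mu(\zeta)/\zeta\to0$ in Stolz angles while $\mathrm{Im}\,h_z(w)\ge\mathrm{Im}\,z>0$ keeps the iterates at positive distance from $\mathbb{R}$, one concludes that $\mathrm{Im}\,h_z^{\circ n}(w)$ stays bounded and $h_z^{\circ(n+1)}(w)/h_z^{\circ n}(w)\to1$, contradicting the escape; the case $\tau(z)\in\mathbb{R}$ is excluded similarly. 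Granting this, $\omega_t(z)\deq\tau(z)$ is the unique fixed point of $h_z$ in $\mathbb{C}^+$, and since it is attracting one has $1-(t-1)\Phi_\mu'(\omega_t(z))\neq0$, so the implicit function theorem makes $z\mapsto\omega_t(z)$ analytic (in the degenerate case where $\mu$ has finitely many atoms, $\omega_t$ instead solves a polynomial equation with coefficients analytic in $z$, and one selects the branch lying in $\mathbb{C}^+$). The two properties in \eqref{freeconv1} now read off the fixed-point equation directly: $\mathrm{Im}\,\omega_t(z)=\mathrm{Im}\,z+(t-1)\mathrm{Im}\,\Phi_\mu(\omega_t(z))\ge\mathrm{Im}\,z$, while evaluating at $z=\mathrm{i}\eta$ and using $\Phi_\mu(w)/w\to0$ gives $\omega_t(\mathrm{i}\eta)/\mathrm{i}\eta\to1$.

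I would also record the alternative route, which is the one taken in \cite{Huang15} and rests on \cite{Ber95,Bel Ber,Chis}: Bercovici and Voiculescu proved that for $t\ge1$ the prescription $\varphi_{\mu^{\boxplus t}}=t\,\varphi_\mu$---with $\varphi_\mu=F_\mu^{-1}-\mathrm{id}$ the Voiculescu transform, which is a priori defined only on a truncated Stolz angle---determines a genuine probability measure $\mu^{\boxplus t}$; the analytic subordination results of \cite{Bel Ber,Chis}, applied to $\mu^{\boxplus t}=\mu\boxplus\mu^{\boxplus(t-1)}$, then furnish an analytic $\omega\colon\mathbb{C}^+\to\mathbb{C}^+$ with $F_{\mu^{\boxplus t}}=F_\mu\circ\omega$ and the normalization in \eqref{freeconv1}, and \eqref{freeconv} follows from a one-line manipulation of the identity $F_{\mu^{\boxplus t}}^{-1}(z)=tF_\mu^{-1}(z)-(t-1)z$. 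The existence of the subordination function for the semi-group was also obtained directly in \cite{Bel05,Bel04}, and the proposition, in exactly this form, is Equations (2.4) and (2.5) of \cite{Huang15}.
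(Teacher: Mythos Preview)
The paper does not prove this proposition; it is stated as a known result with a citation to \cite{Huang15} (and the broader literature \cite{Ber95,Bel05,Bel04,Bel Ber,Chis}), and no argument is given in the text. Your proposal therefore supplies strictly more than the paper does, and both routes you outline are correct and standard.

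A few comments on the details of your fixed-point argument. The ``main obstacle'' you flag---ruling out $\tau(z)\in\mathbb{R}\cup\{\infty\}$---can be handled more cleanly than you indicate. The real boundary is excluded in one line: since $\mathrm{Im}\,h_z(w)\ge\mathrm{Im}\,z>0$ for every $w\in\mathbb{C}^+$, the entire orbit lives in the closed half-plane $\{\mathrm{Im}\ge\mathrm{Im}\,z\}$, so it cannot accumulate on $\mathbb{R}$. For $\infty$, the criterion for $\infty$ to be the Denjoy--Wolff point of a self-map $h$ of $\mathbb{C}^+$ is that the angular limit $\lim_{\eta\to\infty}h(\mathrm{i}\eta)/(\mathrm{i}\eta)$ exists and is $\ge1$; here $h_z(\mathrm{i}\eta)/(\mathrm{i}\eta)=z/(\mathrm{i}\eta)+(t-1)\Phi_\mu(\mathrm{i}\eta)/(\mathrm{i}\eta)\to0$, so $\infty$ is excluded as well. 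With that, your implicit-function step is fine: the interior Denjoy--Wolff fixed point of a non-automorphism satisfies $|h_z'(\omega_t(z))|<1$ by the Schwarz--Pick lemma, hence $1-(t-1)\Phi_\mu'(\omega_t(z))\neq0$. Your alternative derivation via the Voiculescu transform identity $F_{\mu^{\boxplus t}}^{-1}(z)=tF_\mu^{-1}(z)-(t-1)z$ is exactly the argument underlying \cite{Huang15}, and you correctly identify it as such.
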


\begin{definition}
Given a finite Borel measure $\mu$ on $\mathbb{R}$, we define the function $I_{\mu}$ via
\begin{align}\label{definition Imu}
I_{\mu}(\omega)
:=
\int_{\mathbb{R}} \frac{1}{|x-\omega|^2}\,\mu(\mathrm{d}x),\qquad \omega\in\mathbb{C}^+\cup\mathbb{R}\backslash{\mathrm{supp}(\mu)}. 
\end{align}
\end{definition}

\begin{remark}\label{comparaisonomegaStieltjes}
Using Proposition \ref{definition mu t}, we get the identity
\begin{align}\label{connexionomegat}
\mathrm{Im}\, m_{\mu^{\boxplus t}}(z)
=
\mathrm{Im}\, \omega_t(z)\, I_{\mu}\big(\omega_t(z)\big), \qquad z\in\mathbb{C}^+.
\end{align}
In other words,  using Lemma \ref{singular continuous}, it will become possible to deduce some qualitative properties of the density of $\mu^{\boxplus t}$ from the study of the subordination function $\omega_t$ and of the function~$I_{\mu}$. 
Moreover, by equations \eqref{freeconv1} and \eqref{freeconv},
\begin{align}\label{pluspetit}
I_{\widehat{\mu}}\big(\omega_t(z)\big)
=
\frac{1}{t-1}\Big(1-\frac{\mathrm{Im}\,z}{\mathrm{Im}\,\omega_t(z)}\Big)
\leq \frac{1}{t-1}, \qquad z\in\mathbb{C}^+.
\end{align}
As we will see, this inequality plays an important role in our study of the endpoints of the support of $\mu^{\boxplus t}$.
\end{remark}

\begin{proposition}\label{continuous}
Let $\mu$ be a compactly supported Borel probability measure on $\mathbb{R}$, which is not a single point mass. Then for any $t>1$, the subordination function $\omega_t$ extends continuously to $\mathbb{C}^+\cup\mathbb{R}$ as a function taking values in $\mathbb{C}^+\cup\mathbb{R}\cup\lbrace \infty \rbrace$.
\end{proposition}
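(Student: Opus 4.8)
The plan is to rewrite the defining equation~\eqref{freeconv} as a fixed-point relation with a remainder that is small at infinity, use this to show that $\omega_t$ is locally bounded up to the real axis, and then exclude oscillation of $\omega_t$ at each real boundary point by a soft argument exploiting that the relevant implicit function is holomorphic. For a general compactly supported $\mu$ that is not a point mass, the Nevanlinna representation (cf.~\eqref{Nevan}, which is valid in this generality; see also~\cite{Maa92}) gives $F_\mu(\omega)=\omega-m_1+m_{\widehat\mu}(\omega)$, where $m_1:=\int x\,\mathrm{d}\mu$, $m_{\widehat\mu}(\omega):=\int(x-\omega)^{-1}\,\mathrm{d}\widehat\mu$, and $\widehat\mu$ is a finite positive measure with $\widehat\mu(\mathbb{R})=\int x^2\,\mathrm{d}\mu-m_1^2>0$; moreover $\widehat\mu$ is compactly supported, since $F_\mu$ is meromorphic on $\widehat{\mathbb{C}}\setminus[\min\mathrm{supp}\,\mu,\max\mathrm{supp}\,\mu]$ with poles only at the zeros of $m_\mu$, all of which lie in that interval. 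Substituting into~\eqref{freeconv} and simplifying yields
\begin{align}\label{plan-fp}
\omega_t(z)=z-(t-1)m_1+(t-1)\,m_{\widehat\mu}\big(\omega_t(z)\big),\qquad z\in\mathbb{C}^+ .
\end{align}
Because $\widehat\mu$ is compactly supported, $|m_{\widehat\mu}(\omega)|\le\widehat\mu(\mathbb{R})/\mathrm{dist}(\omega,\mathrm{supp}\,\widehat\mu)\to 0$ as $|\omega|\to\infty$, and inserting this into~\eqref{plan-fp} shows that $\sup\{|\omega_t(z)|:z\in\mathbb{C}^+,\,|z|\le R\}<\infty$ for every $R>0$ (this is the step where $0<t-1<\infty$ and the compact support of $\mu$ are used). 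Hence, for every $E\in\mathbb{R}$, the cluster set $L(E):=\bigcap_{\delta>0}\overline{\omega_t(\{z\in\mathbb{C}^+:|z-E|<\delta\})}$ is a nonempty, bounded, connected, compact subset of $\overline{\mathbb{C}^+}$, and the proposition reduces to proving that $L(E)$ is a single point.

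Since a discrete continuum is a single point, it suffices to show that $L(E)$ is discrete. Fix $\omega_*\in L(E)$ and a sequence $z_n\to E$ in $\mathbb{C}^+$ with $\omega_t(z_n)\to\omega_*$. Passing to the limit in~\eqref{plan-fp} along this sequence shows that $m_{\widehat\mu}(\omega_t(z_n))$ converges, say to $r_*$, with $\omega_*=E-(t-1)m_1+(t-1)r_*$. If $\omega_*\notin\mathrm{supp}\,\widehat\mu$, then $m_{\widehat\mu}$ is holomorphic near $\omega_*$ (using Schwarz reflection across $\mathbb{R}\setminus\mathrm{supp}\,\widehat\mu$ if $\omega_*\in\mathbb{R}$), so $G(\omega):=\omega-(t-1)m_{\widehat\mu}(\omega)$ is holomorphic, non-constant near $\omega_*$, with $G(\omega_*)=E-(t-1)m_1$ and, by~\eqref{plan-fp}, $G(\omega_t(z))=z-(t-1)m_1$ on $\mathbb{C}^+$. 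Since the preimages under a non-constant holomorphic map of points near $G(\omega_*)$ that lie near $\omega_*$ must tend to $\omega_*$, every element of $L(E)$ in a sufficiently small neighborhood of $\omega_*$ equals $\omega_*$, i.e.\ $\omega_*$ is isolated in $L(E)$.

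The remaining case $\omega_*\in\mathrm{supp}\,\widehat\mu\subseteq\mathbb{R}$ is, I expect, the main obstacle. There $\omega_*$ and $r_*=(\omega_*-E)/(t-1)+m_1$ are real, so $\mathrm{Im}\,m_{\widehat\mu}(\omega_t(z_n))\to 0$; combined with $\mathrm{Im}\,m_{\widehat\mu}(\omega)=\mathrm{Im}(\omega)\,I_{\widehat\mu}(\omega)$, the fact that $\mathrm{Im}\,m_{\widehat\mu}(\omega)\to\pi\rho_{\widehat\mu}(\omega_*)>0$ whenever $\omega\to\omega_*$ through $\mathbb{C}^+$ and $\widehat\mu$ has a positive continuous density near $\omega_*$, and the a priori bound $I_{\widehat\mu}(\omega_t(z))<1/(t-1)$ from~\eqref{pluspetit} (which keeps $\omega_t(z)$ away from the bulk of $\mathrm{supp}\,\widehat\mu$ and excludes atoms), one is reduced to $\omega_*$ being an edge of a component of $\mathrm{supp}\,\widehat\mu_{\mathrm{ac}}$, at which a Puiseux expansion of $G$ again makes the argument of the previous paragraph go through. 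Alternatively, the same boundary regularity follows from Carathéodory's extension theorem once $\omega_t$ is identified as a conformal bijection of $\mathbb{C}^+$ onto a simply connected domain $\Omega_t\subseteq\mathbb{C}^+$ with locally connected boundary — this identification uses that $\omega\mapsto t\omega-(t-1)F_\mu(\omega)$ is a holomorphic left inverse of $\omega_t$ by~\eqref{freeconv} which is proper onto $\mathbb{C}^+$, so a biholomorphism by simple connectivity, and that $\partial\Omega_t$ is a locally finite union of real-analytic arcs of $\{(t-1)I_{\widehat\mu}=1\}$, intervals of $\mathbb{R}$, and the point $\infty$. Either way $L(E)$ is a singleton for all $E\in\mathbb{R}$, which is precisely the asserted continuous extension to $\mathbb{C}^+\cup\mathbb{R}$; local boundedness moreover shows the extension takes values in $\overline{\mathbb{C}^+}\subseteq\mathbb{C}^+\cup\mathbb{R}\cup\{\infty\}$.
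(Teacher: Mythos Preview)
The paper does not prove this proposition; it refers the reader to Huang~\cite{Huang15}. Your local boundedness argument via~\eqref{plan-fp} is correct (and in fact shows, under compact support, that the continuous extension never takes the value $\infty$), and the cluster-set argument when $\omega_*\notin\mathrm{supp}\,\widehat\mu$ is fine: $G(\omega)=\omega-(t-1)m_{\widehat\mu}(\omega)$ is holomorphic near $\omega_*$ with $G\circ\omega_t(z)=z-(t-1)m_1$, so the isolated-preimage property of a non-constant holomorphic map gives discreteness of $L(E)$ near $\omega_*$.

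The gap is entirely in the case $\omega_*\in\mathrm{supp}\,\widehat\mu$, which you yourself flag. Your first sketch assumes that $\widehat\mu$ has a positive continuous density near $\omega_*$ and that ``edges'' of $\mathrm{supp}\,\widehat\mu_{\mathrm{ac}}$ admit a Puiseux expansion of $G$; neither is available for a general compactly supported $\mu$, where $\widehat\mu$ can have a singular continuous part or a density with arbitrary zero set. The bound $I_{\widehat\mu}(\omega_t)\le(t-1)^{-1}$ rules out atoms of $\widehat\mu$, but no more. Your second sketch via Carath\'eodory is the right route --- and is essentially how Huang (following Biane~\cite{B}) proceeds --- but two assertions you make require proof: (i) that $H_t(\omega):=t\omega-(t-1)F_\mu(\omega)$ is a biholomorphism from $\Omega_t:=\{(t-1)I_{\widehat\mu}<1\}\cap\mathbb{C}^+$ onto $\mathbb{C}^+$ (properness is not automatic from having a right inverse), and (ii) that $\partial\Omega_t$ is locally connected. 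In the cited references this is handled by showing that $u\mapsto v_t(u):=\inf\{v\ge 0:(t-1)I_{\widehat\mu}(u+\mathrm{i}v)\le 1\}$ is continuous (using only that $v\mapsto I_{\widehat\mu}(u+\mathrm{i}v)$ is strictly decreasing and tends to $0$ at infinity), so that $\overline{\Omega_t}$ is the closed epigraph of a continuous function and Carath\'eodory applies. Without this step, Case~2 remains a sketch rather than a proof.
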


We refer to \cite{Huang15} for more details. Since the subordination function $\omega_t$ can be naturally extended to the real line, we will simply write $\omega_t$ to refer to its extension. Moreover we recall the following consequence of Theorem 4.1 in \cite{Bel1}.

\begin{proposition}\label{compactsupp}
For every $t>1$, $\mu^{\boxplus t}$ admits an absolutely continuous part with respect to the Lebesgue measure and a pure point part. Moreover its density is real analytic wherever it is strictly positive and finite and $\mu^{\boxplus t}$ is compactly supported since the support of $\mu$ is compact.
\end{proposition}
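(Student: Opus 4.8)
The plan is to extract all three assertions from the subordination identity $F_{\mu^{\boxplus t}}=F_\mu\circ\omega_t$ of Proposition~\ref{definition mu t}, which by \eqref{freeconv} can be recast as
\begin{align}\label{aux}
F_{\mu^{\boxplus t}}(z)=\frac{t\,\omega_t(z)-z}{t-1},\qquad F_{\mu^{\boxplus t}}(z)-z=\frac{t}{t-1}\big(\omega_t(z)-z\big),
\end{align}
combined with the continuous extension of $\omega_t$ to $\mathbb{C}^+\cup\mathbb{R}$ with values in $\mathbb{C}^+\cup\mathbb{R}\cup\{\infty\}$ (Proposition~\ref{continuous}), the Cauchy--Stieltjes inversion formulas of Lemma~\ref{singular continuous}, and the boundary value $I_{\widehat\mu}(\omega_t(E))=\tfrac1{t-1}$ of \eqref{pluspetit}, which holds at $E\in\mathbb{R}$ whenever $\mathrm{Im}\,\omega_t(E)>0$. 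Compactness of $\mathrm{supp}(\mu^{\boxplus t})$ is classical and follows from the operator bound $\lambda_{\min}(X)+\lambda_{\min}(Y)\le X+Y\le\lambda_{\max}(X)+\lambda_{\max}(Y)$ together with \cite{Ber95}; it is also reflected in \eqref{aux}, since for $|E|$ large the expansion $F_\mu(\omega)=\omega+O(1)$ forces $\omega_t(E)$ to be real and unbounded, so that $m_{\mu^{\boxplus t}}(E)=m_\mu(\omega_t(E))$ is then real and leaves no density or mass behind.

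The core of the argument is to partition $\mathbb{R}=\mathcal{U}\sqcup\mathcal{V}\sqcup\mathcal{W}$ according to whether $\omega_t(E)$ lies in $\mathbb{C}^+$, in $\mathbb{R}$, or equals $\infty$; the set $\mathcal{U}$ is open by continuity of $\omega_t$. On $\mathcal{W}$ one has $m_{\mu^{\boxplus t}}(E)=\lim_{z\to E}m_\mu(\omega_t(z))=0$ because $|m_\mu(\omega)|\le\mathrm{dist}(\omega,\mathrm{supp}\,\mu)^{-1}$, so $\mathcal{W}$ carries neither density nor mass. On $\mathcal{V}$ the first identity in \eqref{aux} gives $F_{\mu^{\boxplus t}}(z)\to\frac{t\,\omega_t(E)-E}{t-1}\in\mathbb{R}$ as $z\to E$, so either $m_{\mu^{\boxplus t}}(E)$ is real --- in which case $\rho_t(E)=0$ and $E$ is not an atom, by Lemma~\ref{singular continuous}(i)--(ii) --- or else $m_{\mu^{\boxplus t}}(E)=\infty$, which happens precisely when $\omega_t(E)=E/t$. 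In particular $\{E:0<\rho_t(E)<\infty\}\subseteq\mathcal{U}$.

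To treat $\mathcal{U}$ I would upgrade the continuity of $\omega_t$ to real-analyticity by the implicit function theorem applied to $H(z,w):=tw-z-(t-1)F_\mu(w)=0$. For $E_0\in\mathcal{U}$ and $w_0:=\omega_t(E_0)\in\mathbb{C}^+$ the map $F_\mu$ is holomorphic near $w_0$, and using the Nevanlinna form $F_\mu'(w_0)=1+\int_{\mathbb{R}}(x-w_0)^{-2}\,\widehat\mu(\mathrm{d}x)$ one gets
\begin{align}
\partial_w H(E_0,w_0)=t-(t-1)F_\mu'(w_0)=1-(t-1)\int_{\mathbb{R}}\frac{1}{(x-w_0)^2}\,\widehat\mu(\mathrm{d}x)\neq 0,
\end{align}
since the modulus of the integral is at most $I_{\widehat\mu}(w_0)=\tfrac1{t-1}$, with strict inequality in the triangle inequality unless $\widehat\mu$ is a point mass --- a degenerate case in which $m_\mu$ is rational, $\mu$ is supported on at most two points, and $\mu^{\boxplus t}$ can be written down explicitly. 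Hence $\omega_t$ is real-analytic on $\mathcal{U}$, so $\rho_t=\tfrac1\pi\,\mathrm{Im}\,(m_\mu\circ\omega_t)$ is real-analytic on $\mathcal{U}$ and, since $\mathrm{Im}\,m_\mu>0$ on $\mathbb{C}^+$, strictly positive and finite there. Combined with the inclusion from the preceding paragraph, this proves the real-analyticity assertion and shows $\mathrm{supp}(\mu^{\boxplus t}_{\mathrm{ac}})=\overline{\mathcal{U}}$.

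It remains to rule out a singular continuous part, which I expect to be the main obstacle. By Lemma~\ref{singular continuous}(iii) such a part is carried by $S:=\{E:\lim_{\mathrm{n.t.}}m_{\mu^{\boxplus t}}=\infty\}=\{E:F_{\mu^{\boxplus t}}(E)=0\}$, and \eqref{aux} together with $F_{\mu^{\boxplus t}}=F_\mu\circ\omega_t$ forces every $E\in S$ to satisfy $\omega_t(E)=E/t$ and $F_\mu(E/t)=0$, so that $E/t$ lies in the Lebesgue-null set where $m_\mu$ has infinite non-tangential limit. The remaining step --- that $\mu^{\boxplus t}$ assigns no \emph{continuous} mass to $S$, equivalently that every point of $S$ is an atom --- requires the quantitative description of the boundary behaviour of $\omega_t$ near such exceptional points and is exactly Theorem~4.1 of \cite{Bel1} (see also \cite{Huang15}). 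Since the pure point part of any measure exists, this completes the plan.
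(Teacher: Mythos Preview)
The paper does not actually prove this proposition: it is stated as a direct consequence of Theorem~4.1 in \cite{Bel1} (and of \cite{Huang15} for the semi-group case), with no argument given. Your sketch is therefore more than the paper offers, and it is essentially sound. Your partition of $\mathbb{R}$ according to whether $\omega_t(E)$ lands in $\mathbb{C}^+$, in $\mathbb{R}$, or at $\infty$, together with the implicit-function-theorem step on $\mathcal{U}$ using $\bigl|\int(x-w_0)^{-2}\,\widehat\mu(\mathrm{d}x)\bigr|\le I_{\widehat\mu}(w_0)=\tfrac1{t-1}$, is correct and in fact anticipates what the paper does later in Proposition~\ref{differential structure} and Proposition~\ref{edgecaract}. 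The compact-support argument is also fine.

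Where you and the paper coincide is at the one genuinely hard point: ruling out a singular continuous component. You correctly isolate the obstruction---one must show that the set $S=\{E:\omega_t(E)=E/t,\ F_\mu(E/t)=0\}$ carries no continuous $\mu^{\boxplus t}$-mass---and then defer to \cite{Bel1}, which is exactly what the paper does. So your route is not different in substance; it simply unpacks the easy parts of the cited theorem before invoking it for the hard part. One small caution: your treatment of the degenerate case ``$\widehat\mu$ a point mass'' is a little quick (it corresponds to $\mu$ being a two-point measure, for which $\mu^{\boxplus t}$ is explicitly computable), but this does not affect the validity of the overall strategy.
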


\subsection{Free additive convolution of $\mu_{\alpha}$ and $\mu_{\beta}$}\label{subsection: free addition definition}

The free additive convolution can be defined via a system of equations involving subordination functions and the negative reciprocal Cauchy-Stieltjes transforms of the initial measures.  

\begin{proposition}(Theorem 4.1 in \cite{Bel Ber}, Theorem 2.1 in \cite{Chis}).
Let $\mu_{\alpha}$, $\mu_{\beta}$ be two Borel probability measures on $\mathbb{R}$.
There exist two analytic functions $\omega_\alpha,$ $\omega_\beta:\mathbb{C}^+\to\mathbb{C}^+$ such that for every $z\in\mathbb{C}^+$,
\begin{align}\label{freeadd1}
&\mathrm{Im}\,\omega_\alpha(z)\geq \mathrm{Im}\,z\text{ and }\frac{\omega_\alpha(\mathrm{i}\eta)}{\mathrm{i}\eta}\to 1\text{ as }\eta\to\infty,\\ \label{freeadd3}
&\mathrm{Im}\,\omega_\beta(z)\geq \mathrm{Im}\,z\text{ and }\frac{\omega_\beta(\mathrm{i}\eta)}{\mathrm{i}\eta}\to 1\text{ as }\eta\to\infty,\\ \label{freeadd2}
&\omega_\alpha(z)+\omega_\beta(z)-z
=
F_{\mu_\alpha}\big(\omega_\beta(z)\big)
=
F_{\mu_\beta}\big(\omega_\alpha(z)\big).
\end{align}
Moreover set $F(z):=F_{\mu_{\alpha}}\big(\omega_\beta(z)\big)$. Then $F$ is a self-mapping of the upper half-plane and by~\eqref{freeadd3},  
\begin{align}
\lim_{\eta\to\infty}\frac{F(\mathrm{i}\eta)}{\mathrm{i}\eta}=
\lim_{\eta\to\infty}\frac{F_{\mu_{\alpha}}\big(\omega_{\beta}(\mathrm{i}\eta)\big)}{\omega_{\beta}(\mathrm{i}\eta)}\frac{\omega_{\beta}(\mathrm{i}\eta)}{\mathrm{i}\eta}=1.
\end{align}
Therefore there exists a Borel probability measure denoted by $\mu_{\alpha}\boxplus\mu_{\beta}$, such that $F(z):=F_{\mu_\alpha}\big(\omega_\beta(z)\big)$ is its negative reciprocal Cauchy-Stieltjes transform. This probability measure is the free additive convolution of $\mu_{\alpha}$ and $\mu_{\beta}$.
\end{proposition}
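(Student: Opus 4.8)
I would follow the fixed-point strategy of Belinschi--Bercovici~\cite{Bel Ber} and Chistyakov--G\"otze~\cite{Chis}. Write $h_\alpha := F_{\mu_\alpha} - \mathrm{id}$ and $h_\beta := F_{\mu_\beta} - \mathrm{id}$, where $\mathrm{id}$ is the identity on $\mathbb{C}^+$. The Cauchy--Schwarz computation carried out in the proof of Proposition~\ref{Nevanlinna representation} shows that $\mathrm{Im}\,h_\gamma(w)\ge 0$ for every $w\in\mathbb{C}^+$ and $\gamma\in\{\alpha,\beta\}$, with strict inequality unless $\mu_\gamma$ is a single Dirac mass; thus $h_\alpha,h_\beta$ map $\mathbb{C}^+$ into $\mathbb{C}^+\cup\mathbb{R}$. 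Rewriting the two equalities in~\eqref{freeadd2} as $\omega_\alpha = z + h_\alpha(\omega_\beta)$ and $\omega_\beta = z + h_\beta(\omega_\alpha)$ and substituting the first into the second shows that~\eqref{freeadd2} is equivalent to asking that $\omega_\beta(z)$ be a fixed point of
\[
\Phi_z(w)\ :=\ z + h_\beta\bigl(z + h_\alpha(w)\bigr),\qquad w\in\mathbb{C}^+,
\]
after which one sets $\omega_\alpha(z) := z + h_\alpha(\omega_\beta(z))$. The map $\Phi_z$ is well defined and holomorphic on $\mathbb{C}^+$ because $\mathrm{Im}(z+h_\alpha(w))\ge\mathrm{Im}\,z>0$; it satisfies $\mathrm{Im}\,\Phi_z(w)\ge\mathrm{Im}\,z$; and $(z,w)\mapsto\Phi_z(w)$ is jointly holomorphic on $\mathbb{C}^+\times\mathbb{C}^+$.

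Next, for each fixed $z\in\mathbb{C}^+$ I would show that $\Phi_z$ has a unique fixed point $\omega_\beta(z)\in\mathbb{C}^+$. Uniqueness is the easy half: if $\mu_\beta$ is not a Dirac mass then $\mathrm{Im}\,\Phi_z(w)>\mathrm{Im}\,z$, so $\Phi_z(\mathbb{C}^+)$ is a proper subdomain of $\mathbb{C}^+$ and $\Phi_z$ is not a conformal automorphism; by the rigidity part of the Schwarz--Pick lemma $\Phi_z$ then strictly decreases the hyperbolic distance of $\mathbb{C}^+$ between distinct points, so it has at most one fixed point (and if $\mu_\beta$ is a Dirac mass $\Phi_z$ is constant, with exactly one). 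For existence I would exploit that the measures here are compactly supported, hence have finite variance: by the Nevanlinna representation~\eqref{Nevan}--\eqref{finiteness} (valid for any compactly supported measure, cf.~\cite{Maa92}), $h_\beta(u) = -\int x\,\mu_\beta(\mathrm{d}x) + m_{\widehat{\mu_\beta}}(u)$ with $\widehat{\mu_\beta}(\mathbb{R}) = \int x^2\,\mu_\beta(\mathrm{d}x)<\infty$, so $|h_\beta(u)|\le \bigl|\int x\,\mu_\beta(\mathrm{d}x)\bigr| + \widehat{\mu_\beta}(\mathbb{R})/\mathrm{Im}\,u$ is bounded on $\{w\,:\,\mathrm{Im}\,w\ge\mathrm{Im}\,z\}$, a half-plane containing $z+h_\alpha(w)$ for every $w\in\mathbb{C}^+$. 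Therefore $\Phi_z(\mathbb{C}^+)$ has compact closure inside $\mathbb{C}^+$, so $\Phi_z$ is a uniform contraction of the complete hyperbolic metric of $\mathbb{C}^+$ (a form of the Earle--Hamilton theorem) and has a unique fixed point $\omega_\beta(z)$, realized as $\lim_n \Phi_z^{(n)}(w_0)$ for any $w_0\in\mathbb{C}^+$. I expect \emph{this} step --- ruling out escape to the boundary --- to be the crux: for general, non-compactly-supported, measures $h_\beta$ need not be bounded on $\{\mathrm{Im}\,w\ge\mathrm{Im}\,z\}$, and one instead invokes the Denjoy--Wolff theorem, excluding a boundary Denjoy--Wolff point on $\mathbb{R}$ via $\mathrm{Im}\,\Phi_z\ge\mathrm{Im}\,z>0$ and one at $\infty$ via the Nevanlinna asymptotics $F_{\mu_\gamma}(w)=w+o(|w|)$ as $w\to\infty$ nontangentially; this is exactly Theorem~4.1 of~\cite{Bel Ber} and Theorem~2.1 of~\cite{Chis}.

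Granting the fixed point, I would check that $z\mapsto\omega_\beta(z)$ is analytic: since $\Phi_{z_0}$ strictly contracts the hyperbolic metric, $|\Phi_{z_0}'(\omega_\beta(z_0))|<1$, so $\partial_w\bigl(\Phi_z(w)-w\bigr)=\Phi_{z_0}'(\omega_\beta(z_0))-1\ne 0$ at $(z,w)=(z_0,\omega_\beta(z_0))$, and the holomorphic implicit function theorem applied to $\Phi_z(w)-w=0$ yields a holomorphic $z\mapsto\omega_\beta(z)$ near $z_0$. (Equivalently, the convergence $\Phi_z^{(n)}(w_0)\to\omega_\beta(z)$ is locally uniform in $z$, since the bound on $h_\beta$ and hence the contraction constant are uniform on compact subsets of $\mathbb{C}^+$, and a locally uniform limit of holomorphic functions is holomorphic.) Consequently $\omega_\beta$ and $\omega_\alpha(z):=z+h_\alpha(\omega_\beta(z))$ are analytic self-maps of $\mathbb{C}^+$, and they are in fact the unique pair solving~\eqref{freeadd2}.

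Finally I would verify the stated properties. By construction $\omega_\alpha-z=h_\alpha(\omega_\beta)=F_{\mu_\alpha}(\omega_\beta)-\omega_\beta$ and, from the fixed-point equation, $\omega_\beta-z=h_\beta(\omega_\alpha)=F_{\mu_\beta}(\omega_\alpha)-\omega_\alpha$; rearranging gives $\omega_\alpha+\omega_\beta-z=F_{\mu_\alpha}(\omega_\beta)=F_{\mu_\beta}(\omega_\alpha)$, i.e.~\eqref{freeadd2}. Moreover $\mathrm{Im}\,\omega_\alpha=\mathrm{Im}\,z+\mathrm{Im}\,h_\alpha(\omega_\beta)\ge\mathrm{Im}\,z$ and likewise $\mathrm{Im}\,\omega_\beta\ge\mathrm{Im}\,z$, which are the inequalities in~\eqref{freeadd1} and~\eqref{freeadd3}. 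For the normalizations, note that $\mathrm{Im}\,\omega_\beta(\mathrm{i}\eta)\ge\eta\to\infty$ together with $|m_{\widehat{\mu_\alpha}}(w)|\le\widehat{\mu_\alpha}(\mathbb{R})/\mathrm{Im}\,w$ gives $h_\alpha(\omega_\beta(\mathrm{i}\eta))\to-\int x\,\mu_\alpha(\mathrm{d}x)$, so $\omega_\alpha(\mathrm{i}\eta)=\mathrm{i}\eta+O(1)$ and $\omega_\alpha(\mathrm{i}\eta)/(\mathrm{i}\eta)\to 1$; symmetrically $\omega_\beta(\mathrm{i}\eta)/(\mathrm{i}\eta)\to 1$. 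This completes~\eqref{freeadd1} and~\eqref{freeadd3}. The remaining assertion --- that $F:=F_{\mu_\alpha}\circ\omega_\beta=F_{\mu_\beta}\circ\omega_\alpha$ (equal by~\eqref{freeadd2}) is the negative reciprocal Cauchy--Stieltjes transform of a Borel probability measure, to be called $\mu_\alpha\boxplus\mu_\beta$ --- then follows from the computation displayed in the statement, which shows $F(\mathrm{i}\eta)/(\mathrm{i}\eta)\to 1$, together with Remark~\ref{corresponding measure}.
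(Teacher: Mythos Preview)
Your proposal is correct and follows essentially the same approach as the paper. The paper does not give a self-contained proof of this proposition; it cites \cite{Bel Ber} and \cite{Chis} and then briefly explains that $\omega_\alpha(z)$ and $\omega_\beta(z)$ are the Denjoy--Wolff points of the maps $f_\alpha(z,\cdot)$ and $f_\beta(z,\cdot)$, where $f_\beta(z,\omega)=F_{\mu_\beta}\bigl(F_{\mu_\alpha}(\omega)-\omega+z\bigr)-F_{\mu_\alpha}(\omega)+\omega$ --- which is exactly your $\Phi_z$. Your write-up supplies the details the paper omits (Schwarz--Pick for uniqueness, Earle--Hamilton in the compactly supported case with the Denjoy--Wolff alternative for general measures, and the implicit function theorem for analyticity), all consistent with the cited references.
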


The existence and the uniqueness of the subordination functions $\omega_\alpha$ and $\omega_\beta$ were proven in \cite{Bel Ber} by using the theory of Denjoy-Wolff points. More precisely, a point $\omega\in\mathbb{C}^+\cup\mathbb{R}$ is called a Denjoy-Wolff point of an analytic function $f:\mathbb{C}^+\rightarrow \mathbb{C}^+\cup\mathbb{R}$ if one of the following conditions is verified:
\begin{enumerate}
\item $\mathrm{Im}\,\omega>0$ and $f(\omega)=\omega$,
\item $\mathrm{Im}\,\omega=0$ and $\lim_n f(\omega_n)=\omega$ and $\lim_n \big|\frac{f(\omega_n)-\omega}{z_n-\omega}\big|\leq 1,$ where $(\omega_n)_{n\geq 1}$ converges non-tangentially to $\omega$.
\end{enumerate}
Any analytic function of the complex upper-half plane to itself that is different from the identity, admits a unique Denjoy-Wolff point.
Using equation \eqref{freeadd2}, we observe that for any $z\in\mathbb{C}^+$, $\omega_\alpha(z)$ and $\omega_\beta(z)$ are precisely the Denjoy-Wolff points of the analytic functions $f_{\alpha},f_{\beta}:\mathbb{C}^+\rightarrow \mathbb{C}^+\cup\mathbb{R}$, given by 
\begin{align*}
&f_{\alpha}(z,\omega):=F_{\mu_\alpha}\big(F_{\mu_\beta}(\omega)-\omega+z\big)-F_{\mu_\beta}(\omega)+\omega,\\
&f_{\beta}(z,\omega):=F_{\mu_\beta}\big(F_{\mu_\alpha}(\omega)-\omega+z\big)-F_{\mu_\alpha}(\omega)+\omega.
\end{align*}
The subordination functions $\omega_\alpha$ and $\omega_\beta$ are hence uniquely characterized as the Denjoy-Wolff points of $f_\alpha$ and $f_\beta$.

\begin{remark}
As before, let $\mu$ be any finite Borel measure on $\mathbb{R}$ and set
\begin{align}
I_{\mu}(\omega)
:=
\int_{\mathbb{R}} \frac{1}{|x-\omega|^2}\,\mu(\mathrm{d}x), \qquad z\in\mathbb{C}^+\cup\mathbb{R}\backslash{\mathrm{supp}(\mu)}.
\end{align}
By equation \eqref{freeadd2}, we have that
\begin{align}\label{identité plus petit de base}
\mathrm{Im}\,m_{\mu_{\alpha}\boxplus\mu_{\beta}}(z)
=
\mathrm{Im}\,\omega_{\alpha}(z)\,I_{\mu_{\beta}}\big(\omega_{\alpha}(z)\big)
=
\mathrm{Im}\,\omega_{\beta}(z)\,I_{\mu_{\alpha}}\big(\omega_{\beta}(z)\big),\qquad z\in\mathbb{C}^+.
\end{align} 
Taking the imaginary part in \eqref{freeadd2} and using the Nevanlinna representation in \eqref{Nevan} yield
\begin{align}
I_{\widehat{\mu}_{\alpha}}\big(\omega_{\beta}(z)\big)=\frac{\mathrm{Im}\,\omega_{\alpha}(z)-\mathrm{Im}\,z}{\mathrm{Im}\,\omega_{\beta}(z)},\ 
I_{\widehat{\mu}_{\beta}}\big(\omega_{\alpha}(z)\big)=\frac{\mathrm{Im}\,\omega_{\beta}(z)-\mathrm{Im}\,z}{\mathrm{Im}\,\omega_{\alpha}(z)},\qquad z\in\mathbb{C}^+.
\end{align}
Therefore using \eqref{freeadd1} and \eqref{freeadd3},
\begin{align}\label{inegalité edge}
I_{\widehat{\mu}_{\alpha}}\big(\omega_{\beta}(z)\big)I_{\widehat{\mu}_{\beta}}\big(\omega_{\alpha}(z)\big)
=
\Big(\frac{\mathrm{Im}\,\omega_\beta(z)}{\mathrm{Im}\,\omega_\alpha(z)}-\frac{\mathrm{Im}\,z}{\mathrm{Im}\,\omega_{\alpha}(z)}\Big)\Big(\frac{\mathrm{Im}\,\omega_\alpha(z)}{\mathrm{Im}\,\omega_\beta(z)}-\frac{\mathrm{Im}\,z}{\mathrm{Im}\,\omega_{\beta}(z)}\Big)
\leq 
1,\qquad z\in\mathbb{C}^+.
\end{align}
\end{remark}

We conclude this subsection by recalling these two fundamental results due to Belinschi on the subordination functions and the free additive convolution.

\begin{proposition}(Theorem 3.3 in \cite{Bel1}, Theorem 6 in \cite{Bel2}). Let $\mu_{\alpha}$ and $\mu_{\beta}$ be compactly supported Borel probability measures on $\mathbb{R}$, none of them being a single point mass. Then the subordination functions $\omega_{\alpha}$ and $\omega_{\beta}$ extend continuously to $\mathbb{C}^+\cup\mathbb{R}$ as functions with values in $\mathbb{C}^+\cup\mathbb{R}\cup{\lbrace \infty \rbrace}$.
\end{proposition}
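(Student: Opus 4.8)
The plan is to exploit the characterization, recalled just above, of $\omega_\alpha(z)$ and $\omega_\beta(z)$ as the Denjoy-Wolff points of the analytic self-maps $f_\alpha(z,\cdot)$ and $f_\beta(z,\cdot)$ of $\mathbb{C}^+$. By exchanging the roles of $\mu_\alpha$ and $\mu_\beta$ it suffices to treat $\omega_\beta$. Fix $x_0\in\mathbb{R}$ and a sequence $z_n\to x_0$ in $\mathbb{C}^+$. Since $\mathbb{C}^+\cup\mathbb{R}\cup\{\infty\}$ is compact in the Riemann sphere, I would pass to a subsequence along which $\omega_\beta(z_n)\to w$ for some $w\in\mathbb{C}^+\cup\mathbb{R}\cup\{\infty\}$, and the whole point is to prove that $w$ does not depend on the subsequence; this I would do by letting $n\to\infty$ in the fixed-point relation $\omega_\beta(z_n)=f_\beta\big(z_n,\omega_\beta(z_n)\big)$ coming from \eqref{freeadd2} and invoking uniqueness of Denjoy-Wolff points.

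The case $w\in\mathbb{C}^+$ is the easy one. Using \eqref{freeadd2} in the form $\omega_\alpha(z)=F_{\mu_\alpha}(\omega_\beta(z))-\omega_\beta(z)+z$ together with the strict inequality $\mathrm{Im}\,F_{\mu_\alpha}(\omega)>\mathrm{Im}\,\omega$ on $\mathbb{C}^+$, valid because $\mu_\alpha$ is not a point mass, one checks that $(\omega_\alpha(z_n),\omega_\beta(z_n))$ converges to a pair in $\mathbb{C}^+\times\mathbb{C}^+$; hence the inner argument of $f_\beta$ stays in $\mathbb{C}^+$ and one may pass to the limit, obtaining $w=f_\beta(x_0,w)$. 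Thus $w$ is a fixed point in $\mathbb{C}^+$ of the self-map $f_\beta(x_0,\cdot)$, which is not the identity, so $w$ is its unique Denjoy-Wolff point; in particular any two subsequential limits lying in $\mathbb{C}^+$ coincide.

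The real difficulty, and the step I expect to be the main obstacle, is the case $w\in\mathbb{R}\cup\{\infty\}$: there $F_{\mu_\alpha}$ and $F_{\mu_\beta}$ need not extend continuously to the boundary points that occur, so one cannot pass to the limit naively, and one must in addition exclude the possibility that $\mathrm{Im}\,\omega_\beta(z)$ tends to $0$ along one sequence while remaining bounded below along another. My plan here is to combine two ingredients: (i) the boundary Denjoy-Wolff inequality that $f_\beta(x_0,\cdot)$ must satisfy in the limit, together with the monotonicity on real intervals of the Nevanlinna functions $F_{\mu_\alpha}-\mathrm{id}$ and $F_{\mu_\beta}-\mathrm{id}$ outside the respective supports; and (ii) the identity $\omega_\alpha(z)+\omega_\beta(z)=z+F_{\mu_\alpha\boxplus\mu_\beta}(z)$, which follows from \eqref{freeadd2} and the relation $F_{\mu_\alpha\boxplus\mu_\beta}=F_{\mu_\alpha}\circ\omega_\beta$, and whose right-hand side has controlled boundary behaviour since $\mu_\alpha\boxplus\mu_\beta$ has no singular continuous part and a density that is real-analytic wherever it is positive and finite (the regularity theory of Belinschi and Bercovici). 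From these one extracts that, once $\mathrm{Im}\,\omega_\beta(z_n)\to0$, the real parts $\mathrm{Re}\,\omega_\beta(z_n)$ are forced to converge to a value in $\mathbb{R}\cup\{\infty\}$ depending only on $x_0$, which pins $w$ down. Putting the two cases together yields continuity of $\omega_\beta$, hence by symmetry of $\omega_\alpha$, on $\mathbb{C}^+\cup\mathbb{R}$ with values in $\mathbb{C}^+\cup\mathbb{R}\cup\{\infty\}$; the complete argument is carried out in \cite{Bel1,Bel2}.
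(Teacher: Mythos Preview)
The paper does not supply its own proof of this proposition: it is stated with attribution to \cite{Bel1,Bel2} and used as a black box. So there is no in-paper argument to compare against, and your closing sentence deferring to those references is in fact exactly what the paper does.

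That said, let me comment on your sketch. The overall strategy---subsequential limits of $\omega_\beta(z_n)$ in the Riemann sphere, the Denjoy--Wolff characterization, and the easy case $w\in\mathbb{C}^+$---is sound and is indeed the route taken in \cite{Bel1}. The boundary case $w\in\mathbb{R}\cup\{\infty\}$ is, as you say, the hard part, and your plan for it has a logical issue. Your ingredient (ii) invokes the fact that $\mu_\alpha\boxplus\mu_\beta$ has no singular continuous part and a density real-analytic where positive and finite. In the paper (and in \cite{Bel1}) this regularity is the \emph{subsequent} Proposition~\ref{extension analytic} (Theorem~4.1 of \cite{Bel1}), whose proof relies on the continuous boundary extension of the subordination functions that you are trying to establish. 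Using it here would be circular. The actual argument in \cite{Bel1} for the boundary case avoids this: it works directly with the Nevanlinna structure of $F_{\mu_\alpha}$, $F_{\mu_\beta}$ and the Julia--Carath\'eodory theorem applied to the iterates of $f_\beta(z,\cdot)$, together with the identity $\omega_\alpha+\omega_\beta-z=F_{\mu_\alpha\boxplus\mu_\beta}$ but only through general properties of reciprocal Cauchy transforms, not through fine regularity of the free convolution. So your ingredient (i) is on the right track; ingredient (ii) as stated should be replaced by a direct analysis that does not presuppose Proposition~\ref{extension analytic}.
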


\begin{proposition}(Theorem 4.1 in \cite{Bel1})\label{extension analytic}
Let $\mu_{\alpha}$ and $\mu_{\beta}$ satisfy Assumption \ref{main assumption2}. The free additive  convolution $\mu_{\alpha}\boxplus\mu_{\beta}$ is absolutely continuous with respect to the Lebesgue measure. Its density is bounded, continuous and real-analytic wherever it is strictly positive. Moreover since $\mu_{\alpha}$ and $\mu_{\beta}$ are compactly supported, so is $\mu_{\alpha}\boxplus\mu_{\beta}$.
\end{proposition}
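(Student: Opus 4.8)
\emph{Sketch of proof.} This is Belinschi's regularity theorem, and I indicate how I would obtain it within the subordination framework above. I would split the statement into three parts: compactness of the support, absence of a singular part, and boundedness, continuity and real-analyticity of the density. The compactness of $\mathrm{supp}(\mu_\alpha\boxplus\mu_\beta)$ is the soft part: if $\mathrm{supp}(\mu_\alpha)$ and $\mathrm{supp}(\mu_\beta)$ lie in $[a_-,a_+]$ and $[b_-,b_+]$, then $\mathrm{supp}(\mu_\alpha\boxplus\mu_\beta)\subseteq[a_-+b_-,a_++b_+]$, which one sees either from the operator realization ($\|X+Y\|\le\|X\|+\|Y\|$) or analytically, since for real $z$ outside a sufficiently large interval $F_{\mu_\alpha}$ and $F_{\mu_\beta}$ are real-analytic near the relevant arguments, so the Denjoy--Wolff characterization of $\omega_\alpha,\omega_\beta$ produces real-analytic solutions there and $F=F_{\mu_\alpha}\circ\omega_\beta$ is real-analytic, hence $\mu_\alpha\boxplus\mu_\beta$ charges nothing outside a compact set.

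For absolute continuity I would use the general fact that the singular continuous part of a free additive convolution of two probability measures, neither a point mass, is always trivial (this follows from Lemma \ref{singular continuous}(iii) together with the fact that the subordination functions have finite non-tangential boundary values at Lebesgue-almost every real point); since $\mu_\alpha$ and $\mu_\beta$ are absolutely continuous under Assumption \ref{main assumption2}, they carry no atoms, and an atom of $\mu_\alpha\boxplus\mu_\beta$ would force atoms of $\mu_\alpha$ and $\mu_\beta$ whose masses add to more than one; hence $\mu_\alpha\boxplus\mu_\beta$ has no atoms either and is absolutely continuous, with density $\rho_{\alpha\boxplus\beta}(x)=\frac{1}{\pi}\lim_{\eta\searrow0}\mathrm{Im}\,m_{\mu_\alpha}\big(\omega_\beta(x+\mathrm{i}\eta)\big)$ by Lemma \ref{singular continuous}(i) and the relation $m_{\mu_\alpha\boxplus\mu_\beta}=m_{\mu_\alpha}\circ\omega_\beta$.

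The substance is the boundedness and continuity of this density. Using the continuity of $\omega_\beta$ up to $\mathbb{R}$ (with values in $\mathbb{C}^+\cup\mathbb{R}\cup\{\infty\}$), I would run a case analysis on $\omega_\beta(x)$. If $\omega_\beta(x)\in\mathbb{C}^+$ then $m_{\mu_\alpha}$ is analytic there, $\rho_{\alpha\boxplus\beta}$ is finite at $x$, and by the implicit function theorem applied to \eqref{freeadd2} (with $F_{\mu_\alpha},F_{\mu_\beta}$ analytic at the non-real points $\omega_\beta(x),\omega_\alpha(x)$) the function $\omega_\beta$, and hence $\rho_{\alpha\boxplus\beta}$, is real-analytic near $x$. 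If $\omega_\beta(x)=\infty$ the limit is $0$, and if $\omega_\beta(x)\in\mathbb{R}\setminus\mathrm{supp}(\mu_\alpha)$ then $m_{\mu_\alpha}$ is real there, so $\rho_{\alpha\boxplus\beta}(x)=0$. The remaining case $\omega_\beta(x)\in\mathrm{supp}(\mu_\alpha)$ is where the power-law hypothesis is used: I would combine the identity $\mathrm{Im}\,m_{\mu_\alpha\boxplus\mu_\beta}(z)=\mathrm{Im}\,\omega_\beta(z)\,I_{\mu_\alpha}(\omega_\beta(z))$ with the inequality $I_{\widehat{\mu}_\alpha}(\omega_\beta(z))\,I_{\widehat{\mu}_\beta}(\omega_\alpha(z))\le1$. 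By Proposition \ref{Nevanlinna representation}, $\widehat{\mu}_\alpha$ has a density bounded below on compact subsets of the interior of $\mathrm{supp}(\mu_\alpha)$, and at an endpoint its density vanishes at a power rate with exponent strictly between $0$ and $1$ (this is where the bound $<1$ on the Jacobi exponents enters); in both cases $I_{\widehat{\mu}_\alpha}(\omega)\to+\infty$ as $\omega\to\omega_\beta(x)$ within $\mathbb{C}^+$. The product inequality then forces $I_{\widehat{\mu}_\beta}(\omega_\alpha(x+\mathrm{i}\eta))\to0$, and since $\widehat{\mu}_\beta$ is compactly supported this forces $\omega_\alpha(x+\mathrm{i}\eta)\to\infty$, whence $m_{\mu_\alpha\boxplus\mu_\beta}(x+\mathrm{i}\eta)=m_{\mu_\beta}(\omega_\alpha(x+\mathrm{i}\eta))\to0$ and $\rho_{\alpha\boxplus\beta}(x)=0$. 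Thus $\rho_{\alpha\boxplus\beta}$ is finite everywhere; making the last estimate quantitative near such boundary points gives its continuity, after which the compactness of the support yields a uniform bound.

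The main obstacle is precisely this last step: controlling the density at points $x$ where $\omega_\beta(x)$ reaches $\mathrm{supp}(\mu_\alpha)$ — or, symmetrically, where $\omega_\alpha(x)$ reaches $\mathrm{supp}(\mu_\beta)$ — which is delicate because the density of $\mu_\alpha$ may itself be unbounded at such a point. It is only here that the restriction of the exponents to $(-1,1)$ is genuinely needed; this is the content of Theorem 4.1 in \cite{Bel1}, the one-interval case being treated in \cite{Bao20}.
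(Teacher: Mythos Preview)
The paper does not prove this proposition; it simply records it as a consequence of Theorem~4.1 in \cite{Bel1} (with the $L^\infty$-boundedness coming from \cite{Bel2}) and moves on. So there is no ``paper's own proof'' to compare against beyond the citation itself.

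That said, your sketch is a faithful outline of how these results are established in the subordination framework, and it dovetails with the analysis the paper carries out later in Section~4. One point of contact worth noting: in your delicate case $\omega_\beta(x)\in\mathrm{supp}(\mu_\alpha)$ you argue that $I_{\widehat{\mu}_\alpha}\big(\omega_\beta(z)\big)\to\infty$ forces $\omega_\alpha\to\infty$ and hence $\rho_{\alpha\boxplus\beta}(x)=0$. The paper in fact proves something stronger in Lemma~\ref{asymptotics zero}: under Assumption~\ref{main assumption2} the subordination function $\omega_\beta$ stays at uniformly positive distance from $\mathrm{supp}(\mu_\alpha)$ on the whole spectral domain $\mathcal{J}$, so this case never occurs. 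The mechanism is the same divergence of $I_{\widehat{\mu}_\alpha}$ combined with the product bound \eqref{inegalité edge}, but the paper closes the argument by contradiction rather than by computing the density at such a hypothetical point. Your sketch and the paper's Section~4 are therefore two presentations of the same underlying estimate; the paper simply defers it and imports the regularity statement as a black box here.

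A minor quibble: your claim that the density of $\widehat{\mu}_\alpha$ vanishes at an endpoint with exponent ``strictly between $0$ and $1$'' is not quite right as stated---the exponent is $|s_j^\pm|\in[0,1)$, so it can be $0$. This does not affect the divergence of $I_{\widehat{\mu}_\alpha}$, which is what you actually need.
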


\section{Proof of the main results on $\mu^{\boxplus t}$}

Since we are considering the free additive convolution of a Borel probability measure $\mu$, that satisfies the power law behavior stated in Assumption \ref{main assumption}, the subordination function inherits some strong regulating properties from equation \eqref{freeconv}. More precisely, for every $t>1$ fixed, we know that $\omega_t$ extends continuously to the real line, but might be infinite at some real values, by Proposition \ref{continuous}. We will rule out this scenario under Assumption \ref{main assumption} and show that it diverges only when $z$ diverges itself.

We start by studying the imaginary part of $\omega_t(z)$ as $z$ approaches  the support of $\widehat{\mu}$ and will study its differential structure. 

\begin{lemma}\label{zero}
Let $\mu$ be a Borel probability measure on $\mathbb{R}$ satisfying Assumption \ref{main assumption}.
Let $E$ be a zero of $m_{\mu}$. There exists a  constant ${M}>0$ such that 
\begin{align}
\mathrm{dist}\big(\omega_t(z),E\big)\geq{M},\qquad z\in\mathbb{R}.
\end{align}
\end{lemma}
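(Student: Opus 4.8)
The plan is to derive the bound from the inequality \eqref{pluspetit} together with the fact that a zero of $m_\mu$ is an atom of the Nevanlinna measure $\widehat{\mu}$, and then to pass from $\mathbb{C}^+$ to the real line using the boundary continuity of $\omega_t$. First I would record that, as in the proof of Proposition \ref{Nevanlinna representation}, $m_\mu$ has no zeros in $\mathrm{supp}(\mu)$: on $\mathrm{supp}(\mu_{\mathrm{ac}})$ the boundary value of $\mathrm{Im}\,m_\mu$ equals $\pi\rho>0$, while on the gaps between the components of $\mathrm{supp}(\mu)$ the function $m_\mu$ is real-analytic and strictly increasing, hence vanishes at most once per gap and nowhere on $\mathrm{supp}(\mu)$. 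Consequently $E$ lies at strictly positive distance from $\mathrm{supp}(\mu_{\mathrm{ac}})$, so by Remark \ref{pure point de mu hat} it is a pure point of $\widehat{\mu}$; set $a:=\widehat{\mu}(\{E\})>0$.

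For $z\in\mathbb{C}^+$ we have $\omega_t(z)\in\mathbb{C}^+$, hence $\omega_t(z)\neq E$, and keeping in the integral defining $I_{\widehat{\mu}}$ only the mass of $\widehat{\mu}$ carried by the atom at $E$ gives
\begin{align*}
\frac{a}{|\omega_t(z)-E|^{2}}\;\leq\;I_{\widehat{\mu}}\big(\omega_t(z)\big)\;\leq\;\frac{1}{t-1},
\end{align*}
the last step being \eqref{pluspetit}. Thus $|\omega_t(z)-E|\geq\sqrt{a(t-1)}=:M$ for all $z\in\mathbb{C}^+$. To extend this to $z\in\mathbb{R}$, take $z_n:=z+\mathrm{i}/n$, so that $z_n\to z$ and, by Proposition \ref{continuous}, $\omega_t(z_n)\to\omega_t(z)$ in $\mathbb{C}^+\cup\mathbb{R}\cup\{\infty\}$. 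If $\omega_t(z)=\infty$ there is nothing to prove; otherwise the convergence is Euclidean and the bound $|\omega_t(z_n)-E|\geq M$ survives in the limit, yielding $\mathrm{dist}\big(\omega_t(z),E\big)\geq M$.

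The only genuinely delicate point is this last step. For real $z$ the value $\omega_t(z)$ may be $\infty$, or a real number lying in $\mathrm{supp}(\widehat{\mu})$, and in that case $I_{\widehat{\mu}}(\omega_t(z))$ can be infinite so that \eqref{pluspetit} carries no information directly; this is exactly why one must route the estimate through $\mathbb{C}^+$ and invoke the boundary continuity of $\omega_t$, reading the limit in the one-point compactification of the closed upper half-plane so that the possibility $\omega_t(z)=\infty$ is automatically accommodated. Everything else is the two-line estimate above, and the resulting constant $M=\sqrt{\widehat{\mu}(\{E\})\,(t-1)}$ is explicit.
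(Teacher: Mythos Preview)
Your proof is correct and takes a genuinely different route from the paper. The paper argues by contradiction directly from the subordination identity \eqref{freeconv}: if $\omega_t(z_n)\to E$ along a bounded sequence $(z_n)\subset\mathbb{C}^+$, then the left-hand side of $\frac{|t\omega_t(z_n)-z_n|}{t-1}=|F_\mu(\omega_t(z_n))|$ stays bounded while the right-hand side diverges (since $F_\mu=-1/m_\mu$ and $m_\mu(E)=0$). Your argument instead routes through the Nevanlinna measure $\widehat{\mu}$: having identified $E$ as an atom of $\widehat{\mu}$, you combine the pointwise lower bound $I_{\widehat{\mu}}(\omega_t(z))\geq \widehat{\mu}(\{E\})/|\omega_t(z)-E|^2$ with the uniform upper bound \eqref{pluspetit} to obtain the distance estimate on $\mathbb{C}^+$, and then pass to the boundary by continuity. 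The payoff of your approach is an explicit constant $M=\sqrt{\widehat{\mu}(\{E\})(t-1)}=\sqrt{(t-1)/m_\mu'(E)}$, and the mechanism is the same one that drives Lemma~\ref{distance} later on, so your argument previews that proof nicely. The paper's approach, by contrast, is a one-line application of \eqref{freeconv} and does not require first recognising $E$ as an atom of $\widehat{\mu}$; it therefore applies verbatim to any zero of $m_\mu$ without passing through Remark~\ref{pure point de mu hat}. One small point: your justification that $E$ lies at positive distance from $\mathrm{supp}(\mu_{\mathrm{ac}})$ invokes $\pi\rho>0$ on that support, which is only true in the interior; this is exactly what is argued in the proof of Proposition~\ref{Nevanlinna representation}, so citing that proof (as you do) is the right move, but the parenthetical ``$\pi\rho>0$'' slightly overstates the argument.
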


\begin{proof}[Proof]
Let $(z_n)_{n\geq 1}$ be a sequence of the upper half-plane such that $\omega_t(z_n)$ converges to $E$, a zero of $m_{\mu}$. By equation \eqref{freeconv} and by Bolzano-Weierstrass, the sequence $(z_n)_{n\geq 1}$ admits a converging subsequence $(z_{n_k})_{k\geq 1}$ such that
\begin{align}
\frac{\big|t\omega_t(z_{n_k})-z_{n_k}\big|}{t-1}
=
\big|F_{\mu}\big(\omega_t(z_{n_k})\big)\big|,\qquad k\geq 1.
\end{align}
This equality cannot hold since the left-hand side converges to a finite value and the right-hand side diverges, as $k$ diverges.
Therefore such a sequence cannot exist and the lemma is proven.
\end{proof}

\begin{remark}
Recall that any isolated pure point of $\widehat{\mu}$ corresponds to a zero of the Cauchy-Stieltjes transform of $\mu$ by Remark \ref{pure point de mu hat}. Therefore we have proven that $\omega_t$ stays at positive distance from the isolated pure points of $\widehat{\mu}$.
\end{remark}

Using Lemma \ref{zero}, we can now prove that the subordination function $\omega_t$ remains bounded on any compact subset of the real line. 

\begin{lemma}\label{bounded}
Let $\mathcal{K}$ be a compact subset of $\mathbb{C}^{+}\cup\mathbb{R}$. There exists a constant $C>0$ such that $|\omega_t(z)|\leq C$ uniformly on $\mathcal{K}$.
\end{lemma}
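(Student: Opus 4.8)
The plan is to argue by contradiction: suppose there is a compact set $\mathcal{K}\subset\mathbb{C}^+\cup\mathbb{R}$ and a sequence $(z_n)_{n\ge 1}\subset\mathcal{K}$ with $|\omega_t(z_n)|\to\infty$. Since $\mathcal{K}$ is compact we may pass to a subsequence along which $z_n\to z_*\in\mathcal{K}$. The key identity is the defining subordination equation \eqref{freeconv}, namely $t\omega_t(z)-z=(t-1)F_\mu(\omega_t(z))$. Along the sequence the left-hand side behaves like $t\,\omega_t(z_n)+O(1)$, so $|t\omega_t(z_n)-z_n|\to\infty$ at the same rate as $|\omega_t(z_n)|$. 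On the other hand, using the Nevanlinna representation \eqref{Nevan} for $F_\mu$, one has $F_\mu(\omega)-\omega=-\int x\,\mu(\mathrm{d}x)+\int\frac{1}{x-\omega}\,\widehat{\mu}(\mathrm{d}x)$, and since $\widehat{\mu}$ is a finite measure with compact support, $\big|\int\frac{1}{x-\omega}\,\widehat{\mu}(\mathrm{d}x)\big|\le \widehat{\mu}(\mathbb{R})/\mathrm{dist}(\omega,\mathrm{supp}(\widehat{\mu}))\to 0$ as $|\omega|\to\infty$. Hence $F_\mu(\omega_t(z_n))=\omega_t(z_n)-\int x\,\mu(\mathrm{d}x)+o(1)$, so $(t-1)F_\mu(\omega_t(z_n))=(t-1)\omega_t(z_n)+O(1)$.

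Substituting both estimates back into \eqref{freeconv} gives $t\,\omega_t(z_n)-z_n=(t-1)\omega_t(z_n)+O(1)$, i.e. $\omega_t(z_n)=z_n+O(1)$, which is bounded since $z_n\in\mathcal{K}$. This contradicts $|\omega_t(z_n)|\to\infty$, so no such sequence exists and $\omega_t$ is bounded on $\mathcal{K}$. The uniform constant $C$ is then obtained by a standard compactness argument: if the bound failed uniformly we could extract a sequence violating it, contradicting the pointwise-finiteness we have just established together with continuity of $\omega_t$ on $\mathbb{C}^+\cup\mathbb{R}$ (Proposition \ref{continuous}); alternatively, the contradiction argument above already yields a bound depending only on $t$, $\mathrm{diam}(\mathcal{K})$, $\int x\,\mu(\mathrm{d}x)$ and the decay of $F_\mu$ at infinity, which is uniform on $\mathcal{K}$.

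The one subtlety to handle carefully is the possibility that $z_*$ is a real point where, a priori by Proposition \ref{continuous}, $\omega_t$ could take the value $\infty$; but the computation above shows precisely that this cannot happen, since near such a point the equation forces $\omega_t(z_n)=z_n+O(1)$. Note also that Lemma \ref{zero} is not strictly needed for this argument — the decay of the Cauchy–Stieltjes-type integral $\int\frac{1}{x-\omega}\,\widehat\mu(\mathrm{d}x)$ at spatial infinity is what drives the proof — but it is consistent with it: Lemma \ref{zero} rules out $\omega_t$ approaching the finitely many zeros of $m_\mu$, while the present lemma rules out $\omega_t$ escaping to infinity. The main (mild) obstacle is simply bookkeeping the error terms uniformly in $z\in\mathcal{K}$, which is routine because $\widehat\mu$ is a fixed finite compactly supported measure and $t>1$ is fixed.
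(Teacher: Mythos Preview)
Your argument is correct and in fact a bit cleaner than the paper's. The paper argues directly: from $|t\omega_t(z)-z|=(t-1)/|m_\mu(\omega_t(z))|$ it invokes Lemma~\ref{zero} to assert a uniform lower bound $|m_\mu(\omega_t(z))|\ge M_2$ on $\mathcal{K}$, and then reads off a bound on $|\omega_t(z)|$. Your route instead uses the Nevanlinna representation \eqref{Nevan} to get the asymptotic $F_\mu(\omega)=\omega+O(1)$ as $|\omega|\to\infty$, and plugs this into \eqref{freeconv} to derive a contradiction. The advantage of your approach is that it is self-contained: it does not need Lemma~\ref{zero}, and it transparently handles the one scenario the paper's direct bound leaves implicit, namely that $m_\mu(\omega_t(z))$ could also be small because $\omega_t(z)$ is near infinity rather than near a finite zero of $m_\mu$. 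The paper's approach, on the other hand, gives an explicit quantitative bound once one accepts the lower bound on $|m_\mu|$. Both arguments ultimately rest on the same mechanism, that $F_\mu(\omega)/\omega\to 1$ at infinity, so the subordination equation cannot be balanced if $\omega_t(z)$ is large while $z$ stays bounded.
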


\begin{proof}[Proof]
By the second equation for the free additive convolution in \eqref{freeconv},
\begin{align}
\frac{|t\omega_t(z)-z|}{t-1}=\bigg|\frac{1}{m_{\mu}\big(\omega_t(z)\big)}\bigg|,\qquad z\in\mathbb{C}^+\cup\mathbb{R}.
\end{align}
Moreover there exists a positive constant $M_1$ such that $|z|\leq M_1$ and since $\omega_t$ stays at positive distance from the zeros of $m_{\mu}$ by Lemma \ref{zero}, there exists some constant ${M_2}>0$ such that 
\begin{align}
\big|m_{\mu}\big(\omega_t(z)\big)\big|\geq {M_2}.
\end{align}
Therefore,
\begin{align}
|\omega_t(z)|\leq \frac{t-1}{t} \Big|\frac{1}{m_{\mu}\big(\omega_t(z)\big)}\Big|+(t-1)|z|\leq \frac{t-1}{t}\frac{1}{{M_2}}+M_1.
\end{align}
Setting $C$ to be the latter upper bound proves the result.
\end{proof}

We can now present a first result on the differential structure of the subordination function~$\omega_t$. The argument relies mainly on the boundedness of $\omega_t$ and on the subordination equation \eqref{freeconv}.

\begin{proposition}\label{increasing}
Let $\mu$ be a Borel probability measure on $\mathbb{R}$ satisfying Assumption \ref{main assumption} and let $t>1$. The function
$\mathrm{Re}\,\omega_t$ is strictly increasing on the real line and $\mathrm{Re}\,\omega_t'(z)\geq \frac{1}{2}$, for every $z\in\mathbb{C}^+$. Moreover $\omega_t$ is  homeomorphic on the domain $\mathbb{C}^+\cup\mathbb{R}$ and biholomorphic on the domain $\mathbb{C}^+\cup\mathbb{R}\backslash {\mathrm{supp}(\mu^{\boxplus t})}$.
\end{proposition}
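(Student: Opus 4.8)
The plan is to extract everything from the scalar subordination equation \eqref{freeconv} by differentiation. Differentiating $t\omega_t(z)-z=(t-1)F_\mu(\omega_t(z))$ in $z$, and using that the Nevanlinna representation \eqref{Nevan} gives $F_\mu'(\omega)=1+\int_{\mathbb{R}}(x-\omega)^{-2}\,\widehat{\mu}(\mathrm{d}x)$, I would first establish
\begin{equation}\label{eq:omegatprime}
\omega_t'(z)=\Big(\,1-(t-1)\int_{\mathbb{R}}\frac{\widehat{\mu}(\mathrm{d}x)}{(x-\omega_t(z))^2}\,\Big)^{-1},\qquad z\in\mathbb{C}^+.
\end{equation}
The modulus of the integral is at most $I_{\widehat{\mu}}(\omega_t(z))$, and \eqref{pluspetit} gives $(t-1)I_{\widehat{\mu}}(\omega_t(z))<1$ for every $z\in\mathbb{C}^+$, strictly because $\mathrm{Im}\,z>0$. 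Hence the denominator in \eqref{eq:omegatprime} is of the form $1-w$ with $|w|<1$, and the elementary inequality $\mathrm{Re}\,\frac{1}{1-w}\ge\tfrac12$ for $|w|<1$ (after clearing denominators this is just $|w|^2\le1$) gives $\mathrm{Re}\,\omega_t'(z)\ge\tfrac12$ throughout $\mathbb{C}^+$.

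Next I would pass to the real line by integrating \eqref{eq:omegatprime} along horizontal segments: for real $E_1<E_2$ and $\eta>0$,
\[
\mathrm{Re}\,\omega_t(E_2+\mathrm{i}\eta)-\mathrm{Re}\,\omega_t(E_1+\mathrm{i}\eta)=\int_{E_1}^{E_2}\mathrm{Re}\,\omega_t'(E+\mathrm{i}\eta)\,\mathrm{d}E\ge\tfrac12(E_2-E_1).
\]
By Lemma \ref{bounded} the continuous extension of $\omega_t$ to $\mathbb{C}^+\cup\mathbb{R}$ supplied by Proposition \ref{continuous} is finite; letting $\eta\searrow0$ therefore gives $\mathrm{Re}\,\omega_t(E_2)-\mathrm{Re}\,\omega_t(E_1)\ge\tfrac12(E_2-E_1)>0$, so $\mathrm{Re}\,\omega_t$ is strictly increasing on $\mathbb{R}$.

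For the topological statements I would rewrite \eqref{freeconv} as $z=t\omega_t(z)-(t-1)F_\mu(\omega_t(z))$, valid on $\mathbb{C}^+$. Under Assumption \ref{main assumption}, $F_\mu=-1/m_\mu$ extends continuously to $\mathbb{C}^+\cup\mathbb{R}$, finite except at the finitely many zeros of $m_\mu$; by Lemma \ref{zero} the range of $\omega_t$ on $\mathbb{C}^+\cup\mathbb{R}$ stays uniformly away from those zeros, so the right-hand side above is continuous on $\mathbb{C}^+\cup\mathbb{R}$ and the identity extends there. Thus $z$ is a function of $\omega_t(z)$, which makes $\omega_t$ injective on $\mathbb{C}^+\cup\mathbb{R}$; its inverse is the restriction of the continuous map $\omega\mapsto t\omega-(t-1)F_\mu(\omega)$, so $\omega_t$ is a homeomorphism onto its image. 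Finally, for $z$ in a connected component of $\mathbb{R}\setminus\mathrm{supp}(\mu^{\boxplus t})$ the value $F_\mu(\omega_t(z))=-1/m_{\mu^{\boxplus t}}(z)$ is a finite nonzero real number (the nonvanishing again by Lemma \ref{zero}), and since $F_\mu$ maps $\mathbb{C}^+$ into $\mathbb{C}^+$ this forces $\mathrm{Im}\,\omega_t\equiv0$ on each such component. The Schwarz reflection principle then extends $\omega_t$ holomorphically across $\mathbb{R}\setminus\mathrm{supp}(\mu^{\boxplus t})$, and the extension, being still injective by the previous step, is biholomorphic onto its image — equivalently $\omega_t'\ne0$ there, which also follows from $\mathrm{Re}\,\omega_t'\ge\tfrac12$ on letting $\mathrm{Im}\,z\searrow0$.

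I expect the analytic core — identity \eqref{eq:omegatprime} and the bound $\mathrm{Re}\,\omega_t'\ge\tfrac12$ — to be short and essentially forced once \eqref{pluspetit} is in hand. The part needing the most care is the last paragraph: confirming that $F_\mu$ is genuinely finite and continuous along the part of $\mathbb{C}^+\cup\mathbb{R}$ reached by $\omega_t$ (precisely the role of Lemma \ref{zero}), and identifying $\{z\in\mathbb{R}:\mathrm{Im}\,\omega_t(z)=0\}$ with $\mathbb{R}\setminus\mathrm{supp}(\mu^{\boxplus t})$, which is the input required to run Schwarz reflection and upgrade the homeomorphism to a biholomorphism off the support.
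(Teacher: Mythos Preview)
Your derivation of $\mathrm{Re}\,\omega_t'\ge\tfrac12$ and the monotonicity of $\mathrm{Re}\,\omega_t$ on $\mathbb{R}$ is correct and matches the paper's argument essentially verbatim; writing $\omega_t'=(1-w)^{-1}$ with $|w|<1$ is just a cosmetic repackaging of the paper's inequality $|\omega_t'-1|\le|\omega_t'|$.

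For the topological claims you take a genuinely different route. The paper argues that $\omega_t$ is a proper local homeomorphism on $\mathbb{C}^+$ (from $\omega_t'\neq0$ and the formula $\omega_t(z)=\tfrac{z}{t}+\tfrac{t-1}{t}F_{\mu^{\boxplus t}}(z)$), invokes Hadamard's global inversion theorem to get a biholomorphism of $\mathbb{C}^+$, and then applies Carath\'eodory's extension theorem to the Jordan boundary curve $\omega_t(\mathbb{R}\cup\{\infty\})$. You instead read off an explicit left inverse $\omega\mapsto t\omega-(t-1)F_\mu(\omega)$ from the subordination equation. This is more elementary and gives injectivity on $\mathbb{C}^+\cup\mathbb{R}$ cleanly: collisions with $\omega_0\in\mathbb{C}^+$ are ruled out because $F_\mu$ is analytic there, and collisions with $\omega_0\in\mathbb{R}$ are ruled out by your monotonicity result (since then both preimages are real). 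Your Schwarz reflection argument for analyticity across $\mathbb{R}\setminus\mathrm{supp}(\mu^{\boxplus t})$ is also correct and arguably more transparent than the paper's appeal to analyticity of $F_{\mu^{\boxplus t}}$.

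There is, however, one real gap. You assert that $F_\mu$ extends \emph{continuously} to $\mathbb{C}^+\cup\mathbb{R}$, and use this to conclude that the inverse $\omega\mapsto t\omega-(t-1)F_\mu(\omega)$ is continuous on the range of $\omega_t$. Assumption~\ref{main assumption} does not justify this: it only bounds the density $\rho$ above and below by Jacobi weights, with no regularity in the interior of each interval, so $m_\mu$ (essentially a Hilbert transform of $\rho$) need not have continuous boundary values there. At this point in the paper you do not yet know that $\omega_t(\mathbb{R})$ avoids the interior of $\mathrm{supp}(\mu)$ (that is Lemma~\ref{distance}, proved afterwards). The fix is cheap: once injectivity is in hand, observe that $\omega_t$ is proper (e.g.\ from $\omega_t(z)=\tfrac{z}{t}+\tfrac{t-1}{t}F_{\mu^{\boxplus t}}(z)$ with $\mu^{\boxplus t}$ compactly supported), so $\omega_t$ extends to a continuous injection of the compact hemisphere $\overline{\mathbb{C}^+}\cup\{\infty\}$ into itself, hence automatically a homeomorphism onto its image. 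This replaces the unverified continuity of $F_\mu$ and completes your approach.
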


\begin{proof}[Proof]
Differentiating \eqref{freeconv} with respect to $z$ and then recalling the Nevanlinna representation in \eqref{Nevan} gives 
\begin{align}
\omega_t'(z)-1=(t-1)\,\omega_t'(z)\int_{\mathbb{R}}\frac{1}{\big(x-\omega_t(z)\big)^2}\,\widehat{\mu}(\mathrm{d}x),\qquad z\in\mathbb{C}^+.
\end{align}
By \eqref{pluspetit}, $I_{\widehat{\mu}}(\omega_t(z))\leq\frac{1}{t-1}$ for every $z\in\mathbb{C}^+$. As a consequence,
$|\omega_t'(z)-1|\leq|\omega_t'(z)|$ and 
$\mathrm{Re}\,\omega_t'(z)\geq \frac{1}{2}$ for every $z\in\mathbb{C}^+$. Therefore $\lim_{\eta\searrow 0}\mathrm{Re}\,\omega_t(E+\mathrm{i}\eta)$ is strictly increasing on the real line and so is $\mathrm{Re}\,\omega_t$, by continuity.  

Regarding the second claim, we have proven that $\omega_t'(z)$ never vanishes on $\mathbb{C}^+$. Furthermore it follows from Proposition \ref{continuous} and from \eqref{freeconv} that
\begin{align}
\omega_t(z)=\frac{z}{t}+\frac{t}{t-1}F_{\mu^{\boxplus t}}(z),\qquad z\in\mathbb{C}^+\cup\mathbb{R}.
\end{align}
Thence, $|\omega_t(z)|$ diverges, whenever $z$ diverges.
By Hadamard's global inversion theorem, the subordination function $\omega_t$ is homeomorphic on $\mathbb{C}^+$. Moreover since $\omega_t$ is analytic on $\mathbb{C}^+$, it is biholomorphic on $\mathbb{C}^+$. In particular $\omega_t$ is a conformal map on the upper half plane. Set
\begin{align}
\varphi(z):=\frac{z-\mathrm{i}}{z+\mathrm{i}},\qquad z\in\mathbb{C}^+\cup\mathbb{R}.
\end{align}
Then $\varphi$ maps the upper half plane to the unit disk and by Carathéodory's theorem, the subordination function $\omega_t$ extends to a homeomorphism of $\mathbb{C}^+\cup\mathbb{R}$ onto its image, since $\omega_t\big(\varphi^{-1}(\mathbb{R})\big)$ is a Jordan curve, i.e a continuous and non self-intersecting loop, by continuity and strict monotonicity of $\omega_t$ and by Lemma~\ref{bounded}. It then follows by \eqref{freeconv} and by the bounded convergence theorem that $F_{\mu^{\boxplus t}}$ is analytic on $\mathbb{C}^+\cup\mathbb{R}\backslash \mathrm{supp}(\mu^{\boxplus t})$ and that $\omega_t$ is biholomorphic on $\mathbb{C}^+\cup\mathbb{R}\backslash\mathrm{supp}(\mu^{\boxplus t})$. This observation concludes the proof of the last statement of this proposition.
\end{proof}

We will now rely on the assumption that the measure $\mu$ satisfies the power law behavior dictated in Assumption \ref{main assumption} to study the distance between the subordination function $\omega_t$ and the support of the absolutely continuous part of $\widehat{\mu}$. This will give us a better understanding of the function $I_{\mu}\big(\omega_t(z)\big)$, where $I_{\mu}$ is defined in \eqref{definition Imu}.
By Proposition \ref{compactsupp}, $\mu^{\boxplus t}$ is compactly supported. Let $\mathcal{E}$ be a finite union of compact intervals on the real line such that $\mathrm{supp}(\mu^{\boxplus t})\subset~\mathcal{E}$, let $x_1,...,x_{n_{\mathrm{pp}}}$ denote the pure points of the measure $\mu$ and fix $\delta_1,...,\delta_{n_{\mathrm{pp}}}>0$ sufficiently small. We then define
\begin{align}
\mathcal{J}:=\big\lbrace z=E+\mathrm{i}\eta:\ E\in\mathcal{E}\text{ and }0\leq \eta \leq 1 \big\rbrace\backslash\cup_{j=1}^{n_{\mathrm{pp}}}\omega_t^{-1}(x_j-\delta_j,x_j+\delta_j).
\end{align}
In other words, $\mathcal{J}$ is a compact set containing the support of the absolutely continuous part of $\mu^{\boxplus t}$, but not the pure points of the measure $\mu$. Our goal is to prove the following proposition.

\begin{proposition}\label{comparison}
There exists a constant $C\geq 1$ depending on the choices of $\mathcal{E}$ and of $\delta_i>0,\ 1\leq i  \leq n_{\mathrm{pp}}$, in the definition of $\mathcal{J}$ such that 
\begin{align}
C^{-1}\mathrm{Im}\, m_{\mu^{\boxplus t}}(z) 
\leq 
\mathrm{Im}\, \omega_t(z) 
\leq 
C\, \mathrm{Im}\, m_{\mu^{\boxplus t}}(z),\qquad z\in\mathcal{J}. 
\end{align} 
\end{proposition}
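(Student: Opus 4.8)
The plan is to rephrase the claimed comparison as a two‑sided bound on $I_\mu(\omega_t(z))$ and then prove that bound by localising $\omega_t(z)$ away from the real points at which the Cauchy--Stieltjes transform $m_\mu$ has an infinite boundary value. By the identity \eqref{connexionomegat}, $\mathrm{Im}\,m_{\mu^{\boxplus t}}(z)=\mathrm{Im}\,\omega_t(z)\,I_\mu(\omega_t(z))$ for $z\in\mathbb{C}^+$, and this extends to $z\in\mathcal J$ by continuity of $\omega_t$ (Proposition~\ref{continuous}); hence Proposition~\ref{comparison} is equivalent to the statement that $c\le I_\mu(\omega_t(z))\le C$ uniformly for $z\in\mathcal J$. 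The lower bound is immediate: by Lemma~\ref{bounded} the set $\omega_t(\mathcal J)$ is bounded, so $|x-\omega_t(z)|\le R$ uniformly over $x\in\mathrm{supp}(\mu)$ and $z\in\mathcal J$, whence $I_\mu(\omega_t(z))\ge R^{-2}>0$.

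For the upper bound I would first record an exact algebraic identity. Taking imaginary parts in the Nevanlinna representation \eqref{Nevan}, writing $\int\frac{1}{x-\omega}\,\widehat\mu(\mathrm dx)=m_{\widehat\mu}(\omega)$, and using $\mathrm{Im}\,m_\nu(\omega)=\mathrm{Im}\,\omega\,I_\nu(\omega)$ together with $\mathrm{Im}(1/w)=-\mathrm{Im}\,w/|w|^2$, one obtains
\[
I_\mu(\omega)=\big(1+I_{\widehat\mu}(\omega)\big)\,|m_\mu(\omega)|^2,\qquad \omega\in\mathbb{C}^+ .
\]
On the other hand, \eqref{freeconv} gives $m_\mu(\omega_t(z))=-(t-1)/\big(t\omega_t(z)-z\big)$, and \eqref{pluspetit} gives $1+I_{\widehat\mu}(\omega_t(z))\le t/(t-1)$ for $z\in\mathbb{C}^+$, hence on all of $\mathcal J$ by lower semicontinuity of $I_{\widehat\mu}$ and continuity of $\omega_t$. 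Combining these three facts reduces the upper bound to showing that $|t\omega_t(z)-z|$ is bounded below by a positive constant on $\mathcal J$.

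To prove this, suppose not: by compactness of $\mathcal J$ and continuity of $\omega_t$ there is $z_0\in\mathcal J$ with $t\omega_t(z_0)=z_0$. Since $\mathrm{Im}\,\omega_t(z)\ge\mathrm{Im}\,z>\mathrm{Im}(z/t)$ for $z\in\mathbb{C}^+$ by \eqref{freeconv1}, necessarily $z_0\in\mathbb{R}$ and $w_0:=\omega_t(z_0)=z_0/t\in\mathbb{R}$. Letting $z\to z_0$ inside $\mathbb{C}^+$ in $m_\mu(\omega_t(z))=-(t-1)/(t\omega_t(z)-z)$ shows that the boundary value of $m_\mu$ at $w_0$ is infinite, so $w_0\in\mathrm{supp}(\mu)$ and it is either (i) a pure point $x_k$ of $\mu$, or (ii) one of the endpoints $E_j^\pm$ whose exponent is $\le 0$. (The interior of a component of $\mathrm{supp}(\mu)$ contains no such point by the standard regularity of the Cauchy transform of a Jacobi-type density; an endpoint with positive exponent is not such a point because there $m_\mu$ has a finite limit, unless that limit is $0$, which is excluded by Lemma~\ref{zero}.) In case (i), $w_0\in(x_k-\delta_k,x_k+\delta_k)$ forces $z_0\in\omega_t^{-1}(x_k-\delta_k,x_k+\delta_k)$, contradicting $z_0\in\mathcal J$. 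In case (ii), the two-sided power-law bound \eqref{Jacobi} on $\rho$ near $E_j^\pm$ is used to show that the auxiliary measure $\widehat\mu$ of \eqref{Nevan} satisfies $\widehat\mu\big(B(E_j^\pm,r)\big)/r^2\to\infty$ as $r\to 0$; hence $I_{\widehat\mu}(\omega)\ge\widehat\mu\big(B(E_j^\pm,r)\big)/(2r)^2\to\infty$ with $r=\mathrm{dist}(\omega,E_j^\pm)$ as $\omega\to E_j^\pm$, and since $\omega_t(z)\to w_0=E_j^\pm$ as $z\to z_0$ this contradicts $I_{\widehat\mu}(\omega_t(z))\le 1/(t-1)$. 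Either way we obtain a contradiction, so $|t\omega_t(z)-z|\ge c>0$ on $\mathcal J$, completing the proof.

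The main obstacle is case (ii): one must extract from the power-law bound \eqref{Jacobi}, with the relevant exponent $\le 0$, the local behaviour $|m_\mu(x)|\sim|x-E_j^\pm|^{t_j^\pm}$ (and $|m_\mu(x)|\sim|\log|x-E_j^\pm||$ when the exponent is $0$), and then the resulting rate of concentration of the density $\rho/|m_\mu|^2$ of $\widehat\mu$ at $E_j^\pm$. This is a local analysis of Cauchy transforms of Jacobi-type measures; everything else — the identity for $I_\mu$, the reductions via \eqref{pluspetit} and \eqref{freeconv}, Lemma~\ref{zero}, and the compactness argument — is comparatively soft.
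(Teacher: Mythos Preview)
Your reformulation via the identity $I_\mu(\omega)=(1+I_{\widehat\mu}(\omega))|m_\mu(\omega)|^2$ and the reduction to a lower bound on $|t\omega_t(z)-z|$ is clean, and the lower bound on $I_\mu(\omega_t)$ is handled exactly as in the paper. The difficulty is entirely in your case analysis for the putative point $w_0=z_0/t$.

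There is a genuine gap in how you exclude interior points of $\mathrm{supp}(\mu_{\mathrm{ac}})$. Assumption~\ref{main assumption} only says that on each $[E_j^-,E_j^+]$ the density satisfies the two-sided power-law bound \eqref{Jacobi}; in particular, away from the endpoints $\rho$ is merely bounded above and below, with no H\"older or even continuity hypothesis. For such a density the boundary value of $m_\mu$ at an interior point can diverge (the real part can be of order $|\log\eta|$), so your appeal to ``standard regularity of the Cauchy transform of a Jacobi-type density'' is not justified here. Thus $w_0$ could be an interior, non-atomic point of $\mathrm{supp}(\mu_{\mathrm{ac}})$, and your list (i)--(ii) is incomplete.

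The natural fix is to treat interior points by the same mechanism you use in case~(ii): show $I_{\widehat\mu}(\omega)\to\infty$ as $\omega$ approaches any non-atomic point of $\mathrm{supp}(\mu_{\mathrm{ac}})$ and contradict $I_{\widehat\mu}(\omega_t)\le 1/(t-1)$. But that is precisely the content of Lemma~\ref{distance} in the paper, which covers interior points and endpoints in one stroke by estimating $I_{\widehat\mu}(\omega)=I_\mu(\omega)/|m_\mu(\omega)|^2-1$ directly (using $I_\mu(\omega)\gtrsim 1/\mathrm{Im}\,\omega$ and $|m_\mu(\omega)|\lesssim|\log\mathrm{Im}\,\omega|$ for interior points, and the power-law estimates \eqref{estimate1}--\eqref{estimate3} near endpoints). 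Once Lemma~\ref{distance} is in hand, the paper bypasses your detour entirely: positive distance from $\mathrm{supp}(\widehat\mu)$ gives $\sup_{\mathcal J}I_\mu(\omega_t)<\infty$ immediately. So your route is not wrong in spirit, but it does not save any work; the hard step (the $I_{\widehat\mu}\to\infty$ estimate on all of $\mathrm{supp}(\mu_{\mathrm{ac}})\setminus\{\text{pure points}\}$) is unavoidable, and after proving it the direct argument is shorter than the $|t\omega_t-z|$ reformulation. Your proposed route in case~(ii) via $\widehat\mu\big(B(E_j^\pm,r)\big)/r^2\to\infty$ is also more circuitous than the paper's, since it requires first extracting the density of $\widehat\mu$ as $\rho/|m_\mu|^2$ and then integrating, whereas the paper's estimate of $I_{\widehat\mu}$ uses the same local bounds on $I_\mu$ and $|m_\mu|$ without that intermediate step.
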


By the Cauchy-Stieltjes inversion formula, this shows that $E\in\mathcal{E}$ lies strictly inside the support of $\mu^{\boxplus t}$ if and only if $\mathrm{Im}\,\omega_t(E)>0$, as long as $\omega_t(E)$ is not an atom of $\mu$.
We shall prove this proposition in several steps. First we recall that for every finite Borel measure $\mu$ on $\mathbb{R}$,
\begin{align}
I_{\mu}(\omega):=\int_{\mathbb{R}}\frac{1}{|x-\omega|^2}\,\mu(\mathrm{d}x),\qquad \omega\in\mathbb{C}^+\cup\mathbb{R}\backslash\mathrm{supp}(\mu).
\end{align}

\begin{lemma}\label{bounds}
There exist three constants $c_1,$ $c_2$, $c_3>0$ such that 
\begin{align*}
\inf_{z\in\mathcal{J}} I_{\mu}\big(\omega_t(z)\big)\geq c_1,\ 
\inf_{z\in\mathcal{J}} I_{\widehat{\mu}}\big(\omega_t(z)\big)\geq c_2\ \text{and }
\sup_{z\in\mathcal{J}} I_{\widehat{\mu}}\big(\omega_t(z)\big)\leq c_3.
\end{align*}
\end{lemma}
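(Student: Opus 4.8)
The plan is to establish the three bounds separately, exploiting the localization results already obtained (Lemmas \ref{zero} and \ref{bounded}) together with the power law behavior in Assumption \ref{main assumption}. The two bounds involving $I_{\widehat\mu}$ are the easier ones and I would dispatch them first. For the upper bound $\sup_{z\in\mathcal J} I_{\widehat\mu}(\omega_t(z))\le c_3$, I would simply invoke \eqref{pluspetit}, which gives $I_{\widehat\mu}(\omega_t(z))\le \frac{1}{t-1}$ for all $z\in\mathbb C^+$, and pass to the boundary $\mathbb R$ by continuity of $\omega_t$ (Proposition \ref{continuous}) and Fatou's lemma; here one uses that $\mathcal J$ is chosen to avoid the pure points $x_j$ of $\mu$, which are precisely the points at which $\widehat\mu$ could fail to control $\omega_t$ near its support. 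So $c_3=\tfrac{1}{t-1}$ works. For the lower bound on $I_{\widehat\mu}(\omega_t(z))$, I would argue by contradiction and compactness: if no such $c_2>0$ existed, there would be a sequence $z_n\in\mathcal J$ with $I_{\widehat\mu}(\omega_t(z_n))\to 0$; by Lemma \ref{bounded} the points $\omega_t(z_n)$ stay in a fixed compact set, and along a subsequence $\omega_t(z_n)\to\omega_*$; since $\int_\R \frac{1}{|x-\omega_*|^2}\widehat\mu(\dd x)=0$ is impossible for a nonzero finite measure unless $\omega_*=\infty$ (which is excluded by boundedness), we reach a contradiction, provided one first checks that $\omega_*$ cannot be a pure point of $\widehat\mu$ — but $I_{\widehat\mu}(\omega_t(z_n))\to\infty$ in that case, not $0$, so that scenario is harmless. (One should also use that $\widehat\mu(\R)=\int x^2\,\mu(\dd x)>0$ by \eqref{finiteness}, so $\widehat\mu\ne 0$.)

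For the remaining bound $\inf_{z\in\mathcal J} I_\mu(\omega_t(z))\ge c_1$, I would again argue by compactness and contradiction. Suppose $I_\mu(\omega_t(z_n))\to 0$ for some sequence $z_n\in\mathcal J$. By Lemma \ref{bounded}, $\omega_t(z_n)$ lies in a fixed compact set, so along a subsequence $\omega_t(z_n)\to\omega_*\in\mathbb C^+\cup\mathbb R$ with $\omega_*$ finite. Since $I_\mu(\omega)\ge \big(\mathrm{diam}(\mathrm{supp}(\mu)\cup\{\omega\})\big)^{-2}\mu(\R)$ whenever $\omega$ lies in a bounded region, $I_\mu(\omega_t(z_n))$ is bounded below by a positive constant unless $\mathrm{Im}\,\omega_*=0$ and $\omega_*$ is an isolated point relative to $\mathrm{supp}(\mu)$ — but then $I_\mu(\omega_*)$ is still a strictly positive finite number, so in fact $I_\mu$ is bounded away from $0$ on the whole compact set, \emph{except possibly} where $\omega_*\in\mathrm{supp}(\mu_{\mathrm{ac}})$ and the integral $\int\frac{1}{|x-\omega_*|^2}\mu(\dd x)$ could diverge (making $I_\mu$ large, hence again harmless) or at a pure point (where it diverges too). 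Thus the only way $I_\mu(\omega_t(z_n))$ could tend to $0$ is if $\omega_t(z_n)$ escapes to infinity, contradicting Lemma \ref{bounded}. Actually the cleanest route: $I_\mu$ is continuous and strictly positive on the compact set $\omega_t(\mathcal J)\subset\mathbb C^+\cup\mathbb R$ — here one must verify $\omega_t(\mathcal J)$ is compact, which follows since $\mathcal J$ is compact and $\omega_t$ is continuous on $\mathbb C^+\cup\mathbb R$ with the convention that $\mathcal J$ avoids the preimages of the $x_j$ — and a continuous strictly positive function on a compact set attains a positive minimum.

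The main obstacle is handling the interaction of $\omega_t$ with the support of $\mu$: on $\mathrm{supp}(\mu_{\mathrm{ac}})$ and at the pure points $x_j$, the function $I_\mu(\omega)$ may blow up, and one must be certain that this blow-up never turns into a \emph{vanishing} — i.e. that $\omega_t(\mathcal J)$ never approaches a point where $I_\mu=0$, which for a compactly supported measure can only happen at $\infty$. The delicate point is therefore to confirm that, after excising the neighborhoods $\omega_t^{-1}(x_j-\delta_j,x_j+\delta_j)$, the image $\omega_t(\mathcal J)$ is a genuine compact subset of $\mathbb C^+\cup\mathbb R$ on which all three integrals $I_\mu$, $I_{\widehat\mu}$ are continuous and finite where needed; once that is in place, the three estimates follow from elementary compactness arguments combined with \eqref{pluspetit}. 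I would also remark that the constants $c_1,c_2,c_3$ genuinely depend on the choices of $\mathcal E$ and the $\delta_i$, exactly as stated, because shrinking the $\delta_i$ brings $\omega_t(\mathcal J)$ closer to the pure points of $\mu$.
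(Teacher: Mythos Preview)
Your proposal is correct and follows essentially the same approach as the paper: the two lower bounds come from the boundedness of $\omega_t$ on $\mathcal J$ (Lemma \ref{bounded}) together with the strict positivity of the integrands, and the upper bound on $I_{\widehat\mu}$ is exactly \eqref{pluspetit}. The paper's proof is considerably terser---for the lower bounds it simply observes that $\tfrac{1}{|x-\omega_t(z)|^2}$ is strictly positive and $\omega_t$ is bounded, which immediately gives $I_\mu(\omega_t(z))\ge \mu(\R)\big/\!\sup_{x,z}|x-\omega_t(z)|^2>0$ without any compactness--contradiction machinery or case analysis on the location of $\omega_*$; your worries about $\omega_*$ landing in $\mathrm{supp}(\mu_{\mathrm{ac}})$ or at a pure point are unnecessary here since those scenarios make $I_\mu$ large, not small, and the excision of the $\delta_j$-neighborhoods plays no role in this lemma (it matters only later, in Lemma \ref{distance}, for the \emph{upper} bound on $I_\mu$).
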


\begin{proof}[Proof]
The first and second estimates follow from Lemma \ref{bounded}, since $\frac{1}{|x-\omega_t(z)|^2}$ is a strictly positive function of $x$, whenever $z\in\mathcal{J}$.
The last identity follows from the Nevanlinna representation in \eqref{Nevan}, from \eqref{freeconv} and \eqref{freeconv1} since
\begin{align}
I_{\widehat{\mu}}\big(\omega_t(z)\big)=\frac{1}{t-1}\Big(1-\frac{\mathrm{Im}\,z}{\mathrm{Im}\,\omega_t(z)}\Big)\leq\frac{1}{t-1},\qquad z\in\mathbb{C}^+.
\end{align}
This observation concludes the proof of this lemma.
\end{proof}

We can now turn to one of the main features that the subordination function $\omega_t$ exhibits, when $\mu$ satisfies Assumption \ref{main assumption}.

\begin{lemma}\label{distance}
Let $\mu$ be a Borel probability measure satisfying Assumption \ref{main assumption} and let $t>1$.
There exists a constant $C>0$ depending on the measure $\mu$, on $t$ and on the choices of $\mathcal{E}$ and of $\delta_i>0,\, 1\leq i\leq n_{\mathrm{pp}}$ in the definition of $\mathcal{J}$, such that
\begin{align}
\inf_{z\in\mathcal{J}}\mathrm{dist}\big(\omega_t(z),\,\mathrm{supp}(\widehat{\mu})\big)\geq C
\text{ and }
\sup_{z\in\mathcal{J}} I_{\mu}\big(\omega_t(z)\big)\leq\frac{1}{C^2}.
\end{align}
In other words, $\omega_t$ stays at positive distance from the support of $\widehat{\mu}$ as long as it does not land on a pure point of the measure $\mu$.
\end{lemma}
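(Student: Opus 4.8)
The plan is to split $\mathrm{supp}(\widehat{\mu})$, which by Proposition \ref{Nevanlinna representation} equals $\mathrm{supp}(\mu_{\mathrm{ac}})\cup\{y_1,\dots,y_j\}$ with each $y_i$ a zero of $m_\mu$, into the finitely many zeros $y_i$ --- kept at a distance by Lemma \ref{zero} --- and the absolutely continuous part $\mathrm{supp}(\mu_{\mathrm{ac}})$, kept at a distance using the Jacobi behavior of Assumption \ref{main assumption}. For the first part: since $\omega_t$ is continuous on the compact set $\mathcal{J}$ and bounded there (Lemma \ref{bounded}), $\omega_t(\mathcal{J})$ is compact, and it cannot contain a zero $E$ of $m_\mu$, for otherwise $E=\omega_t(z_0)$ for some $z_0\in\mathcal{J}$ and \eqref{freeconv} would give the absurdity $t\,\omega_t(z_0)-z_0=(t-1)F_\mu(E)=\infty$; hence $\mathrm{dist}(\omega_t(\mathcal{J}),\{y_1,\dots,y_j\})\ge c_1>0$. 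By the construction of $\mathcal{J}$ one also has $\mathrm{dist}(\omega_t(z),x_k)\ge c_2>0$ for all $z\in\mathcal{J}$ and every pure point $x_k$ of $\mu$. It then remains to bound $\mathrm{dist}(\omega_t(z),\mathrm{supp}(\mu_{\mathrm{ac}}))$ from below uniformly on $\mathcal{J}$.

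For this I would argue by contradiction. Suppose $\inf_{z\in\mathcal{J}}\mathrm{dist}(\omega_t(z),\mathrm{supp}(\mu_{\mathrm{ac}}))=0$; by continuity of $\omega_t$ and compactness of $\mathcal{J}$ the infimum is attained at some $z_0\in\mathcal{J}$, so $E_0:=\omega_t(z_0)\in\mathrm{supp}(\mu_{\mathrm{ac}})\subset\mathbb{R}$, and since $z_0\in\mathcal{J}$ the point $E_0$ is not a pure point of $\mu$, whence $\mu(\{E_0\})=0$. I would then fix $\delta>0$ so small that $\mu(N)<\tfrac{t-1}{2t}$, where $N:=(E_0-\delta,E_0+\delta)$. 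Taking imaginary parts in \eqref{Nevan} and using $\mathrm{Im}\,m_\nu(\omega)=\mathrm{Im}(\omega)\,I_\nu(\omega)$ for $\omega\in\mathbb{C}^+$ gives the identity $I_\mu(\omega)=|m_\mu(\omega)|^2\big(1+I_{\widehat{\mu}}(\omega)\big)$ on $\mathbb{C}^+$, which together with \eqref{pluspetit} yields
\begin{align*}
I_\mu\big(\omega_t(z)\big)\le\tfrac{t}{t-1}\,\big|m_\mu\big(\omega_t(z)\big)\big|^2,\qquad z\in\mathbb{C}^+.
\end{align*}
Now put $w_n:=z_0+\mathrm{i}/n\in\mathbb{C}^+$, so that $\omega_t(w_n)\to E_0$ and $\omega_t(w_n)\in B(E_0,\delta/2)$ for $n$ large. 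Writing $\mu=\mu|_N+\mu|_{N^c}$: for $\omega\in B(E_0,\delta/2)$ the support of $\mu|_{N^c}$ is at distance $\ge\delta/2$ from $\omega$, so $|m_{\mu|_{N^c}}(\omega)|$ and $I_{\mu|_{N^c}}(\omega)$ are bounded, while Cauchy-Schwarz gives $|m_{\mu|_N}(\omega)|^2\le\mu(N)\,I_{\mu|_N}(\omega)$; hence $|m_\mu(\omega)|^2\le 2\mu(N)\,I_{\mu|_N}(\omega)+C_\delta$. Inserting this into the displayed inequality at $\omega=\omega_t(w_n)$ and using $I_{\mu|_N}\le I_\mu$ together with $1-\tfrac{2t\mu(N)}{t-1}>0$, I would deduce $\sup_n I_{\mu|_N}(\omega_t(w_n))<\infty$. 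But $E_0\in\mathrm{supp}(\mu_{\mathrm{ac}})$ and \eqref{Jacobi} force $\mu((E_0-\epsilon,E_0+\epsilon))\ge c\,\epsilon^{\kappa+1}$ for small $\epsilon$, where $\kappa\in(-1,1)$ is $0$ if $E_0$ is interior to $\mathrm{supp}(\mu_{\mathrm{ac}})$ and the relevant endpoint exponent otherwise; hence $I_{\mu|_N}(\omega)\ge c'\,\mathrm{dist}(\omega,E_0)^{\kappa-1}\to\infty$ as $\omega\to E_0$, a contradiction. Thus $\mathrm{dist}(\omega_t(z),\mathrm{supp}(\mu_{\mathrm{ac}}))\ge c_3>0$ on $\mathcal{J}$.

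Setting $C:=\min(c_1,c_2,c_3)$ gives the first assertion. For the second, since $\mathrm{supp}(\mu)\subseteq\mathrm{supp}(\mu_{\mathrm{ac}})\cup\{x_1,\dots,x_{n_{\mathrm{pp}}}\}$ and $\omega_t(z)$ is at distance $\ge C$ from both pieces, every $x\in\mathrm{supp}(\mu)$ satisfies $|x-\omega_t(z)|\ge C$, so $I_\mu(\omega_t(z))=\int|x-\omega_t(z)|^{-2}\,\mu(\mathrm{d}x)\le C^{-2}$, as claimed. I expect the main obstacle to be the lower bound $I_{\mu|_N}(\omega)\to\infty$ near points of $\mathrm{supp}(\mu_{\mathrm{ac}})$: this is where the strict inequality $t_j^\pm<1$ is essential (it guarantees $\mu$ has at least a sublinear amount of mass near every such point; were some exponent $\ge 1$ the quantity $I_{\mu|_N}$ could stay bounded and the mechanism would fail, matching the restriction flagged after Theorem \ref{main theorem}), and it relies on the standard but slightly delicate estimate for the Cauchy-Stieltjes transform of a Jacobi-type weight near an endpoint. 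Everything else is bookkeeping with Cauchy-Schwarz, compactness, and the Nevanlinna representation.
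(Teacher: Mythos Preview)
Your argument is correct and reaches the same conclusion as the paper, but the route you take for the part ``$\omega_t$ stays away from $\mathrm{supp}(\mu_{\mathrm{ac}})$'' is genuinely different. The paper proves that $I_{\widehat{\mu}}(\omega)\to\infty$ as $\omega$ approaches any non-atomic point of $\mathrm{supp}(\mu_{\mathrm{ac}})$, using the identity $I_{\widehat{\mu}}(\omega)=I_\mu(\omega)/|m_\mu(\omega)|^2-1$ together with \emph{two} families of Jacobi-type estimates: a lower bound on $I_\mu$ and explicit logarithmic upper bounds on $|m_\mu|$ near the endpoints (the estimates \eqref{estimate1}--\eqref{estimate3}, quoted from \cite{Bao20}). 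The contradiction with \eqref{pluspetit} then follows. Your argument rewrites the same a priori bound as $I_\mu(\omega_t)\le \tfrac{t}{t-1}|m_\mu(\omega_t)|^2$, localizes via the splitting $\mu=\mu|_N+\mu|_{N^c}$, and applies Cauchy--Schwarz to $m_{\mu|_N}$ so as to absorb $|m_\mu|^2$ back into $I_{\mu|_N}$; this yields a uniform bound on $I_{\mu|_N}(\omega_t(w_n))$, contradicted by the divergence of $I_{\mu|_N}$ at $E_0$. The gain is that you only need the \emph{lower} bound on $I_\mu$ near the support (which is the easier half of the Jacobi estimates) and never need the upper bounds on $|m_\mu|$; the cost is the extra bookkeeping with the localization and the factor $2$ from $|a+b|^2\le 2|a|^2+2|b|^2$, which forces the choice $\mu(N)<\tfrac{t-1}{2t}$. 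Both approaches hinge on $t_j^\pm<1$ through the divergence of $I_\mu$ at boundary points, exactly as you note.
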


\begin{remark}
As a summary, the proof of this lemma consists in showing that there exists some constant $C\geq 1$ such that
\begin{align}
C^{-1} \leq I_{\mu}\big(\omega_t(z)\big) \leq C,\qquad z\in \mathcal{J}.
\end{align}
This proof extends Lemma $3.12$ in \cite{Bao20} to the case where $\mu$ can admit a pure point part.  
\end{remark}

\begin{proof}[Proof]
We have already proven in Lemma \ref{zero} that $\omega_t$ stays at positive distance from the pure points of $\widehat{\mu}$. We will now prove that it stays at positive distance from the connected components of $\mathrm{supp}(\widehat{\mu}_{\mathrm{ac}})$.
Let $1\leq j\leq n_{\mathrm{ac}}$ and consider the interval $[E_{j}^{-},E_{j}^{+}]$ in the support of $\widehat{\mu}$. Denote by $t_{j}^{-}$ and $t_{j}^{+}$ the corresponding exponents in the power law dictated in \eqref{Jacobi}. By the Nevanlinna representation in \eqref{Nevan},
\begin{align}
\int_{\mathbb{R}}\frac{1}{|x-\omega|^2}\,\widehat{\mu}(\mathrm{d}x)
=
\frac{\mathrm{Im}\,m_{\mu}(\omega)}{|m_{\mu}(\omega)|^2\,\mathrm{Im}\,\omega}-1
=
\frac{\int_{\mathbb{R}}\frac{1}{|x-\omega|^2}\,\mu(\mathrm{d}x)}{|\int_{\mathbb{R}}\frac{1}{x-\omega}\,\mu(\mathrm{d}x)|^2}-1.
\end{align}

We will now show that $I_{\widehat{\mu}}(\omega)$ diverges when $\omega$ approaches any non-atomic point of $[E_{j}^{-},E_{j}^{+}]$ in $\mathbb{C}^+\backslash\mathrm{supp}(\widehat{\mu})$. 

To do so, let us treat the endpoints of $\mathrm{supp}(\mu)$ first. Since $E_{j}^{-}$ is not a pure point of the measure $\mu$ by Assumption \ref{main assumption}, the following classical bounds, see e.g \cite[Lemma 3.12]{Bao20}, follow from the power law behavior in Assumption \ref{main assumption}. Let $\delta>0$ be sufficiently small. If $|E_{j}^{-}-\omega|<\delta$, there exists $c>0$ depending on $\delta$ such that
\begin{align}\label{estimate1}
\int_{\mathbb{R}}\frac{1}{|x-\omega|^2}\,\mu(\mathrm{d}x)\geq c \left\{
    \begin{array}{ll}
        \frac{(\mathrm{Re}\,\omega-E_{j}^{-})^{t_{j}^{-}}}{\mathrm{Im}\,\omega} & \mbox{if } \mathrm{Re}\,\omega-E_{j}^{-}>\mathrm{Im}\,\omega, \\
        (E_{j}^{-} -\mathrm{Re}\,\omega)^{t^{-}_{j}-1} & \mbox{if } \mathrm{Re}\,\omega-E_{j}^{-}<-\mathrm{Im}\,\omega,\\
        (\mathrm{Im}\,\omega)^{t^{-}_{j} -1} & \mbox{if }  |E_{j}^{-}-\mathrm{Re}\,\omega|\leq\mathrm{Im}\,\omega.
    \end{array}
\right.
\end{align}
Now whenever $t^{-}_{j}\geq 0$ and $|E^{-}_{j}-\omega|<\delta$, there exists $C_1>0$ such that 
\begin{align}\label{estimate2}
\Big|\int_{\mathbb{R}}\frac{1}{x-\omega}\,\mu(\mathrm{d}x)\Big|\leq C_1 \left\{
    \begin{array}{ll}
        |\log\mathrm{Im}\,\omega| & \mbox{if } \mathrm{Re}\,\omega-E^{-}_{j}>\mathrm{Im}\,\omega, \\
        |\log(E^{-}_{j}-\mathrm{Re}\,\omega)| & \mbox{if } \mathrm{Re}\,\omega-E^{-}_{j}<-\mathrm{Im}\,\omega,\\
        |\log\mathrm{Im}\,\omega| & \mbox{if }  |E^{-}_{j}-\mathrm{Re}\,\omega|\leq\mathrm{Im}\,\omega.
    \end{array}
\right.
\end{align}
If $t^{-}_j< 0$ and $|E^{-}_{j}-\omega|<\delta$, then there exists $C_2>0$ such that 
\begin{align}\label{estimate3}\hspace{-0.3cm}
\Big|\int_{\mathbb{R}}\frac{1}{x-\omega}\,\mu(\mathrm{d}x)\Big|\leq C_2 \left\{
    \begin{array}{ll}
        |\log\mathrm{Im}\,\omega|(\mathrm{Re}\,\omega-E^{-}_{j})^{t^{-}_{j}} & \mbox{if } \mathrm{Re}\,\omega-E^{-}_{j}>\mathrm{Im}\,\omega, \\
        |\log(E^{-}_{j}-\mathrm{Re}\,\omega)||\mathrm{Re}\,\omega-E^{-}_{j}|^{t^{-}_{j}} & \mbox{if } \mathrm{Re}\,\omega-E^{-}_{j}<-\mathrm{Im}\,\omega,\\
        |\log\mathrm{Im}\,\omega|(\mathrm{Im}\,\omega)^{t^{-}_{j}} & \mbox{if }  |E^{-}_{j}-\mathrm{Re}\,\omega|\leq\mathrm{Im}\,\omega.
    \end{array}
\right.
\end{align}
We conclude that $I_{\widehat{\mu}}(\omega)$ diverges when $\omega$ approaches an endpoint of the support of $\mu_{\mathrm{ac}}$. Indeed, when $t_{j}^{-}\geq 0$, we observe from \eqref{estimate1} and \eqref{estimate2} that there exists $C_3>0$ such that
\begin{align}
I_{\widehat{\mu}}(\omega)+1\geq C_3 \left\{
    \begin{array}{ll}
        \frac{(\mathrm{Re}\,\omega-E_{j}^{-})^{t_{j}^{-}}}{\mathrm{Im}\,\omega(\log \mathrm{Im}\,\omega)^2} & \mbox{if } \mathrm{Re}\,\omega-E^{-}_{j}>\mathrm{Im}\,\omega, \\
        \frac{|\mathrm{Re}\,\omega-E_{j}^{-}|^{t_{j}^{-}-1}}{|\log(E_{j}^{-}-\mathrm{Re}\,\omega)|^2} & \mbox{if } \mathrm{Re}\,\omega-E^{-}_{j}<-\mathrm{Im}\,\omega,\\
        \frac{(\mathrm{Im}\,\omega)^{t_{j}^{-}-1}}{(\log \mathrm{Im}\,\omega)^2} & \mbox{if }  |E^{-}_{j}-\mathrm{Re}\,\omega|\leq\mathrm{Im}\,\omega,
    \end{array}
\right.
\end{align}
when $|\omega-E_{j}^{-}|<\delta$. Moreover when $t_{j}^{-}<0$, it follows from \eqref{estimate1} and \eqref{estimate3} that there exists $C_4>0$ such that
\begin{align}
I_{\widehat{\mu}}(\omega)+1\geq C_4 \left\{
    \begin{array}{ll}
        \frac{(\mathrm{Re}\,\omega-E_{j}^{-})^{-t_{j}^{-}}}{\mathrm{Im}\,\omega|\log \mathrm{Im}\,\omega|^2} & \mbox{if } \mathrm{Re}\,\omega-E^{-}_{j}>\mathrm{Im}\,\omega, \\
        \frac{|\mathrm{Re}\,\omega-E_{j}^{-}|^{-t_{j}^{-}-1}}{|\log(E_{j}^{-}-\mathrm{Re}\,\omega)|^2} & \mbox{if } \mathrm{Re}\,\omega-E^{-}_{j}<-\mathrm{Im}\,\omega,\\
        \frac{(\mathrm{Im}\,\omega)^{-t_{j}^{-}-1}}{|\log \mathrm{Im}\,\omega|^2} & \mbox{if }  |E^{-}_{j}-\mathrm{Re}\,\omega|\leq\mathrm{Im}\,\omega,
    \end{array}
\right.
\end{align}
when $|\omega-E_{j}^{-}|<\delta$. We now show that for any $\delta>0$  sufficiently small, $I_{\widehat{\mu}}(\omega)$ diverges whenever $\omega$ approaches any non-atomic point of $[E^{-}_{j}+\delta,E^{+}_{j}-\delta].$ The density $\rho$ of the measure $\mu$ is almost everywhere strictly positive on $[E^{-}_{j}+\delta,E^{+}_{j}-\delta]$ and there exist constants $\delta'>0$ and $c>0$ so that for every $\omega\in\mathbb{C}^+$ such that $\mathrm{dist}\big(\omega,[E^{-}_{j}+\delta,E^{+}_{j}-\delta]\big)<\delta'$, 
\begin{align}
\int_{\mathbb{R}}\frac{1}{|x-\omega|^2}\,\mu(\mathrm{d}x)\geq 
c\,\frac{1}{\mathrm{Im}\,\omega}.
\end{align}
Moreover since the density $\rho$ is finite at every point which is not a pure point of $\mu$ or an endpoint of its support, there exist two positive constants $C_1$ and $C_2$, so that for every $\omega$ such that 
\begin{align}
\mathrm{dist}\big(\omega,[E_{j}^{-}+\delta,E_{j}^{+}-\delta]\big)<\delta' \text{ and }\mathrm{dist}(\omega,x_j)\geq\delta_j\text{ for every }1\leq j \leq k,
\end{align}
we have the bound
\begin{align}
\Big|\int_{\mathbb{R}}\frac{1}{x-\omega}\,\mu(\mathrm{d}x)\Big|
\leq 
C_1 + C_2\int_{[E_{j}^{-}+\delta,E_{j}^{+}-\delta]}\frac{1}{|x-\omega|}\,\mathrm{d}x
\leq 
C_1 + C_2|\log\mathrm{Im}\,\omega|. 
\end{align}

Consequently it follows that $I_{\widehat{\mu}}(\omega)$ diverges as predicted. However, by \eqref{pluspetit}
\begin{align}
\int_{\mathbb{R}}\frac{1}{|x-\omega_t(z)|^2}\,\widehat{\mu}(\mathrm{d}x)\leq\frac{1}{t-1},\qquad  z\in\mathbb{C}^+\cup\mathbb{R}.
\end{align}
Hence $\omega_t$ remains at positive distance from $\mathcal{J}$. This proves the lemma.
\end{proof}

Now that we have shown that the subordination function $\omega_t$ remains at positive distance from the non-atomic points of the support of $\mu$, we can turn to the proof of Proposition \ref{comparison}.

\begin{proof}[Proof of Proposition \ref{comparison}]
Summarizing the work that has been done so far, we have shown that $I_{\mu}\big(\omega_t(z)\big)$ only diverges whenever $\omega_t(z)$ approaches the support of the singular part of the measure $\mu$. Therefore by construction of $\mathcal{J}$ and 
by Lemmas \ref{bounds} and \ref{distance}, there exists a constant $C\geq 1$ such that 
\begin{align}
\inf_{z\in\mathcal{J}}I_{\mu}\big(\omega_t(z)\big)\geq C^{-1} \text{ and } \sup_{z\in\mathcal{J}}I_{\mu}\big(\omega_t(z)\big)\leq C.
\end{align}
Recalling from \eqref{connexionomegat} that
$
\mathrm{Im}\,m_{\mu^{\boxplus t}}(z)=\mathrm{Im}\,\omega_t(z)\, I_{\mu}\big(\omega_t(z)\big),$ for any $z\in\mathbb{C}^+\cup\mathbb{R},
$
concludes the proof.
\end{proof}

As another consequence of Lemma \ref{distance}, we get a stronger result on the differential structure of the subordination function $\omega_t$. First let us recall the following notion of derivative.

\begin{definition}
Let $\nu$ be a Borel measure on $\mathbb{R}$ and $E$ be a real point. For any non-tangential sequence $(z_n)_{n\geq 1}$ in $\mathbb{C}^+$ converging to $E$, 
\begin{align}\label{julia derivative}
\lim_{n}\frac{F_{\nu}(z_n)-F_{\nu}(E)}{z_n-E}
\end{align}
is called the Julia-Carathéodory derivative of $F_{\nu}$, whenever it exists.
\end{definition}

\begin{proposition}\label{differential structure}
Let $t>1$ and let $\mu$ be a Borel probability measure on $\mathbb{R}$ satisfying Assumption \ref{main assumption}. The negative reciprocal Cauchy-Stieltjes transform $F_\mu$ is analytic on the domain
\begin{align}
\omega_t\big(\mathbb{C}^+\cup\mathbb{R}\big)\backslash\Big\lbrace E\in{\mathrm{supp}(\mu_{\mathrm{ac}})}:\ E\text{ is a pure point of $\mu$}\Big\rbrace.
\end{align}
Moreover the Julia-Carathéodory derivative of $F_{\mu}$ exists at every pure point of $\mu$. Likewise if we define
\begin{align}
\mathcal{U}:=\mathbb{C}^+\cup\mathbb{R}\backslash \Big\lbrace E\in\mathbb{R}:\ F_{\mu}'\big(\omega_t(E)\big)=\frac{t}{t-1}\text{ or }E\text{ is a pure point of $\mu$}\Big\rbrace,
\end{align}
then $\omega_t$ is a biholomorphism of $\mathcal{U}$ onto its image.
and $\frac{\omega_t(z)-\omega_t(E)}{z-E}$ diverges as $z$ approaches $E$ non-tangentially if and only if $E$ is a real point such that $F_{\mu}'\big(\omega_t(E)\big)=\frac{t}{t-1}$.
\end{proposition}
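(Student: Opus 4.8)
The plan is to deduce Proposition~\ref{differential structure} from the structural results already established for $\omega_t$, chiefly Proposition~\ref{increasing}, Lemma~\ref{distance}, and the subordination identity~\eqref{freeconv}. First I would address analyticity of $F_\mu$ on $\omega_t(\mathbb C^+\cup\mathbb R)$ minus the atoms of $\mu$. Lemma~\ref{distance} shows $\omega_t(z)$ stays at positive distance from $\mathrm{supp}(\widehat\mu)$ whenever $\omega_t(z)$ is not an atom of $\mu$, and by~\eqref{support} the singular part of $\widehat\mu$ consists only of atoms coming from zeros of $m_\mu$, while $\mathrm{supp}(\widehat\mu_{\mathrm{ac}})=\mathrm{supp}(\mu_{\mathrm{ac}})$. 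So on the relevant domain $\omega_t(z)$ stays bounded away from $\mathrm{supp}(\widehat\mu)$, hence $m_{\widehat\mu}$ is analytic there by Schwarz reflection, and therefore so is $m_\mu$ (via~\eqref{Nevan}) and $F_\mu=-1/m_\mu$, once one checks $m_\mu$ does not vanish there — but $\omega_t$ avoids zeros of $m_\mu$ by Lemma~\ref{zero}. This gives the first assertion.

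Next I would handle the Julia-Carathéodory derivative at a pure point $E_0$ of $\mu$. Here $F_\mu(\omega)=-1/m_\mu(\omega)$ and near $E_0$ one has $m_\mu(\omega)\sim -\mu(\{E_0\})/(\omega-E_0)$, so $F_\mu(\omega)\to 0$ and $F_\mu(\omega)/(\omega-E_0)\to 1/\mu(\{E_0\})$ along any non-tangential approach; this is exactly the existence of the Julia-Carathéodory derivative. One should phrase this carefully using the Nevanlinna representation to control the remainder, but it is essentially the standard computation for the reciprocal Cauchy transform at an atom.

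For the biholomorphism statement on $\mathcal U$, I would differentiate~\eqref{freeconv}: from $t\omega_t'(z)-1=(t-1)F_\mu'(\omega_t(z))\omega_t'(z)$ one gets
\begin{align*}
\omega_t'(z)\Big(t-(t-1)F_\mu'(\omega_t(z))\Big)=1.
\end{align*}
On $\mathcal U$ the factor $t-(t-1)F_\mu'(\omega_t(z))$ is nonzero by definition, and $F_\mu'\circ\omega_t$ is analytic there by the first part, so $\omega_t'$ is analytic and nonvanishing on $\mathcal U$; combined with injectivity of $\omega_t$ (from Proposition~\ref{increasing}, $\mathrm{Re}\,\omega_t$ strictly increasing plus the homeomorphism property) this gives the local biholomorphism, and injectivity upgrades it to a global biholomorphism onto its image. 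The divergence characterization of $\frac{\omega_t(z)-\omega_t(E)}{z-E}$ then follows from the same displayed identity: the difference quotient is the reciprocal of the difference quotient of $z\mapsto t\omega_t(z)-(t-1)F_\mu(\omega_t(z))$ evaluated via the chain rule, which tends to $t-(t-1)F_\mu'(\omega_t(E))$, so the quotient blows up exactly when this limit is zero, i.e.\ when $F_\mu'(\omega_t(E))=\frac{t}{t-1}$.

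The main obstacle I anticipate is the bookkeeping at points $E$ where $\omega_t(E)$ is an endpoint of $\mathrm{supp}(\mu_{\mathrm{ac}})$ or where $\omega_t(E)$ could \emph{a priori} be a zero of $m_\mu$ that also lies in $\mathrm{supp}(\mu_{\mathrm{ac}})$: one must invoke Lemma~\ref{zero}, Lemma~\ref{distance}, and~\eqref{support} together to rule out the bad overlaps, and be careful that ``$E$ is a pure point of $\mu$'' in the statement really means ``$\omega_t(E)$ is a pure point of $\mu$'' — reconciling this with the fact that $\omega_t$ is a bijection makes the two readings equivalent, but the argument should say so explicitly. Also, verifying the non-tangential (Julia-Carathéodory) limit rather than an unrestricted limit requires using that $\mathrm{Im}\,\omega_t(z)\ge\mathrm{Im}\,z$, so non-tangential approach to $E$ is sent to non-tangential approach to $\omega_t(E)$, which then feeds into the standard atomic asymptotics.
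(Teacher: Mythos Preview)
Your proposal is correct and follows essentially the same approach as the paper: analyticity of $F_\mu$ via the Nevanlinna representation and Lemma~\ref{distance}, the Julia--Carath\'eodory derivative at atoms via $(E-z)m_\mu(z)\to\mu(\{E\})$, and the biholomorphism/divergence dichotomy by differentiating the subordination equation~\eqref{freeconv} and invoking the homeomorphism from Proposition~\ref{increasing}. The paper gets analyticity of $F_\mu$ slightly more directly by writing $F_\mu(\omega)=\omega+m_{\widehat\mu}(\omega)$ from~\eqref{Nevan} (so no separate appeal to Lemma~\ref{zero} to rule out $m_\mu=0$ is needed), but otherwise the arguments coincide, and your remark that ``$E$ is a pure point of $\mu$'' in the definition of $\mathcal U$ should be read as ``$\omega_t(E)$ is a pure point of $\mu$'' is well taken.
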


\begin{proof}
The first claim follows from the Nevanlinna representation in \eqref{Nevan} and  Lemma \ref{distance}. Indeed, the subordination function $\omega_t$ stays at positive distance from any non-atomic point of $\mathrm{supp}(\widehat{\mu})$. Therefore the bounded convergence theorem implies that for any non-atomic point $\omega\in\omega_t(\mathbb{C}^+\cup\mathbb{R})$, the function
$
\int_{\mathbb{R}}\frac{1}{x-\omega}\,\widehat{\mu}(\mathrm{d}x)
$
is analytic and so is $F_{\mu}$.
To prove that the Julia-Carathéodory derivative exists at every pure point $E$ of $\mu$, simply notice that
\begin{align}
\frac{F_{\mu}(z)-F_{\mu}(E)}{z-E}
=
\frac{F_{\mu}(z)}{z-E}
=
\frac{1}{m_{\mu}(z)(E-z)}
\end{align}
and the latter converges to $\frac{1}{\mu(\lbrace E \rbrace)}$ as $z$ approaches $E$, by Lemma \ref{singular continuous}. Moreover if $E$ lies at positive distance from $\mathrm{supp}(\mu_\mathrm{ac})$, then by the Nevanlinna representation in \eqref{Nevan}, $F_\mu$ is analytic at $E$.

To prove the second claim, let $E$ be such that $F_{\mu}$ is analytic at $\omega_t(E)$. Then the subordination equation \eqref{freeconv} yields
\begin{align}
\Big(1-\frac{t-1}{t}F_{\mu}'\big(\omega_t(E)\big) \Big)\,\lim_{z\to E}\frac{\omega_t(z)-\omega_t(E)}{z-E}= \frac{1}{t},\qquad z\in\mathbb{C}^+\cup\mathbb{R}.
\end{align}
Therefore, it follows that the subordination function $\omega_t$ is analytic on $\mathcal{U}$. Moreover by Proposition \ref{increasing}, $\omega_t$ is homeomorphic on $\mathbb{C}^+\cup\mathbb{R}$. It is hence biholomorphic on $\mathcal{U}$ and the last assertion of the proposition is proven. 
\end{proof}

Let $E$ be a pure point of $\mu$. Since the subordination function $\omega_t$ is a homeomorphism on the domain $\mathbb{C}^+\cup\mathbb{R}$, we can explicitly construct a sequence $(z_n)_{n\geq 1}$ converging to $E$ such that $\big(\omega_t(z_n)\big)_{n\geq 1}$ is non-tangential. Making use of the second statement of Lemma \ref{singular continuous}, we will study the behavior of $\omega_t(z)$, when $z$ is close to the support of the singular part of the measure $\mu$ and will evoke some classical results.

\begin{lemma}\label{distance pure point}
Let $E$ be a pure point of the measure $\mu$ and let $(z_n)_{n\geq 1}$ be a non-tangential sequence converging to $E$. Then
\begin{align}\label{limiteImuhat}
\lim_n I_{\widehat{\mu}}(z_n)=
\lim_n\frac{\int_{\mathbb{R}}\frac{1}{|x-z_n|^2}\,\mu(\mathrm{d}x)}{\big|\int_{\mathbb{R}}\frac{1}{x-z_n}\,\mu(\mathrm{d}x)\big|^2}-1
=
\frac{1}{\mu(\big\lbrace E \big\rbrace)}-1.
\end{align}
\end{lemma}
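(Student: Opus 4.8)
The plan is to deduce the second equality from the atom--detection formula Lemma~\ref{singular continuous}(ii), after peeling the point mass of $\mu$ at $E$ off. The first equality requires nothing new: it is the pointwise identity
\begin{align*}
I_{\widehat\mu}(\omega)=\frac{\int_{\R}\frac{1}{|x-\omega|^{2}}\,\mu(\dd x)}{\big|\int_{\R}\frac{1}{x-\omega}\,\mu(\dd x)\big|^{2}}-1,\qquad\omega\in\C^{+},
\end{align*}
already derived in the proof of Lemma~\ref{distance} from the Nevanlinna representation~\eqref{Nevan} (which applies since, under Assumption~\ref{main assumption}, $\mu$ is not a single point mass), evaluated at $\omega=z_{n}$. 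Writing $N_{n}:=\int_{\R}|x-z_{n}|^{-2}\,\mu(\dd x)$ and $D_{n}:=|m_{\mu}(z_{n})|^{2}$, it therefore suffices to prove $N_{n}/D_{n}\to 1/\mu(\{E\})$.

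First I would fix notation: $z_{n}=E_{n}+\ii\eta_{n}$ with $E_{n}\to E$, $\eta_{n}\searrow 0$, and, by non-tangentiality, a constant $c\in(0,1]$ with $c\,|E-z_{n}|\le\eta_{n}\le|E-z_{n}|$ for all large $n$. Decompose $\mu=\mu(\{E\})\,\delta_{E}+\nu$, where $\nu$ is a finite positive Borel measure with $\nu(\{E\})=0$. Applying Lemma~\ref{singular continuous}(ii) to $\nu$ (after normalising, if one likes) along the non-tangential sequence $(z_{n})_{n}$ gives $(E-z_{n})\,m_{\nu}(z_{n})\to\nu(\{E\})=0$, hence $|m_{\nu}(z_{n})|=o\big(|E-z_{n}|^{-1}\big)$.

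With this in hand the two asymptotics are routine. For the denominator, $m_{\mu}(z_{n})=\mu(\{E\})(E-z_{n})^{-1}+m_{\nu}(z_{n})=(E-z_{n})^{-1}\big(\mu(\{E\})+o(1)\big)$, so $D_{n}=|E-z_{n}|^{-2}\big(\mu(\{E\})+o(1)\big)^{2}$. For the numerator, $N_{n}=\mu(\{E\})\,|E-z_{n}|^{-2}+\int_{\R}|x-z_{n}|^{-2}\,\nu(\dd x)$, and the remainder integral equals $\eta_{n}^{-1}\,\Im m_{\nu}(z_{n})\le\eta_{n}^{-1}|m_{\nu}(z_{n})|\le c^{-1}|E-z_{n}|^{-1}|m_{\nu}(z_{n})|=o\big(|E-z_{n}|^{-2}\big)$, so $N_{n}=|E-z_{n}|^{-2}\big(\mu(\{E\})+o(1)\big)$. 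Dividing and using $\mu(\{E\})>0$ yields $N_{n}/D_{n}\to 1/\mu(\{E\})$, whence $I_{\widehat\mu}(z_{n})=N_{n}/D_{n}-1\to 1/\mu(\{E\})-1$.

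The only step that is not bookkeeping is showing the remainder $\int_{\R}|x-z_{n}|^{-2}\,\nu(\dd x)$ is negligible against the $|E-z_{n}|^{-2}$ coming from the atom; note this remainder may itself diverge, being of order $\eta_{n}^{-1}$ when $E\in\mathrm{supp}(\mu_{\mathrm{ac}})$. The point is that, rewritten as $\eta_{n}^{-1}\Im m_{\nu}(z_{n})$, its growth is governed by $m_{\nu}$, which is $o(|E-z_{n}|^{-1})$ because $\nu$ carries no mass at $E$ (Lemma~\ref{singular continuous}(ii)), and the non-tangency relation $\eta_{n}\asymp|E-z_{n}|$ then converts $\eta_{n}^{-1}|m_{\nu}(z_{n})|$ into $o(|E-z_{n}|^{-2})$. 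These two inputs — the vanishing of the Julia-type limit for $\nu$ and the non-tangential comparison $\eta_{n}\asymp|E-z_{n}|$ — are precisely what make the argument go through, and I foresee no other difficulty.
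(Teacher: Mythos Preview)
Your proof is correct. Both your argument and the paper's hinge on the same two ingredients --- non-tangentiality and the atom formula of Lemma~\ref{singular continuous}(ii) --- but the executions differ. The paper does not decompose $\mu$; instead it multiplies numerator and denominator by $|E-z_n|^{2}$ and observes that non-tangentiality gives the \emph{uniform} bound
\[
\Big|\frac{E-z_n}{x-z_n}\Big|^{2}=\frac{\big(\tfrac{E-\Re z_n}{\Im z_n}\big)^{2}+1}{\big(\tfrac{x-\Re z_n}{\Im z_n}\big)^{2}+1}\le M^{2}+1,
\]
so that bounded convergence applies directly to $\int|\tfrac{E-z_n}{x-z_n}|^{2}\,\mu(\dd x)\to\mu(\{E\})$, while the rescaled denominator is handled by Lemma~\ref{singular continuous}(ii). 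Your route --- split off the atom, then control the $\nu$-remainders via $\int|x-z_n|^{-2}\,\nu(\dd x)=\eta_n^{-1}\Im m_\nu(z_n)$ together with $(E-z_n)m_\nu(z_n)\to 0$ --- is slightly more hands-on but has the advantage of making explicit exactly where non-tangentiality enters (the single inequality $\eta_n^{-1}\le c^{-1}|E-z_n|^{-1}$). The paper's version is a bit shorter; yours is arguably more transparent about why the possibly divergent remainder is nevertheless $o(|E-z_n|^{-2})$.
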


\begin{remark}
Using Lemma \ref{distance pure point} and equation \eqref{pluspetit}, we observe that $E\notin\omega_t(\mathbb{R})$ if $\mu\big( \lbrace E \rbrace \big)<1-\frac{1}{t}$.
\end{remark}

\begin{proof}
The first equality in \eqref{limiteImuhat} follows from taking the imaginary part in the Nevanlinna representation in \eqref{Nevan}. To prove the second one, notice that
\begin{align}
\big|E-z_n\big|^2\Big|\int_{\mathbb{R}}\frac{1}{x-z_n}\,\mu(\mathrm{d}x)\Big|^2=\big|\mu\big(\lbrace E \rbrace\big)\big|^2+o(1)\qquad \text{as }n\to\infty,
\end{align}
by the second claim of Lemma \ref{singular continuous}. Moreover since $(z_n)_{n\geq 1}$ is non-tangential, there exist $N_0\geq 1$ and a constant $M>0$ such that for every $n\geq N_0$,
\begin{align}
\Big| \frac{E-z_n}{x-z_n} \Big|^2
=
\Bigg| \frac{\frac{E-\mathrm{Re}(z_n)}{\mathrm{Im}\,z_n}-\mathrm{i}}{\frac{x-\mathrm{Re}(z_n)}{\mathrm{Im}\,z_n}-\mathrm{i}} \Bigg|^2\leq M^2+1,\qquad x\in\mathbb{R}.
\end{align}
Thence by the bounded convergence theorem, as $n\to\infty$,
\begin{align}
\frac{\int_{\mathbb{R}}\frac{1}{|x-z_n|^2}\,\mu(\mathrm{d}x)}{\big|\int_{\mathbb{R}}\frac{1}{x-z_n}\,\mu(\mathrm{d}x)\big|^2}
=
\frac{\int_{\mathbb{R}}\big|\frac{E-z_n}{x-z_n}\big|^2\,\mu(\mathrm{d}x)}{\big|\int_{\mathbb{R}}\frac{E-z_n}{x-z_n}\,\mu(\mathrm{d}x)\big|^2}
=
\frac{1+o(1)}{\mu\big(\lbrace E \rbrace\big)}.
\end{align}
This concludes the proof of the lemma.
\end{proof}

\begin{remark}\label{resultsatoms}
The connection between the pure points of $\mu$ and the pure points of $\mu^{\boxplus t}$ was extensively studied in \cite{Bel05}, \cite{Bel04} and \cite{Huang15}. We therefore simply evoke the results used in this paper and refer the reader to these references for more details. 
We have already seen that if $E$ is a pure point of the measure $\mu$ with $\mu\big(\lbrace E \rbrace\big)<1-\frac{1}{t}$, then $E\notin\omega_t(\mathbb{R})$. Moreover notice that if $tE$ is a pure point of $\mu^{\boxplus t}$, then by the subordination equation \eqref{freeconv} and by Lemma~\ref{singular continuous},
\begin{align}\label{identity atoms}
\omega_t(tE+\mathrm{i}\eta)=E+\mathrm{i}\eta\,\bigg(\frac{1}{t}+\frac{t-1}{t}\frac{1}{\mu^{\boxplus t}\big(\lbrace tE \rbrace \big)}\bigg)+o(\eta),\qquad \text{as }\eta\to 0.
\end{align}
Therefore $E$ is a zero of $F_{\mu}$. Furthermore it was shown in the references cited above that if $tE$ is a pure point of $\mu^{\boxplus t}$, then $E$ is necessarily a pure point of $\mu$ with mass strictly greater than $1-\frac{1}{t}$ and reciprocally, that if $\mu\big(\lbrace E \rbrace\big)>1-\frac{1}{t}$, then $tE$ is a pure point of $\mu^{\boxplus t}$. 
Also notice that if $E$ is a pure point of $\mu$ with $\mu\big(\lbrace E \rbrace \big)=1-\frac{1}{t}$, then $\omega_t(tE)=E$ and $F_{\mu}(tE)=0$, but $tE$ fails to be a pure point of $\mu^{\boxplus t}$. 
\end{remark}

In order to conclude our study of the subordination function $\omega_t$, we remark, using equation~\eqref{freeconv1}, that $\omega_t(z)-z$ is a Nevanlinna function and that it admits an analogous representation to \eqref{Nevan}. We also see that a consequence of equation \eqref{identity atoms} is that the support of the measure in this representation does not contain the pure points of $\mu^{\boxplus t}$. This is the content of the next proposition.

\begin{lemma}\label{Nevan omega_t}
Let $\mu$ be a Borel probability measure on $\mathbb{R}$ satisfying Assumption \ref{main assumption}.
There exists a finite Borel measure $\nu$ on $\mathbb{R}$ such that
\begin{align}\label{nevanomega_t}
\omega_t(z)-z=-(t-1)\int_{\mathbb{R}}x\,\mu(\mathrm{d}x)+\int_{\mathbb{R}} \frac{1}{x-z}\, \nu(\mathrm{d}x),\qquad z\in\mathbb{C}^+\cup\mathbb{R}\backslash\mathrm{supp}(\nu). 
\end{align}
Moreover
\begin{align}
\nu(\mathbb{R})=\int_{\mathbb{R}}x^2\,\mu(\mathrm{d}x)<\infty\quad\text{ and }\quad
\mathrm{supp}(\nu)=\mathrm{supp}(\mu^{\boxplus t}_{\mathrm{ac}}).
\end{align}
\end{lemma}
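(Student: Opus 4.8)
The plan is to mimic the proof of Proposition~\ref{Nevanlinna representation}, with $F_\mu$ replaced by the Nevanlinna function $g(z):=\omega_t(z)-z$. By \eqref{freeconv1} we have $\mathrm{Im}\,\omega_t(z)\ge\mathrm{Im}\,z$, so $g$ maps $\mathbb{C}^+$ analytically into $\mathbb{C}^+\cup\mathbb{R}$, and Lemma~\ref{Pick} provides $a\in\mathbb{R}$, $b\ge 0$ and a Borel measure $\nu$ with $\int_{\mathbb{R}}(1+x^2)^{-1}\nu(\mathrm{d}x)<\infty$ such that $g(z)=a+bz+\int_{\mathbb{R}}\big((x-z)^{-1}-x(1+x^2)^{-1}\big)\,\nu(\mathrm{d}x)$ on $\mathbb{C}^+$. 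Since $\mathrm{Im}\,\omega_t(\mathrm{i}\eta)/\eta\to 1$ by \eqref{freeconv1}, comparing imaginary parts gives $b=\lim_{\eta\to\infty}\mathrm{Im}\,g(\mathrm{i}\eta)/\eta=0$. The constant term and the total mass $\nu(\mathbb{R})$ are then read off from the $\mathrm{i}\eta\to\infty$ expansion of $g$: combining \eqref{freeconv} with Proposition~\ref{definition mu t} yields $g(z)=\tfrac{t-1}{t}\big(F_{\mu^{\boxplus t}}(z)-z\big)$, and since $\mu^{\boxplus t}$ is compactly supported (Proposition~\ref{compactsupp}) one expands $F_{\mu^{\boxplus t}}(\mathrm{i}\eta)-\mathrm{i}\eta$ exactly as in \eqref{asymptotics pick}; matching the first two coefficients pins down the constant and $\nu(\mathbb{R})$. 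Once $\nu$ is shown to be compactly supported (next step), the bounded term $\int_{\mathbb{R}}x(1+x^2)^{-1}\nu(\mathrm{d}x)$ can be absorbed into the constant, giving the cleaner form \eqref{nevanomega_t}.

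The substance is the identification $\mathrm{supp}(\nu)=\mathrm{supp}(\mu^{\boxplus t}_{\mathrm{ac}})$. First, by Proposition~\ref{continuous} and Lemma~\ref{bounded} the function $\omega_t$ extends continuously to $\mathbb{C}^+\cup\mathbb{R}$ with \emph{finite} boundary values, so $\mathrm{Im}\,g$ has finite non-tangential limits everywhere on $\mathbb{R}$; as in the proof of Proposition~\ref{Nevanlinna representation} this forces, via Lemma~\ref{singular continuous}, that $\nu$ carries no pure point (an atom would make $\mathrm{Im}\,g(E+\mathrm{i}\eta)$ diverge like $\eta^{-1}$) and no singular continuous part, and equation \eqref{identity atoms} confirms in particular that $\mathrm{supp}(\nu)$ misses the atoms of $\mu^{\boxplus t}$. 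Hence $\nu$ is absolutely continuous with density $E\mapsto\frac1\pi\mathrm{Im}\,\omega_t(E)$, and it remains to compare the open sets $\{\mathrm{Im}\,\omega_t>0\}$ and $\{\rho_t>0\}$. On $\{\mathrm{Im}\,\omega_t>0\}$ one has $\omega_t(E)\in\mathbb{C}^+$, so $I_\mu(\omega_t(E))\in(0,\infty)$ and \eqref{connexionomegat} gives $\mathrm{Im}\,m_{\mu^{\boxplus t}}(E)>0$; this yields $\mathrm{supp}(\nu)\subseteq\mathrm{supp}(\mu^{\boxplus t}_{\mathrm{ac}})$, and in particular $\nu$ is compactly supported. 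For the reverse inclusion, invoke Proposition~\ref{comparison}: choosing the $\delta_i$ small enough, the set $\mathcal{J}$ covers $\mathrm{supp}(\mu^{\boxplus t}_{\mathrm{ac}})$ except at the finitely many real points that $\omega_t$ maps to atoms of $\mu$ (finitely many because $\omega_t$ is injective, by Proposition~\ref{increasing}), and on $\mathcal{J}$ one has $\mathrm{Im}\,\omega_t\sim\mathrm{Im}\,m_{\mu^{\boxplus t}}$, so $\{\mathrm{Im}\,\omega_t>0\}$ and $\{\rho_t>0\}$ coincide there; moreover Lemma~\ref{distance} keeps $\omega_t$ away from $\mathrm{supp}(\mu_{\mathrm{ac}})$ and from the zeros of $m_\mu$, which ensures that away from those exceptional points $\mathrm{Im}\,\omega_t(E)=0$ forces $\mathrm{Im}\,m_{\mu^{\boxplus t}}(E)=0$. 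Taking closures, and using the continuity of $\omega_t$ to absorb the finitely many exceptional points, gives $\mathrm{supp}(\mu^{\boxplus t}_{\mathrm{ac}})\subseteq\mathrm{supp}(\nu)$. Finally, since $\omega_t=z/t+\tfrac{t-1}{t}F_{\mu^{\boxplus t}}$ and, by Lemma~\ref{bounded}, $m_{\mu^{\boxplus t}}$ does not vanish on $\mathbb{R}$, both sides of \eqref{nevanomega_t} are holomorphic on $\mathbb{C}^+\cup\mathbb{R}\setminus\mathrm{supp}(\nu)$, so the identity extends there by analytic continuation.

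The main obstacle is precisely this last step, showing $\{\mathrm{Im}\,\omega_t>0\}$ and $\{\rho_t>0\}$ have the same closure. Proposition~\ref{comparison} only compares $\mathrm{Im}\,\omega_t$ with $\mathrm{Im}\,m_{\mu^{\boxplus t}}$ on $\mathcal{J}$, that is, away from the preimages under $\omega_t$ of the atoms of $\mu$, so one must check that these finitely many exceptional points do not change the closures; this is where the continuity of $\omega_t$ (Proposition~\ref{continuous}), its injectivity (Proposition~\ref{increasing}), and the atom asymptotics \eqref{identity atoms} enter. Everything else is a routine transcription of the computations already carried out for Proposition~\ref{Nevanlinna representation}.
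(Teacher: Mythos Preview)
Your argument is correct and essentially the same as the paper's: apply Lemma~\ref{Pick} to the Nevanlinna function $\omega_t(z)-z$, read off $b=0$ from~\eqref{freeconv1}, and identify $\mathrm{supp}(\nu)$ with $\mathrm{supp}(\mu^{\boxplus t}_{\mathrm{ac}})$ via Proposition~\ref{comparison}, Lemma~\ref{singular continuous}, and the atom asymptotics~\eqref{identity atoms}. Your treatment of the support is in fact more detailed than the paper's one-line justification.

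The one genuine difference is how you pin down the constant and $\nu(\mathbb{R})$. The paper rewrites $\omega_t(z)-z=(t-1)\bigl(F_\mu(\omega_t(z))-\omega_t(z)\bigr)$ and feeds in the Nevanlinna representation~\eqref{Nevan} of $F_\mu$, which immediately yields $a-\int\frac{x}{1+x^2}\nu(\mathrm{d}x)=-(t-1)\int x\,\mu(\mathrm{d}x)$ and $\nu(\mathbb{R})=(t-1)\,\widehat{\mu}(\mathbb{R})$, all in terms of $\mu$ directly. Your route, via $g(z)=\tfrac{t-1}{t}\bigl(F_{\mu^{\boxplus t}}(z)-z\bigr)$ and~\eqref{asymptotics pick}, gives the constant and $\nu(\mathbb{R})$ in terms of the first two moments of $\mu^{\boxplus t}$; to land on the stated formulas you then need the identities $\int x\,\mu^{\boxplus t}(\mathrm{d}x)=t\int x\,\mu(\mathrm{d}x)$ and $\mathrm{Var}(\mu^{\boxplus t})=t\,\mathrm{Var}(\mu)$, which are standard free-probability facts but are neither stated nor proved in the paper. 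The paper's route is therefore more self-contained; yours is equally valid once those moment identities are supplied (they follow quickly from expanding $m_{\mu^{\boxplus t}}(z)=m_\mu(\omega_t(z))$ at infinity).
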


\begin{proof}[Proof]
As we have seen in \eqref{freeconv1}, the subordination function $\omega_t$ satisfies
\begin{align}
\mathrm{Im}\,\omega_t(z)\geq \mathrm{Im}\,z,\qquad z\in\mathbb{C}^+.
\end{align}
By Lemma \ref{Pick}, there exists a measure $\nu$ and $a\in\mathbb{R},\, b\geq 0$ such that 
\begin{align}
\omega_t(z)-z=a+bz+\int_{\mathbb{R}}\Big(\frac{1}{x-z}-\frac{x}{1+x^2}\Big)\,\nu(\mathrm{d}x),\qquad z\in\mathbb{C}^+\cup\mathbb{R}\backslash\mathrm{supp}(\nu).
\end{align}
Recalling from \eqref{freeconv1} that $\frac{\omega(\mathrm{i}\eta)}{\mathrm{i}\eta}\to 1$, as $\eta\to\infty$, we have that
\begin{align}
\lim_{\eta\to \infty}\frac{a}{\mathrm{i}\eta}+b+\frac{1}{\mathrm{i}\eta}\int_{\mathbb{R}}\Big(\frac{1}{x-\mathrm{i}\eta}-\frac{x}{1+x^2}\Big)\,\nu(\mathrm{d}x)=0.
\end{align}
Therefore,
$
b=0.
$
Let $z\in\mathbb{C}^+$. Using the subordination equation \eqref{freeconv} and the Nevanlinna representation in \eqref{Nevan} yield
\begin{align}
\frac{1}{t-1}\big(\omega_t(z)-z\big)
=
F_{\mu}\big(\omega_t(z)\big)-\omega_t(z)
=
-\int_{\mathbb{R}}x\,\mu(\mathrm{d}x)+\int_{\mathbb{R}}\frac{1}{x-\omega_t(z)}\,\widehat{\mu}(\mathrm{d}x).
\end{align}
Thence
\begin{align}
a+\int_{\mathbb{R}}\Big(\frac{1}{x-z}-\frac{x}{1+x^2}\Big)\,\nu(\mathrm{d}x)
=
(t-1)\Big( -\int_{\mathbb{R}}x\,\mu(\mathrm{d}x)+\int_{\mathbb{R}}\frac{1}{x-\omega_t(z)}\,\widehat{\mu}(\mathrm{d}x) \Big)
\end{align}
and
\begin{align}
a-\int_{\mathbb{R}}\frac{x}{1+x^2}\,\nu(\mathrm{d}x)=-(t-1)\int_{\mathbb{R}}x\,\mu(\mathrm{d}x).
\end{align}
Finally, 
\begin{align}
\nu(\mathbb{R})=\lim_{\eta\to\infty}(-\mathrm{i}\eta)\int_{\mathbb{R}}\frac{1}{x-\mathrm{i}\eta}\,\nu(\mathrm{d}x)
=
\int_{\mathbb{R}}x^2\,\mu(\mathrm{d}x)
\end{align}
and the last claim is a direct consequence of the identity in \eqref{identity atoms}, of Proposition \ref{comparison} and of the Cauchy-Stieltjes inversion formula in Lemma \ref{singular continuous}.
\end{proof}

This result concludes our study of the subordination function and we will now shift our focus to the support of $\mu^{\boxplus t}.$ 

\begin{proposition}\label{edgecaract}
We denote by $\rho_t$ the density of the measure $\mu^{\boxplus t}$ and define 
\begin{align}
\mathcal{V}_t:=\partial\big\lbrace x\in\mathbb{R}:\ \rho_t(x)>0 \big\rbrace.
\end{align}
For any $z\in\mathbb{C}^+\cup\mathbb{R}$, we have the estimate
\begin{align}\label{edge}
\big|F'_{\mu}\big(\omega_t(z)\big)-1\big|\leq \frac{1}{t-1}
\end{align}
and equality is achieved for $z=E+\mathrm{i}\eta$ if and only if 
\begin{align}
E\in\mathcal{V}_t \text{ and } \eta=0 \qquad 
\text{ or }\qquad 
\mu\big(\lbrace \frac{E}{t} \rbrace\big)=1-\frac{1}{t} \text{ and } \eta=0.
\end{align}
In fact we have that for any such  $z$,
$F'_{\mu}\big(\omega_t(z)\big)=\frac{t}{t-1}$. 
\end{proposition}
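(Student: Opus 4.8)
The plan is to extract $F_\mu'$ from the Nevanlinna representation~\eqref{Nevan}, which gives, at every point $\omega$ at which $F_\mu$ is analytic,
\[
F_\mu'(\omega)-1=\int_{\mathbb{R}}\frac{1}{(x-\omega)^2}\,\widehat{\mu}(\mathrm{d}x),
\qquad\text{hence}\qquad
\big|F_\mu'(\omega)-1\big|\le I_{\widehat{\mu}}(\omega),
\]
with equality in the last estimate precisely when $\frac{1}{(x-\omega)^2}$ has $\widehat{\mu}$-almost everywhere constant argument. Inserting $\omega=\omega_t(z)$ together with $I_{\widehat{\mu}}(\omega_t(z))\le\frac{1}{t-1}$ from~\eqref{pluspetit} yields~\eqref{edge} for $z\in\mathbb{C}^+$; I would then pass to $z\in\mathbb{R}$ by continuity, using that $\omega_t$ is bounded (Lemma~\ref{bounded}) and continuous (Proposition~\ref{continuous}) up to the real line, and that by Lemma~\ref{distance} the value $\omega_t(E)$ stays at positive distance from $\mathrm{supp}(\widehat{\mu})$ (and, being distinct from them, from the finitely many atoms of $\mu$) as long as it is not a pure point of $\mu$, so that $F_\mu'$ is continuous there. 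At a real $E$ for which $\omega_t(E)$ equals a pure point $E_0$ of $\mu$, $F_\mu$ fails to be analytic, but the computation in the proof of Proposition~\ref{differential structure} together with Lemma~\ref{singular continuous} identifies the Julia-Carathéodory derivative of $F_\mu$ at $E_0$ as $\frac{1}{\mu(\{E_0\})}$ (see also Lemma~\ref{distance pure point}); since Remark~\ref{resultsatoms} forces $\mu(\{E_0\})\ge 1-\frac1t$ at such a landing point, one gets $0\le\frac{1}{\mu(\{E_0\})}-1\le\frac{1}{t-1}$, which is~\eqref{edge} again.

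For the equality analysis I would first rule out $\mathrm{Im}\,z>0$: then $\omega_t(z)$ lies in the open upper half-plane, so it is not a pure point of $\mu$ and has finite positive imaginary part, whence~\eqref{pluspetit} gives $I_{\widehat{\mu}}(\omega_t(z))<\frac{1}{t-1}$ and~\eqref{edge} is strict. Thus equality forces $\eta=0$, and for $z=E\in\mathbb{R}$ I would distinguish three cases according to $\omega_t(E)$. If $\omega_t(E)$ is a pure point $E_0$ of $\mu$, the left-hand side of~\eqref{edge} equals $\frac{1}{\mu(\{E_0\})}-1$, which equals $\frac{1}{t-1}$ exactly when $\mu(\{E_0\})=1-\frac1t$; by Remark~\ref{resultsatoms} this identity forces $\omega_t(tE_0)=E_0$, hence (by injectivity of $\omega_t$, Proposition~\ref{increasing}) $E=tE_0$ and $E_0=E/t$, giving the second alternative of the statement, and then $F_\mu'(\omega_t(E))=\frac{1}{\mu(\{E_0\})}=\frac{t}{t-1}$. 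If $\omega_t(E)$ is not a pure point of $\mu$ but $\mathrm{Im}\,\omega_t(E)>0$, then $\mathrm{supp}(\widehat{\mu})\supseteq\mathrm{supp}(\mu_{\mathrm{ac}})$ contains a nondegenerate interval of positive $\widehat{\mu}$-measure (Assumption~\ref{main assumption} gives $n_{\mathrm{ac}}\ge1$, and $\mathrm{supp}(\widehat{\mu}_{\mathrm{ac}})=\mathrm{supp}(\mu_{\mathrm{ac}})$ by~\eqref{support}) on which $\arg(x-\omega_t(E))$ genuinely varies, so the triangle inequality above is strict and~\eqref{edge} is strict; consistently,~\eqref{connexionomegat} with Lemma~\ref{distance} identifies this case with $\rho_t(E)>0$, i.e.\ with $E$ interior to $\mathrm{supp}(\mu^{\boxplus t})$, so $E\notin\mathcal{V}_t$. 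The remaining case is $\omega_t(E)\in\mathbb{R}$ not a pure point of $\mu$: then by Lemma~\ref{distance} it is at positive distance from $\mathrm{supp}(\widehat{\mu})$, so $(x-\omega_t(E))^2>0$ on $\mathrm{supp}(\widehat{\mu})$ and $F_\mu'(\omega_t(E))-1=I_{\widehat{\mu}}(\omega_t(E))\in\big[0,\tfrac{1}{t-1}\big]$, so equality in~\eqref{edge} is equivalent to $F_\mu'(\omega_t(E))=\frac{t}{t-1}$.

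The crux is then to show, for $\omega_t(E)\in\mathbb{R}$ not a pure point of $\mu$, that $F_\mu'(\omega_t(E))=\frac{t}{t-1}$ if and only if $E\in\mathcal{V}_t$; equivalently, by the last sentence of Proposition~\ref{differential structure}, that $\frac{\omega_t(z)-\omega_t(E)}{z-E}$ diverges non-tangentially exactly on $\mathcal{V}_t$. For the forward implication I would argue by continuity: if $E\in\mathcal{V}_t$, choose interior points $E'$ of $\mathrm{supp}(\mu^{\boxplus t}_{\mathrm{ac}})$ with $E'\to E$, for which $\mathrm{Im}\,\omega_t(E')>0$; passing to the limit $\eta\searrow0$ in~\eqref{pluspetit} at $z=E'$ (using $\mathrm{Im}\,\omega_t(E')>0$ and continuity of $\omega_t$) gives $I_{\widehat{\mu}}(\omega_t(E'))=\frac{1}{t-1}$, and then letting $E'\to E$ with continuity of $I_{\widehat{\mu}}$ at $\omega_t(E)$ (Lemma~\ref{distance}) gives $I_{\widehat{\mu}}(\omega_t(E))=\frac{1}{t-1}$, i.e.\ $F_\mu'(\omega_t(E))=\frac{t}{t-1}$. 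For the converse, if $E\notin\mathcal{V}_t$ and $\omega_t(E)\in\mathbb{R}$ then $\rho_t$ vanishes on a neighbourhood of $E$, so $E$ lies at positive distance from $\mathrm{supp}(\mu^{\boxplus t})$ and $F_{\mu^{\boxplus t}}$ is real-analytic near $E$; using $\omega_t(z)=\frac{z}{t}+\frac{t}{t-1}F_{\mu^{\boxplus t}}(z)$ (from the proof of Proposition~\ref{increasing}) one sees $\omega_t$ is real-analytic near $E$ with $\omega_t'(E)=\frac1t+\frac{t}{t-1}F_{\mu^{\boxplus t}}'(E)$ finite and nonzero, while differentiating~\eqref{freeconv} gives $\omega_t'(E)\big(t-(t-1)F_\mu'(\omega_t(E))\big)=1$, so $F_\mu'(\omega_t(E))\neq\frac{t}{t-1}$ and~\eqref{edge} is strict. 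Assembling the cases yields the characterization of equality and the fact that $F_\mu'(\omega_t(z))=\frac{t}{t-1}$ at each equality point. The main obstacle I anticipate is exactly this converse step: ruling out an accidental saturation of the combined estimate at points strictly outside $\mathrm{supp}(\mu^{\boxplus t})$, for which the clean mechanism is the local analyticity of $\omega_t$ there, so that Lemma~\ref{distance}, Proposition~\ref{increasing} and the Nevanlinna structure~\eqref{Nevan} all have to be brought together.
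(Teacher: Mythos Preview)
Your argument is correct and tracks the paper's proof closely: both extract~\eqref{edge} from the Nevanlinna representation and~\eqref{pluspetit}, handle the atomic case via the Julia--Carath\'eodory derivative and Remark~\ref{resultsatoms}, and obtain the forward implication $E\in\mathcal{V}_t\Rightarrow F_\mu'(\omega_t(E))=\tfrac{t}{t-1}$ by continuity of $I_{\widehat{\mu}}$ along points approaching $E$ from the interior of the support. The one genuine difference is in the converse: the paper shows that equality at a non-atomic $\omega_t(E)$ forces $\eta/\mathrm{Im}\,\omega_t(E+\mathrm{i}\eta)\to 0$, hence via Proposition~\ref{comparison} also $\eta/\mathrm{Im}\,m_{\mu^{\boxplus t}}(E+\mathrm{i}\eta)\to 0$, which places $E$ in $\mathcal{V}_t$; you instead argue the contrapositive by noting that $E\notin\mathrm{supp}(\mu^{\boxplus t})$ makes $\omega_t$ analytic at $E$ (Proposition~\ref{increasing}), so differentiating~\eqref{freeconv} gives $\omega_t'(E)\bigl(t-(t-1)F_\mu'(\omega_t(E))\bigr)=1$, ruling out $F_\mu'(\omega_t(E))=\tfrac{t}{t-1}$. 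Your route is slightly more algebraic; the paper's stays closer to the Julia--Carath\'eodory picture. One small point to tighten: from ``$\rho_t$ vanishes on a neighbourhood of $E$'' you jump to ``$E$ lies at positive distance from $\mathrm{supp}(\mu^{\boxplus t})$'', but $\mu^{\boxplus t}$ may have atoms; you should also remark, via Remark~\ref{resultsatoms}, that $\omega_t(E)$ not being a pure point of $\mu$ prevents $E$ from being a pure point of $\mu^{\boxplus t}$, after which the conclusion follows since the atoms are isolated.
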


\begin{proof}[Proof]
The first inequality follows from the Nevanlinna representation \eqref{Nevan}, from \eqref{pluspetit} and from  equation \eqref{freeconv1} since for every $z\in\mathbb{C}^+$,
\begin{align}
\big|F_{\mu}'\big(\omega_t(z)\big)-1\big| \nonumber
&=
\Big|\int_{\mathbb{R}}\frac{1}{\big(x-\omega_t(z)\big)^2}\,\widehat{\mu}(\mathrm{d}x)\Big|\\ \nonumber
&\leq
\int_{\mathbb{R}}\frac{1}{|x-\omega_t(z)|^2}\,\widehat{\mu}(\mathrm{d}x)\\
&\leq\label{ineq}
\frac{1}{t-1}.
\end{align}
We first observe that if $\omega_t(E)$ is a pure point of $\mu$, then by Remark \ref{resultsatoms}, $\mu\big( \lbrace \omega_t(E) \rbrace \big)\geq 1-\frac{1}{t}$ and $\omega_t(E)=\frac{E}{t}$. The fact that $F_{\mu}'\big(\omega_t(E)\big)=\frac{t}{t-1}$ is equivalent to $\mu\big(\lbrace \omega_t(E) \rbrace \big)=1-\frac{1}{t}$ is a consequence of Lemma~\ref{distance pure point}. We have therefore treated the case where $\omega_t(E)$ is a pure point of $\mu$ and can now turn to the study of the non-atomic part of $\mu$. 

We begin by proving that $F_{\mu}'\big(\omega_t(E)\big)=\frac{t}{t-1}$ is a necessary condition for $E$ to be an endpoint of the support of the absolutely continuous part of $\mu^{\boxplus t}$, or in other words, for $E$ to be in $\mathcal{V}_t$. We first observe that if $E\in\mathcal{V}_t$, then $\rho_t(E)=0$ and $\omega_t(E)$ does not belong to the atomic part of $\mu$ since  
\begin{align}
m_{\mu^{\boxplus t}}(z_n)=
m_{\mu}\big(\omega_t(z_n)\big)=
\frac{\mu\big(\lbrace \omega_t(E) \rbrace\big)+o(1)}{\omega_t(E)-\omega_t(z_n)},\qquad \text{as $n\to\infty$.}
\end{align}
Without loss of generality, we assume that $E+\varepsilon$ is in the support of $\mu^{\boxplus t}_{\mathrm{ac}}$ for every $\varepsilon>0$ small enough. 
In order to show that $F_{\mu}'\big(\omega_t(E)\big)=\frac{t}{t-1}$, we will show that $\omega_t(E)$ is real and that $\lim_{\eta\searrow 0}\frac{\eta}{\mathrm{Im}\,\omega_t(E+\mathrm{i}\eta)}=0$. The proof of this implication will then follow from  Cauchy-Schwarz  and from~\eqref{ineq}.  From \eqref{comparaisonomegaStieltjes}, we see that
\begin{align}
\mathrm{Im}\,\omega_t(z)=\frac{\mathrm{Im}\,m_{\mu^{\boxplus t}}(z)}{I_{\mu}\big(\omega_t(z)\big)},\qquad z\in\mathbb{C}^+\cup\mathbb{R}.
\end{align}
Since $\omega_t(E)$ is not a pure point of the measure $\mu$, then by Lemma \ref{distance}, there exists some constant $C\geq 1$ such that
\begin{align}
C^{-1}\leq I_{\mu}\big(\omega_t(z)\big) \leq C.
\end{align}
Therefore, by the Cauchy-Stieltjes inversion formula and since $\omega_t$ is continuous on the real line, $\mathrm{Im}\,\omega_t(E)=0$. Furthermore since $\omega_t$ stays at positive distance from the support of $\widehat{\mu}$, $I_{\widehat{\mu}}$ is continuous at any endpoint in $\mathcal{V}_t$. Thence by \eqref{pluspetit},
\begin{align}
\lim_{\eta\searrow 0}\nonumber
\frac{1}{t-1}\bigg(1-\frac{\eta}{\mathrm{Im}\,\omega_t(E+\mathrm{i}\eta)}\bigg)
&=
I_{\widehat{\mu}}\big(\omega_t(E)\big)\\ \nonumber
&=
\lim_{|\varepsilon|\searrow 0}I_{\widehat{\mu}}\big(\omega_t(E+\varepsilon)\big)\\ \nonumber
&=
\lim_{|\varepsilon|\searrow 0}\lim_{\eta\searrow 0}
\frac{1}{t-1}\bigg(1-\frac{\eta}{\mathrm{Im}\,\omega_t(E+\varepsilon+\mathrm{i}\eta)}\bigg)\\
&=
\frac{1}{t-1}.
\end{align}
As a consequence, $\lim_{\eta\searrow 0}\frac{\eta}{\mathrm{Im}\,\omega_t(E+\mathrm{i}\eta)}=0$ and $F_{\mu}'\big(\omega_t(z)\big)-1=\frac{1}{t-1}.$

Next we prove the reverse implication. Let $E$ be such that $\omega_t(E)$ is non-atomic and $\Big|F_{\mu}'\big(\omega_t(E+\mathrm{i}\eta)\big)-1\Big|=\frac{1}{t-1}$. Then the inequalities in \eqref{ineq} are all equalities. Recalling that $\widehat{\mu}$ is supported on more than one point, we see that Cauchy-Schwarz implies that $\eta=0$ since $\mathrm{Im}\,\omega_t(E+\mathrm{i}\eta)\geq\eta$. Moreover by \eqref{pluspetit}, $\frac{\mathrm{Im}\,z_n}{\mathrm{Im}\,\omega_t(z_n)}$ vanishes along any sequence $(z_n)_{n\geq 1}$ of the upper half-plane converging to $E$, such that $\big(\omega_t(z_n)\big)_{n\geq 1}$ is non-tangential. Since $\omega_t(E)$ is not a pure point of $\mu$, there exists a constant $C>0$ such that 
\begin{align}
\frac{\mathrm{Im}\,\omega_t(E+\mathrm{i}\eta)}{\eta}\leq
C\frac{\mathrm{Im}\,m_{\mu^{\boxplus t}}(E+\mathrm{i}\eta)}{\eta},
\end{align}
by Lemma~\ref{distance}. 
Therefore $\frac{\mathrm{Im}\,z_n}{\mathrm{Im}\,m_{\mu^{\boxplus t}}(z_n)}$ tends to $0$ as $n\to\infty$ and $E$ is an endpoint of $\mathrm{supp}(\mu^{\boxplus t}_{\mathrm{ac}})$. This concludes the proof of this proposition.
\end{proof}

Combining the claims of Propositions \ref{differential structure} and \ref{edgecaract}, we have proven that any point at which the Julia-Carathéodory derivative of $\omega_t$ diverges corresponds to a pure point of $\mu$ with mass equal to $1-\frac{1}{t}$ or to an endpoint of the support of $\mu^{\boxplus t}$. 
Now we have all the requirements to prove the main results on the support of $\mu^{\boxplus t}$.

\begin{proof}[Proof of Theorem \ref{thm1}]
We shall study the function $F_{\mu}'-1$ and use Proposition \ref{edgecaract} to deduce the result. 
By the Nevanlinna representation in \eqref{Nevan}, 
\begin{align}
F_{\mu}'(E)-1=\int_{\mathbb{R}}\frac{1}{(x-E)^2}\,\widehat{\mu}(\mathrm{d}x),\qquad E\in\mathbb{R}\backslash\mathrm{supp}(\widehat{\mu}).
\end{align}
Consequently as $E$ converges to a pure point of $\widehat{\mu}$, $F'_{\mu}(E)-1$ diverges. Furthermore
by Lemma \ref{distance}, $F_{\mu}'(E)-1$ diverges when $E$ approaches an endpoint of the support of $\mu$ from its complement, since by assumption $\mu$ does not admit any pure point at the endpoints of its support. Therefore by Proposition \ref{Nevan} we have shown that $F'_{\mu}(E)-1$ diverges when $E$ tends to a pure point of $\widehat{\mu}$ or to the endpoints of the support of $\widehat{\mu}$.
For every $E\in\mathbb{R}$ outside of the support of $\widehat{\mu}$, 
\begin{align}
F_{\mu}'''(E)=\int_{\mathbb{R}}\frac{1}{(x-E)^4}\,\widehat{\mu}(\mathrm{d}x)>0,
\end{align}
which implies that $F_{\mu}'-1$ is convex outside of $\mathrm{supp}(\widehat{\mu})$. Since $\mathrm{Re}\,\omega_t$ is increasing on $\mathbb{R}$ by Proposition \ref{increasing},  the equation
\begin{align}
F_{\mu}'\big(\omega_t(E)\big)-1=\frac{1}{t-1}
\end{align}
has at most two solutions in each bounded connected component of the complement of $\mathrm{supp}(\widehat{\mu})$ and at most one solution in each of its unbounded connected components, since $|F_{\mu}'(E)-1|\to 0$ as $|E|\to \infty$. We have therefore bounded the number of potential endpoints in the support of $\mu^{\boxplus t}$. Noticing that there are at most $\big(n+|\mathcal{Z}_\mu|\big)$ connected components in the complement of $\mathrm{supp}(\widehat{\mu})$ gives the upper bound $\mathrm{I}_t+\mathrm{C}_t^{0}\leq n_{\mathrm{ac}}+|\mathcal{Z}_\mu|\leq 2n_{\mathrm{ac}}+n_{\mathrm{pp}}^{\mathrm{out}}-1$. The proof of the claim on $\mathrm{C}_t^{\infty}$ is a consequence of Remark \ref{resultsatoms},  of the identity
\begin{align}
\mathrm{Im}\,m_{\mu^{\boxplus t}}(z)
=
\mathrm{Im}\,\omega_t(z)\,
I_{\mu}\big( \omega_t(z) \big),\qquad z\in\mathbb{C}^{+},
\end{align}
and of the fact that for any pure point $E$ of $\mu$ such that $\mu\big( \lbrace E \rbrace \big)=1-\frac{1}{t}$, we have
\begin{align}
I_{\mu}\big( \omega_t(z) \big)=\frac{1-\frac{1}{t}+o(1)}{|E-\omega_t(z)|^2},
\end{align}
as $\omega_t(z)$ approaches $E$ non-tangentially.
This concludes the proof of the theorem. 
\end{proof}

\section{The support of $\mu_{\alpha}\boxplus \mu_{\beta}$}\label{section: proof free addition part 1}

The support of the free additive convolution of two absolutely continuous measures satisfying the power law behavior from Assumption \ref{main assumption2} and supported on a single interval has already been studied in \cite{Bao20}. In our current framework, the Cauchy-Stieltjes transforms of the initial measures $\mu_\alpha$ and $\mu_\beta$ might admit zeroes on the real line. The behavior of the subordination functions around these zeroes is very different from what was observed in our study of the free convolution semi-group and is the reason why the upper bounds that we present in Theorems \ref{thm1} and in \ref{main theorem} are so different. More precisely, $\omega_\alpha$ might approach zeroes of the Cauchy-Stieltjes transforms of $\mu_\beta$ and when this occurs, $\omega_\beta$ diverges. Reciprocally, $\omega_\alpha$ diverges whenever $\omega_\beta$ approaches a zero of the Cauchy-Stieltjes transform of $\mu_\alpha$. Moreover the subordination functions may fail to be homeomorphisms on the domain $\mathbb{C}^+\cup\mathbb{R}$. As we will see, our strategy to prove Theorem \ref{main theorem} therefore relies on a different combinatorial argument. 

We use Lemma \ref{Pick} in the same fashion as before.  
By Assumption \ref{main assumption2} and by the Nevanlinna representation in Proposition \ref{Nevanlinna representation}, there exist two Borel measures $\widehat{\mu}_{\alpha}$ and $\widehat{\mu}_{\beta}$ on $\mathbb{R}$ such that for every $\omega\in\mathbb{C}^+$,
\begin{align}\label{Nevan2}
F_{\mu_\alpha}(\omega)-\omega&=\int_{\mathbb{R}} \frac{1}{x-\omega}\,\widehat{\mu}_\alpha(\mathrm{d}x),\\ \label{Nevan22}
F_{\mu_\beta}(\omega)-\omega&=\int_{\mathbb{R}} \frac{1}{x-\omega}\,\widehat{\mu}_\beta(\mathrm{d}x).
\end{align}
Moreover these measures are finite since
\begin{align*}
0<\widehat{\mu}_{\alpha}(\mathbb{R})=\int_{\mathbb{R}} x^2\mu_\alpha(\mathrm{d}x)<\infty
\ \text{ and }\ 
0<\widehat{\mu}_{\beta}(\mathbb{R})=\int_{\mathbb{R}} x^2\mu_\beta(\mathrm{d}x)<\infty.
\end{align*}
Proposition \ref{Nevanlinna representation} entails the following result on the supports of $\widehat{\mu}_\alpha$ and $\widehat{\mu}_\beta$.
\begin{corollary}\label{Nevan support}
Let $\mu_\alpha$ and $\mu_\beta$ be two measures satisfying Assumption \ref{main assumption2}. There exist two integers $1\leq m_{\alpha} \leq n_{\alpha}-1$ and $1\leq m_{\beta} \leq n_{\beta}-1$ such that the pure point parts of $\widehat{\mu}_\alpha$ and of $\widehat{\mu}_\beta$ consist of $m_\alpha$ and $m_\beta$ points, respectively. More precisely, we have
\begin{align*}
\mathrm{supp}(\widehat{\mu}_\alpha)&=\mathrm{supp}({\mu}_\alpha)\cup\lbrace E_1^{\alpha},...,E_{m_{\alpha}}^{\alpha}\rbrace,\\
\mathrm{supp}(\widehat{\mu}_\beta)&=\mathrm{supp}({\mu}_\beta)\cup\lbrace E_1^{\beta},...,E_{m_{\beta}}^{\beta}\rbrace.
\end{align*}
Furthermore, every $E\in\big\lbrace E_1^{\alpha},...,E_{m_{\alpha}}^{\alpha} \big\rbrace$ is a zero of $m_{\mu_\alpha}$ and every $E\in\big\lbrace E_1^{\beta},...,E_{m_\beta}^{\beta} \big\rbrace$ is a zero of  $m_{\mu_\beta}.$ 
\end{corollary}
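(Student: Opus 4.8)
The statement is essentially a corollary of Proposition \ref{Nevanlinna representation}, applied separately to $\mu_\alpha$ and to $\mu_\beta$, so the plan is to translate the conclusions of that proposition into the notation of this section. Since $\mu_\alpha$ and $\mu_\beta$ are absolutely continuous — hence have no pure points — and are centered, the term $-\int_{\mathbb{R}}x\,\mu_\bullet(\mathrm{d}x)$ in \eqref{Nevan} vanishes, which is precisely why one arrives at \eqref{Nevan2} and \eqref{Nevan22}; the finiteness and the total-mass identities $\widehat\mu_\alpha(\mathbb{R})=\int x^2\mu_\alpha(\mathrm{d}x)$, $\widehat\mu_\beta(\mathbb{R})=\int x^2\mu_\beta(\mathrm{d}x)$ are then exactly \eqref{finiteness}. (One should flag that Proposition \ref{Nevanlinna representation} is literally stated under Assumption \ref{main assumption}, so for the absolutely continuous measures of Assumption \ref{main assumption2} one uses the absolutely continuous version of \eqref{support} from \cite{Bao20}, which needs no pure points; the Pick/Nevanlinna part of the proof goes through verbatim.)

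For the support description I would first record that $m_{\mu_\alpha}$ has only finitely many real zeros and locate them. As in the proof of Proposition \ref{Nevanlinna representation}, $m_{\mu_\alpha}$ is real-analytic with strictly positive derivative $\int(x-u)^{-2}\mu_\alpha(\mathrm{d}x)$ on each of the $n_\alpha+1$ connected components of $\mathbb{R}\setminus\mathrm{supp}(\mu_\alpha)$, it is strictly positive on the unbounded component to the left of $\mathrm{supp}(\mu_\alpha)$ and strictly negative on the one to its right, and it has no zero in the interior of $\mathrm{supp}(\mu_\alpha)$ since there $\mathrm{Im}\,m_{\mu_\alpha}=\pi\rho_\alpha>0$ almost everywhere. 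Hence every zero of $m_{\mu_\alpha}$ lies in one of the $n_\alpha-1$ bounded gaps, with at most one zero per gap, and in particular all zeros are at strictly positive distance from $\mathrm{supp}(\mu_\alpha)=\mathrm{supp}(\mu_{\alpha,\mathrm{ac}})$. Next I would invoke the structural output of the proof of Proposition \ref{Nevanlinna representation} together with Remark \ref{pure point de mu hat}: $\widehat\mu_\alpha$ has no singular continuous part, its absolutely continuous part satisfies $\mathrm{supp}(\widehat\mu_{\alpha,\mathrm{ac}})=\mathrm{supp}(\mu_\alpha)$ by \eqref{support}, and a point at positive distance from $\mathrm{supp}(\mu_{\alpha,\mathrm{ac}})$ is an atom of $\widehat\mu_\alpha$ if and only if it is a zero of $m_{\mu_\alpha}$. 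Combining these, the atoms of $\widehat\mu_\alpha$ are exactly the zeros of $m_{\mu_\alpha}$; denoting them $E_1^\alpha,\dots,E_{m_\alpha}^\alpha$ gives $\mathrm{supp}(\widehat\mu_\alpha)=\mathrm{supp}(\mu_\alpha)\cup\{E_1^\alpha,\dots,E_{m_\alpha}^\alpha\}$ with $m_\alpha\le n_\alpha-1$, and the same argument applied to $\mu_\beta$ gives the statement for $\widehat\mu_\beta$.

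The remaining point is the lower bound $m_\alpha\ge 1$ (the $j\ge 1$ in \eqref{support}), i.e.\ that at least one bounded gap of $\mathrm{supp}(\mu_\alpha)$ actually contains a zero of $m_{\mu_\alpha}$. The natural approach is a sign-change argument: $m_{\mu_\alpha}$ is positive on the left unbounded component of $\mathbb{R}\setminus\mathrm{supp}(\mu_\alpha)$ and negative on the right one, so some sign change must occur; the delicate part is to rule out that this sign change is realized only by a sign change of $\mathrm{Re}\,m_{\mu_\alpha}$ across a support interval rather than by a genuine zero in a gap, which is where the multi-cut structure and the hypotheses (centering, exponents strictly in $(-1,1)$) have to be used. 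I expect this to be the main obstacle in the argument; the rest is bookkeeping on top of Proposition \ref{Nevanlinna representation}.
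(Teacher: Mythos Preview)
Your approach is correct and is exactly how the paper obtains the corollary: it is stated there simply as a consequence of Proposition~\ref{Nevanlinna representation} (together with Remark~\ref{pure point de mu hat}), with no further argument, and your write-up just makes the translation to Assumption~\ref{main assumption2} explicit.

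Your hesitation about the lower bound $m_\alpha\ge 1$ is well placed, but it is not a gap in your argument; it is a slight inaccuracy in the statement itself. The paper does not prove $m_\alpha\ge 1$ either, and in fact its own proof of Theorem~\ref{main theorem 1} invokes this very corollary with $n_\beta=1$ to conclude that $m_{\mu_\beta}$ has no zero and that $\widehat\mu_\beta$ is absolutely continuous, i.e.\ $m_\beta=0$. So the intended content is $0\le m_\alpha\le n_\alpha-1$ (and likewise for $\beta$); the sign-change obstacle you describe is real, and no appeal to centering or to the exponents being in $(-1,1)$ will in general force a zero in a gap. You can safely drop that last paragraph.
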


By Remark \ref{pure point de mu hat}, recall that every zero of $m_{\mu_\alpha}$, which lies at positive distance from $\mathrm{supp}({\mu}_{\alpha})$, is a pure point of $\widehat{\mu}_{\alpha}$ and that identically, every zero of $m_{\mu_\beta}$  at positive distance from $\mathrm{supp}({\mu}_{\beta})$ is a pure point of $\widehat{\mu}_{\beta}$. 
Moreover we can use equation \eqref{Nevan2} to express the moments of $\widehat{\mu}_\alpha$ with respect to the moments of $\mu_\alpha$, and the moments of $\widehat{\mu}_\beta$ with respect to the moments of $\mu_\beta$.

\begin{lemma}\label{moments nevan}
Let $\mu_\alpha$ and $\mu_\beta$ be two measures satisfying Assumption \ref{main assumption2}. For every $k\geq 2$,
\begin{align*}
\int_{\mathbb{R}}x^k\,\mu_\alpha(\mathrm{d}x)
=
\sum_{\substack{l_1\geq 0, l_2\geq0:\ \\l_1+l_2=k-2}}\Big(\int_{\mathbb{R}}x^{l_1}\,\widehat{\mu}_\alpha(\mathrm{d}x) \int_{\mathbb{R}}x^{l_2}\,\mu_\alpha(\mathrm{d}x)\Big).
\end{align*}
The same result holds by replacing $\alpha$ by $\beta$.
\end{lemma}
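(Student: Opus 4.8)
The plan is to read off the moment identity from the functional equation relating $m_{\mu_\alpha}$ and $\widehat{\mu}_\alpha$ by comparing Laurent expansions at infinity. Since $\mu_\alpha$ satisfies Assumption \ref{main assumption2}, it is compactly supported, say $\mathrm{supp}(\mu_\alpha)\subset[-R,R]$, and by Corollary \ref{Nevan support} the measure $\widehat{\mu}_\alpha$ is also compactly supported (its support is $\mathrm{supp}(\mu_\alpha)$ together with finitely many of the bounded zeros of $m_{\mu_\alpha}$) and finite by \eqref{finiteness}. Writing $m_k:=\int_{\mathbb{R}}x^k\,\mu_\alpha(\mathrm{d}x)$ and $\widehat{m}_j:=\int_{\mathbb{R}}x^j\,\widehat{\mu}_\alpha(\mathrm{d}x)$, the first step is to expand $\frac{1}{x-\omega}=-\sum_{\ell\geq 0}x^\ell\omega^{-\ell-1}$ and integrate term by term, which for $|\omega|$ large enough gives the absolutely convergent expansions
\[
m_{\mu_\alpha}(\omega)=-\sum_{k\geq 0}\frac{m_k}{\omega^{k+1}},\qquad
\int_{\mathbb{R}}\frac{1}{x-\omega}\,\widehat{\mu}_\alpha(\mathrm{d}x)=-\sum_{j\geq 0}\frac{\widehat{m}_j}{\omega^{j+1}}.
\]
Here $m_0=1$, and since $\mu_\alpha$ is centered, $m_1=0$.

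Next I would use the Nevanlinna identity \eqref{Nevan2} together with $F_{\mu_\alpha}=-1/m_{\mu_\alpha}$ and multiply through by $m_{\mu_\alpha}(\omega)$, which is nonzero for $|\omega|$ large, to obtain
\[
-1-\omega\,m_{\mu_\alpha}(\omega)=m_{\mu_\alpha}(\omega)\int_{\mathbb{R}}\frac{1}{x-\omega}\,\widehat{\mu}_\alpha(\mathrm{d}x).
\]
Substituting the two expansions above, the left-hand side becomes $-1+\sum_{k\geq 0}m_k\omega^{-k}=\sum_{k\geq 2}m_k\omega^{-k}$ (using $m_0=1$ and $m_1=0$), while the right-hand side is the Cauchy product $\sum_{n\geq 2}\big(\sum_{l_1+l_2=n-2}\widehat{m}_{l_1}m_{l_2}\big)\omega^{-n}$. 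Both sides are Laurent series of a function holomorphic at $\infty$, so their coefficients coincide; comparing the coefficient of $\omega^{-k}$ for each $k\geq 2$ yields
\[
\int_{\mathbb{R}}x^k\,\mu_\alpha(\mathrm{d}x)=\sum_{\substack{l_1\geq 0,\,l_2\geq 0\\ l_1+l_2=k-2}}\Big(\int_{\mathbb{R}}x^{l_1}\,\widehat{\mu}_\alpha(\mathrm{d}x)\int_{\mathbb{R}}x^{l_2}\,\mu_\alpha(\mathrm{d}x)\Big),
\]
which is the claim. The statement for $\beta$ is obtained by the same computation, replacing $\alpha$ by $\beta$ throughout and using \eqref{Nevan22}.

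The argument is essentially bookkeeping: the only points requiring care are the legitimacy of the term-by-term integration, of multiplying the two power series, and of matching Laurent coefficients, all of which follow from the compactness of the supports (hence absolute convergence of both series for $|\omega|$ larger than some $R'$), and the use of $m_1=0$, guaranteed by the centering hypothesis in Assumption \ref{main assumption2}. I do not expect any genuine obstacle; if one prefers, the same identity can alternatively be established by induction on $k$, feeding the lower-order cases back into the functional equation, but the generating-function bookkeeping above is the most transparent route.
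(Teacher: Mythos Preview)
Your proof is correct and follows essentially the same approach as the paper: derive the functional equation $-1-\omega\,m_{\mu_\alpha}(\omega)=m_{\widehat{\mu}_\alpha}(\omega)\,m_{\mu_\alpha}(\omega)$ from \eqref{Nevan2}, expand both sides as Laurent series in $1/\omega$ for $|\omega|$ large, use $m_0=1$ and $m_1=0$, and match coefficients. Your write-up is in fact slightly more careful than the paper's in justifying the absolute convergence and the legitimacy of the Cauchy product.
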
 

\begin{proof}
Using equation \eqref{Nevan2}, we see that
\begin{align*}
-1-\omega m_{\mu_\alpha}(\omega)=m_{\widehat{\mu}_\alpha}(\omega)\,m_{\mu_\alpha}(\omega),\qquad \omega\in\mathbb{C}^{+}.
\end{align*}
Since $\mu_\alpha$ and $\widehat{\mu}_\alpha$ have compact supports, we have that for every $\omega\in\mathbb{C}^{+}$ such that $|\omega|$ is sufficiently large,
\begin{align*}
-1+\sum_{k\geq 0}\int_{\mathbb{R}}x^k\,\mu_\alpha(\mathrm{d}x)\Big(\frac{1}{\omega}\Big)^k
=
\Big(\frac{1}{\omega}\Big)^2
\sum_{l_1\geq 0}\int_{\mathbb{R}}x^{l_1}\,\widehat{\mu}_\alpha(\mathrm{d}x)\Big(\frac{1}{\omega}\Big)^{l_1}
\sum_{l_2\geq 0}\int_{\mathbb{R}}x^{l_2}\,\mu_\alpha(\mathrm{d}x)\Big(\frac{1}{\omega}\Big)^{l_2}.
\end{align*}
Recalling that $\mu_\alpha$ is a centered probability measure yields
\begin{align}\label{equation moments}
\sum_{k\geq 2}\int_{\mathbb{R}}x^k\,\mu_\alpha(\mathrm{d}x)\Big(\frac{1}{\omega}\Big)^k
=
\Big(\frac{1}{\omega}\Big)^2
\sum_{l_1\geq 0}\int_{\mathbb{R}}x^{l_1}\,\widehat{\mu}_\alpha(\mathrm{d}x)\Big(\frac{1}{\omega}\Big)^{l_1}
\sum_{l_2\geq 0}\int_{\mathbb{R}}x^{l_2}\,\mu_\alpha(\mathrm{d}x)\Big(\frac{1}{\omega}\Big)^{l_2}.
\end{align}
The claim of the lemma follows by comparing the coefficients in both sides of \eqref{equation moments}.
\end{proof}

In order to study the support of the free additive convolution $\mu_{\alpha}\boxplus\mu_{\beta}$, 
let $\mathcal{E}$ be a compact interval on the real line such that $\mathrm{supp}(\mu_{\alpha}\boxplus\mu_{\beta})\subset\mathcal{E}$. Then consider the spectral domain
\begin{align*}
\mathcal{J}:=\big\lbrace z=E+\mathrm{i}\eta:\ E\in\mathcal{E}\text{ and }0\leq \eta \leq 1 \big\rbrace.
\end{align*}

When we studied the free additive convolution semi-group of a single measure $\mu$, we showed in Lemma \ref{zero} that the subordination function $\omega_t$ always stays at positive distance from the zeroes of $m_{\mu}$. As we are about to see, this statement no longer holds for the subordination functions $\omega_\alpha$ and $\omega_\beta$ of $\mu_\alpha\boxplus\mu_\beta$. They may approach zeroes of $m_{\mu_{\beta}}$ and zeroes of $m_{\mu_{\alpha}}$ respectively. This is a striking difference between the studies of the supports of the free convolution semi-group $\lbrace \mu^{\boxplus t}:\ t>1\rbrace$ and of the free addition $\mu_{\alpha}\boxplus\mu_{\beta}$. We then have the following relations.

\begin{proposition}\label{divergence subordination} Assume that there exist a pure point of $\widehat{\mu}_\alpha$, which we denote by $E^{\alpha}$, and an open set $\mathcal{O}_\delta\subset\mathbb{C}^+\cup\mathbb{R}$ such that 
\begin{align}\label{closetoatoms}
\mathrm{dist}\big(E^{\alpha},\mathrm{supp}(\mu_{\alpha})\big)\geq c \quad\text{ and }\quad|E^{\alpha}-\omega_\beta(z)|<\delta,\qquad z\in\mathcal{O}_\delta, 
\end{align}
for some constants $c>0$ and $0<\delta<\frac{c}{2}$. Whenever $\delta$ is small, we have the following asymptotics, 
\begin{align}
\omega_\alpha(z)-z&=\frac{\widehat{\mu}_\alpha\big(\lbrace E^{\alpha} \rbrace\big)}{E^{\alpha}-\omega_\beta(z)}\big(1+O(\delta)\big),&&z\in\mathcal{O}_\delta,\\
\label{asymptotics sub}
E^{\alpha}-z
&=
\Big(E^{\alpha}-\omega_\beta(z)\Big)
\bigg(1-\frac{f(z)\big(1+O(\delta)\big)}{\widehat{\mu}_\alpha\big(\lbrace E^{\alpha}\rbrace\big)}\bigg),&&z\in\mathcal{O}_\delta,
\end{align}
where 
\begin{align*}
f(z)=
\int_\mathbb{R}x^2\,\mu_\beta(\mathrm{d}x)
&+ \nonumber
\Big(\int_{\mathbb{R}}x^3\,\mu_\beta(\mathrm{d}x)\Big)\Big(\frac{E^{\alpha}-\omega_\beta(z)}{\widehat{\mu}_\alpha(\lbrace E^{\alpha}\rbrace)}\Big)\\
&+
\Big(\int_{\mathbb{R}}x^4\,\mu_\beta(\mathrm{d}x)-\big(\int_{\mathbb{R}}x^2\mu_\beta(\mathrm{d}x)\big)^2\Big)\Big(\frac{E^{\alpha}-\omega_\beta(z)}{\widehat{\mu}_\alpha(\lbrace E^{\alpha}\rbrace)}\Big)^2.
\end{align*}
The same result holds if we interchange $\alpha$ and $\beta$.
\end{proposition}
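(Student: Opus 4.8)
The plan is to read the two subordination identities in \eqref{freeadd2} in two complementary regimes. In the regime of \eqref{closetoatoms}, where $\omega_\beta(z)$ is pinned within $\delta$ of the atom $E^\alpha$ of $\widehat{\mu}_\alpha$, the identity $\omega_\alpha(z)+\omega_\beta(z)-z=F_{\mu_\alpha}\big(\omega_\beta(z)\big)$ forces $\omega_\alpha(z)$ to blow up like $\widehat{\mu}_\alpha(\{E^\alpha\})/(E^\alpha-\omega_\beta(z))$; then, reading the twin identity $\omega_\alpha(z)+\omega_\beta(z)-z=F_{\mu_\beta}\big(\omega_\alpha(z)\big)$ at this large argument, one may expand $F_{\mu_\beta}$ in a Laurent series at infinity and substitute the first relation back in to extract the formula for $E^\alpha-z$. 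Since the assertion is local near $E^\alpha$, I would take $\mathcal{O}_\delta$ bounded and $E^\alpha-\omega_\beta(z)\neq 0$ on it.

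\emph{Step 1 (divergence of $\omega_\alpha$).} Combining \eqref{freeadd2} with the Nevanlinna representation \eqref{Nevan2}, extended to arguments near $E^\alpha$ where $\widehat{\mu}_\alpha$ reduces to its atom there,
\[
\omega_\alpha(z)-z=F_{\mu_\alpha}\big(\omega_\beta(z)\big)-\omega_\beta(z)=\frac{\widehat{\mu}_\alpha(\{E^\alpha\})}{E^\alpha-\omega_\beta(z)}+\int_{\mathbb{R}\backslash\{E^\alpha\}}\frac{\widehat{\mu}_\alpha(\mathrm{d}x)}{x-\omega_\beta(z)}.
\]
By Corollary \ref{Nevan support}, $\widehat{\mu}_\alpha$ restricted to $\mathbb{R}\backslash\{E^\alpha\}$ is a finite measure carried by $\mathrm{supp}(\mu_\alpha)$ together with finitely many other zeros of $m_{\mu_\alpha}$; since $\mathrm{dist}(E^\alpha,\mathrm{supp}(\mu_\alpha))\ge c$, for $\delta$ small the point $\omega_\beta(z)$ stays at distance bounded below from that set, so the remaining integral is $O(1)$ while $|\widehat{\mu}_\alpha(\{E^\alpha\})/(E^\alpha-\omega_\beta(z))|\ge \widehat{\mu}_\alpha(\{E^\alpha\})/\delta$. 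This gives the first asymptotics and also shows $|\omega_\alpha(z)|\to\infty$ as $\delta\to 0$, uniformly on $\mathcal{O}_\delta$; in particular $z$ is forced to stay near $E^\alpha$, hence bounded.

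\emph{Step 2 (Laurent expansion and substitution).} Using \eqref{freeadd2}, the representation \eqref{Nevan22}, and the fact that $|\omega_\alpha(z)|$ eventually exceeds the radius of $\mathrm{supp}(\widehat{\mu}_\beta)$,
\[
\omega_\beta(z)-z=F_{\mu_\beta}\big(\omega_\alpha(z)\big)-\omega_\alpha(z)=-\sum_{k\ge 0}\frac{1}{\omega_\alpha(z)^{k+1}}\int_{\mathbb{R}}x^k\,\widehat{\mu}_\beta(\mathrm{d}x).
\]
By Lemma \ref{moments nevan} and the fact that $\mu_\beta$ is centered, the moments $\int_{\mathbb{R}}x^k\,\widehat{\mu}_\beta(\mathrm{d}x)$ for $k=0,1,2$ equal $\int x^2\,\mu_\beta(\mathrm{d}x)$, $\int x^3\,\mu_\beta(\mathrm{d}x)$, and $\int x^4\,\mu_\beta(\mathrm{d}x)-\big(\int x^2\,\mu_\beta(\mathrm{d}x)\big)^2$, so the right-hand side equals
\[
-\frac{\int_{\mathbb{R}} x^2\,\mu_\beta(\mathrm{d}x)}{\omega_\alpha(z)}-\frac{\int_{\mathbb{R}} x^3\,\mu_\beta(\mathrm{d}x)}{\omega_\alpha(z)^2}-\frac{\int_{\mathbb{R}} x^4\,\mu_\beta(\mathrm{d}x)-\big(\int_{\mathbb{R}} x^2\,\mu_\beta(\mathrm{d}x)\big)^2}{\omega_\alpha(z)^3}+O\big(\omega_\alpha(z)^{-4}\big).
\]
Then I would insert $\omega_\alpha(z)^{-1}=\frac{E^\alpha-\omega_\beta(z)}{\widehat{\mu}_\alpha(\{E^\alpha\})}\big(1+O(\delta)\big)$, which follows from Step 1 because the atom term dominates the bounded quantity $z$, collect the terms, and recognize the resulting polynomial in $\frac{E^\alpha-\omega_\beta(z)}{\widehat{\mu}_\alpha(\{E^\alpha\})}$ as $-\frac{E^\alpha-\omega_\beta(z)}{\widehat{\mu}_\alpha(\{E^\alpha\})}f(z)$, up to a multiplicative factor $1+O(\delta)$; the tail $O(\omega_\alpha(z)^{-4})$ is of strictly smaller order. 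Writing $E^\alpha-z=(E^\alpha-\omega_\beta(z))+(\omega_\beta(z)-z)$ then yields the second asymptotics, and exchanging the roles of $\alpha$ and $\beta$ in \eqref{freeadd2} gives the symmetric statement.

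I expect the main obstacle to be Step 2: controlling all error contributions — the tail of the Laurent series, the $O(\delta)$ distortion in each power $\omega_\alpha(z)^{-k}$, and the $O(\delta)$ distortion of the atom term — so that everything beyond the three displayed terms collapses into a single multiplicative $1+O(\delta)$, which is what fixes $f(z)$ to be precisely the stated quadratic in $(E^\alpha-\omega_\beta(z))/\widehat{\mu}_\alpha(\{E^\alpha\})$. A minor prerequisite is the observation that $z$ remains confined near $E^\alpha$, which legitimizes both the expansion of $F_{\mu_\beta}$ at infinity and the approximation for $\omega_\alpha(z)^{-1}$.
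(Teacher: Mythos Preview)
Your proof is correct and follows essentially the same approach as the paper: isolate the atom $E^\alpha$ in the Nevanlinna representation of $F_{\mu_\alpha}-\mathrm{id}$ to get the first asymptotic, then expand $\omega_\beta(z)-z=m_{\widehat{\mu}_\beta}(\omega_\alpha(z))$ as a series at infinity and convert the moments of $\widehat{\mu}_\beta$ to moments of $\mu_\beta$ via Lemma~\ref{moments nevan}. The paper organizes the same computation through auxiliary functions $g(z),h(z)$ and the ratio $(1+h)/(1+g)$, but the content is identical; your direct route via $E^\alpha-z=(E^\alpha-\omega_\beta(z))+(\omega_\beta(z)-z)$ is arguably cleaner, and your explicit assumption that $\mathcal{O}_\delta$ is bounded makes transparent the step (implicit in the paper) that lets you absorb $z$ into the $O(\delta)$ when passing from $\omega_\alpha(z)-z$ to $\omega_\alpha(z)^{-1}$.
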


\begin{proof}
By the subordination equation in \eqref{freeadd2}, we have
\begin{align}\label{subequation 1}
\omega_{\alpha}(z)&=z+\int_{\mathbb{R}}\frac{1}{x-\omega_\beta(z)}\,\widehat{\mu}_{\alpha}(\mathrm{d}x),\qquad z\in\mathbb{C}^+\cup\mathbb{R},\\
\label{subequation 2}
\omega_{\beta}(z)&=z+\int_{\mathbb{R}}\frac{1}{x-\omega_\alpha(z)}\,\widehat{\mu}_{\beta}(\mathrm{d}x),\qquad z\in\mathbb{C}^+\cup\mathbb{R},
\end{align}
and by Corollary \ref{Nevan support}, $\mathrm{supp}(\mu_{\alpha})=\mathrm{supp}\big((\widehat{\mu}_{\alpha})_{\mathrm{ac}}\big)$.
This implies that whenever $\omega_\beta(z)$ converges to $E^{\alpha}$, $\big|\omega_\alpha(z)\big|$ diverges and $z$ converges to $E^{\alpha}$. We will now compute these rates of convergence and divergence more precisely. 

First, we set 
\begin{align*}
g(z):=\frac{1}{\widehat{\mu}_\alpha\big(\lbrace E^{\alpha} \rbrace\big)}\int_{\mathbb{R}\backslash \lbrace E^{\alpha}\rbrace}\frac{E^{\alpha}-\omega_\beta(z)}{x-\omega_\beta(z)}\,\widehat{\mu}_\alpha(\mathrm{d}x),\qquad z\in\mathcal{O}_\delta.
\end{align*}
Then it follows from equation \eqref{subequation 1} that
\begin{align}\label{asympt 1}
\omega_\alpha(z)-z=\frac{\widehat{\mu}_\alpha\big(\lbrace E^{\alpha} \rbrace\big)}{E^{\alpha}-\omega_\beta(z)}\big(1+g(z)\big),\qquad z\in\mathcal{O}_\delta.
\end{align}
We now take a look at equation \eqref{subequation 2}. We define
\begin{align}
h(z):=\frac{1}{\widehat{\mu}_\beta(\mathbb{R})}\int_{\mathbb{R}}\frac{1}{1-\frac{x-z}{\omega_\alpha(z)-z}}\,\widehat{\mu}_\beta(\mathrm{d}x)-1.
\end{align} 
Consequently by equation \eqref{subequation 2}, we have that for every $z\in\mathcal{O}_\delta$,
\begin{align}\label{asympt 2}
\omega_\beta(z)-z \nonumber
&=
\int_{\mathbb{R}}\frac{1}{x-\omega_\alpha(z)}\,\widehat{\mu}_\beta(\mathrm{d}x)\\ \nonumber
&=
-\frac{1}{\omega_\alpha(z)-z}\int_{\mathbb{R}}\frac{1}{1-\frac{x-z}{\omega_\alpha(z)-z}}\,\widehat{\mu}_\beta(\mathrm{d}x)\\ 
&=
-\frac{\widehat{\mu}_\beta(\mathbb{R})}{\omega_\alpha(z)-z}\big( 1+h(z) \big).
\end{align}
We now combine \eqref{asympt 1} and \eqref{asympt 2}. This yields
\begin{align*}
E^{\alpha}-\omega_\beta(z)
&=
E^{\alpha}-z+\frac{\widehat{\mu}_\beta(\mathbb{R})}{\omega_\alpha(z)-z}\big(1+h(z)\big)\\
&=
E^{\alpha}-z+\frac{\widehat{\mu}_\beta(\mathbb{R})}{\widehat{\mu}_\alpha\big(\lbrace E^{\alpha} \rbrace \big)}\big(E^{\alpha}-\omega_\beta(z)\big)
\frac{1+h(z)}{1+g(z)},
\end{align*} 
and we see that
\begin{align}\label{equation z subordination}
\big(E^{\alpha}-\omega_\beta(z)\big)\Big(1-\frac{\widehat{\mu}_\beta(\mathbb{R})}{\widehat{\mu}_\alpha\big(\lbrace E^{\alpha}\rbrace\big)}  \frac{1+h(z)}{1+g(z)} \Big)
=
E^{\alpha}-z,\qquad z\in\mathcal{O}_\delta.
\end{align}
Furthermore we can combine equations \eqref{asympt 1} and \eqref{asympt 2} to express the ratio $\frac{1+h(z)}{1+g(z)}$, when $z\in\mathcal{O}_\delta$. We then obtain 
\begin{align}
\frac{1+h(z)}{1+g(z)}
&=\nonumber
\frac{1}{\widehat{\mu}_\beta\big(\mathbb{R}\big)}
\frac{\omega_\alpha(z)-z}{1+g(z)}\int_{\mathbb{R}}\frac{1}{\omega_\alpha(z)-x}\,\widehat{\mu}_\beta(\mathrm{d}x)\\
&=\nonumber
\frac{\widehat{\mu}_\alpha\big(\lbrace E^{\alpha}\rbrace\big)}{\widehat{\mu}_\beta\big(\mathbb{R}\big)}
\frac{1}{E^{\alpha}-\omega_\beta(z)}
\int_\mathbb{R}\frac{1}{m_{\widehat{\mu}_\alpha}(\omega_\beta(z))-(x-z)}\,\widehat{\mu}_\beta(\mathrm{d}x)\\
&=
\frac{\widehat{\mu}_\alpha\big(\lbrace E^{\alpha}\rbrace\big)}{\widehat{\mu}_\beta\big(\mathbb{R}\big)}
\frac{-1}{E^{\alpha}-\omega_\beta(z)}\,
m_{\widehat{\mu}_\beta}\Big(m_{\widehat{\mu}_\alpha}\big(\omega_\beta(z)\big)+z\Big).
\end{align}
We remark that by \eqref{closetoatoms}, $g(z)=O(\delta)$, whenever $z\in\mathcal{O}_\delta$ and $\omega_\beta(z)$ approaches $E^{\alpha}$. Consequently,
\begin{align}\label{double stieltjes}
m_{\widehat{\mu}_\beta}\Big(m_{\widehat{\mu}_\alpha}\big(\omega_\beta(z)\big)+z\Big)
&=\nonumber
m_{\widehat{\mu}_\beta}\Big(\frac{\widehat{\mu}_\alpha\big(\lbrace E^{\alpha} \rbrace \big)}{E^{\alpha}-\omega_\beta(z)}\big(1+O(\delta)\big)\Big)\\
&=\nonumber
\frac{\omega_\beta(z)-E^{\alpha}}{\widehat{\mu}_\alpha\big(\lbrace E^{\alpha}\rbrace\big)\big(1+O(\delta)\big)}
\int_\mathbb{R}\frac{1}{1-\frac{x(E^{\alpha}-\omega_\beta(z))}{\widehat{\mu}_\alpha(\lbrace E^{\alpha} \rbrace)}}\,\widehat{\mu}_\beta(\mathrm{d}x)\\
&=
(-1) \sum_{k\geq 0} 
\Big(\frac{E^{\alpha}-\omega_\beta(z)}{\widehat{\mu}_\alpha\big(\lbrace E^{\alpha}\rbrace\big)\big(1+O(\delta)\big)}\Big)^{k+1}  \int_\mathbb{R}x^k\,\widehat{\mu}_\beta(\mathrm{d}x),
\end{align}
where we used \eqref{subequation 1}. Therefore we can plug equation \eqref{double stieltjes} in \eqref{equation z subordination} to obtain the equation
\begin{align*}
E^{\alpha}-z
=
\big(E^{\alpha}-\omega_\beta(z)\big)
\bigg(1-\frac{1+O(\delta)}{\widehat{\mu}_\alpha\big(\lbrace E^{\alpha}\rbrace\big)}
\sum_{k=0}^2\Big(\int_{\mathbb{R}}x^k\,\widehat{\mu}_\beta(\mathrm{d}x)\Big)\Big(\frac{E^{\alpha}-\omega_\beta(z)}{\widehat{\mu}_\alpha\big(\lbrace E^{\alpha}\rbrace\big)}\Big)^k\bigg).
\end{align*} 
By Lemma \ref{moments nevan}, we can directly express the moment of $\widehat{\mu}_\beta$ in terms of the moments of $\mu_\beta$. The claim of the proposition then follows directly from this observation.
\end{proof}

We will now show that the subordination functions remain bounded, whenever $z$ lies in a compact domain of $\mathbb{C}^+$ that does not contain any zero of the Cauchy-Stieltjes transforms of $\mu_{\alpha}$ and $\mu_\beta$. 

\begin{lemma}\label{bounded subordination functions}
Let $\mu_{\alpha}$ and $\mu_{\beta}$ be two Borel probability measures on $\mathbb{R}$ verifying Assumption~\ref{main assumption2}.
Let $\mathcal{O}_\alpha$ be an open set containing $\lbrace E:\ m_{\mu_{\alpha}}(E)=0\rbrace$ and $\mathcal{O}_\beta$ be an open set containing $\lbrace E:\ m_{\mu_\beta}(E)=0\rbrace$. There exist constants $C_{\alpha}\geq 1$ and $C_{\beta}\geq 1$ depending on the measures $\mu_{\alpha}$ and $\mu_{\beta}$, on $\mathcal{J}$, and on the open sets $\mathcal{O}_\alpha$ and $\mathcal{O}_\beta$ such that 
\begin{align*}
\sup_{z\in\mathcal{J}\backslash \mathcal{O}_\alpha}|\omega_{\alpha}(z)|\leq C_{\alpha}
\text{ and } 
\sup_{z\in\mathcal{J}\backslash \mathcal{O}_\beta}|\omega_\beta(z)|\leq C_{\beta}. 
\end{align*}
\end{lemma}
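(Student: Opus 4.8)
\emph{Proof strategy.} By symmetry it suffices to prove the bound for $\omega_\alpha$ on $\mathcal{J}\setminus\mathcal{O}_\alpha$; the bound for $\omega_\beta$ is then obtained by interchanging $\alpha$ and $\beta$ (and $\mathcal{O}_\alpha$ and $\mathcal{O}_\beta$). The plan is to argue by contradiction. If the bound fails there is a sequence $(z_n)$ in $\mathcal{J}\setminus\mathcal{O}_\alpha$ with $|\omega_\alpha(z_n)|\to\infty$, and since $\mathcal{J}$ is compact and $\mathcal{J}\setminus\mathcal{O}_\alpha$ is closed in it, after passing to a subsequence $z_n\to z_*$ for some $z_*\in\mathcal{J}\setminus\mathcal{O}_\alpha$; in particular $z_*\notin\mathcal{O}_\alpha$. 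The starting point is to rewrite the subordination equations via the Nevanlinna representations \eqref{Nevan2}--\eqref{Nevan22} as in \eqref{subequation 1}--\eqref{subequation 2}, namely $\omega_\alpha(z)=z+m_{\widehat\mu_\alpha}(\omega_\beta(z))$ and $\omega_\beta(z)=z+m_{\widehat\mu_\beta}(\omega_\alpha(z))$. Since $\widehat\mu_\beta$ is finite and compactly supported, $|m_{\widehat\mu_\beta}(\omega)|\le\widehat\mu_\beta(\mathbb{R})/\mathrm{dist}(\omega,\mathrm{supp}(\widehat\mu_\beta))$, so $|\omega_\beta(z_n)-z_n|\to 0$ and hence $\omega_\beta(z_n)\to z_*$. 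Feeding this into the first identity, and using that $z_n$ stays bounded while $|\omega_\alpha(z_n)|\to\infty$, one gets $|m_{\widehat\mu_\alpha}(\omega_\beta(z_n))|\to\infty$; equivalently, since $m_{\widehat\mu_\alpha}(\omega)=F_{\mu_\alpha}(\omega)-\omega=-1/m_{\mu_\alpha}(\omega)-\omega$, this says $m_{\mu_\alpha}(\omega_\beta(z_n))\to 0$.

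It then remains to show that $m_{\mu_\alpha}(\omega_\beta(z_n))\to 0$ together with $\omega_\beta(z_n)\to z_*$ forces $z_*\in\mathcal{O}_\alpha$, contradicting the above. First, $z_*$ cannot lie in $\mathbb{C}^+$: there $m_{\mu_\alpha}$ is holomorphic and zero-free (it has positive imaginary part), so we would get $m_{\mu_\alpha}(\omega_\beta(z_n))\to m_{\mu_\alpha}(z_*)\neq 0$. Hence $z_*\in\mathcal{E}\subset\mathbb{R}$. If moreover $z_*\notin\mathrm{supp}(\mu_\alpha)$, then $m_{\mu_\alpha}$ is analytic in a neighbourhood of $z_*$, so $m_{\mu_\alpha}(z_*)=\lim_n m_{\mu_\alpha}(\omega_\beta(z_n))=0$, i.e.\ $z_*\in\lbrace E\in\mathbb{R}:\ m_{\mu_\alpha}(E)=0\rbrace\subset\mathcal{O}_\alpha$ — a contradiction. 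The remaining, and genuinely delicate, case is $z_*\in\mathrm{supp}(\mu_\alpha)$, which I would exclude using the Jacobi-type power law of Assumption \ref{main assumption2}.

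For this last case, recall that \eqref{Jacobi2} holds for a.e.\ $x$ on the \emph{whole} interval $[A_j^-,A_j^+]$, so $\rho_\alpha$ is bounded below by a positive constant on every compact subset of the interior $(A_j^-,A_j^+)$. If $z_*$ is such an interior point, choosing $\delta,c_0>0$ with $\rho_\alpha\ge c_0$ on $(z_*-\delta,z_*+\delta)$ yields, for $\omega\in\mathbb{C}^+$ with $|\mathrm{Re}\,\omega-z_*|<\delta/2$ and $\mathrm{Im}\,\omega<\delta/2$, the bound $I_{\mu_\alpha}(\omega)\ge \tfrac{\pi c_0}{2\,\mathrm{Im}\,\omega}$ and hence $\mathrm{Im}\,m_{\mu_\alpha}(\omega)=\mathrm{Im}\,\omega\,I_{\mu_\alpha}(\omega)\ge \tfrac{\pi c_0}{2}$; since $\omega_\beta(z_n)\to z_*\in\mathbb{R}$ this contradicts $m_{\mu_\alpha}(\omega_\beta(z_n))\to 0$. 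If instead $z_*$ is an endpoint $A_j^{\pm}$, I would invoke the power-law estimates already established in the proof of Lemma \ref{distance} (equations \eqref{estimate1}--\eqref{estimate3}; cf.\ \cite[Lemma~3.12]{Bao20}): as $\omega\to A_j^{\pm}$ in $\mathbb{C}^+$ the quantity $|m_{\mu_\alpha}(\omega)|$ either diverges (when the exponent at $A_j^{\pm}$ is $\le 0$), which contradicts $m_{\mu_\alpha}(\omega_\beta(z_n))\to 0$, or it converges to a finite boundary value $\lim_{\eta\searrow 0}m_{\mu_\alpha}(A_j^{\pm}+\mathrm{i}\eta)$ (when the exponent is positive); in the latter situation that value is either nonzero — again a contradiction — or zero, so that $A_j^{\pm}\in\lbrace E\in\mathbb{R}:\ m_{\mu_\alpha}(E)=0\rbrace\subset\mathcal{O}_\alpha$ and once more $z_*\in\mathcal{O}_\alpha$. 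In every case we reach a contradiction, which proves the bound for $\omega_\alpha$; the bound for $\omega_\beta$ follows in the same way.

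I expect the endpoint sub-case to be the main obstacle: it is there that one must extract from the power-law behaviour alone the required control on the boundary values of $m_{\mu_\alpha}$, whereas the interior case and the reduction step are essentially soft consequences of compact support and of the identities already recorded in Section \ref{section: proof free addition part 1}.
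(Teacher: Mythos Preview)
Your argument is correct and follows essentially the same route as the paper's proof: both extract a convergent subsequence $z_n\to z_*$, use $\omega_\beta(z)=z+m_{\widehat\mu_\beta}(\omega_\alpha(z))$ to force $\omega_\beta(z_n)\to z_*$, and then use $\omega_\alpha(z)=z+m_{\widehat\mu_\alpha}(\omega_\beta(z))$ together with $m_{\widehat\mu_\alpha}=-1/m_{\mu_\alpha}-\mathrm{id}$ to deduce $m_{\mu_\alpha}(\omega_\beta(z_n))\to 0$ and hence that $z_*$ is a zero of $m_{\mu_\alpha}$. You are in fact more explicit than the paper at the final step: the paper simply asserts that divergence of $m_{\widehat\mu_\alpha}(\omega_\beta(z_n))$ ``can only happen if $\omega_\beta(z_n)$ approaches a zero of $m_{\mu_\alpha}$'' without separately discussing the possibility $z_*\in\mathrm{supp}(\mu_\alpha)$, whereas you rule out interior points via the density lower bound and treat endpoints with the power-law estimates; this case analysis is exactly what is needed and is handled later in the paper in Lemma~\ref{asymptotics zero}. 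The paper closes with an additional quantitative step (bounding $|m_{\mu_\beta}(\omega_\alpha(z))|$ from below and then estimating $|\omega_\alpha(z)|$ via the subordination equation), but your direct compactness contradiction already suffices.
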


\begin{proof}
First let us prove some reciprocal of Proposition \ref{divergence subordination}. Namely, if $\big(\omega_\alpha(z_n)\big)_{n\geq 1}$ diverges along a converging sequence ($z_n)_{n\geq 1}$, then $\big(\omega_{\beta}(z_n)\big)_{n\geq 1}$ and $(z_n)_{n\geq 1}$ have the same limit, a zero of the Cauchy-Stieltjes transform of ${\mu_\alpha}$.

To do so, let us assume that there exists a sequence $(z_n)_{n\geq 1}$ converging to a point $E$ and such that $\omega_\alpha(z_n)$ diverges. Then by \eqref{subequation 1}, $|\omega_\beta(z_n)-z_n|$ converges to $0$ as $n$ goes to infinity. Consequently, the sequences $(z_n)_{n\geq 1}$ and $\big(\omega_\beta(z_n)\big)_{n\geq 1}$ share the same limit.
Furthermore by~\eqref{subequation 2}, $\big(\int_{\mathbb{R}}\frac{1}{x-\omega_\beta(z_n)}\,\widehat{\mu}_\alpha(\mathrm{d}x)\big)_{n\geq 1}$ diverges as $n$ goes to infinity. By the Nevanlinna representation in \eqref{Nevan}, this can only happen if $\omega_\beta(z_n)$ approaches a zero of~$m_{\mu_\alpha}$, since it is converging. Thence $E$ is a zero of $m_{\mu_\alpha}$. 
This entails the existence of a constant $M_1>0$ such that
\begin{align*}
\inf_{z\in\mathcal{J}\backslash \mathcal{O}_\alpha}\Big|\int_{\mathbb{R}}\frac{1}{x-\omega_\alpha(z)}\,\mu_\beta(\mathrm{d}x)\Big|\geq M_1,
\end{align*}
since $\mathcal{J}$ is compact.
Let us assume that there exists ${z\in\mathcal{J}\backslash \mathcal{O}_\alpha}$ such that $|\omega_\alpha(z)|>2M_2$, for some constant $M_2>\sup\big\lbrace |z|:\ z\in\mathcal{J}\big\rbrace$. Then by \eqref{subequation 1}, 
\begin{align*}
|\omega_\beta(z)-z|\leq\int_{\mathbb{R}}\frac{1}{|x-\omega_\alpha(z)|}\,\widehat{\mu}_\beta(\mathrm{d}x)\leq\frac{\widehat{\mu}_\beta(\mathbb{R})}{M_2}
\end{align*}
and by the subordination equation in \eqref{freeadd3},
\begin{align*}
2M_2
\leq 
\big|\omega_\alpha(z)\big|
\leq 
\big|\omega_\beta(z)-z\big| + \Big|F_{\mu_\beta}\big(\omega_\alpha(z)\big)\Big|
\leq
\frac{\widehat{\mu}_{\beta}(\mathbb{R})}{M_2}+\frac{1}{M_1}.
\end{align*}
Taking $M_2$ sufficiently large yields a contradiction. We conclude that there exists a constant $C_{\alpha}>0$ depending on $\mathcal{O}_\alpha$ and on $\mu_{\beta}$ such that 
\begin{align*}
\sup_{z\in\mathcal{J}\backslash\mathcal{O}_\alpha}|\omega_\alpha(z)|\leq C_{\alpha}.
\end{align*}
The second inequality is proved identically, by interchanging $\alpha$ and $\beta$.
\end{proof}

We will now compare the imaginary parts of the subordination functions $\omega_\alpha$ and $\omega_\beta$ with the imaginary part of the Cauchy-Stieltjes transform of $\mu_\alpha\boxplus\mu_\beta$. This shall be achieved by proving, as in Lemma \ref{distance}, that $\omega_\alpha$ and $\omega_\beta$ stay at positive distance from the supports of $\mu_\beta$ and $\mu_\alpha$ respectively. To do so, we recall that  for any finite Borel measure $\mu$ on $\mathbb{R}$, we denote by $I_{\mu}$ the function
\begin{align*}
I_{\mu}(\omega)
:=
\int_{\mathbb{R}} \frac{1}{|x-\omega|^2}\,\mu(\mathrm{d}x), \qquad \omega\in\mathbb{C}^+\cup\mathbb{R}\backslash\mathrm{supp}(\mu).
\end{align*}
In the next lemma, we bound the functions $I_{\mu_\alpha}$ and $I_{\mu_\beta}$ uniformly. 

\begin{lemma}\label{bounds2}
Let $\mu_{\alpha}$ and $\mu_{\beta}$ be two Borel probability measures verifying Assumption~\ref{main assumption2}.
Let $\mathcal{O}_\alpha$ be an open set containing $\lbrace E:\ m_{\mu_{\alpha}}(E)=0\rbrace$ and $\mathcal{O}_\beta$ be an open set containing $\lbrace E:\ m_{\mu_\beta}(E)=0\rbrace$.
There exist constants $c^\alpha_1,$ $c^\beta_1,$ $c_2^\alpha$, $c_2^\beta$, $c_3^\alpha,$ $c_3^\beta>0$ such that 
\begin{align}\label{boundImuhat 1}
\inf_{z\in\mathcal{J}\backslash\mathcal{O}_\beta} I_{\mu_{\alpha}}\big(\omega_\beta(z)\big)\geq c_1^\alpha,\ 
\inf_{z\in\mathcal{J}\backslash\mathcal{O}_\beta} I_{\widehat{\mu}_{\alpha}}\big(\omega_\beta(z)\big)\geq c_2^\alpha,\ 
\sup_{z\in\mathcal{J}\backslash\mathcal{O}_\beta} I_{\widehat{\mu}_{\alpha}}\big(\omega_\beta(z)\big)\leq c_3^\alpha
\end{align}
and
\begin{align}\label{bound Imuhat 2}
\inf_{z\in\mathcal{J}\backslash\mathcal{O}_\alpha} I_{\mu_{\beta}}\big(\omega_\alpha(z)\big)\geq c_1^\beta,\ 
\inf_{z\in\mathcal{J}\backslash\mathcal{O}_\alpha} I_{\widehat{\mu}_{\beta}}\big(\omega_\alpha(z)\big)\geq c_2^\beta,\ 
\sup_{z\in\mathcal{J}\backslash\mathcal{O}_\alpha} I_{\widehat{\mu}_{\beta}}\big(\omega_\alpha(z)\big)\leq c_3^\beta.
\end{align}
\end{lemma}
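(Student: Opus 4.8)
\emph{Strategy.} By the $\alpha\leftrightarrow\beta$ symmetry of the statement it suffices to establish the three bounds in~\eqref{boundImuhat 1}. The two lower bounds are elementary and rely only on Lemma~\ref{bounded subordination functions}: it furnishes a constant $C_\beta\geq 1$ with $|\omega_\beta(z)|\leq C_\beta$ for every $z\in\mathcal{J}\backslash\mathcal{O}_\beta$, and since both $\mu_\alpha$ and $\widehat\mu_\alpha$ are supported in a fixed compact interval $[-R_\alpha,R_\alpha]$, with $\mu_\alpha(\mathbb{R})=1$ and $\widehat\mu_\alpha$ finite with $\widehat\mu_\alpha(\mathbb{R})=\int_\mathbb{R} x^2\,\mu_\alpha(\mathrm{d}x)>0$, the integrand $|x-\omega_\beta(z)|^{-2}$ is bounded below by $(R_\alpha+C_\beta)^{-2}$ uniformly on these supports. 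This yields $I_{\mu_\alpha}(\omega_\beta(z))\geq (R_\alpha+C_\beta)^{-2}=:c_1^\alpha$ and $I_{\widehat\mu_\alpha}(\omega_\beta(z))\geq \widehat\mu_\alpha(\mathbb{R})(R_\alpha+C_\beta)^{-2}=:c_2^\alpha$ on $\mathcal{J}\backslash\mathcal{O}_\beta$.

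The upper bound $\sup_{z\in\mathcal{J}\backslash\mathcal{O}_\beta}I_{\widehat\mu_\alpha}(\omega_\beta(z))\leq c_3^\alpha$ is the substantive part and, since $\widehat\mu_\alpha$ is finite with compact support, is equivalent to the claim that $\omega_\beta$ stays at a positive distance from $\mathrm{supp}(\widehat\mu_\alpha)$ uniformly on $\mathcal{J}\backslash\mathcal{O}_\beta$. By Corollary~\ref{Nevan support}, $\mathrm{supp}(\widehat\mu_\alpha)=\mathrm{supp}(\mu_\alpha)\cup\{E_1^\alpha,\dots,E_{m_\alpha}^\alpha\}$, so by compactness of $\mathcal{J}$ I would argue by contradiction, assuming a sequence $z_n\in\mathcal{J}\backslash\mathcal{O}_\beta$ with $\omega_\beta(z_n)\to p\in\mathrm{supp}(\widehat\mu_\alpha)$. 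Taking imaginary parts in~\eqref{Nevan2} and applying Cauchy--Schwarz gives the identity $I_{\widehat\mu_\alpha}(\omega)+1=I_{\mu_\alpha}(\omega)/|m_{\mu_\alpha}(\omega)|^2$, and inserting the power-law bounds of Assumption~\ref{main assumption2}, used exactly as in the estimates~\eqref{estimate1}--\eqref{estimate3} of Lemma~\ref{distance} (with the exponents $s_j^\pm$ in place of $t_j^\pm$), shows that $I_{\widehat\mu_\alpha}(\omega)\to\infty$ whenever $\omega$ tends to a point of $\mathrm{supp}(\mu_\alpha)$: at interior points $I_{\mu_\alpha}(\omega)\gtrsim 1/\mathrm{Im}\,\omega$ while $|m_{\mu_\alpha}|$ grows at most logarithmically, and at the endpoints (no atoms there, by Assumption~\ref{main assumption2}) the computation of Lemma~\ref{distance} applies verbatim.

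In the case $p\in\mathrm{supp}(\mu_\alpha)$ this gives $I_{\widehat\mu_\alpha}(\omega_\beta(z_n))\to\infty$, so by the product inequality~\eqref{inegalité edge} $I_{\widehat\mu_\beta}(\omega_\alpha(z_n))\to 0$; since $\widehat\mu_\beta$ is a fixed finite compactly supported measure this forces $|\omega_\alpha(z_n)|\to\infty$, and then the reciprocal statement established at the start of the proof of Lemma~\ref{bounded subordination functions} says that $z_n$ and $\omega_\beta(z_n)$ converge to a common zero of $m_{\mu_\alpha}$ — contradicting $p\in\mathrm{supp}(\mu_\alpha)$, on which $m_{\mu_\alpha}$ has no zero. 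The remaining case $p=E_j^\alpha$, where $E_j^\alpha$ \emph{is} a zero of $m_{\mu_\alpha}$ and $I_{\widehat\mu_\alpha}(\omega_\beta(z_n))$ genuinely diverges, is where the previous argument breaks and is controlled instead by Proposition~\ref{divergence subordination}: applying~\eqref{asymptotics sub} with $E^\alpha=E_j^\alpha$ shows that $\omega_\beta(z_n)\to E_j^\alpha$ forces $z_n\to E_j^\alpha$ as well (and $|\omega_\alpha(z_n)|\to\infty$). Hence the $\omega_\beta$-preimage inside $\mathcal{J}$ of a small disk about $E_j^\alpha$ shrinks, as the radius tends to $0$, into an arbitrarily small neighbourhood of the finite zero set $\{E:m_{\mu_\alpha}(E)=0\}$; since $\omega_\beta$ is continuous and this set is disjoint from $\mathrm{supp}(\mu_\alpha)$, it follows that $\omega_\beta$ stays at a positive distance from $\{E_1^\alpha,\dots,E_{m_\alpha}^\alpha\}$ on $\mathcal{J}\backslash\mathcal{O}_\beta$ (enlarging $\mathcal{O}_\beta$ by a neighbourhood of those finitely many zeros of $m_{\mu_\alpha}$ if necessary, which only changes the value of $c_3^\alpha$). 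Combining the two cases gives $\inf_{z\in\mathcal{J}\backslash\mathcal{O}_\beta}\mathrm{dist}(\omega_\beta(z),\mathrm{supp}(\widehat\mu_\alpha))>0$, hence the upper bound $c_3^\alpha$, and the bounds~\eqref{bound Imuhat 2} follow by exchanging $\alpha$ and $\beta$. \textbf{The main obstacle} is precisely this pure-point case $p=E_j^\alpha$: the naive blow-up/contradiction argument is inconclusive there, and one must appeal to the localisation of the preimages provided by Proposition~\ref{divergence subordination}.
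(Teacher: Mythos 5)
Your two lower bounds are fine and are exactly the paper's argument: on $\mathcal{J}\backslash\mathcal{O}_\beta$ the function $\omega_\beta$ is bounded by Lemma~\ref{bounded subordination functions}, and since $\mu_\alpha$ and $\widehat{\mu}_\alpha$ are compactly supported with positive total mass the integrands are bounded below. Where you diverge from the paper is the supremum bound, and there your route does not work. The paper never proves, at this stage, that $\omega_\beta$ stays away from $\mathrm{supp}(\widehat{\mu}_\alpha)$; it obtains the upper bound in one line by combining the product inequality \eqref{inegalité edge} with the lower bound just established on the complementary factor ($I_{\widehat{\mu}}(\omega_\cdot)\,I_{\widehat{\mu}}(\omega_\cdot)\le 1$ together with $\inf I\ge c_2$ gives $\sup I\le 1/c_2$ on the same set). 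The distance statement you try to prove instead is essentially Lemma~\ref{asymptotics zero}, which in the paper comes \emph{after} this lemma and uses it, so besides being much heavier your route inverts the logical order of the section.

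More importantly, your handling of the pure-point case is a genuine gap. If $E^{\alpha}$ is a pure point of $\widehat{\mu}_\alpha$ with $\widehat{\mu}_\alpha(\lbrace E^{\alpha}\rbrace)\ge\widehat{\mu}_\beta(\mathbb{R})$, then by Proposition~\ref{divergence subordination} and Lemma~\ref{visiting a zero} the function $\omega_\beta$ really does reach $E^{\alpha}$ (indeed $\omega_\beta(E^{\alpha})=E^{\alpha}$), and $I_{\widehat{\mu}_\alpha}\big(\omega_\beta(z)\big)\sim\widehat{\mu}_\alpha(\lbrace E^{\alpha}\rbrace)/|E^{\alpha}-\omega_\beta(z)|^2$ diverges as $z\to E^{\alpha}$; since $E^{\alpha}$ is a zero of $m_{\mu_\alpha}$, not of $m_{\mu_\beta}$, these $z$ belong to $\mathcal{J}\backslash\mathcal{O}_\beta$. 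Hence the uniform-distance claim you reduce the supremum bound to is false on $\mathcal{J}\backslash\mathcal{O}_\beta$, and your repair of ``enlarging $\mathcal{O}_\beta$ by a neighbourhood of the zeros of $m_{\mu_\alpha}$'' does not merely change the constant $c_3^{\alpha}$: it shrinks the set over which the supremum is taken and therefore proves a different statement, namely a bound on $\mathcal{J}\backslash(\mathcal{O}_\alpha\cup\mathcal{O}_\beta)$. This is exactly the pairing the paper's one-line argument produces (combining \eqref{inegalité edge} with the lower bound on $I_{\widehat{\mu}_\alpha}(\omega_\beta)$ over $\mathcal{J}\backslash\mathcal{O}_\beta$ bounds the \emph{partner} quantity $I_{\widehat{\mu}_\beta}(\omega_\alpha)$ there, which is what is used later in the proof of Lemma~\ref{asymptotics zero}), and it is immune to the pure-point obstruction you ran into. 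A secondary flaw: in your case $p\in\mathrm{supp}(\mu_\alpha)$ the contradiction rests on ``$m_{\mu_\alpha}$ has no zero on $\mathrm{supp}(\mu_\alpha)$''; this holds in the interior but can fail at an endpoint $A_j^{\pm}$ when the corresponding exponent is positive, which is precisely the sub-case the paper must treat by a separate computation in Lemma~\ref{asymptotics zero}, so even your first case is incomplete as written.
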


\begin{proof}[Proof]
Let us focus on \eqref{boundImuhat 1}, since the proofs of the inequalities in \eqref{bound Imuhat 2} are identical. 
The first and the second identities are clear since $I_{\mu_\alpha}$ and $I_{\widehat{\mu}_\alpha}$ are integrals of a positive function which is not identically null and since $\omega_\beta$ is bounded on $\mathcal{J}\backslash\mathcal{O}_\beta$ by Lemma \ref{bounded subordination functions}.
The last inequality follows from combining \eqref{inegalité edge} with the second inequality in \eqref{boundImuhat 1}.
\end{proof}

In order to compare the imaginary parts of the subordination functions with the imaginary part of the Cauchy-Stieltjes transform of $\mu_\alpha\boxplus\mu_\beta$, we assume that  $\omega_\beta(z)$ approaches an endpoint of the support of $\mu_\alpha$ and study the asymptotics of $I_{\widehat{\mu}_\alpha}\big(\omega_\beta(z)\big)$ and of $I_{\widehat{\mu}_\beta}\big(\omega_\alpha(z)\big)$. We then conclude that  $\omega_\alpha$ and $\omega_\beta$ do not approach the supports of $\mu_\beta$ and of $\mu_\alpha$, respectively. 

\begin{lemma}\label{asymptotics zero}
Let $\mu_{\alpha}$ and $\mu_{\beta}$ be two Borel probability measures on $\mathbb{R}$ verifying Assumption~\ref{main assumption2}. There exist positive constants $C_\alpha, C_\beta$ such that 
\begin{align}
\inf_{z\in\mathcal{J}}\mathrm{dist}\big(\omega_\alpha(z),\mathrm{supp}(\mu_\beta)\big)\geq C_\alpha,\ 
\inf_{z\in\mathcal{J}}\mathrm{dist}\big(\omega_\beta(z),\mathrm{supp}(\mu_\alpha)\big)\geq C_\beta.
\end{align}
\end{lemma}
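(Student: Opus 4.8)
The plan is to argue by contradiction, playing the two subordination functions against each other through the product inequality \eqref{inegalité edge} and the reciprocal of Proposition \ref{divergence subordination} established in the first paragraph of the proof of Lemma \ref{bounded subordination functions}. By the obvious symmetry $\alpha\leftrightarrow\beta$ it suffices to prove that $\omega_\beta$ stays at positive distance from $\mathrm{supp}(\mu_\alpha)$ on $\mathcal{J}$; the bound for $\omega_\alpha$ and $\mathrm{supp}(\mu_\beta)$ then follows by interchanging the roles of $\mu_\alpha$ and $\mu_\beta$.

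So suppose the first inequality fails: there is a sequence $(z_n)_{n\ge1}\subset\mathcal{J}$ with $\mathrm{dist}\big(\omega_\beta(z_n),\mathrm{supp}(\mu_\alpha)\big)\to 0$. Since $\mathcal{J}$ and $\mathrm{supp}(\mu_\alpha)$ are compact and $\omega_\beta$ extends continuously to $\mathbb{C}^+\cup\mathbb{R}$ with values in $\mathbb{C}^+\cup\mathbb{R}\cup\{\infty\}$, after passing to a subsequence I may assume $z_n\to z_0\in\mathcal{J}$ and $\omega_\beta(z_n)\to v_0$ for some $v_0\in\mathrm{supp}(\mu_\alpha)\subset\mathbb{R}$; in particular $\mathrm{Im}\,\omega_\beta(z_n)\to 0$. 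Note that $v_0$ is not an atom of $\mu_\alpha$, since $\mu_\alpha$ is absolutely continuous (so in particular the endpoints of the components of $\mathrm{supp}(\mu_\alpha)$ carry no mass).

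Now I would run the following chain. Because $\mu_\alpha$ has a Jacobi-type density (Assumption \ref{main assumption2}) with exponents in $(-1,1)$ and $v_0$ is a non-atomic point of $\mathrm{supp}(\mu_\alpha)=\mathrm{supp}\big((\widehat{\mu}_\alpha)_{\mathrm{ac}}\big)$ (Corollary \ref{Nevan support}), the same power-law estimates used in the proof of Lemma \ref{distance}, namely \cite[Lemma 3.12]{Bao20}, show that $I_{\widehat{\mu}_\alpha}\big(\omega_\beta(z_n)\big)\to\infty$ — whether $v_0$ is an interior point of a component or one of its endpoints. Feeding this into the product bound \eqref{inegalité edge}, $I_{\widehat{\mu}_\alpha}(\omega_\beta(z_n))\,I_{\widehat{\mu}_\beta}(\omega_\alpha(z_n))\le 1$, we get $I_{\widehat{\mu}_\beta}\big(\omega_\alpha(z_n)\big)\to 0$; since $\widehat{\mu}_\beta$ is a nonzero finite measure with $\mathrm{supp}(\widehat{\mu}_\beta)\subset[-R,R]$, one has $I_{\widehat{\mu}_\beta}(\omega)\ge \widehat{\mu}_\beta(\mathbb{R})/(|\omega|+R)^2$, so $I_{\widehat{\mu}_\beta}(\omega)$ is bounded below on bounded sets and therefore $|\omega_\alpha(z_n)|\to\infty$. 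At this point I invoke the reciprocal of Proposition \ref{divergence subordination}: from \eqref{subequation 2}, $|\omega_\beta(z_n)-z_n|\le \widehat{\mu}_\beta(\mathbb{R})/\mathrm{dist}\big(\omega_\alpha(z_n),\mathrm{supp}(\widehat{\mu}_\beta)\big)\to 0$, so $v_0=z_0$, and then \eqref{subequation 1} together with the Nevanlinna representation \eqref{Nevan2} gives $-1/m_{\mu_\alpha}\big(\omega_\beta(z_n)\big)=\omega_\alpha(z_n)+\omega_\beta(z_n)-z_n\to\infty$, i.e. $m_{\mu_\alpha}\big(\omega_\beta(z_n)\big)\to 0$; hence $v_0$ is a zero of $m_{\mu_\alpha}$. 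But $m_{\mu_\alpha}$ has no zeros on $\mathrm{supp}(\mu_\alpha)$ — this is shown in the proof of Proposition \ref{Nevanlinna representation} — which contradicts $v_0\in\mathrm{supp}(\mu_\alpha)$. This contradiction yields $\inf_{z\in\mathcal{J}}\mathrm{dist}\big(\omega_\beta(z),\mathrm{supp}(\mu_\alpha)\big)\ge C_\beta>0$, and the bound $\inf_{z\in\mathcal{J}}\mathrm{dist}\big(\omega_\alpha(z),\mathrm{supp}(\mu_\beta)\big)\ge C_\alpha>0$ follows by the symmetric argument.

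The step I expect to be the main obstacle is the first link of that chain: proving that $I_{\widehat{\mu}_\alpha}\big(\omega_\beta(z_n)\big)\to\infty$ uniformly as $\omega_\beta(z_n)$ approaches an \emph{endpoint} of $\mathrm{supp}(\mu_\alpha)$, irrespective of the direction of approach, and checking that the auxiliary measure $\widehat{\mu}_\alpha$ still carries a power-law density with exponent in $(-1,1)$ at such a point. This is exactly the mechanism already exploited in \cite{Bao20} and in the proof of Lemma \ref{distance}, and it is precisely where the assumption that all endpoint exponents lie strictly between $-1$ and $1$ is used; once it is in place, the remainder of the argument is bookkeeping built on \eqref{inegalité edge}, Lemma \ref{bounded subordination functions}, and Proposition \ref{Nevanlinna representation}.
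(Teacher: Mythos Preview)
Your overall strategy matches the paper's up to the very last step, but that step contains a genuine gap. You conclude by asserting that $m_{\mu_\alpha}$ has no zeros on $\mathrm{supp}(\mu_\alpha)$, citing the proof of Proposition~\ref{Nevanlinna representation}. That proof, however, only rules out zeros in the \emph{interior} of $\mathrm{supp}(\mu_\alpha)$ and strictly outside the convex hull; it says nothing about the endpoints $A_j^\pm$. In fact, when the Jacobi exponent at a left endpoint $A_j^-$ satisfies $s_j^->0$, the integral $\int(x-A_j^-)^{-1}\,\mu_\alpha(\mathrm{d}x)$ is finite and can perfectly well vanish (the positive contribution from $[A_j^-,A_j^+]$ can be cancelled by negative contributions from components to the left). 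The paper is aware of this: its proof of the lemma explicitly splits off the case ``$A^-$ is a zero of $m_{\mu_\alpha}$'' and treats it separately. Your contradiction therefore fails precisely in this borderline situation.

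What the paper does instead is to track the \emph{rate} at which $I_{\widehat\mu_\beta}\big(\omega_\alpha(z_n)\big)\to 0$ when $|\omega_\alpha(z_n)|\to\infty$, namely $I_{\widehat\mu_\beta}\big(\omega_\alpha(z_n)\big)=\widehat\mu_\beta(\mathbb{R})\,|\omega_\alpha(z_n)|^{-2}\big(1+o(1)\big)$, together with $|m_{\mu_\beta}(\omega_\alpha(z_n))|^2=|\omega_\alpha(z_n)|^{-2}\big(1+o(1)\big)$. Using the identity $m_{\mu_\alpha}(\omega_\beta)=m_{\mu_\beta}(\omega_\alpha)$ from \eqref{freeadd2} and \eqref{egalte Imuhat}, one then gets
\[
I_{\widehat\mu_\alpha}\big(\omega_\beta(z_n)\big)\,I_{\widehat\mu_\beta}\big(\omega_\alpha(z_n)\big)
=\widehat\mu_\beta(\mathbb{R})\,I_{\mu_\alpha}\big(\omega_\beta(z_n)\big)+o(1),
\]
and the right-hand side diverges by the power-law estimate \eqref{rate divergence}, contradicting \eqref{inegalité edge} directly. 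In other words, the contradiction comes from the \emph{product} blowing up, not from the location of the zero of $m_{\mu_\alpha}$. Incidentally, the step you flagged as the main obstacle (that $I_{\widehat\mu_\alpha}(\omega_\beta)\to\infty$ at an endpoint) is fine and goes through exactly as in Lemma~\ref{distance}; the real obstacle is the one you did not flag.
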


\begin{proof}
In order to prove this result, we first recall Lemma \ref{distance}. We denote by $[A^{-},A^{+}]$ one of the intervals in the support of $\mu_\alpha$. The fact that $\omega_\alpha(z)$ stays at positive distance from any point in $(A^{-},A^{+})$ is proved in the same fashion as in Lemma \ref{distance}. We therefore focus on $A^{-}$, the proof for $A^{+}$ being identical. Since $\mu_\alpha$ satisfies the power law behavior dictated in Assumption \ref{main assumption2}, we set $t_{\alpha}^{-}$ to be the  exponent corresponding to $A^{-}$. As can be seen in Lemma 3.4 in \cite{Bao20}, whenever $|\omega_{\beta}-A^{-}|<\delta$, for $\delta>0$ sufficiently small, we have that
\begin{align}\label{rate divergence}
\int_{\mathbb{R}}\frac{1}{|x-\omega_{\beta}|^2}\,\mu_{\alpha}(\mathrm{d}x)\sim \left\{
    \begin{array}{ll}
        \frac{(\mathrm{Re}\,\omega_{\beta}-A^{-})^{t_{\alpha}^{-}}}{\mathrm{Im}\,\omega_{\beta}} & \mbox{if } \mathrm{Re}\,\omega_{\beta}-A^{-}>\mathrm{Im}\,\omega_{\beta}, \\
        (A^{-} -\mathrm{Re}\,\omega_{\beta})^{t_{\alpha}^{-}-1} & \mbox{if } \mathrm{Re}\,\omega_{\beta}-A^{-}<-\mathrm{Im}\,\omega_{\beta},\\
       (\mathrm{Im}\,\omega_{\beta})^{t_{\alpha}^{-}-1} & \mbox{if }  |A^{-}-\mathrm{Re}\,\omega_{\beta}|\leq\mathrm{Im}\,\omega_{\beta}.
    \end{array}
\right.
\end{align}
Furthermore, there exists $C_1^{\alpha}>0$ such that  whenever $t_{\alpha}^{-}\geq 0$ and $|A^{-}-\omega_{\beta}|<\delta$ with $\delta>0$ sufficiently small, 
\begin{align*}
\Big|\int_{\mathbb{R}}\frac{1}{x-\omega_{\beta}}\,\mu_{\alpha}(\mathrm{d}x)\Big|\leq C_1^{\alpha} \left\{
    \begin{array}{ll}
        |\log\mathrm{Im}\,\omega_{\beta}| & \mbox{if } \mathrm{Re}\,\omega_{\beta}-A^{-}>\mathrm{Im}\,\omega_{\beta}, \\
        |\log(A^{-}-\mathrm{Re}\,\omega_{\beta})| & \mbox{if } \mathrm{Re}\,\omega_{\beta}-A^{-}<-\mathrm{Im}\,\omega_{\beta},\\
        |\log\mathrm{Im}\,\omega_{\beta}| & \mbox{if }  |A^{-}-\mathrm{Re}\,\omega_{\beta}|\leq\mathrm{Im}\,\omega_{\beta}.
    \end{array}
\right.
\end{align*}
We recall from the definition of $I_{\widehat{\mu}_\alpha}(\omega_\beta)$ that 
\begin{align}\label{egalte Imuhat}
I_{\widehat{\mu}_\alpha}(\omega_{\beta})
=
\frac{\mathrm{Im}\,m_{\mu_\alpha}(\omega_\beta)}{|m_{\mu_\alpha}(\omega_\beta)|^2\mathrm{Im}\,\omega_\beta}-1
=
\frac{I_{\mu_\alpha}(\omega_\beta)}{|m_{\mu_\alpha}(\omega_\beta)|^2}-1.
\end{align}
Therefore there exists $C_2^{\alpha}>0$ such that
\begin{align}\label{bound power law}
I_{\widehat{\mu}_\alpha}(\omega_\beta)+1\geq C_2^{\alpha}\left\{
    \begin{array}{ll}
        \frac{(\mathrm{Re}\,\omega_\beta-A^{-})^{t_{\alpha}^{-}}}{\mathrm{Im}\,\omega_\beta(\log\mathrm{Im}\,\omega_\beta)^2} & \mbox{if } \mathrm{Re}\,\omega_{\beta}-A^{-}>\mathrm{Im}\,\omega_{\beta}, \\
        \frac{|\mathrm{Re}\,\omega_\beta-A^{-}|^{t_{\alpha}^{-}-1}}{|\log(A^{-}-\mathrm{Re}\,\omega_\beta)|^2} & \mbox{if } \mathrm{Re}\,\omega_{\beta}-A^{-}<-\mathrm{Im}\,\omega_{\beta},\\
        \frac{(\mathrm{Im}\,\omega_\beta)^{t_{\alpha}^{-}-1}}{(\log\mathrm{Im}\,\omega_\beta)^2} & \mbox{if }  |A^{-}-\mathrm{Re}\,\omega_{\beta}|\leq\mathrm{Im}\,\omega_{\beta}.
    \end{array}
\right.
\end{align}

Next, we assume that there exists a converging sequence $(z_n)_{n\geq 1}$ in the upper half-plane, so that $\omega_\beta(z_n)$ approaches $A^{-}$. We denote by $z$ the limit of $z_n$. If $A^{-}$ is not a zero of $m_{\mu_\alpha}$, the result is proven since we know from Lemma \ref{bounds2} that $I_{\widehat{\mu}_\beta}\big(\omega_\alpha(z)\big)$ is bounded from above and that $I_{\widehat{\mu}_\alpha}\big(\omega_\beta(z)\big)I_{\widehat{\mu}_\beta}\big(\omega_\alpha(z)\big)\leq 1$, see \eqref{inegalité edge}. Therefore the existence of such a sequence $(z_n)_{n\geq 1}$ contradicts  equation \eqref{bound power law}. 

We now assume that $A^{-}$ is a zero of $m_{\mu_\alpha}$. Then, by \eqref{subequation 1} and \eqref{subequation 2}, we see that if $\omega_\beta(z_n)$ converges to $A^{-}$, then  the sequence $(z_n)_{n\geq 1}$ itself converges to $A^{-}$ and $\omega_\alpha(z_n)$ diverges. Therefore
\begin{align*}
\Big|m_{\mu_\beta}\big(\omega_\alpha(z_n)\big)\Big|^2
&= \nonumber
\Big| \int_{\mathbb{R}}\frac{1}{x-\omega_\alpha(z_n)}\,\mu_\beta(\mathrm{d}x) \Big|^2\\
&= \nonumber
\Big|\frac{1}{\omega_\alpha(z_n)}\Big|^2 
\Big|\int_{\mathbb{R}}\frac{1}{1-\frac{x}{\omega_\alpha(z_n)}}\,\mu_\beta(\mathrm{d}x) \Big|^2\\
&=
\Big|\frac{1}{\omega_\alpha(z_n)}\Big|^2\Big(1+O\big(|\frac{1}{\omega_\alpha(z_n)}|^2\big)\Big)
\end{align*}
and
\begin{align*}
I_{\widehat{\mu}_\beta}\big(\omega_\alpha(z_n)\big)
&=\nonumber
\int_{\mathbb{R}}\frac{1}{|x-\omega_\alpha(z_n)|^2}\,\widehat{\mu}_\beta(\mathrm{d}x)\\
&=
\frac{\widehat{\mu}_\beta(\mathbb{R})}{|\omega_\alpha(z_n)|^2}\Big(1+O\big(\frac{1}{|\omega_\alpha(z_n)|^2}\big)\Big).
\end{align*}
As a consequence, we have that
\begin{align*}\nonumber
I_{\widehat{\mu}_\alpha}\big(\omega_\beta(z_n)\big)
I_{\widehat{\mu}_\beta}\big(\omega_\alpha(z_n)\big)
&=
\Big(\frac{I_{\mu_\alpha}\big(\omega_\beta(z_n)\big)}{\big|m_{\mu_\beta}(\omega_\alpha(z_n)\big)\big|^2}-1\Big)
I_{\widehat{\mu}_\beta}\big(\omega_\alpha(z_n)\big)\\
&=
\widehat{\mu}_\beta(\mathbb{R})\,I_{\mu_\alpha}\big(\omega_\beta(z_n)\big)+O\Big(\frac{1}{\big|\omega_\alpha(z_n)\big|^2}\Big).
\end{align*}
Combining this with \eqref{rate divergence}, we conclude that $I_{\widehat{\mu}_\alpha}\big(\omega_\beta(z_n)\big)
I_{\widehat{\mu}_\beta}\big(\omega_\alpha(z_n)\big)$ diverges as $\omega_\beta$ approaches $A^{-}$. This again contradicts $I_{\widehat{\mu}_\alpha}\big(\omega_\beta(z)\big)I_{\widehat{\mu}_\beta}\big(\omega_\alpha(z)\big)\leq 1$.

Finally,  if $t_{\alpha}^{-}<0$ and $|A^{-}-\omega_{\beta}|<\delta$, with $\delta>0$ sufficiently small, there exists $C_3^{\alpha}>0$ such that 
\begin{align*}
\Big|\int_{\mathbb{R}}\frac{1}{x-\omega_{\beta}}\,\mu_{\alpha}(\mathrm{d}x)\Big|\leq C_3^{\alpha} \left\{
    \begin{array}{ll}
        |\log\mathrm{Im}\,\omega_{\beta}|(\mathrm{Re}\,\omega_{\beta}-A^{-})^{t^{-}_{\alpha}} & \mbox{if } \mathrm{Re}\,\omega_{\beta}-A^{-}>\mathrm{Im}\,\omega_{\beta}, \\
        |\log(A^{-}-\mathrm{Re}\,\omega_{\beta})||\mathrm{Re}\,\omega_{\beta}-A^{-}|^{t_{\alpha}^{-}} & \mbox{if } \mathrm{Re}\,\omega_{\beta}-A^{-}<-\mathrm{Im}\,\omega_{\beta},\\
        |\log\mathrm{Im}\,\omega_{\beta}|(\mathrm{Im}\,\omega_{\beta})^{t_{\alpha}^{-}} & \mbox{if }  |A^{-}-\mathrm{Re}\,\omega_{\beta}|\leq\mathrm{Im}\,\omega_{\beta}.
    \end{array}
\right.
\end{align*}
This entails the existence of a constant $C_4^{\alpha}>0$ such that
\begin{align*}
I_{\widehat{\mu}_\alpha}(\omega_\beta)+1\geq C_4^{\alpha}\left\{
    \begin{array}{ll}
        \frac{(\mathrm{Re}\,\omega_\beta-A^{-})^{-t_{\alpha}^{-}}}{\mathrm{Im}\,\omega_\beta(\log\mathrm{Im}\,\omega_\beta)^2} & \mbox{if } \mathrm{Re}\,\omega_{\beta}-A^{-}>\mathrm{Im}\,\omega_{\beta}, \\
        \frac{|\mathrm{Re}\,\omega_\beta-A^{-}|^{-t_{\alpha}^{-}-1}}{|\log(A^{-}-\mathrm{Re}\,\omega_\beta)|^2} & \mbox{if } \mathrm{Re}\,\omega_{\beta}-A^{-}<-\mathrm{Im}\,\omega_{\beta},\\
        \frac{(\mathrm{Im}\,\omega_\beta)^{-t_{\alpha}^{-}-1}}{(\log\mathrm{Im}\,\omega_\beta)^2} & \mbox{if }  |A^{-}-\mathrm{Re}\,\omega_{\beta}|\leq\mathrm{Im}\,\omega_{\beta}.
    \end{array}
\right.
\end{align*}
In other words, we have shown that $I_{\widehat{\mu}_\alpha}(\omega_\beta)$ diverges. Moreover, by Lemma  \ref{bounds2}, $I_{\widehat{\mu}_\beta}$ remains bounded when $\omega_\beta$ approaches $A^{-}$. Indeed, if $t_\alpha^{-}\leq 0$, then $m_{\mu_\alpha}$ does not vanish at $A^{-}$. The result is proven since the divergence of $I_{\widehat{\mu}_\alpha}\big(\omega_\beta(z_n)\big)
I_{\widehat{\mu}_\beta}\big(\omega_\alpha(z_n)\big)$ contradicts  
\begin{align*}
I_{\widehat{\mu}_\alpha}\big(\omega_\beta(z)\big)I_{\widehat{\mu}_\beta}\big(\omega_\alpha(z)\big)\leq 1,\qquad z\in\mathbb{C}^{+}\cup\mathbb{R}.
\end{align*}
This observation concludes the proof of the lemma.
\end{proof}

\begin{proposition}\label{comparaison alpha}
Let $\mu_{\alpha}$ and $\mu_{\beta}$ be two Borel probability measures on $\mathbb{R}$ verifying Assumption~\ref{main assumption2}.
Let $\mathcal{O}_\alpha$ be an open set containing $\lbrace E_1^{\alpha},...,E_{m_{\alpha}}^{\alpha}\rbrace$ and $\mathcal{O}_\beta$ be an open set containing $\lbrace E_1^{\beta},...,E_{m_{\beta}}^{\beta}\rbrace$. There exist constants $C_1\geq 1$ and $C_2\geq 1$, depending on the measures $\mu_{\alpha}$ and $\mu_{\beta}$, on $\mathcal{J}$ and on the open sets $\mathcal{O}_\alpha$ and $\mathcal{O}_\beta$, such that 
\begin{align*}
C_1^{-1}\,\mathrm{Im}\, m_{\mu_{\alpha}\boxplus\mu_{\beta}}(z) 
\leq 
\mathrm{Im}\, \omega_\alpha(z) 
\leq 
C_1\,\mathrm{Im}\, m_{\mu_{\alpha}\boxplus\mu_{\beta}}(z),\qquad z\in\mathcal{J}\backslash\mathcal{O}_\alpha
\end{align*} 
and
\begin{align*}
C_2^{-1}\,\mathrm{Im}\, m_{\mu_{\alpha}\boxplus\mu_{\beta}}(z) 
\leq 
\mathrm{Im}\, \omega_\beta(z) 
\leq 
C_2\,\mathrm{Im}\, m_{\mu_{\alpha}\boxplus\mu_{\beta}}(z),\qquad z\in\mathcal{J}\backslash\mathcal{O}_\beta.
\end{align*} 
\end{proposition}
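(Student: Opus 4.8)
The plan is to deduce both two-sided estimates directly from the fundamental identity \eqref{identité plus petit de base}, which for $z\in\mathbb{C}^+$ reads
\begin{align*}
\mathrm{Im}\,m_{\mu_{\alpha}\boxplus\mu_{\beta}}(z)
=
\mathrm{Im}\,\omega_{\alpha}(z)\,I_{\mu_{\beta}}\big(\omega_{\alpha}(z)\big)
=
\mathrm{Im}\,\omega_{\beta}(z)\,I_{\mu_{\alpha}}\big(\omega_{\beta}(z)\big),
\end{align*}
together with the uniform two-sided control on $I_{\mu_{\beta}}\big(\omega_{\alpha}(z)\big)$ (respectively $I_{\mu_{\alpha}}\big(\omega_{\beta}(z)\big)$) that has already been established. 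Concretely, I would first treat $z\in\mathcal{J}\backslash\mathcal{O}_\alpha$ and obtain the bound on $\mathrm{Im}\,\omega_\alpha$, and then invoke symmetry (interchanging $\alpha$ and $\beta$) to get the bound on $\mathrm{Im}\,\omega_\beta$ on $\mathcal{J}\backslash\mathcal{O}_\beta$ from the second equality above.

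For $z\in\mathcal{J}\backslash\mathcal{O}_\alpha$ the quantity $I_{\mu_\beta}(\omega_\alpha(z))$ is bounded from below by the constant $c_1^\beta>0$ of Lemma \ref{bounds2} (first inequality in \eqref{bound Imuhat 2}); this in turn rests on Lemma \ref{bounded subordination functions}, since $\omega_\alpha$ maps the compact set $\mathcal{J}\backslash\mathcal{O}_\alpha$ into a bounded subset of $\mathbb{C}^+\cup\mathbb{R}$ on which the strictly positive integrand $|x-\,\cdot\,|^{-2}$ stays away from $0$ ($\mu_\beta$ not being a point mass). On the other hand, Lemma \ref{asymptotics zero} gives $\mathrm{dist}\big(\omega_\alpha(z),\mathrm{supp}(\mu_\beta)\big)\geq C_\alpha>0$ for every $z\in\mathcal{J}$, whence the trivial estimate
\begin{align*}
I_{\mu_\beta}\big(\omega_\alpha(z)\big)=\int_{\mathbb{R}}\frac{1}{|x-\omega_\alpha(z)|^2}\,\mu_\beta(\mathrm{d}x)\leq \frac{\mu_\beta(\mathbb{R})}{C_\alpha^{2}},\qquad z\in\mathcal{J}.
\end{align*}
Before combining these, I would extend the identity \eqref{identité plus petit de base} to the real points of $\mathcal{J}\backslash\mathcal{O}_\alpha$: for $z\in\mathbb{R}$ there, pick $z_n\to z$ non-tangentially from $\mathbb{C}^+$; continuity of $\omega_\alpha$ (Belinschi) and dominated convergence (legitimate because $\omega_\alpha(z_n)$ stays at distance $\geq C_\alpha$ from $\mathrm{supp}(\mu_\beta)$, uniformly in $n$) give $\mathrm{Im}\,\omega_\alpha(z_n)\,I_{\mu_\beta}(\omega_\alpha(z_n))\to\mathrm{Im}\,\omega_\alpha(z)\,I_{\mu_\beta}(\omega_\alpha(z))$, while the left-hand side tends to $\pi\rho_{\alpha\boxplus\beta}(z)=\mathrm{Im}\,m_{\mu_{\alpha}\boxplus\mu_{\beta}}(z)$ by the continuity and boundedness of the density asserted in Proposition \ref{extension analytic}. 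Feeding the two bounds on $I_{\mu_\beta}(\omega_\alpha(z))$ into the resulting identity yields
\begin{align*}
c_1^{\beta}\,\mathrm{Im}\,\omega_\alpha(z)\leq \mathrm{Im}\,m_{\mu_{\alpha}\boxplus\mu_{\beta}}(z)\leq \frac{\mu_\beta(\mathbb{R})}{C_\alpha^{2}}\,\mathrm{Im}\,\omega_\alpha(z),\qquad z\in\mathcal{J}\backslash\mathcal{O}_\alpha,
\end{align*}
so the claimed inequality holds with $C_1:=\max\big\{(c_1^\beta)^{-1},\,\mu_\beta(\mathbb{R})^{-1}C_\alpha^{2},\,1\big\}$. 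The same computation with $\alpha$ and $\beta$ interchanged, using the second equality in \eqref{identité plus petit de base}, the lower bound \eqref{boundImuhat 1} of Lemma \ref{bounds2}, and the second distance estimate of Lemma \ref{asymptotics zero}, produces the corresponding constant $C_2$ for $\mathrm{Im}\,\omega_\beta$ on $\mathcal{J}\backslash\mathcal{O}_\beta$.

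I do not expect a genuine obstacle inside this proof: all the substantive work—keeping $\omega_\alpha$ bounded off the zeros of $m_{\mu_\alpha}$ (Lemma \ref{bounded subordination functions}) and at positive distance from $\mathrm{supp}(\mu_\beta)$ (Lemma \ref{asymptotics zero}), which is where the power-law behavior of Assumption \ref{main assumption2} and the inequality $I_{\widehat\mu_\alpha}(\omega_\beta)I_{\widehat\mu_\beta}(\omega_\alpha)\leq1$ of \eqref{inegalité edge} are used—has already been carried out. The only mildly delicate point is the passage of \eqref{identité plus petit de base} to the real boundary, and that is precisely what Lemma \ref{asymptotics zero} makes harmless, since it guarantees that $I_{\mu_\beta}(\omega_\alpha(z))$ remains finite and that the dominated convergence argument applies. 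As a remark, combining this proposition with the Cauchy–Stieltjes inversion formula shows that a point $E\in\mathcal{E}$ with $\mathrm{dist}\big(E,\mathcal{O}_\alpha\cup\mathcal{O}_\beta\big)>0$ lies in the interior of $\mathrm{supp}(\mu_\alpha\boxplus\mu_\beta)$ if and only if $\mathrm{Im}\,\omega_\alpha(E)>0$, equivalently $\mathrm{Im}\,\omega_\beta(E)>0$, which is the input needed for the combinatorial analysis of Section \ref{section: proof part 2}.
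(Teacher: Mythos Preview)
Your proposal is correct and follows essentially the same route as the paper: reduce the two-sided comparison to two-sided bounds on $I_{\mu_\beta}(\omega_\alpha(z))$ via the identity \eqref{identité plus petit de base}, get the lower bound from Lemma~\ref{bounds2} (boundedness of $\omega_\alpha$ on $\mathcal{J}\backslash\mathcal{O}_\alpha$) and the upper bound from the distance estimate of Lemma~\ref{asymptotics zero}, then repeat with $\alpha$ and $\beta$ swapped. Your treatment of the passage to the real boundary is in fact more explicit than the paper's, which simply takes the identity and the bounds for granted on all of $\mathcal{J}$.
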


Assume $E\in\mathcal{E}$ is not a vanishing point of the density of $\mu_\alpha\boxplus\mu_\beta$. The Cauchy-Stieltjes inversion formula in Lemma \ref{singular continuous} implies that $E$ lies in the support of $\mu_{\alpha}\boxplus\mu_{\beta}$  if and only if $\mathrm{Im}\,\omega_\alpha(E)>0$ and $\mathrm{Im}\,\omega_\beta(E)>0$. Moreover Proposition \ref{comparaison alpha} also implies that $\mathrm{Im}\,\omega_\alpha$ and $\mathrm{Im}\,\omega_\beta$ either both vanish, or are both strictly greater than zero.

\begin{proof}
By taking the imaginary parts of $m_{\mu_\alpha}\big(\omega_\beta(z)\big)$ and $m_{\mu_\beta}\big(\omega_\alpha(z)\big)$, we observe that it suffices to show that there exist some constants $C_1\geq 1$ and $C_2\geq 1$ such that
\begin{align*}
&C_1^{-1}\leq I_{\mu_{\beta}}\big(\omega_{\alpha}(z)\big)\leq C_1,\qquad z\in\mathcal{J}\backslash\mathcal{O}_\alpha\\
\beforetext{and }
&C_2^{-1}\leq I_{\mu_{\alpha}}\big(\omega_{\beta}(z)\big)\leq C_2,\qquad z\in\mathcal{J}\backslash\mathcal{O}_\beta.
\end{align*}
Since $\mu_{\alpha}$ and $\mu_{\beta}$ verify the power law behavior in  Assumption \ref{main assumption2},  $\omega_\beta$ stays at positive distance from the support of $\mu_\alpha$, $\omega_\alpha$ stays at positive distance from the support of $\mu_\beta$, and $I_{\mu_\alpha}\big(\omega_\beta(z)\big)$ and $I_{\mu_\beta}\big(\omega_\alpha(z)\big)$ are bounded from below, as long as $z$ stays away from $\mathcal{O}_\alpha$ and from $\mathcal{O}_\beta$, by Lemmas \ref{bounds2} and \ref{asymptotics zero}.
Thence, there exist two constants $C_1>0$ and $C_2>0$ such that
\begin{align*}
\sup_{z\in\mathcal{J}} I_{\mu_{\alpha}}\big(\omega_{\beta}(z)\big)\leq C_1,\ 
\sup_{z\in\mathcal{J}} I_{\mu_{\beta}}\big(\omega_{\alpha}(z)\big)\leq C_2.
\end{align*}
This concludes the proof of this proposition.
\end{proof}

\begin{comment}
A direct consequence of Lemma \ref{bounded subordination functions} and of Proposition \ref{comparaison alpha} is that the subordination functions are proper maps. This is the content of the next lemma

\begin{lemma}\label{proper}
Let $\mu_\alpha$ and $\mu_\beta$ satisfy assumption \ref{main assumption2}. Then $\omega_\alpha$ and $\omega_\beta$ are proper.
\end{lemma}

\begin{proof}
Let us prove the result for $\omega_\beta.$ By \eqref{subequation 2}, 
\begin{align}
\omega_\beta(z)=z+\int_{\mathbb{R}}\frac{1}{x-\omega_\alpha(z)}\,\widehat{\mu}_\beta(\mathrm{d}x).
\end{align}
Furthermore by Proposition \ref{comparaison alpha}, $\omega_\alpha$ stays at positive distance from the support of the absolutely continuous part of $\widehat{\mu}_\beta$. This shows that $\Big|\int_{\mathbb{R}}\frac{1}{x-\omega_\alpha(z)}\,\widehat{\mu}_\beta(\mathrm{d}x)\Big|$ diverges only when $\omega_\alpha(z)$ approaches a real point $E\in\lbrace E_1^{\beta},...,E_{m_\beta}^{\beta} \rbrace$. Since we have seen in Lemma \ref{bounded subordination functions} that this only happens if $z$ approaches $E\in\lbrace E_1^{\beta},...,E_{m_\beta}^{\beta}\rbrace$, $\Big|\int_{\mathbb{R}}\frac{1}{x-\omega_\alpha(z)}\,\widehat{\mu}_\beta(\mathrm{d}x)\Big|$ remains bounded when $z$ is sufficently large. We then conclude that $\omega_\beta(z)$ diverges as $z$ diverges. The same argument holds for $\omega_\alpha$.
\end{proof}
\end{comment}

Proposition \ref{comparaison alpha} yields another connection between the subordination functions and the free addition $\mu_\alpha\boxplus\mu_\beta$, as in Lemma \ref{Nevan omega_t}. This is the content of the next lemma, see e.g. \cite{Bao20} for a proof in the case where $\mu_\alpha$ and $\mu_\beta$ are both supported on a single interval.

\begin{lemma}\label{Nevan omega alpha}
Let $\mu_\alpha$ and $\mu_\beta$ be two Borel probability measures on $\mathbb{R}$ satisfying Assumption~\ref{main assumption2}.
There exist two finite Borel measures, $\nu_{\alpha}$, $\nu_{\beta}$ such that
\begin{align*}
&\omega_\alpha(z)=z+\int_{\mathbb{R}}\frac{1}{x-z}\,\nu_{\alpha}(\mathrm{d}x),\qquad z\in\mathbb{C}^+\cup\mathbb{R}\backslash\mathrm{supp}(\nu_\alpha),\\
&\omega_\beta(z)=z+\int_{\mathbb{R}}\frac{1}{x-z}\,\nu_{\beta}(\mathrm{d}x),\qquad z\in\mathbb{C}^+\cup\mathbb{R}\backslash\mathrm{supp}(\nu_\beta).
\end{align*}
Moreover
\begin{align*} 
\mathrm{supp}(\nu_{\alpha})=
\mathrm{supp}(\nu_{\beta})=
\mathrm{supp}(\mu_{\alpha}\boxplus\mu_{\beta})
\end{align*}
and
\begin{align*}
\nu_\alpha(\mathbb{R})=\int_{\mathbb{R}}x^2\,\mu_\alpha(\mathrm{d}x)<\infty\text{ and }
\nu_\beta(\mathbb{R})=\int_{\mathbb{R}}x^2\,\mu_\beta(\mathrm{d}x)<\infty.
\end{align*}
\end{lemma}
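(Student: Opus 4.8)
The plan is to follow the same three-part scheme as in the proof of Lemma~\ref{Nevan omega_t}: produce a Nevanlinna representation from Lemma~\ref{Pick}, pin down the two free constants from the behaviour at $\mathrm{i}\eta$, and then identify the support using the comparison estimates already proved in Section~\ref{section: proof free addition part 1}. By \eqref{freeadd1} the function $\omega_\alpha(z)-z$ maps $\mathbb{C}^+$ into $\mathbb{C}^+\cup\mathbb{R}$, so Lemma~\ref{Pick} yields a Borel measure $\nu_\alpha$, an $a\in\mathbb{R}$ and a $b\geq 0$ with
\begin{align*}
\omega_\alpha(z)-z=a+bz+\int_{\mathbb{R}}\Big(\frac{1}{x-z}-\frac{x}{1+x^2}\Big)\,\nu_\alpha(\mathrm{d}x),\qquad z\in\mathbb{C}^+.
\end{align*}
The normalization $\omega_\alpha(\mathrm{i}\eta)/(\mathrm{i}\eta)\to 1$ in \eqref{freeadd1} forces $b=0$. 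To compute $a$ I would use that, by \eqref{freeadd2} and the Nevanlinna representation \eqref{Nevan2} (here the centering of $\mu_\alpha$, via Proposition~\ref{Nevanlinna representation}, is what removes the constant term), one has $\omega_\alpha(z)-z=F_{\mu_\alpha}(\omega_\beta(z))-\omega_\beta(z)=\int_{\mathbb{R}}\frac{1}{x-\omega_\beta(z)}\,\widehat{\mu}_\alpha(\mathrm{d}x)$, which is exactly \eqref{subequation 1}; since $\omega_\beta(\mathrm{i}\eta)\to\infty$ the right-hand side tends to $0$ as $\eta\to\infty$, so comparing with the representation gives $a=\int_{\mathbb{R}}\frac{x}{1+x^2}\,\nu_\alpha(\mathrm{d}x)$ and hence $\omega_\alpha(z)=z+\int_{\mathbb{R}}\frac{1}{x-z}\,\nu_\alpha(\mathrm{d}x)$. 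The same argument applies to $\omega_\beta$ with $\widehat{\mu}_\beta$ via \eqref{subequation 2} and \eqref{Nevan22}.

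For the total mass I would take imaginary parts at $z=\mathrm{i}\eta$: $\eta\,\mathrm{Im}(\omega_\alpha(\mathrm{i}\eta)-\mathrm{i}\eta)=\int_{\mathbb{R}}\frac{\eta^2}{x^2+\eta^2}\,\nu_\alpha(\mathrm{d}x)\nearrow\nu_\alpha(\mathbb{R})$ as $\eta\to\infty$ by monotone convergence, while from \eqref{subequation 1} and $\omega_\beta(\mathrm{i}\eta)=\mathrm{i}\eta(1+o(1))$ one gets $\eta\,\mathrm{Im}(\omega_\alpha(\mathrm{i}\eta)-\mathrm{i}\eta)=\widehat{\mu}_\alpha(\mathbb{R})+o(1)$. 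Hence $\nu_\alpha(\mathbb{R})=\widehat{\mu}_\alpha(\mathbb{R})=\int_{\mathbb{R}}x^2\,\mu_\alpha(\mathrm{d}x)$, which is finite since $\mathrm{supp}(\mu_\alpha)$ is compact (Proposition~\ref{Nevanlinna representation}, Corollary~\ref{Nevan support}); the identical computation gives $\nu_\beta(\mathbb{R})=\int_{\mathbb{R}}x^2\,\mu_\beta(\mathrm{d}x)$. Finiteness of $\nu_\alpha$ in particular legitimizes the simplified representation above on $\mathbb{C}^+\cup\mathbb{R}\setminus\mathrm{supp}(\nu_\alpha)$.

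It remains to identify $\mathrm{supp}(\nu_\alpha)$. First, $\nu_\alpha$ has no singular continuous part: by Lemma~\ref{singular continuous}(iii) such a point would be one where the non-tangential limit of $m_{\nu_\alpha}=\omega_\alpha-z$ is infinite, and by Lemma~\ref{bounded subordination functions} this can occur only at the finitely many real zeros of $m_{\mu_\alpha}$. Let $\mathcal{Z}$ be the (finite) set of real zeros of $m_{\mu_\alpha}$ and $m_{\mu_\beta}$. On any compact $K\subset\mathbb{R}$ disjoint from $\mathcal{Z}$, Lemmas~\ref{bounded subordination functions} and~\ref{asymptotics zero} make $\omega_\beta$ bounded near $K$ and bounded away from $\mathrm{supp}((\widehat{\mu}_\alpha)_{\mathrm{ac}})=\mathrm{supp}(\mu_\alpha)$, so $\omega_\alpha-z=\int\frac{1}{x-\omega_\beta(z)}\,\widehat{\mu}_\alpha(\mathrm{d}x)$ is continuous up to $K$; by Lemma~\ref{singular continuous}(i) the a.c.\ density of $\nu_\alpha$ at $E\in K$ is $\tfrac1\pi\mathrm{Im}\,\omega_\alpha(E)$, which by Proposition~\ref{comparaison alpha} is comparable to $\mathrm{Im}\,m_{\mu_\alpha\boxplus\mu_\beta}(E)=\pi\rho_{\alpha\boxplus\beta}(E)$ (Proposition~\ref{extension analytic}), hence positive precisely on $\mathrm{supp}(\mu_\alpha\boxplus\mu_\beta)$; so $\mathrm{supp}(\nu_\alpha)\cap K=\mathrm{supp}(\mu_\alpha\boxplus\mu_\beta)\cap K$. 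The one genuinely new point compared to \cite{Bao20} is the analysis on neighbourhoods of the finitely many $E_0\in\mathcal{Z}$, where one of the subordination functions may blow up: here I would feed the asymptotics of Proposition~\ref{divergence subordination} (together with Lemma~\ref{bounded subordination functions}) into the expressions $\omega_\alpha(z)-z$ and $m_{\mu_\alpha\boxplus\mu_\beta}(z)=m_{\mu_\alpha}(\omega_\beta(z))$ and check that $m_{\nu_\alpha}$ and $m_{\mu_\alpha\boxplus\mu_\beta}$ extend analytically, with real boundary values, across exactly the same neighbourhoods of $\mathcal{Z}$. Combining the two regimes gives $\mathrm{supp}(\nu_\alpha)=\mathrm{supp}(\mu_\alpha\boxplus\mu_\beta)$, and exchanging $\alpha$ and $\beta$ gives $\mathrm{supp}(\nu_\beta)=\mathrm{supp}(\mu_\alpha\boxplus\mu_\beta)$, so all three supports coincide. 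I expect the last step — controlling the behaviour of the diverging subordination function near the zeros of $m_{\mu_\alpha}$ and $m_{\mu_\beta}$ and verifying that it does not enlarge $\mathrm{supp}(\nu_\alpha)$ beyond $\mathrm{supp}(\mu_\alpha\boxplus\mu_\beta)$ — to be the main obstacle, since it is precisely the multi-cut phenomenon that is absent in the single-interval setting.
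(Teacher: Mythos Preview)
Your approach matches the paper's almost line for line: both obtain the representation from Lemma~\ref{Pick}, kill the $bz$ term from \eqref{freeadd1}, and identify $a$ and the total mass via \eqref{subequation 1}--\eqref{subequation 2} exactly as in Lemma~\ref{Nevan omega_t}. For the support claim the paper is terser than you: it simply asserts that $\mathrm{supp}(\nu_\alpha)=\mathrm{supp}(\nu_\beta)=\mathrm{supp}(\mu_\alpha\boxplus\mu_\beta)$ ``follows directly from Proposition~\ref{comparaison alpha}'', without singling out the neighbourhoods of the zeros of $m_{\mu_\alpha}$ and $m_{\mu_\beta}$ for separate treatment.

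Your instinct that these neighbourhoods deserve care is correct, but your expectation that the check will show ``$m_{\nu_\alpha}$ and $m_{\mu_\alpha\boxplus\mu_\beta}$ extend analytically, with real boundary values, across exactly the same neighbourhoods of $\mathcal{Z}$'' does not survive the computation. Take a zero $E^{\alpha}$ of $m_{\mu_\alpha}$ with $\widehat{\mu}_\beta(\mathbb{R})<\widehat{\mu}_\alpha(\{E^{\alpha}\})$ (case~(2) of Lemma~\ref{visiting a zero}, i.e.\ $E^\alpha\in\mathcal{P}^\alpha$ with strict inequality). Then $\omega_\beta(E^{\alpha})=E^{\alpha}$, and the asymptotics of Proposition~\ref{divergence subordination} give $E^\alpha-\omega_\beta(z)\sim(E^\alpha-z)/c$ with $c=1-\widehat{\mu}_\beta(\mathbb{R})/\widehat{\mu}_\alpha(\{E^{\alpha}\})\in(0,1)$, hence
\[
m_{\nu_\alpha}(z)=\omega_\alpha(z)-z\sim\frac{\widehat{\mu}_\alpha(\{E^{\alpha}\})-\widehat{\mu}_\beta(\mathbb{R})}{E^{\alpha}-z},
\]
so $\nu_\alpha$ acquires an atom at $E^{\alpha}$, whereas $E^{\alpha}\notin\mathrm{supp}(\mu_\alpha\boxplus\mu_\beta)$. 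Thus $\mathrm{supp}(\nu_\alpha)$ can strictly contain $\mathrm{supp}(\mu_\alpha\boxplus\mu_\beta)$ by finitely many isolated atoms in $\mathcal{P}^\alpha$, and the paper's one-line appeal to Proposition~\ref{comparaison alpha} glosses over this. What Proposition~\ref{comparaison alpha} (together with your argument ruling out a singular continuous part) genuinely yields is $\mathrm{supp}\big((\nu_\alpha)_{\mathrm{ac}}\big)=\mathrm{supp}(\mu_\alpha\boxplus\mu_\beta)$; note also that $\omega_\beta$ stays bounded and real near such $E^\alpha$, so $\nu_\beta$ has no atom there, and symmetrically. This weaker statement is what the subsequent uses in the paper (the Schwarz reflection in Lemma~\ref{visiting a zero} and the monotonicity in Proposition~\ref{increasing alpha}, which is qualified by ``as long as it is bounded'') actually need.
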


\begin{proof}[Proof]
By \eqref{freeadd1} and \eqref{freeadd2}, $\omega_{\alpha}(z)-z$ and $\omega_{\beta}(z)-z$ are Nevanlinna functions. By Lemma~\ref{Pick}, there exist two Borel measures $\nu_\alpha$ and $\nu_\beta$, some constants $a_1$, $a_2\in\mathbb{R}$ and $b_1,$ $b_2\geq 0$ such that
\begin{align*}
&\omega_\alpha(z)-z
=
a_1
+
b_1 z
+
\int_{\mathbb{R}}\Big(\,\frac{1}{x-z}-\frac{x}{1+x^2}\Big)\,\nu_\alpha(\mathrm{d}x),\\
&\omega_\beta(z)-z
=
a_2
+
b_2 z
+
\int_{\mathbb{R}}\Big(\,\frac{1}{x-z}-\frac{x}{1+x^2}\Big)\,\nu_\beta(\mathrm{d}x).
\end{align*}
The claims on the supports of $\nu_\alpha$ and $\nu_\beta$ follow directly from Proposition \ref{comparaison alpha}. The other statements are direct consequences of the Nevanlinna representations of $F_{\mu_{\alpha}}$ and of $F_{\mu_\beta}$, by repeating the same argument as in Lemma \ref{Nevan omega_t}. This proves the result.
\end{proof}

\begin{lemma}\label{visiting a zero}
Let $\mu_\alpha$ and $\mu_\beta$ be two probability measures on $\mathbb{R}$ satisfying Assumption~\ref{main assumption2} and let $E^{\alpha}$ be a pure point of the measure $\widehat{\mu}_\alpha$. 
\begin{enumerate}
\item If ${\widehat{\mu}_\beta(\mathbb{R})}>{\widehat{\mu}_\alpha\big( \lbrace E^{\alpha} \rbrace \big)}$, then $E^{\alpha}$ does not lie in $\omega_\beta(\mathbb{R})$.
\item If ${\widehat{\mu}_\beta(\mathbb{R})}<{\widehat{\mu}_\alpha\big( \lbrace E^{\alpha} \rbrace \big)}$, then $E^{\alpha}\in\omega_\beta(\mathbb{R})$ and $E^{\alpha}\notin\mathrm{supp}(\mu_\alpha\boxplus\mu_\beta)$. 
\item If $\widehat{\mu}_\beta(\mathbb{R})=\widehat{\mu}_\alpha\big(\lbrace E^{\alpha} \rbrace \big)$, then $E^{\alpha}\in\omega_\beta(\mathbb{R})$. 
\end{enumerate}
The same result holds by interchanging $\alpha$ and $\beta$.
\end{lemma}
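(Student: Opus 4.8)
I would analyse Lemma~\ref{visiting a zero} through the asymptotic relations of Proposition~\ref{divergence subordination}. Recall the key identity obtained there: if $\omega_\beta(z)$ approaches a pure point $E^\alpha$ of $\widehat\mu_\alpha$ that sits at positive distance from $\mathrm{supp}(\mu_\alpha)$, then from equation~\eqref{equation z subordination},
\[
E^\alpha - z = \big(E^\alpha-\omega_\beta(z)\big)\Big(1-\frac{\widehat\mu_\beta(\mathbb{R})}{\widehat\mu_\alpha(\{E^\alpha\})}\,\frac{1+h(z)}{1+g(z)}\Big),
\]
where $g(z)=O(\delta)$ and $h(z)=O(\delta)$ as $\omega_\beta(z)\to E^\alpha$. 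Write $r:=\widehat\mu_\beta(\mathbb{R})/\widehat\mu_\alpha(\{E^\alpha\})$. The plan is to read off from this relation whether there can exist a real $E$ with $\omega_\beta(E)=E^\alpha$, and, when such $E$ exists, to decide whether $E$ lies in $\mathrm{supp}(\mu_\alpha\boxplus\mu_\beta)$ by examining the imaginary part of $\omega_\beta$ near $E$ via Proposition~\ref{comparaison alpha}.

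**Case $r>1$ (statement (1)).** First I would note that, by Proposition~\ref{comparaison alpha}, $E^\alpha$ being a pure point of $\widehat\mu_\alpha$ at positive distance from $\mathrm{supp}(\mu_\alpha)$ forces $\omega_\alpha$ to diverge whenever $\omega_\beta$ lands on $E^\alpha$, and then $z\to E^\alpha$ as well (this divergence/convergence dichotomy is already established in the proof of Lemma~\ref{bounded subordination functions}). Suppose for contradiction that $E^\alpha=\omega_\beta(E)$ for some real $E$; then necessarily $E=E^\alpha$. Approaching along $z=E^\alpha+\mathrm{i}\eta$, $\eta\searrow 0$, set $w(\eta):=E^\alpha-\omega_\beta(E^\alpha+\mathrm{i}\eta)\to 0$. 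The displayed identity gives $-\mathrm{i}\eta = w(\eta)\big(1-r(1+o(1))\big)$, so $\mathrm{Im}\,w(\eta) = \eta/(r-1)\cdot(1+o(1))$, which is \emph{negative} relative orientation: more precisely $\mathrm{Im}\big(\omega_\beta(E^\alpha+\mathrm{i}\eta)\big) = \eta/(r-1)+o(\eta)<0$ would contradict $\mathrm{Im}\,\omega_\beta\ge \mathrm{Im}\,z>0$ once $r<1$; but here $r>1$ so $\mathrm{Im}\,\omega_\beta(E^\alpha+\mathrm{i}\eta)\sim \eta/(r-1)>0$, consistent. So the sign test must be run more carefully: I would instead use Proposition~\ref{comparaison alpha} to the effect that $\mathrm{Im}\,\omega_\beta(E)>0$ if and only if $E$ is in the interior of $\mathrm{supp}(\mu_\alpha\boxplus\mu_\beta)$, together with the constraint $I_{\widehat\mu_\alpha}(\omega_\beta)I_{\widehat\mu_\beta}(\omega_\alpha)\le 1$ from~\eqref{inegalité edge}. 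As $\omega_\beta\to E^\alpha$ and $\omega_\alpha\to\infty$, one has $I_{\widehat\mu_\alpha}(\omega_\beta)\to\widehat\mu_\alpha(\{E^\alpha\})/\mathrm{dist}(E^\alpha,\cdot)^2$-type blow-up $\sim \widehat\mu_\alpha(\{E^\alpha\})/|w|^2$, while $I_{\widehat\mu_\beta}(\omega_\alpha)\sim\widehat\mu_\beta(\mathbb{R})/|\omega_\alpha|^2$, and by~\eqref{asympt 1} $|\omega_\alpha-z|\sim\widehat\mu_\alpha(\{E^\alpha\})/|w|$; multiplying, the product tends to $\widehat\mu_\beta(\mathbb{R})/\widehat\mu_\alpha(\{E^\alpha\}) = r$. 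Hence~\eqref{inegalité edge} forces $r\le 1$ at any real point that $\omega_\beta$ maps to $E^\alpha$. This immediately yields (1): if $r>1$, no real $E$ has $\omega_\beta(E)=E^\alpha$, i.e. $E^\alpha\notin\omega_\beta(\mathbb{R})$.

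**Cases $r<1$ and $r=1$ (statements (2), (3)).** Here I would argue existence by a continuity/intermediate-value argument on the real line. Since $E^\alpha$ is a zero of $m_{\mu_\alpha}$ lying strictly between two bands of $\mathrm{supp}(\mu_\alpha)$ (Corollary~\ref{Nevan support}), I would track $z\mapsto\omega_\beta(z)$ for real $z$ near $E^\alpha$: by Lemma~\ref{Nevan omega alpha}, $\omega_\beta(z)-z=\int(x-z)^{-1}\nu_\beta(\mathrm{d}x)$ with $\mathrm{supp}(\nu_\beta)=\mathrm{supp}(\mu_\alpha\boxplus\mu_\beta)$, so on any real interval disjoint from that support $\omega_\beta$ is real-analytic and strictly increasing (its derivative is $1+\int(x-z)^{-2}\nu_\beta(\mathrm{d}x)>0$). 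From~\eqref{subequation 2}, $\omega_\beta(z)=E^\alpha$ exactly when $\omega_\alpha(z)$ is a pole-type singularity, i.e. $m_{\mu_\alpha}(\omega_\beta(z))=0$; I would show $\omega_\beta$ actually attains the value $E^\alpha$ by combining the local inversion of the displayed relation $E^\alpha-z=w(1-r(1+o(1)))$: for $r<1$ the factor $1-r(1+o(1))$ is bounded away from $0$, so $w\mapsto z$ is a local homeomorphism near $w=0$ and $z=E^\alpha$, proving $E^\alpha\in\omega_\beta(\mathbb{R})$; moreover along the real axis $\mathrm{Im}\,w\equiv 0$ forces $\mathrm{Im}\,\omega_\beta\equiv0$ there, hence by Proposition~\ref{comparaison alpha} $\mathrm{Im}\,m_{\mu_\alpha\boxplus\mu_\beta}=0$ on a neighbourhood, so $E^\alpha\notin\mathrm{supp}(\mu_\alpha\boxplus\mu_\beta)$, giving (2). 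For $r=1$ the leading term cancels and one must expand to the next order using $f(z)$ from Proposition~\ref{divergence subordination}, namely $E^\alpha - z = w^2 f(E^\alpha)/\widehat\mu_\alpha(\{E^\alpha\})^2 + O(w^3)$ with $f(E^\alpha)=\int x^2\mu_\beta(\mathrm{d}x)=\widehat\mu_\beta(\mathbb{R})>0$; this square-root relation $w\sim c\sqrt{E^\alpha-z}$ still shows $E^\alpha$ is attained (now at an endpoint-type point of $\mathrm{supp}(\mu_\alpha\boxplus\mu_\beta)$), giving (3). The symmetric statements follow by swapping the roles of $\alpha$ and $\beta$ throughout.

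**Main obstacle.** The delicate point is the borderline case $r=1$: there the first-order cancellation means the sign of $E^\alpha-z$ along the real axis is governed by the quadratic term, and I must be careful that $f(E^\alpha)>0$ (which it is, since $f(E^\alpha)=\int x^2\mu_\beta\,\mathrm{d}x>0$ as $\mu_\beta$ is not a point mass) and that the $O(w^3)$ error genuinely does not spoil solvability — this requires that the expansion in Proposition~\ref{divergence subordination} be valid uniformly as $\delta\to0$, which it is by construction. A secondary subtlety is justifying that $\omega_\beta$ \emph{does} approach $E^\alpha$ (rather than merely being consistent with doing so) in cases (2)–(3); this is where I lean on the local-inversion consequence of the displayed identity together with continuity of $\omega_\beta$ on $\mathbb{C}^+\cup\mathbb{R}$ from the Belinschi extension results quoted before Proposition~\ref{extension analytic}.
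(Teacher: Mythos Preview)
Your argument for part~(1) is correct and is essentially the paper's own: compute the limit of the product $I_{\widehat\mu_\alpha}(\omega_\beta(z))\,I_{\widehat\mu_\beta}(\omega_\alpha(z))$ as $\omega_\beta(z)\to E^\alpha$, obtain $r=\widehat\mu_\beta(\mathbb{R})/\widehat\mu_\alpha(\{E^\alpha\})$, and invoke the bound~\eqref{inegalité edge}. (The initial sign--test detour is unnecessary, but you abandon it for the right reason.)

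Parts~(2) and~(3), however, contain a genuine circularity. Proposition~\ref{divergence subordination} and the identity~\eqref{equation z subordination} are \emph{conditional}: they describe what happens \emph{given} an open set $\mathcal{O}_\delta$ on which $|E^\alpha-\omega_\beta(z)|<\delta$. Your ``local inversion'' step treats the relation $E^\alpha-z=w(1-r(1+o(1)))$ as if it were defined for all $w$ in a full neighbourhood of $0$, and then concludes surjectivity onto that neighbourhood. But the relation is only available on the image $\{E^\alpha-\omega_\beta(z):z\in\mathcal{O}_\delta\}$, and you have not shown this image is nonempty, let alone a punctured neighbourhood of $0$. Continuity of $\omega_\beta$ on $\mathbb{C}^+\cup\mathbb{R}$ does not help here: it tells you $\omega_\beta(E^\alpha)$ is some well-defined value, but gives no reason that value should be $E^\alpha$. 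Likewise, the monotonicity of $\omega_\beta$ from Lemma~\ref{Nevan omega alpha} applies only outside $\mathrm{supp}(\mu_\alpha\boxplus\mu_\beta)$, and whether $E^\alpha$ lies outside is precisely part of what you are trying to prove. You flag this as a ``secondary subtlety'', but it is the heart of the matter.

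The paper closes this gap by working in the opposite direction: it first proves (via a winding-number argument) that $F_{\mu_\beta}$ is biholomorphic from a neighbourhood $\mathcal{H}_1$ of infinity onto a smaller such neighbourhood $\mathcal{H}_2$, so that $F_{\mu_\beta}^{-1}$ exists there. For $\omega$ in a punctured ball around $E^\alpha$ one then has $F_{\mu_\alpha}(\omega)\in\mathcal{H}_2$, and one can \emph{define} a candidate preimage
\[
z(\omega):=F_{\mu_\beta}^{-1}\bigl(F_{\mu_\alpha}(\omega)\bigr)+\omega-F_{\mu_\alpha}(\omega).
\]
One checks directly that $I_{\widehat\mu_\alpha}(\omega)\,I_{\widehat\mu_\beta}\bigl(F_{\mu_\beta}^{-1}(F_{\mu_\alpha}(\omega))\bigr)\to r<1$, so that $z(\omega)\in\mathbb{C}^+$ when $\mathrm{Im}\,\omega>0$; then the uniqueness of the Denjoy--Wolff fixed point for $f(z,\cdot)$ forces $\omega=\omega_\beta(z(\omega))$. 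This \emph{constructs} preimages rather than presupposing their existence, and yields $\mathrm{B}_\delta(E^\alpha)\subset\omega_\beta\bigl(\mathbb{C}\setminus\mathrm{supp}(\mu_\alpha\boxplus\mu_\beta)\bigr)$, from which both conclusions of~(2) follow. The equality case~(3) is handled by the same construction, observing $|z(\omega)-E^\alpha|=o(|E^\alpha-\omega|)$. To repair your argument you would need an analogous mechanism that produces, for each $\omega$ near $E^\alpha$, an actual $z$ with $\omega_\beta(z)=\omega$.
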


We remark that whenever a pure point $E^{\alpha}$ of $\widehat{\mu}_\alpha$ verifies ${\widehat{\mu}_\beta(\mathbb{R})}={\widehat{\mu}_\alpha\big( \lbrace E^{\alpha} \rbrace \big)}$, the density of $\mu_\alpha\boxplus\mu_\beta$ vanishes at that point. The asymptotics of $\omega_\beta(z)$, as $\omega_\beta(z)$ approaches $E^{\alpha}$, are given in Proposition \ref{divergence subordination}.

\begin{proof}
We proceed in a similar fashion as in Proposition \ref{divergence subordination}. Let $E^{\alpha}$ be a pure point of $\widehat{\mu}_\alpha$ that lies in $\omega_\beta(\mathbb{R})$. Then by Lemma \ref{asymptotics zero}, $E^{\alpha}$ lies at positive distance from the support of~$\mu_\alpha$. By \eqref{subequation 1} and \eqref{subequation 2}, this entails the following asymptotics, as $\omega_\beta$ approaches $E^{\alpha}$,
\begin{align*}
\omega_\alpha(z)
&=
z+\frac{\widehat{\mu}_\alpha\big( \lbrace E^{\alpha} \rbrace \big)}{E-\omega_\beta(z)}\Big( 1+O\big(|E^{\alpha}-\omega_\beta(z)|\big) \Big),\\
I_{\widehat{\mu}_\alpha}\big(\omega_\beta(z)\big)
&=
\frac{\widehat{\mu}_\alpha\big(\lbrace E^{\alpha} \rbrace \big)}{|E^{\alpha}-\omega_\beta(z)|^2}\Big(1+O\big(|E^{\alpha}-\omega_\beta(z)|^2\big) \Big), \\
I_{\widehat{\mu}_\beta}\big(\omega_\alpha(z)\big)
&=
\frac{\widehat{\mu}_\beta\big(\mathbb{R}\big)}{|\omega_\alpha(z)|^2}\Big(1+O\big({|\omega_\alpha(z)|^{-2}}\big) \Big).
\end{align*}
Combining these three equations yields
\begin{align*}
I_{\widehat{\mu}_\alpha}\big(\omega_\beta(z)\big)
I_{\widehat{\mu}_\beta}\big(\omega_\alpha(z)\big)
=
\frac{\widehat{\mu}_\beta\big(\mathbb{R}\big)}{\widehat{\mu}_\alpha\big(\lbrace E^{\alpha} \rbrace\big)}\Big(1+O\big(|E^{\alpha}-\omega_\beta(z)|^2\big)\Big).
\end{align*}
The first claim of the lemma follows by recalling that since
\begin{align*}
I_{\widehat{\mu}_\alpha}\big(\omega_\beta(z)\big)
I_{\widehat{\mu}_\beta}\big(\omega_\alpha(z)\big)
\leq
1, 
\qquad z\in\mathbb{C}^{+}\cup\mathbb{R},
\end{align*}
the ratio $\frac{\widehat{\mu}_\beta(\mathbb{R})}{\widehat{\mu}_\alpha(\lbrace E^{\alpha} \rbrace )}$ is necessarily smaller or equal to $1$. This concludes the proof of the first claim of the lemma. 

We now address the second claim of the lemma. We first remark that $F_{\mu_\beta}$ can be analytically extended to $\mathbb{C}\backslash\mathrm{supp}(\widehat{\mu}_\beta)$ by the Schwarz reflection principle.  We then define 
\begin{align*}
&\mathcal{H}_1:=\Big\lbrace z\in\mathbb{C}:\ \mathrm{dist}\big(z,\mathrm{supp}(\widehat{\mu}_\beta)\big)>\sqrt{\widehat{\mu}_\beta(\mathbb{R})} \Big\rbrace\\
\text{and } 
&\mathcal{H}_2:=\Big\lbrace \omega\in\mathbb{C}:\ \mathrm{dist}\big(\omega,\mathrm{supp}(\widehat{\mu}_\beta)\big)>2\sqrt{\widehat{\mu}_\beta(\mathbb{R})} \Big\rbrace.
\end{align*}
Following the line of reasoning of Lemma $2.4$ in \cite{Maa92}, we will show that $F_{\mu_\beta}:\mathcal{H}_1\rightarrow\mathcal{H}_2$ is a biholomorphism. Let $\omega\in\mathcal{H}_2$ and denote by $\mathcal{C}_r(\omega)$ the circle with center $\omega$ and radius $r$ such that
\begin{align}\label{borne inegalité}
\mathrm{dist}\big(\omega,\mathrm{supp}(\widehat{\mu}_\beta)\big)>r+\frac{\widehat{\mu}_\beta(\mathbb{R})}{r}.
\end{align}
Then for any $z\in\mathcal{C}_r(\omega)$, 
\begin{align}\label{borne cercle}
\mathrm{dist}\big(z,\mathrm{supp}(\widehat{\mu}_\beta)\big)
\geq
\mathrm{dist}\big(\omega,\mathrm{supp}(\widehat{\mu}_\beta)\big)-r
>
\frac{\widehat{\mu}_\beta(\mathbb{R})}{r}.
\end{align}

By the Nevanlinna representation in \eqref{Nevan22}, the lower bound in \eqref{borne cercle} entails
\begin{align}\label{winds around once}
\big|F_{\mu_\beta}(z)-z\big|
\leq
\int_{\mathbb{R}}\frac{1}{|x-z|}\,\widehat{\mu}_\beta(\mathrm{d}x)
\leq
\frac{\widehat{\mu}_\beta(\mathbb{R})}{\mathrm{dist}\big(z,\mathrm{supp}(\widehat{\mu}_\beta)\big)}
<
r,
\qquad z\in\mathcal{C}_r(\omega).
\end{align}
First we consider $r=\sqrt{\widehat{\mu}_\beta(\mathbb{R})}$. Then \eqref{borne inegalité} is verified since
\begin{align*}
\mathrm{dist}\big(\omega,\mathrm{supp}(\widehat{\mu}_\beta)\big)
>
2\sqrt{\widehat{\mu}_\beta(\mathbb{R})}
=
r+\frac{\widehat{\mu}_\beta(\mathbb{R})}{r}
\end{align*}
and $\mathcal{C}_r(\omega)\subset\mathcal{H}_1$, since
\begin{align*}
\mathrm{dist}\big(z,\mathrm{supp}(\widehat{\mu}_\beta)\big)
\geq 
\mathrm{dist}\big(\omega,\mathrm{supp}(\widehat{\mu}_\beta)\big)-r
>
\sqrt{\widehat{\mu}_\beta(\mathbb{R})},\qquad z\in\mathcal{C}_r(\omega).
\end{align*}
By \eqref{winds around once}, $F_{\mu_\beta}\big( \mathcal{C}_r(\omega)\big)$ winds around $\omega$ once and is analytic in $\mathcal{H}_1$. Therefore 
\begin{align*}
\frac{1}{2\pi\mathrm{i}}\int_{\mathcal{C}_r(\omega)}\frac{1}{F_{\mu_\beta}(z)-\omega}F_{\mu_\beta}'(z)\,\mathrm{d}z
=
\frac{1}{2\pi\mathrm{i}}\int_{F_{\mu_\beta}\big(\mathcal{C}_r(\omega)\big)}\frac{1}{u-\omega}\,\mathrm{d}u
=
1.
\end{align*}
The argument principle then implies that there exists a unique $z_0$ in the interior of $\mathcal{C}_r(\omega)$ such that $F_{\mu_\beta}(z_0)=\omega$ and that $\omega$ admits no other preimage in $\mathcal{H}_1$. This proves that 
\begin{align*}
F_{\mu_\beta}:\mathcal{H}_1\rightarrow\mathcal{H}_2
\end{align*} 
is a biholomorphism. In addition, by considering $r=\frac{2\widehat{\mu}_\beta(\mathbb{R})}{\mathrm{dist}(\omega,\mathrm{supp}(\widehat{\mu}_\beta))}$, we see that \eqref{borne inegalité} is satisfied  and that $\mathcal{C}_r\big(\omega\big)\subset\mathcal{H}_1$ since
\begin{align*}
\frac{\widehat{\mu}_\beta(\mathbb{R})}{r}=\frac{1}{2}\,\mathrm{dist}\big(\omega,\mathrm{supp}(\widehat{\mu}_\beta)\big) 
\text{ and }
r<\frac{1}{2}\,\mathrm{dist}\big(\omega,\mathrm{supp}(\widehat{\mu}_\beta)\big).  
\end{align*}
Therefore by \eqref{winds around once}, we have that for any $\omega\in\mathcal{H}_2$,
\begin{align}\label{bound inverse}
\big|F_{\mu_\beta}^{-1}(\omega)-\omega\big|<\frac{2\widehat{\mu}_\beta(\mathbb{R})}{\mathrm{dist}\big(\omega,\mathrm{supp}(\widehat{\mu}_\beta)\big)}.
\end{align}

We will now use \eqref{bound inverse} to prove the second claim of the proposition. Let $E^{\alpha}$ be one of the pure points of $\widehat{\mu}_\alpha$ that lies at positive distance from $\mathrm{supp}(\mu_\alpha)$. Let $\varepsilon>0$ and consider the open set $\mathrm{B}_{\varepsilon}(E^{\alpha})\backslash \lbrace E^{\alpha} \rbrace$, where $\mathrm{B}_{\varepsilon}(E^{\alpha})$ denotes the open ball with center $E^{\alpha}$ and radius~$\varepsilon$. By the Nevanlinna representation in \eqref{Nevan22}, we observe that as $\omega$ approaches $E^{\alpha}$,
\begin{align}\label{asympt Falpha}
\big|E^{\alpha}-\omega\big|\big|F_{\mu_\alpha}(\omega)\big|
=
\widehat{\mu}_\alpha\big(\lbrace E^{\alpha} \rbrace\big)+o\big(|E^{\alpha}-\omega|\big).
\end{align}
Therefore we can choose $\varepsilon>0$ sufficiently small so that whenever $\omega\in\mathrm{B}_{\varepsilon}(E^{\alpha})\backslash \lbrace E^{\alpha} \rbrace$, $F_{\mu_\alpha}(\omega)\in\mathcal{H}_2$. Hence, using \eqref{bound inverse} and \eqref{asympt Falpha}, we remark that when $\omega$ approaches $E^{\alpha}$,
\begin{align}\label{imuhatbound1}
I_{\widehat{\mu}_\beta}\Big(F_{\mu_\beta}^{-1}\big(F_{\mu_\alpha}(\omega)\big)\Big)
&=\nonumber
\frac{\widehat{\mu}_\beta(\mathbb{R})+o\big(|E^{\alpha}-\omega|\big)}{\Big|F_{\mu_\beta}^{-1}\big(F_{\mu_\alpha}(\omega)\big)\Big|^2}\\
&=\nonumber
\frac{\widehat{\mu}_\beta(\mathbb{R})+o\big(|E^{\alpha}-\omega|\big)}{\big|F_{\mu_\alpha}(\omega)\big|^2+o\big(|E^{\alpha}-\omega|\big)}\\
&=
\frac{\widehat{\mu}_\beta(\mathbb{R})+o\big(|E^{\alpha}-\omega|\big)}{\widehat{\mu}_\alpha\big(\lbrace E^{\alpha}\rbrace \big)^2}\big|E^{\alpha}-\omega\big|^2.
\end{align}
Moreover since $E^{\alpha}$ is a pure point of $\widehat{\mu}_\alpha$, we have that whenever $\omega$ approaches $E^{\alpha}$,
\begin{align}\label{imuhatbound2}
I_{\widehat{\mu}_\alpha}\big(\omega\big)
=
\frac{\widehat{\mu}_\alpha\big(\lbrace E^{\alpha} \rbrace \big)+o\big(|E^{\alpha}-\omega|\big)}{|E^{\alpha}-\omega|^2}.
\end{align}
Combining \eqref{imuhatbound1} and \eqref{imuhatbound2}, we therefore obtain
\begin{align}
I_{\widehat{\mu}_\alpha}\big(\omega\big) I_{\widehat{\mu}_\beta}\Big(F_{\mu_\beta}^{-1}\big(F_{\mu_\alpha}(\omega)\big)\Big)
=
\frac{\widehat{\mu}_\beta(\mathbb{R})}{\widehat{\mu}_\alpha\big(\lbrace E^{\alpha}\rbrace \big)}\Big(1+o\big(|E^{\alpha}-\omega|\big)\Big).
\end{align}
Now since we assume that $\frac{\widehat{\mu}_\beta(\mathbb{R})}{\widehat{\mu}_\alpha(\lbrace E^{\alpha}\rbrace)}<1$, there exists a constant $\delta\in(0,\varepsilon)$ sufficiently small such that 
\begin{align}\label{aim}
I_{\widehat{\mu}_\alpha}\big(\omega\big) I_{\widehat{\mu}_\beta}\Big(F_{\mu_\beta}^{-1}\big(F_{\mu_\alpha}(\omega)\big)\Big)<1,\qquad \omega\in\mathrm{B}_\delta(E^{\alpha})\backslash\lbrace E^{\alpha} \rbrace.
\end{align}
We will show that \eqref{aim} ensures that $\mathrm{B}_\delta(E^{\alpha})\subset\omega_\beta\big(\mathbb{C}\backslash\mathrm{supp}(\mu_\alpha\boxplus\mu_\beta)\big)$, where we have analytically extended the subordination function $\omega_\beta$ to $\mathbb{C}\backslash\mathrm{supp}(\mu_\alpha\boxplus\mu_\beta)$ by the Schwarz reflection principle and by Lemma \ref{Nevan omega alpha}. For any $\omega\in\mathrm{B}_\delta(E^{\alpha})\backslash\lbrace E^{\alpha} \rbrace$, we define 
\begin{align}\label{equation z}
z(\omega):=F_{\mu_\beta}^{-1}\big(F_{\mu_\alpha}(\omega)\big)-F_{\mu_\alpha}(\omega)+\omega.
\end{align}
By the Nevanlinna representations in equations \eqref{Nevan2} and \eqref{Nevan22},
\begin{align}
\mathrm{Im}\,F_{\mu_\beta}(\omega)-\mathrm{Im}\,\omega
&=
I_{\widehat{\mu}_\beta}(\omega)\,\mathrm{Im}\,\omega, \qquad \omega\in\mathbb{C}^{+}\cup\mathbb{R},\\ 
\mathrm{Im}\,F_{\mu_\alpha}(\omega)-\mathrm{Im}\,\omega
&=
I_{\widehat{\mu}_\alpha}(\omega)\,\mathrm{Im}\,\omega, \qquad \omega\in\mathbb{C}^{+}\cup\mathbb{R}.
\end{align}
Therefore,
\begin{align}\label{equation Im z}
\mathrm{Im}\,z(\omega)=
\mathrm{Im}\,\omega\,\Bigg(\frac{1-I_{\widehat{\mu}_\alpha}(\omega)I_{\widehat{\mu}_\beta}\Big(F_{\mu_\beta}^{-1}\big(F_{\mu_\alpha}(\omega)\big)\Big)}{1+I_{\widehat{\mu}_\beta}\Big(F_{\mu_\beta}^{-1}\big(F_{\mu_\alpha}(\omega)\big)\Big)}\Bigg),\qquad \omega\in\mathrm{B}_\delta(E^{\alpha})\backslash\lbrace E^{\alpha} \rbrace.
\end{align}
For all $z\in\mathbb{C}^+\cup\mathbb{R}$, we define the function  
\begin{align*}
f(z,\omega):=F_{\mu_\beta}\big(F_{\mu_\alpha}(\omega)-\omega+z\big)-F_{\mu_\alpha}(\omega)+\omega, \qquad \omega\in\mathbb{C}^+.
\end{align*}
We consider a point $\omega_{\mathrm{int}}\in\mathrm{B}_\delta(E^{\alpha})\backslash\lbrace E^{\alpha} \rbrace$ with strictly positive imaginary part and set $z_{\mathrm{int}}:=z\big(\omega_{\mathrm{int}}\big)$. Using \eqref{Nevan22} and \eqref{aim}, we see that 
\begin{align*}\nonumber
\left.\frac{\partial}{\partial\omega}\right|_{\omega=\omega_{\mathrm{int}}}\Big(f(z_{\mathrm{int}},\omega)-\omega\Big)
=
\Big(F_{\mu_\beta}'\big( F_{\mu_\alpha}(\omega_{\mathrm{int}})-\omega_{\mathrm{int}}+z_{\mathrm{int}} \big)-1\Big)\Big(F_{\mu_\alpha}'(\omega_{\mathrm{int}})-1\Big) -1 
\end{align*}
is non-zero. Therefore by the implicit function theorem, there exists an analytic function $\widetilde{\omega_\beta}(z)$ defined in a neighborhood $\mathcal{O}$ of $z_{\mathrm{int}}$ such that 
\begin{align*}
f\big(z,\widetilde{\omega_\beta}(z)\big)=\widetilde{\omega_\beta}(z),\qquad z\in\mathcal{O}.
\end{align*}
Using the subordination equation \eqref{freeadd2}, we remark that 
\begin{align*}
f\big(z,\omega_\beta(z)\big)=\omega_\beta(z),\qquad z\in\mathbb{C}^+\cup\mathbb{R}.
\end{align*}
Since for every $z\in\mathbb{C}^{+}\cup\mathbb{R}$ fixed, 
\begin{align*}
\mathrm{Im}\,f(z,\omega)=
\Big(\mathrm{Im}\,F_{\mu_\alpha}(\omega)-\mathrm{Im}\,\omega + \mathrm{Im}\, z \Big)\Big(I_{\widehat{\mu}_\beta}\big(F_{\mu_\alpha}(\omega)-\omega+z \big) \Big)+\mathrm{Im}\,z > 0,
\end{align*}
there exists a unique function $\omega(z)$ such that $f\big(z,\omega(z)\big)=\omega(z)$. For every $z\in\mathbb{C}^+$, $\omega(z)$ is the Denjoy-Wolff point of the function $f(z,\omega)$. Therefore $\widetilde{\omega_\beta}(z)=\omega_\beta(z)$ for every $z\in\mathcal{O}$. Since $\widetilde{\omega_\beta}(z_\mathrm{int})=\omega_{\mathrm{int}}$, we conclude that $\mathrm{B}_\delta(E^{\alpha})\backslash\lbrace E^{\alpha} \rbrace\subset\omega_\beta\big(\mathbb{C}\backslash\mathrm{supp}(\mu_\alpha\boxplus\mu_\beta)\big)$, and by continuity of $\omega_\beta$, we have that $\mathrm{B}_\delta(E^{\alpha})\subset\omega_\beta\big(\mathbb{C}\backslash\mathrm{supp}(\mu_\alpha\boxplus\mu_\beta)\big)$. 

The proof of the second claim of this lemma then follows from Lemma \ref{Nevan omega alpha}. The proof of the last claim is similar. We notice that by \eqref{bound inverse}, \eqref{equation z} and \eqref{equation Im z}, $|z(\omega)-E^{\alpha}|=o\big(|E^{\alpha}-\omega|\big)$, as $\omega$ approaches $E^{\alpha}$. Again by the uniqueness of Denjoy-Wolff points, we conclude that $\omega_\beta(E^{\alpha})=E^{\alpha}$, which concludes the proof of this lemma.
\end{proof}

Let $E^{\alpha}$ be a pure point of the measure $\widehat{\mu}_\alpha$. By the Nevanlinna representation in \eqref{Nevan2}, we see that
\begin{align*}
\widehat{\mu}_\alpha\big(\lbrace E^{\alpha} \rbrace \big)=\frac{1}{m_{\mu_\alpha}'(E^{\alpha})}.
\end{align*}
Moreover by Lemma \ref{moments nevan}, we recall that
\begin{align*}
\widehat{\mu}_\beta(\mathbb{R})=\int_{\mathbb{R}}x^2\,\mu_\beta(\mathrm{d}x).
\end{align*}
Therefore the conditions in Lemma \ref{visiting a zero} can be expressed accordingly. The interpretation of this lemma is that $\omega_\beta$ will take real values in between two connected components of the support of $\mu_\alpha$ if their distance is sufficiently large compared to the variance of $\mu_\beta$. 

We proved in Lemma~\ref{Nevan omega alpha} that the subordination functions $\omega_\alpha$ and $\omega_\beta$ are increasing outside of the support of the free addition $\mu_\alpha\boxplus\mu_\beta$. We now establish more information about the imaginary parts and the real parts of the subordination functions. We show in Proposition \ref{increasing alpha}  below that the real parts of the subordination functions are increasing whenever they are bounded and we prove in Proposition~\ref{virtual proposition} that the intervals on which the imaginary part of $\omega_\alpha$ is positive necessarily intersect the support of $\widehat{\mu}_\beta$ or the support of $\widehat{\mu}_\alpha$.

\begin{proposition}\label{increasing alpha}
$\mathrm{Re}\,\omega_\beta$ is increasing on $\mathbb{R}\backslash\mathrm{supp}(\mu_{\alpha}\boxplus\mu_{\beta})$, as long as it is bounded. Namely, $\mathrm{Re}\,\omega_\beta$ is increasing on each connected component of $\mathbb{R}\backslash\mathrm{supp}(\mu_{\alpha}\boxplus\mu_{\beta})$ and if $E_1$ and $E_2$ denote two consecutive endpoints of $\mathrm{supp}(\mu_\alpha\boxplus\mu_\beta)$ such that $[E_1,E_2]\subset\mathrm{supp}(\mu_\alpha\boxplus\mu_\beta)$, then $\mathrm{Re}\,\omega_\beta(E_1)<\mathrm{Re}\,\omega_\beta(E_2)$.
\end{proposition}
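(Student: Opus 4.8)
The plan is to treat $\omega_\beta$ separately on the gaps and on the bands of $\mathrm{supp}(\mu_\alpha\boxplus\mu_\beta)$, using the Nevanlinna representation of Lemma~\ref{Nevan omega alpha} together with the subordination system \eqref{subequation 1}--\eqref{subequation 2}.

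\emph{Monotonicity on the gaps.} By Lemma~\ref{Nevan omega alpha} we may write $\omega_\beta(z)=z+\int_{\mathbb{R}}\frac{1}{x-z}\,\nu_\beta(\mathrm{d}x)$ with $\mathrm{supp}(\nu_\beta)=\mathrm{supp}(\mu_\alpha\boxplus\mu_\beta)$. On any connected component $(a,b)$ of $\mathbb{R}\backslash\mathrm{supp}(\mu_\alpha\boxplus\mu_\beta)$ the integrand and its $z$-derivative are bounded, so $\omega_\beta$ is real-valued and real-analytic there, with $\omega_\beta'(E)=1+\int_{\mathbb{R}}\frac{1}{(x-E)^2}\,\nu_\beta(\mathrm{d}x)>0$; hence $\mathrm{Re}\,\omega_\beta=\omega_\beta$ is strictly increasing on $(a,b)$. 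It tends to $-\infty$ (resp.\ $+\infty$) at the right (resp.\ left) endpoint of a gap precisely when $\nu_\beta$ carries an atom there, which accounts for the clause \emph{as long as it is bounded}. The same bound can alternatively be read off by differentiating \eqref{subequation 1}--\eqref{subequation 2}: writing $a:=F_{\mu_\alpha}'(\omega_\beta(z))-1$ and $b:=F_{\mu_\beta}'(\omega_\alpha(z))-1$ one obtains $\omega_\beta'(z)\,(1-ab)=1+b$, and on a gap $a=I_{\widehat{\mu}_\alpha}(\omega_\beta)>0$, $b=I_{\widehat{\mu}_\beta}(\omega_\alpha)>0$ with $ab<1$ (the strict inequality is obtained by letting $\mathrm{Im}\,z\searrow0$ in \eqref{inegalité edge}), so again $\omega_\beta'>0$.

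\emph{Progress across a band.} Let $[E_1,E_2]$ be a connected component of $\mathrm{supp}(\mu_\alpha\boxplus\mu_\beta)$ at whose endpoints $\nu_\beta$ has no atom; then $\omega_\beta(E_1)$ and $\omega_\beta(E_2)$ are finite, and by Proposition~\ref{comparaison alpha} they are real, since the density of $\mu_\alpha\boxplus\mu_\beta$ vanishes at the endpoints of its support. The representation above yields
\begin{align*}
\omega_\beta(E_2)-\omega_\beta(E_1)=(E_2-E_1)\Big(1+\int_{\mathbb{R}}\frac{1}{(x-E_1)(x-E_2)}\,\nu_\beta(\mathrm{d}x)\Big),
\end{align*}
so the assertion $\mathrm{Re}\,\omega_\beta(E_1)<\mathrm{Re}\,\omega_\beta(E_2)$ is equivalent to $\int_{\mathbb{R}}\frac{1}{(x-E_1)(x-E_2)}\,\nu_\beta(\mathrm{d}x)>-1$, i.e.\ to the absence of a collision $\omega_\beta(E_1)=\omega_\beta(E_2)$. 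To prove this I would exploit that, since $\mathrm{Im}\,\omega_\beta(z)\ge\mathrm{Im}\,z$, the map $\omega_\beta$ attains no real value on the open upper half-plane; feeding this into the argument principle on a large contour, together with the normalization $\omega_\beta(z)=z+o(1)$ as $z\to\infty$ and the gap monotonicity from the first step, one finds that the boundary curve $x\mapsto\omega_\beta(x)$ runs from $-\infty$ to $+\infty$ inside $\overline{\mathbb{C}^+}\cup\{\infty\}$ without ever re-crossing a real value it has already passed, which forces $\omega_\beta(E_1)<\omega_\beta(E_2)$.

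\emph{Main difficulty.} The band estimate is where the work lies. In contrast with the semi-group case of Proposition~\ref{increasing}, $\mathrm{Re}\,\omega_\beta$ is \emph{not} monotone across a band---$\mathrm{Re}\int\frac{1}{(x-z)^2}\,\nu_\beta(\mathrm{d}x)$ changes sign when $\nu_\beta$ concentrates near $\mathrm{Re}\,z$---so a genuinely global argument is unavoidable, and one must in addition handle the finitely many real points where $\omega_\beta$ equals $\infty$, namely the zeros of $m_{\mu_\beta}$ lying in $\mathrm{supp}(\mu_\alpha\boxplus\mu_\beta)$ (cf.\ Lemma~\ref{visiting a zero}), by excising small half-discs and tracking the boundary curve on the Riemann sphere. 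The coupling with the partner function $\omega_\alpha$ through \eqref{inegalité edge} is essential: the inequality $\int\frac{1}{(x-E_1)(x-E_2)}\,\nu_\beta(\mathrm{d}x)>-1$ is not a property of an arbitrary finite positive measure supported on $[E_1,E_2]$, so the conclusion genuinely relies on $\nu_\beta$ arising from a free additive convolution.
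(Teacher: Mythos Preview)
Your treatment of the gaps is correct and coincides with the paper's first paragraph: both use the representation $\omega_\beta(z)=z+m_{\nu_\beta}(z)$ from Lemma~\ref{Nevan omega alpha} to get $\omega_\beta'(E)=1+\int(x-E)^{-2}\nu_\beta(\mathrm{d}x)>0$ on each component of $\mathbb{R}\setminus\mathrm{supp}(\mu_\alpha\boxplus\mu_\beta)$.

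For the band, your route and the paper's diverge. The paper argues by contradiction: if $\omega_\beta(E_1)\ge\omega_\beta(E_2)$ then $\partial_E\mathrm{Re}\,\omega_\beta<0$ on some $[E_1',E_2']\subset(E_1,E_2)$; by Cauchy--Riemann $\partial_\eta\mathrm{Im}\,\omega_\beta|_{\eta=0}<0$ there, while $\mathrm{Im}\,\omega_\beta(E+i\eta)\ge\eta$ forces $\mathrm{Im}\,\omega_\beta$ back up for large $\eta$. The paper then asserts that the vertical-line minima of $\mathrm{Im}\,\omega_\beta$ trace out a piece of $\partial\bigl(\omega_\beta(\mathcal{D})\bigr)$ for the half-strip $\mathcal{D}=(E_1',E_2')\times[0,\infty)$, which would contradict the open mapping theorem since those minima occur at interior points of $\mathcal{D}$.

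Your argument-principle sketch has a genuine gap, and you essentially diagnose it yourself in the ``Main difficulty'' paragraph. The inputs you list---$\omega_\beta$ takes no real value in $\mathbb{C}^+$, the normalisation $\omega_\beta(z)=z+o(1)$, monotonicity on the gaps---are shared by \emph{every} function $f(z)=z+m_\nu(z)$ with $\nu$ finite and compactly supported, yet for such $f$ the band inequality can fail. Take $\nu$ with density $c(1-t^2)$ on $[-1,1]$: then $f(-1)=-1+2c$ and $f(1)=1-2c$, so $f(-1)>f(1)$ once $c>\tfrac12$, while $f$ extends analytically across $(-1,1)$ (polynomial density) and $\mathrm{Im}\,f>0$ there. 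For this $f$ the boundary curve $x\mapsto f(x)$ \emph{does} re-cross real values it has already passed, so no argument-principle computation using only those inputs can yield the conclusion. The coupling with $\omega_\alpha$ via \eqref{freeadd2} or \eqref{inegalité edge} must enter somewhere, and your sketch does not indicate where or how. Incidentally, the same family with $\tfrac14<c<\tfrac12$ gives $\partial_E\mathrm{Re}\,f(0)=1-4c<0$ while still $f(-1)<f(1)$; this shows that $\mathrm{Re}\,\omega_\beta$ can genuinely decrease on a subinterval of a band without any contradiction, so the paper's open-mapping step---that the vertical-line minima of $\mathrm{Im}\,\omega_\beta$ must lie on $\partial(\omega_\beta(\mathcal{D}))$---is itself delicate and not obviously a consequence of the stated hypotheses.
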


\begin{remark} In Section 3, we proved in Proposition \ref{increasing} that the real part of the derivative of the subordination function $\omega_t$ is strictly positive everywhere. The deeper reason for this is that we only had one subordination function and one subordination equation. For the free additive convolution of $\mu_\alpha$ and $\mu_\beta$, we have a system of subordination equations. This renders the behaviors of the real parts of the subordination functions more intricate. For example the subordination functions can run off to infinity. 
\end{remark}

\begin{proof}
By Lemma \ref{Nevan omega alpha}, for any connected component $\mathcal{E}$ in the complement of the support of $\mu_\alpha\boxplus\mu_\beta$,
\begin{align*}
\omega_\beta'(E)=1+\int_{\mathbb{R}}\frac{1}{(x-E)^2}\,\nu_\beta(\mathrm{d}x),\qquad E\in\mathcal{E}.
\end{align*}
Consequently $\mathrm{Re}\,\omega_\beta$ is strictly increasing on any connected component in $\mathbb{R}\backslash\mathrm{supp}(\mu_\alpha\boxplus\mu_\beta)$. Therefore by Lemma \ref{asymptotics zero}, we need to show that for any two points $E_1<E_2$ on the real line such that $\omega_\beta(E_1)$ and $\omega_\beta(E_2)$ are real and $\mathrm{Im}\,\omega_\beta(E)>0$ for every $E\in(E_1,E_2)$, we have $\omega_\beta(E_1)<\omega_\beta(E_2)$.

By contradiction, we assume that there exist $E_1<E_2$ on the real line such that the converse holds. By Proposition \ref{extension analytic}, $\omega_\beta$ can be analytically continued in the interior of the support of $\mu_\alpha\boxplus\mu_\beta$. Therefore there exist $E_1<E_1'<E_2'<E_2$ such that  for every $E\in[E_1',E_2']$, we have $\left.\frac{\partial}{\partial E}\right|_{\eta=0}\mathrm{Re}\,\omega_\beta(E+\mathrm{i}\eta)<0$. Consider now the domain 
\begin{align*}
\mathcal{D}:=\big\lbrace E+\mathrm{i}\eta:\ E\in(E_1',E_2'),\  \eta\geq 0 \big\rbrace.
\end{align*}
By the Cauchy-Riemann equations, for every $E\in(E_1',E_2')$, $\left.\frac{\partial}{\partial \eta}\right|_{\eta=0}\mathrm{Im}\,\omega_\beta(E+\mathrm{i}\eta)<0.$
Therefore for every $E\in(E_1',E_2')$, there exists $\eta_0>0$ such that 
\begin{align}\label{plus petit eta}
\mathrm{Im}\,\omega_\beta(E+\mathrm{i}\eta)<\mathrm{Im}\,\omega_\beta(E), \qquad 0<\eta<\eta_0.
\end{align}
Moreover by equation \eqref{freeadd3},  $\mathrm{Im}\,\omega_\beta(z)\geq \mathrm{Im}\,z$ for all $z\in\mathbb{C}^+\cup\mathbb{R}$. Consequently, 
\begin{align}\label{plus grand eta}
\mathrm{Im}\,\omega_\beta(E+\mathrm{i}\eta)>\mathrm{Im}\,\omega_\beta(E), \qquad E\in(E_1',E_2'),\ \eta>\mathrm{Im}\,\omega_\beta(E).
\end{align}
Combining equations \eqref{plus petit eta} and \eqref{plus grand eta} entails the existence of a curve $\gamma\in\omega_\beta(\mathcal{D})$ on the upper half-plane such that for every $E\in(E_1',E_2')$ fixed,  $\mathrm{Im}\,\omega_\beta(E+\mathrm{i}\eta)$ is minimal when $\omega_\beta(E+\mathrm{i}\eta)$ is a point of the curve $\gamma$. In other words, $\gamma$ is a component of the boundary of $\omega_\beta(\mathcal{D})$. We also observe that equation \eqref{plus petit eta} guarantees that whenever $E\in(E_1',E_2')$, such a point is achieved for some $\eta_{E}>0$. This contradicts the open mapping theorem and therefore the analyticity of $\omega_\beta$ since $\big\lbrace E+\mathrm{i}\eta:\  E\in(E_1',E_2') \text{ and }\eta=\eta_E \big\rbrace$ are interior points of $\mathcal{D}$, but are mapped to the component $\gamma$ of the boundary of $\omega_\beta(\mathcal{D})$. This concludes the proof of this proposition.
\end{proof}

\begin{proposition}\label{virtual proposition}
Let $\mu_\alpha$ and $\mu_\beta$ satisfy Assumption \ref{main assumption2} and let $[E_1,E_2]$ be one of the intervals in the support of $\mu_\alpha\boxplus\mu_\beta$. 
Then either $\big[\omega_\alpha(E_1),\omega_\alpha(E_2)\big]\cap\mathrm{supp}(\widehat{\mu}_\beta)$ is not empty or $\big[\omega_\beta(E_1),\omega_\beta(E_2)\big]\cap\mathrm{supp}(\widehat{\mu}_\alpha)$ is not empty. 
\end{proposition}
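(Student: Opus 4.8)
The plan is to argue by contradiction: suppose both $[\omega_\alpha(E_1),\omega_\alpha(E_2)]\cap\mathrm{supp}(\widehat\mu_\beta)=\emptyset$ and $[\omega_\beta(E_1),\omega_\beta(E_2)]\cap\mathrm{supp}(\widehat\mu_\alpha)=\emptyset$. Since each segment is compact and each support is closed, $[\omega_\beta(E_1),\omega_\beta(E_2)]$ lies inside a single connected component $G^\alpha$ of $\mathbb{R}\setminus\mathrm{supp}(\widehat\mu_\alpha)$, and $[\omega_\alpha(E_1),\omega_\alpha(E_2)]$ inside a single component $G^\beta$ of $\mathbb{R}\setminus\mathrm{supp}(\widehat\mu_\beta)$. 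By the Nevanlinna representation~\eqref{Nevan2}, $F_{\mu_\alpha}$ is real-analytic and strictly increasing on $G^\alpha$ (with $F_{\mu_\alpha}'=1+I_{\widehat\mu_\alpha}>0$), hence a diffeomorphism of $G^\alpha$ onto an interval $J^\alpha$, and likewise $F_{\mu_\beta}\colon G^\beta\to J^\beta$. I would then work with the real-analytic function $\Lambda:=F_{\mu_\alpha}^{-1}+F_{\mu_\beta}^{-1}-\mathrm{id}$ on $J:=J^\alpha\cap J^\beta$, which contains $W_i:=F_{\mu_\alpha\boxplus\mu_\beta}(E_i)=F_{\mu_\alpha}(\omega_\beta(E_i))=F_{\mu_\beta}(\omega_\alpha(E_i))$ for $i=1,2$. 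For real $z$ near $E_1$ or $E_2$, continuity keeps $\omega_\beta(z)$ in $G^\alpha$ and $\omega_\alpha(z)$ in $G^\beta$, so $\omega_\beta(z)=F_{\mu_\alpha}^{-1}\big(F_{\mu_\alpha\boxplus\mu_\beta}(z)\big)$ and $\omega_\alpha(z)=F_{\mu_\beta}^{-1}\big(F_{\mu_\alpha\boxplus\mu_\beta}(z)\big)$; substituting this into the subordination identity $\omega_\alpha(z)+\omega_\beta(z)-z=F_{\mu_\alpha}(\omega_\beta(z))=F_{\mu_\alpha\boxplus\mu_\beta}(z)$ of~\eqref{freeadd2} gives $z=\Lambda\big(F_{\mu_\alpha\boxplus\mu_\beta}(z)\big)$ in a neighbourhood of $E_1$ and of $E_2$; in particular $\Lambda(W_i)=E_i$.

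Next I would read off the local behaviour of $\Lambda$ at $W_1$ and $W_2$. By~\eqref{identité plus petit de base}, $\mathrm{Im}\,\omega_\beta(E_i)\,I_{\mu_\alpha}(\omega_\beta(E_i))=\pi\rho_{\alpha\boxplus\beta}(E_i)=0$, and since $\omega_\beta(E_i)\notin\mathrm{supp}(\mu_\alpha)$ this forces $\omega_\alpha(E_i),\omega_\beta(E_i)\in\mathbb{R}$. Taking imaginary parts of~\eqref{subequation 1}--\eqref{subequation 2} on the open set $\{\rho_{\alpha\boxplus\beta}>0\}$, which is dense in $(E_1,E_2)$ because $[E_1,E_2]$ is a connected component of the support of the continuous density $\rho_{\alpha\boxplus\beta}$, yields $I_{\widehat\mu_\alpha}(\omega_\beta)\,I_{\widehat\mu_\beta}(\omega_\alpha)=1$ there; letting $z\to E_i$ gives $\big(F_{\mu_\alpha}'(\omega_\beta(E_i))-1\big)\big(F_{\mu_\beta}'(\omega_\alpha(E_i))-1\big)=1$, which is exactly $\Lambda'(W_i)=\tfrac1{F_{\mu_\alpha}'(\omega_\beta(E_i))}+\tfrac1{F_{\mu_\beta}'(\omega_\alpha(E_i))}-1=0$. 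Since moreover $F_{\mu_\alpha\boxplus\mu_\beta}$ is increasing and $z=\Lambda\big(F_{\mu_\alpha\boxplus\mu_\beta}(z)\big)$ on the gap of $\mathrm{supp}(\mu_\alpha\boxplus\mu_\beta)$ just to the left of $E_1$ and on the one just to the right of $E_2$, one obtains $\Lambda<E_1$ immediately to the left of $W_1$ and $\Lambda>E_2$ immediately to the right of $W_2$; together with $\Lambda'(W_i)=0$ this makes $W_1$ a local maximum and $W_2$ a local minimum of $\Lambda$, with $\Lambda(W_1)=E_1<E_2=\Lambda(W_2)$, while the images of those two gaps under $F_{\mu_\alpha\boxplus\mu_\beta}$ must be disjoint (since $\Lambda$ is single-valued), which pins down $W_1<W_2$.

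The decisive step is a counting argument. Writing $\omega=F_{\mu_\alpha}^{-1}(W)$ and $\tilde\omega=F_{\mu_\beta}^{-1}(W)$, one has $\Lambda'(W)=0\iff I_{\widehat\mu_\alpha}(\omega)\,I_{\widehat\mu_\beta}(\tilde\omega)=1$. On each gap of $\mathrm{supp}(\widehat\mu_\alpha)$ the function $I_{\widehat\mu_\alpha}$ is smooth, positive, blows up at both endpoints — trivially at an atom of $\widehat\mu_\alpha$, and at an edge of $\mathrm{supp}(\mu_\alpha)$ because Assumption~\ref{main assumption2} together with~\eqref{egalte Imuhat} forces $\widehat\mu_\alpha$ to have there a density vanishing like a power of exponent in $(0,1)$, so that $\int(x-\cdot)^{-2}\,\widehat\mu_\alpha$ diverges — and is log-convex: a Cauchy--Schwarz inequality among $\int 2(x-\omega)^{-3}$, $\int(x-\omega)^{-2}$, $\int 6(x-\omega)^{-4}$ against $\widehat\mu_\alpha$ gives $(\log I_{\widehat\mu_\alpha})''>0$, so in particular $I_{\widehat\mu_\alpha}$ is unimodal; the same holds for $I_{\widehat\mu_\beta}$. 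Hence, after the increasing reparametrisations $W\mapsto\omega$ and $W\mapsto\tilde\omega$, the map $W\mapsto\log I_{\widehat\mu_\alpha}(\omega)+\log I_{\widehat\mu_\beta}(\tilde\omega)$ is a sum of two functions each tending to $+\infty$ at the two ends of $J$ with a single interior minimum, so it vanishes at most four times; equivalently $\Lambda'$ has at most four zeros on $J$, while $\Lambda'<0$ at both ends of $J$ (one summand then tends to $0$, the other stays $<1$). This is incompatible with the configuration of the previous paragraph — $\Lambda'<0$ near the left end, a local maximum at $W_1$, a local minimum at $W_2>W_1$ with $\Lambda(W_2)>\Lambda(W_1)$, and $\Lambda'<0$ near the right end — which forces $\Lambda'$ to change sign at least six times along $J$ (once before $W_1$, twice at the two extrema, at least twice inside $(W_1,W_2)$ so that $\Lambda$ can climb from $E_1$ up to $E_2$, and once after $W_2$); the degenerate cases with fewer sign changes are excluded by the real-analyticity of $\Lambda$. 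This contradiction proves the proposition.

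I expect the main obstacle to be this last step: proving the sharp bound on the number of zeros of $\Lambda'$ on $J$ — equivalently, on the number of intersections of the two unimodal, log-convex profiles $I_{\widehat\mu_\alpha}$ and $1/I_{\widehat\mu_\beta}$ after reparametrisation — and matching it exactly against the extremal configuration dictated by the two endpoints of $[E_1,E_2]$; this is precisely where the power-law hypothesis of Assumption~\ref{main assumption2} and the convexity properties of the Cauchy transforms of $\widehat\mu_\alpha$ and $\widehat\mu_\beta$ on the gaps of their supports are indispensable. A subsidiary point to verify along the way is that $\omega_\alpha$ and $\omega_\beta$ genuinely stay away from $\mathrm{supp}(\mu_\beta)$, $\mathrm{supp}(\mu_\alpha)$ and, near $E_1,E_2$, from the atoms of $\widehat\mu_\beta$, $\widehat\mu_\alpha$, which is supplied by Lemmas~\ref{asymptotics zero} and~\ref{visiting a zero}.
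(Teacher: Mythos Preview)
Your setup coincides with the paper's: after the change of variable $W=F_{\mu_\alpha}(\omega)$, your $\Lambda$ is exactly the paper's auxiliary function $z(\omega):=F_{\mu_\beta}^{-1}\!\big(F_{\mu_\alpha}(\omega)\big)+\omega-F_{\mu_\alpha}(\omega)$, and the identities $\Lambda(W_i)=E_i$ are the paper's $z\big(\omega_\beta(E_i)\big)=E_i$. The edge relation $\Lambda'(W_i)=0$ you derive is correct, though the paper does not use it.

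The gap is precisely where you anticipate it. Your claim that $W\mapsto\log I_{\widehat\mu_\alpha}(\omega)+\log I_{\widehat\mu_\beta}(\tilde\omega)$ has at most four zeros is not established, for two reasons. First, at each endpoint of $J=J^\alpha\cap J^\beta$ only \emph{one} of the two summands blows up (whichever of $J^\alpha,J^\beta$ terminates there), so it is false that each summand tends to $+\infty$ at both ends of $J$. Second, and more seriously, the log-convexity of $I_{\widehat\mu_\alpha}$ you prove is convexity in the variable $\omega$, and the reparametrisation $\omega=F_{\mu_\alpha}^{-1}(W)$ is non-affine; convexity does not survive composition with a merely increasing map. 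What survives is only unimodality, and a sum of two unimodal functions can have arbitrarily many zeros. Indeed, convexity of $\log I_{\widehat\mu_\alpha}(\omega(W))$ in $W$ would require $I_{\widehat\mu_\alpha}I_{\widehat\mu_\alpha}''/(I_{\widehat\mu_\alpha}')^2>\frac{1+2a}{1+a}$ with $a=I_{\widehat\mu_\alpha}$, whereas your Cauchy--Schwarz only yields the ratio $\ge 3/2$, which fails once $a>1$. So the bound of four on the zeros of $\Lambda'$ is unproven, and without it your six-versus-four contradiction does not close.

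The paper avoids counting entirely. From $z\big(\omega_\beta(E_1)\big)=E_1<E_2=z\big(\omega_\beta(E_2)\big)$ there is a subinterval of $[\omega_\beta(E_1),\omega_\beta(E_2)]$ on which the real-analytic $z(\omega)$ is strictly increasing; Cauchy--Riemann then gives $\mathrm{Im}\,z(\omega)>0$ for $\omega$ just above that subinterval, and the explicit formula for $\mathrm{Im}\,z$ forces $I_{\widehat\mu_\alpha}(\omega)\,I_{\widehat\mu_\beta}\big(F_{\mu_\beta}^{-1}(F_{\mu_\alpha}(\omega))\big)<1$ there. Choosing such an $\omega_{\mathrm{int}}\in\mathbb{C}^+$ not in the range of $\omega_\beta$, the implicit function theorem applied to $f(z,\omega)=F_{\mu_\beta}\big(F_{\mu_\alpha}(\omega)-\omega+z\big)-F_{\mu_\alpha}(\omega)+\omega$ produces a local fixed-point branch through $(z_{\mathrm{int}},\omega_{\mathrm{int}})$, and uniqueness of the Denjoy--Wolff point forces $\omega_{\mathrm{int}}=\omega_\beta(z_{\mathrm{int}})$, a contradiction. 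The argument uses only the mean value theorem and the Denjoy--Wolff characterisation of $\omega_\beta$; no convexity or zero-count is needed.
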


\begin{proof}
Let $[E_1,E_2]$ be an interval in the support of $\mu_\alpha\boxplus\mu_\beta$. By contradiction, we assume that  $\big[\omega_\alpha(E_1),\omega_\alpha(E_2)\big]$ and $\big[\omega_\beta(E_1),\omega_\beta(E_2)\big]$ are disjoint from $\mathrm{supp}(\widehat{\mu}_\beta)$ and from $\mathrm{supp}(\widehat{\mu}_\alpha)$ respectively. 
By the Nevanlinna representations in \eqref{Nevan2} and \eqref{Nevan22}, and by Proposition \ref{asymptotics zero}, 
\begin{align*}
F_{\mu_\alpha}'(\omega)=1+\int_{\mathbb{R}}\frac{1}{(x-\omega)^2}\,\widehat{\mu}_\alpha(\mathrm{d}x)>1,\qquad \omega\in \big[\omega_\beta(E_1),\omega_\beta(E_2)\big]
\end{align*}
and 
\begin{align*}
F_{\mu_\beta}'(\omega)=1+\int_{\mathbb{R}}\frac{1}{(x-\omega)^2}\,\widehat{\mu}_\beta(\mathrm{d}x)>1,\qquad \omega\in \big[\omega_\alpha(E_1),\omega_\alpha(E_2)\big].
\end{align*}
This implies by the subordination equation \eqref{freeadd2} that
\begin{align}\label{ouvert inclu}
F_{\mu_\alpha}\big([\omega_\beta(E_1),\omega_\beta(E_2)] \big)
= 
\big[ F_{\mu_\alpha}\big(\omega_\beta(E_1)\big), F_{\mu_\alpha} \big(\omega_\beta(E_2)\big)\big]
=
F_{\mu_\beta}\big([\omega_\alpha(E_1),\omega_\alpha(E_2)] \big).
\end{align}
By the analytic inverse function theorem, there exists a neighborhood $\mathcal{U}_\alpha$ of $\big[\omega_\alpha(E_1),\omega_\alpha(E_2)\big]$ such that 
\begin{align}\nonumber
F_{\mu_\beta}:\mathcal{U}_\alpha\rightarrow F_{\mu_\beta}\big(\mathcal{U}_\alpha\big)
\end{align}
is a biholomorphism and by \eqref{ouvert inclu}, there exists a neighborhood $\mathcal{U}_\beta$ of $\big[\omega_\beta(E_1),\omega_\beta(E_2)\big]$ such that 
\begin{align}\label{inclu}
F_{\mu_\alpha}\big(\mathcal{U}_\beta\big)\subset F_{\mu_\beta}\big(\mathcal{U}_\alpha\big).
\end{align}
Therefore the function
\begin{align}\label{z function}
z(\omega):=F_{\mu_\beta}^{-1}\big(F_{\mu_\alpha}(\omega)\big)+\omega-F_{\mu_\alpha}(\omega),\qquad \omega\in\mathcal{U}_\beta
\end{align}
is well-defined and analytic. Furthermore for any $\omega_\beta\in\mathcal{U}_\beta$ such that $\omega_\beta=\omega_\beta(z)$ for some $z\in\mathbb{C}^+\cup\mathbb{R}$, the subordination equation \eqref{freeadd2} implies that
\begin{align}\label{Consistence}
z\big(\omega_\beta(z)\big)\nonumber
&=F_{\mu_\beta}^{-1}\big(F_{\mu_\alpha}(\omega_\beta(z)\big)+\omega_\beta(z)-F_{\mu_\alpha}\big(\omega_\beta(z)\big)\\ \nonumber
&=\omega_\alpha(z)+\omega_\beta(z)-F_{\mu_\alpha}\big(\omega_\beta(z)\big)\\ 
&=z.
\end{align}
In particular, 
\begin{align}\label{edges z}
z\big(\omega_\beta(E_1)\big)=E_1<E_2=z\big(\omega_\beta(E_2)\big).
\end{align}

Next, \begin{comment}
from Proposition \ref{extension analytic}, we recall that the subordination function $\omega_\beta$ can be analytically continued through the real line whenever its imaginary part is strictly positive. More precisely, if $\mathrm{Im}\,\omega_\beta(E)>0$ for some $E\in\mathbb{R}$, then there exists a neighbourhood $\mathcal{O}_\beta$ of $E$ in $\mathbb{C}$ such that $\omega_\beta$ is analytic in $\mathcal{O}_\beta$. This entails the existence of a neighbourhood $\mathcal{U}_{E_1}$ of $\omega_\beta(E_1)$ and a neighbourhood $\mathcal{U}_{E_2}$ of $\omega_\beta(E_2)$ such that $z(\omega)\in\mathbb{C}^{-}$, whenever $\omega\in\mathcal{U}_{E_1}\cup\mathcal{U}_{E_2}$ has positive imaginary part.
\end{comment}
we look at the imaginary part of $z(\omega)$, for any $\omega\in\mathcal{U}_\beta$. By the Nevanlinna representations in equations \eqref{Nevan2} and \eqref{Nevan22},
\begin{align}
\mathrm{Im}\,F_{\mu_\beta}(\omega)-\mathrm{Im}\,\omega
&=
I_{\widehat{\mu}_\beta}(\omega)\,\mathrm{Im}\,\omega, \qquad \omega\in\mathbb{C}^{+}\cup\mathbb{R},\\ \label{imaginary1}
\mathrm{Im}\,F_{\mu_\alpha}(\omega)-\mathrm{Im}\,\omega
&=
I_{\widehat{\mu}_\alpha}(\omega)\,\mathrm{Im}\,\omega, \qquad \omega\in\mathbb{C}^{+}\cup\mathbb{R}.
\end{align}
Therefore by the inclusion in \eqref{inclu},
\begin{align}\label{imaginary2}
\mathrm{Im}\,F_{\mu_\alpha}(\omega)=\mathrm{Im}\,F_{\mu_\beta}^{-1}\big(F_{\mu_\alpha}(\omega)\big)\bigg(1+I_{\widehat{\mu}_\beta}\Big(F_{\mu_\beta}^{-1}\big(F_{\mu_\alpha}(\omega)\big)\Big)\bigg), \qquad \omega_\beta\in\mathcal{U}_{\beta},
\end{align}
and combining equations \eqref{imaginary1} and \eqref{imaginary2} yields
\begin{align}\label{imaginary final}
\mathrm{Im}\,z(\omega)=
\mathrm{Im}\,\omega\,\Bigg(\frac{1-I_{\widehat{\mu}_\alpha}(\omega)I_{\widehat{\mu}_\beta}\Big(F_{\mu_\beta}^{-1}\big(F_{\mu_\alpha}(\omega)\big)\Big)}{1+I_{\widehat{\mu}_\beta}\Big(F_{\mu_\beta}^{-1}\big(F_{\mu_\alpha}(\omega)\big)\Big)}\Bigg),\qquad \omega\in\mathcal{U}_\beta.
\end{align}

By construction, $z(\omega)$ is analytic in $\mathcal{U}_\beta$ and $z(\omega)\in\mathbb{R}$ when $\omega\in\big[\omega_\beta(E_1),\omega_\beta(E_2) \big]$. Therefore using \eqref{edges z}, we observe that there exists a sub-interval $(\omega_0,\omega_1)$ of $\big[\omega_\beta(E_1),\omega_\beta(E_2) \big]$ on which $\mathrm{Re}\,z(\omega)$ has strictly positive derivative. The Cauchy-Riemann equations entail the existence of a neighborhood $\mathcal{U}_\mathrm{int}$ of $(\omega_0,\omega_1)$ such that $\mathrm{Im}\, z(\omega)>0$, for all $\omega\in\mathcal{U}_\mathrm{int}$  with strictly positive imaginary part.
Consequently, for every $\omega\in\mathcal{U}_\mathrm{int}$ with positive imaginary part, it follows from equation \eqref{imaginary final} that
\begin{align}\label{contradiction1}
I_{\widehat{\mu}_\alpha}(\omega)I_{\widehat{\mu}_\beta}\Big(F_{\mu_\beta}^{-1}\big(F_{\mu_\alpha}(\omega)\big)\Big)<1,\qquad \omega\in\mathcal{U}_{\mathrm{int}}.
\end{align} 

We will now show that inequality \eqref{contradiction1} contradicts the assumption that $\mathrm{Im}\,\omega_\beta(z)>0$ for every $z\in [E_1,E_2]$.
For all $z\in\mathbb{C}^+\cup\mathbb{R}$, we define the function  
\begin{align}
f(z,\omega):=F_{\mu_\beta}\big(F_{\mu_\alpha}(\omega)-\omega+z\big)-F_{\mu_\alpha}(\omega)+\omega, \qquad \omega\in\mathbb{C}^+.
\end{align}
We consider a point $\omega_{\mathrm{int}}\in\mathcal{U}_{\mathrm{int}}$ with strictly positive imaginary part that does not lie in the range of $\omega_\beta$, set $z_{\mathrm{int}}:=z\big(\omega_{\mathrm{int}}\big)$ and notice from equations \eqref{z function} and \eqref{contradiction1} that 
\begin{align}\nonumber
\left.\frac{\partial}{\partial\omega}\right|_{\omega=\omega_{\mathrm{int}}}\Big(f(z_{\mathrm{int}},\omega)-\omega\Big)
=
\Big(F_{\mu_\beta}'\big( F_{\mu_\alpha}(\omega_{\mathrm{int}})-\omega_{\mathrm{int}}+z_{\mathrm{int}} \big)-1\Big)\Big(F_{\mu_\alpha}'(\omega_{\mathrm{int}})-1\Big) -1 
\end{align}
is non-zero. Therefore by the implicit function theorem, there exists an analytic function $\widetilde{\omega_\beta}(z)$ defined in a neighborhood $\mathcal{O}_\mathrm{int}$ of $z_{\mathrm{int}}$ such that 
\begin{align}
f\big(z,\widetilde{\omega_\beta}(z)\big)=\widetilde{\omega_\beta}(z),\qquad z\in\mathcal{O}_{\mathrm{int}}.
\end{align}
By the subordination equation \eqref{freeadd2}, we observe that 
\begin{align}
f\big(z,\omega_\beta(z)\big)=\omega_\beta(z),\qquad z\in\mathbb{C}^+\cup\mathbb{R}.
\end{align}
Since for every $z\in\mathbb{C}^{+}\cup\mathbb{R}$ fixed, 
\begin{align*}
\mathrm{Im}\,f(z,\omega)=
\Big(\mathrm{Im}\,F_{\mu_\alpha}(\omega)-\mathrm{Im}\,\omega + \mathrm{Im}\, z \Big)\Big(I_{\widehat{\mu}_\beta}\big(F_{\mu_\alpha}(\omega)-\omega+z \big) \Big)+\mathrm{Im}\,z > 0,
\end{align*}
there exists a unique function $\omega(z)$ such that $f\big(z,\omega(z)\big)=\omega(z)$. Indeed, $\omega_\beta(z)$ is the Denjoy-Wolff point of the function $f(z,\omega)$. In other words, $\omega_\beta(z)=\widetilde{\omega_\beta}(z)$ for every $z\in\mathcal{O}_{\mathrm{int}}$.
This contradicts the assumption that $\mathrm{Im}\,\omega_\beta(E)>0$ for every $E\in\big(E_1,E_2\big)$, since $\omega_\mathrm{int}=\widetilde{\omega_\beta}(z_{\mathrm{int}})$ does not lie in the range of $\omega_\beta$.
\end{proof}

\section{Proofs of Theorems \ref{main theorem 1} and \ref{main theorem}}\label{section: proof part 2}

In order to prove Theorem \ref{main theorem 1}, we recall the following. From Assumption \ref{main assumption2}, the measure~$\mu_\alpha$ is supported on $n_\alpha$ intervals and the measure $\mu_\beta$ is supported on $n_\beta$ intervals. For any $1\leq j \leq n_\alpha$, we denote by $[A_j^-,A_j^+]$ the $j^\text{th}$ interval in the support of $\mu_\alpha$ and for any $1\leq k \leq n_\beta$, we denote by $[B_k^-,B_k^+]$ the $k^\text{th}$ interval in the support of $\mu_\beta$. 
Moreover we will call any interval $[E_1,E_2]\subset\mathbb{R}$ such that $\mathrm{Im}\,\omega_\alpha(E_1)=\mathrm{Im}\,\omega_\alpha(E_2)=0$ and $\mathrm{Im}\,\omega_\alpha(E)>0$ for any $E\in(E_1,E_2)$ a jump of $\omega_\alpha$.  We define the notion of jumps of $\omega_\beta$ identically by interchanging $\alpha$ and $\beta$.

\begin{proof}[Proof of Theorem \ref{main theorem 1}]
The lower bound on the number of connected components in the support of $\mu_\alpha\boxplus\mu_\beta$ is a consequence of Propositions \ref{comparaison alpha}, \ref{virtual proposition} and of Lemma \ref{visiting a zero}. We indeed notice that $\omega_\beta$ approaches any pure point that lies in $\mathcal{P}^{\alpha}$. We will therefore turn to the proof of the upper bound.
By assumption, $\mu_\beta$ is supported on a single interval. This means that its Cauchy-Stieltjes transform does not admit any zero and that $\widehat{\mu}_\beta$ is absolutely continuous, by Corollary \ref{Nevan support}. Therefore for any compact interval $\mathcal{E}$ on the real line such that $\mathrm{supp}(\mu_{\alpha}\boxplus\mu_{\beta})\subset\mathcal{E}$, there exists, by Lemma \ref{bounded subordination functions}, a constant $C_\beta>0$ such that
\begin{align}
\sup_{z\in\mathcal{J}}\big|\omega_\beta(z)\big|\leq C_\beta,
\end{align}
where $\mathcal{J}$ is the spectral domain given by 
\begin{align*}
\mathcal{J}:=\big\lbrace z=E+\mathrm{i}\eta:\ E\in\mathcal{E}\text{ and }0\leq \eta \leq 1 \big\rbrace.
\end{align*}
Moreover by Lemma \ref{bounded subordination functions}, if $\omega_\alpha(z)$ diverges, then $z$ either converges to a pure point of $\widehat{\mu}_\alpha$ or diverges. We consider the subordination function $\omega_\alpha$. Using Propositions \ref{asymptotics zero},  \ref{increasing alpha}, the subordination equation \eqref{freeadd2} and the assumption that $\mu_\beta$ is supported on a single interval, we see that for any jump $[E_1,E_2]\subset\mathbb{R}$ of $\omega_\alpha$, we have 
\begin{align}\label{scenariossign}\nonumber
F_{\mu_\beta}\big(\omega_\alpha(E_1)\big)>0 \text{ and }
F_{\mu_\beta}\big(\omega_\alpha(E_2)\big)>0,\\
\text{or }\nonumber
F_{\mu_\beta}\big(\omega_\alpha(E_1)\big)<0 \text{ and } F_{\mu_\beta}\big(\omega_\alpha(E_2)\big)>0,\\
\text{or }
F_{\mu_\beta}\big(\omega_\alpha(E_1)\big)<0 \text{ and }
F_{\mu_\beta}\big(\omega_\alpha(E_2)\big)<0.
\end{align}
Combining \eqref{scenariossign} with Proposition \ref{virtual proposition} and the fact that $\mathrm{Re}\,F_{\mu_\alpha}(\omega)=-\frac{\mathrm{Re}\,m_{\mu_\alpha}(\omega)}{|m_{\mu_\alpha}(\omega)|^2}$, for any $\omega\in\mathbb{C}^+\cup\mathbb{R}$,  we conclude that any jump $[E_1,E_2]\subset\mathbb{R}$ of $\omega_\beta$ is such that
\begin{align}
\big[\omega_\beta(E_1),\omega_\beta(E_2)\big]\cap\mathrm{supp}(\widehat{\mu}_\alpha)\neq\emptyset.
\end{align}
Moreover if we assume that there exists $1\leq i\leq n_\alpha-2$ such that $\mathrm{Re}\,F_{\mu_\alpha}(E)$ is positive for any $E\in\big(A_i^+,A_{i+1}^{-}\big)$ and $\mathrm{Re}\,F_{\mu_\alpha}(E)$ is negative for any $E\in\big(A_{i+1}^{+},A_{i+2}^{-}\big)$, then the inequalities in \eqref{scenariossign} ensure that for any $E_1\in\big(A_i^+,A_{i+1}^{-}\big)$ and $E_2\in\big(A_{i+1}^{+},A_{i+2}^{-}\big)$, the interval $[E_1,E_2]$ is not a jump of $\omega_\beta$. Therefore the number of jumps of $\omega_\beta$ is bounded by 
$
n_\alpha-\big|\mathcal{N}_{\mu_\alpha}\big|.
$ In order to conclude the proof of Theorem \ref{main theorem 1},
we further notice that any point in the interior of the support of $\mu_\alpha\boxplus\mu_\beta$, at which the density $\rho_{\alpha\boxplus\beta}$ vanishes, reduces the upper bound of $\mathrm{I}_{\alpha\boxplus\beta}$ by $1$. Indeed Proposition~\ref{virtual proposition} and Lemma \ref{visiting a zero} admit the following consequence. If $\mathrm{I}_{\alpha\boxplus\beta}$ is maximal, we see that the only way for the density $\rho_{\alpha\boxplus\beta}$ to vanish at some point in the interior of its support, is for two connected components in $\mathrm{supp}(\mu_\alpha\boxplus\mu_\beta)$ to be merged at that point. This remark concludes the proof of Theorem~\ref{main theorem 1}.
\end{proof}

We now turn to the proof of Theorem \ref{main theorem}.

\begin{proof}[Proof of Theorem \ref{main theorem}]
As before, the lower bound on the number of connected components in the support of $\mu_\alpha\boxplus\mu_\beta$ is a consequence of Propositions \ref{comparaison alpha}, \ref{virtual proposition} and of Lemma \ref{visiting a zero}, since $\omega_\beta$ approaches the pure points in the set $\mathcal{P}^{\beta}$ and $\omega_\alpha$ approaches the pure points in the set $\mathcal{P}^{\alpha}$. We will therefore focus on determining an upper bound on the quantity $\mathrm{I}_{\alpha\boxplus\beta}$. As we saw in the proof of Theorem~\ref{main theorem 1}, this will automatically yield an upper bound on $\mathrm{I}_{\alpha\boxplus\beta}+\mathrm{C}_{\alpha\boxplus\beta}$. 

We use a combinatorial argument to obtain an upper bound on the number of connected components on which $\mathrm{Im}\,\omega_{\alpha}>0$. Since the real parts of the subordination functions $\omega_\alpha(z)$ and $\omega_\beta(z)$ can diverge even if $z$ converges, the proof of Theorem \ref{main theorem} is more involved than the proof of Theorem \ref{thm1}. 
First, we summarize the main properties of $\omega_\alpha$ and $\omega_\beta$: 

\begin{enumerate}
\item $\omega_\alpha$ is continuous at any point $E\in\mathbb{R}$ which is not a pure point of $\widehat{\mu}_\beta$. Identically, $\omega_\beta$ is continuous at any point $E\in\mathbb{R}$ which is not a pure point of $\widehat{\mu}_\alpha$.

\item 
The pure points are approached by one of the subordination functions at most once and in increasing order. Namely, by Proposition \ref{divergence subordination}, $\omega_\alpha$ approaches a pure point $E^\beta$ of $\widehat{\mu}_\beta$ only if $z$ approaches $E^\beta$ itself and $\omega_\beta$ approaches a pure point $E^\alpha$ of $\widehat{\mu}_\alpha$ only if $z$ approaches $E^\alpha$ as well. Furthermore we have shown that whenever a subordination function approaches a pure point, the other subordination function diverges.

\item  By Proposition \ref{comparaison alpha}, $\mathrm{Im}\,\omega_\alpha(E)>0$ if and only if $\mathrm{Im}\,\omega_\beta(E)>0$, for every $E\in\mathbb{R}$.

\item  By Proposition \ref{virtual proposition}, $\mathrm{Im}\,\omega_\beta(E)>0$ for every $E$ in some compact interval $\mathcal{K}$ only if there exists a connected component of $\mathrm{supp}\big(\widehat{\mu}_\alpha\big)$ or of $\mathrm{supp}\big(\widehat{\mu}_\beta\big)$ included in $\mathcal{K}$.

\item Let $\mathcal{K}$ be an interval on the real line which does not contain any pure point of $\widehat{\mu}_\alpha$. By Proposition \ref{increasing alpha}, $\mathrm{Re}\,\omega_\beta$ is increasing on $\mathcal{K}\cap\mathbb{R}\backslash\mathrm{supp}\big(\mu_\alpha\boxplus\mu_\beta\big)$. The same holds by interchanging $\alpha$ and $\beta$.  
\item Let $\mathcal{A}:=\big\lbrace E\in\mathbb{R}: F_{\mu_\alpha}(E)\in\mathbb{R}_{\geq 0} \big\rbrace$ and $\mathcal{B}:=\big\lbrace E\in\mathbb{R}: F_{\mu_\beta}(E)\in\mathbb{R}_{\geq 0} \big\rbrace$. By the subordination equation in \eqref{freeadd2}, $\omega_\beta(E)\in\mathcal{A}$ if and only if $\omega_\beta(E)\in\mathcal{B}$, for any $E\in\mathbb{R}$. 
\end{enumerate}

These six properties summarize the information on the real and  imaginary parts of the subordination functions that we shall use in order to determine an explicit upper bound on the number of connected components of $\mathrm{supp}(\mu_\alpha\boxplus\mu_\beta)$. Given $\mu_\alpha$ and $\mu_\beta$ satisfying Assumption \ref{main assumption2}, we let $\mathcal{F}$ be the set of functions verifying them. Namely,
\begin{align}
\mathcal{F}:=\big\lbrace (\mathrm{w}_\alpha,\mathrm{w}_\beta):\ \mathrm{w}_\alpha, \mathrm{w}_\beta:\mathbb{R}\rightarrow \mathbb{C}^+\cup\mathbb{R}, \text{ where $\mathrm{w}_\alpha$ and $\mathrm{w}_\beta$ verify (1)-(6)}\big\rbrace. 
\end{align}

Recall from Proposition \ref{comparaison alpha} that 
\begin{align}
\mathrm{supp}\big(\mu_\alpha\boxplus\mu_\beta\big)
=
\overline{\big\lbrace E\in\mathbb{R}:\ \mathrm{Im}\,\omega_\alpha(E)>0 \big\rbrace}
=
\overline{\big\lbrace E\in\mathbb{R}:\ \mathrm{Im}\,\omega_\beta(E)>0 \big\rbrace}
\end{align}
and therefore, for any Borel probability measures $\mu_\alpha, \mu_\beta$ on $\mathbb{R}$ satisfying Assumption \ref{main assumption2}, with given $n_\alpha$, $n_\beta$, the number of connected components in the support of $\mu_\alpha\boxplus\mu_\beta$ is bounded by the maximal number of connected components in
\begin{align}\label{first bound}
\overline{\big\lbrace E\in\mathbb{R}:\ \mathrm{Im}\,\mathrm{w}_\beta(E)>0 \big\rbrace},
\end{align}
where $(\mathrm{w}_\alpha,\mathrm{w}_\beta)$ is an element of $\mathcal{F}$. 
As before,  we define 
\begin{align*}
&\mathcal{E}^{\alpha}:=\big\lbrace E_i^{\alpha}\in (A_i^+,A_{i+1}^-):\ 1\leq i \leq n_{\alpha}-1 \text{ and }E_i^{\alpha}\text{ is a pure point of }\widehat{\mu}_\alpha \big\rbrace,\\
&\mathcal{E}^{\beta}:=\big\lbrace E_i^{\beta}\in (B_i^+,B_{i+1}^-):\ 1\leq i \leq n_{\beta}-1 \text{ and }E_i^{\beta}\text{ is a pure point of }\widehat{\mu}_\beta \big\rbrace,
\end{align*}
the respective set of pure points of $\widehat{\mu}_\alpha$ and of $\widehat{\mu}_\beta$. Furthermore we set
\begin{align*}
\mathcal{P}^{\alpha}:=\big\lbrace E\in\mathcal{E}^{\alpha}:\ \mathrm{w}_\beta(E)=E\big\rbrace
\text{ and }
\mathcal{P}^{\beta}:=\big\lbrace E\in\mathcal{E}^{\beta}:\ \mathrm{w}_\alpha(E)=E\big\rbrace. 
\end{align*}
Notice that the definitions of $\mathcal{P}^{\alpha}$ and of $\mathcal{P}^{\beta}$ that we have just given are equivalent to the ones in the statement of Theorem \ref{main theorem}, by Lemma \ref{visiting a zero}.
We say that $E,E'\in\mathcal{P}^{\alpha}$ are consecutive if for every $E<E''<E'$, such that $E''\in\mathcal{E}^{\alpha}$, we have that $E''\notin\mathcal{P}^{\alpha}$. The notion of consecutiveness is defined identically for $\mathcal{P}^{\beta}$. We denote the family of indices of consecutive pure points by $\mathcal{I}^{\alpha}$ and $\mathcal{I}^{\beta}$. Namely,
\begin{align*}
&\mathcal{I}^{\alpha}:=\big\lbrace (i_1,i_2):\ 0\leq i_1<i_2 \leq n_\alpha \text{ and } E^{\alpha}_{i_1},E^{\alpha}_{i_2}\in\mathcal{E}^{\alpha}\text{ are consecutive in $\mathcal{P}^{\alpha}$} \big\rbrace,\\
&\mathcal{I}^{\beta}:=\big\lbrace (i_1,i_2):\ 0\leq i_1<i_2 \leq n_\beta \text{ and } E^{\beta}_{i_1},E^{\beta}_{i_2}\in\mathcal{E}^{\beta}\text{ are consecutive in $\mathcal{P}^{\beta}$} \big\rbrace,
\end{align*}
where we use the convention that $E_0^{\alpha}=E_0^{\beta}:=-\infty$ and $E_{n_\alpha}^{\alpha}=E_{n_\beta}^{\beta}:=+\infty$ and define the notion of consecutiveness identically as before.
We now take $(i_1,i_2)\in\mathcal{I}^\alpha$ and consider the interval $(E_{i_1}^{\alpha},E_{i_2}^{\alpha})$. Whenever $E$ ranges in $(E_{i_1}^{\alpha},E_{i_2}^{\alpha})$, we want to see how many pure points of $\widehat{\mu}_{\beta}$ should be approached by $\mathrm{w}_\alpha(E)$, in order to maximize the number of connected components on which $\mathrm{Im}\,\mathrm{w}_\alpha(E)$ is positive. To do so, we define $\mathcal{J}^{\beta}_{i_1,i_2}$ as the set of the indices of the pure points of $\widehat{\mu}_\beta$ in $\mathcal{P}^{\beta}$ that lie in $\big(E_{i_1}^{\alpha},E_{i_2}^{\alpha}\big)$. More precisely, 
\begin{align}
\big\lbrace E_j^{\beta}:\ j\in\mathcal{J}^{\beta}_{i_1,i_2} \big\rbrace = 
\big\lbrace E\in\mathcal{P}^{\beta}:\ E\in(E_{i_1}^{\alpha},E_{i_2}^{\alpha})\big\rbrace. 
\end{align}
Moreover let $(i_1,i_2)\in\mathcal{I}^\alpha$ and set
\begin{align}
\mathrm{W}^{\mathcal{J}_{i_1,i_2}^{\beta}}_{i_1,i_2}:=\big\lbrace E\in (E^{\alpha}_{i_1},E^{\alpha}_{i_2}):\ \mathrm{Im}\,\mathrm{w}_\beta(E)>0 \big\rbrace.
\end{align}
By Proposition \ref{comparaison alpha}, $\mathrm{W}^{\mathcal{J}_{i_1,i_2}^{\beta}}_{i_1,i_2}$ corresponds to the set of connected components of $\mathrm{supp}\big(\mu_\alpha\boxplus\mu_\beta\big)$ which are included in $\big(E_{i_1}^{\alpha},E_{i_2}^{\alpha} \big)$, given that $E_{i_1}^{\alpha}$ and $E_{i_2}^{\alpha}$ are two consecutive elements of $\mathcal{P}^{\alpha}$, and that the pure points of $\widehat{\mu}_\beta$, which are approached by  $\mathrm{w}_\alpha$ are exactly the ones whose indices lie in $\mathcal{J}_{i_1,i_2}^{\beta}$.
We denote by $\mathrm{S}_{i_1,i_2}^{\mathcal{J}_{i_1,i_2}^{\beta}}$ the maximal number of connected components of $\mathrm{W}^{\mathcal{J}_{i_1,i_2}^{\beta}}_{i_1,i_2}$ and show that this number can be expressed as follows.

\begin{proposition}\label{counting lemma}
Let $(i_1,i_2)\in\mathcal{I}^{\alpha}$. If $\mathcal{J}_{i_1,i_2}^{\beta}=\big\lbrace j_1,...,j_n\big\rbrace$ with $j_1<j_2<...<j_n$ and $1\leq n \leq n_\beta-1$, we have
\begin{align}\label{excursion count}
\mathrm{S}_{i_1,i_2}^{\mathcal{J}_{i_1,i_2}^{\beta}}=
n(n_\alpha-1)+(n_\beta-1)+(i_2-i_1).
\end{align}
Moreover if $\mathcal{J}_{i_1,i_2}^{\beta}$ is empty, then 
\begin{align}
\mathrm{S}_{i_1,i_2}^{\emptyset}=
(n_\beta-1)+(i_2-i_1).
\end{align}
\end{proposition}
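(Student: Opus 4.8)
The plan is to set up a combinatorial counting argument that tracks how the subordination functions $\mathrm{w}_\alpha$ and $\mathrm{w}_\beta$ move through the real line as $E$ ranges over the interval $(E^{\alpha}_{i_1},E^{\alpha}_{i_2})$, using properties (1)--(6). First I would fix the picture: on this interval, $\mathrm{w}_\beta$ is continuous except at the pure points $\{E^\beta_j : j \in \mathcal{J}^\beta_{i_1,i_2}\}$ which it must \emph{visit} (i.e.\ hit exactly, since those are the points of $\mathcal{P}^\beta$), and at each such visit $\mathrm{w}_\alpha$ diverges by property (2). Dually, $\mathrm{w}_\alpha$ on $(E^\alpha_{i_1},E^\alpha_{i_2})$ passes through the real line away from $\mathrm{supp}(\mu_\alpha\boxplus\mu_\beta)$ in an increasing manner (property (5)), and by property (4) every maximal interval on which $\mathrm{Im}\,\mathrm{w}_\beta>0$ (equivalently $\mathrm{Im}\,\mathrm{w}_\alpha>0$, property (3)) forces $[\mathrm{w}_\alpha(E_1),\mathrm{w}_\alpha(E_2)]$ or $[\mathrm{w}_\beta(E_1),\mathrm{w}_\beta(E_2)]$ to contain a full connected component of $\mathrm{supp}(\widehat{\mu}_\alpha)$ or $\mathrm{supp}(\widehat{\mu}_\beta)$. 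This is the essential bookkeeping device: each "jump" (connected component of $\mathrm{W}^{\mathcal{J}}_{i_1,i_2}$) must "consume" a component of one of the two supports as seen through the relevant subordination function.

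Next I would partition the interval $(E^\alpha_{i_1},E^\alpha_{i_2})$ at the $n$ pure points $E^\beta_{j_1}<\dots<E^\beta_{j_n}$ into $n+1$ subintervals, and count the maximal number of jumps in each piece separately. On each piece, $\mathrm{w}_\alpha$ travels monotonically (away from the support) through an interval of the real line; as it does so, $\mathrm{w}_\alpha$ can sweep across the $n_\beta$ components of $\mathrm{supp}(\mu_\beta)$ (hence of $\mathrm{supp}(\widehat{\mu}_\beta)$ up to the finitely many extra atoms) at most a bounded number of times, and symmetrically $\mathrm{w}_\beta$ can sweep across the $n_\alpha$ components of $\mathrm{supp}(\widehat{\mu}_\alpha)$. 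The count of $n_\alpha-1$ (resp.\ $n_\beta-1$) gaps between consecutive components, together with the constraint from property (6) that the sign of $\mathrm{Re}\,F_{\mu_\alpha}$ (equivalently $\mathrm{Re}\,F_{\mu_\beta}$) must change in a compatible way across a jump, is what pins down the arithmetic. I expect the precise accounting to run: each of the $n$ interior pure points contributes a factor $(n_\alpha-1)$ worth of potential jumps (because after a divergence of $\mathrm{w}_\alpha$ and its "reset", $\mathrm{w}_\beta$ can re-traverse the $n_\alpha$-component support of $\widehat{\mu}_\alpha$), the $(n_\beta-1)$ term accounts for the gaps of $\mathrm{supp}(\mu_\beta)$ available to $\mathrm{w}_\alpha$ over the whole stretch, and $(i_2-i_1)$ counts the components of $\mathrm{supp}(\widehat{\mu}_\alpha)$ strictly between $E^\alpha_{i_1}$ and $E^\alpha_{i_2}$ that $\mathrm{w}_\beta$ must cross (there are exactly $i_2-i_1$ of the $E^\alpha_i$-indexed gaps, hence that many components of $\mathrm{supp}\mu_\alpha$, in that range, since consecutive pure points of $\widehat\mu_\alpha$ in $\mathcal{P}^\alpha$ are separated by indices $i_1,i_2$). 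When $\mathcal{J}^\beta_{i_1,i_2}=\emptyset$, the factor $n(n_\alpha-1)$ drops out and we are left with $(n_\beta-1)+(i_2-i_1)$.

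To make the upper bound tight I would exhibit, or argue by a greedy construction, an element $(\mathrm{w}_\alpha,\mathrm{w}_\beta)\in\mathcal{F}$ realizing the claimed number: arrange the jumps so that between consecutive visits to pure points of $\widehat\mu_\beta$, $\mathrm{w}_\beta$ performs exactly $(n_\alpha-1)$ excursions above the real line (one per gap of $\mathrm{supp}\widehat\mu_\alpha$ it can productively use), while globally $\mathrm{w}_\alpha$ uses each of the $(n_\beta-1)$ gaps of $\mathrm{supp}\mu_\beta$ once, plus the $(i_2-i_1)$ mandatory crossings of $\mathrm{supp}\mu_\alpha$ components forced by property (4). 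The construction must be checked against properties (1)--(6) — in particular that the orderings and sign conditions in (5)--(6) are simultaneously satisfiable, which is where one needs to be careful that the two subordination functions don't impose contradictory monotonicity. \textbf{The main obstacle} I anticipate is precisely this consistency check: showing that the "worst case" configuration is actually attainable within $\mathcal{F}$ — i.e.\ that one can choose real trajectories for $\mathrm{w}_\alpha,\mathrm{w}_\beta$ threading all the gaps in the claimed maximal way without violating the increasing-real-part condition (5) or the sign-coupling (6). The upper bound itself is a relatively clean inductive/partitioning argument; the matching lower bound (realizability of $\mathrm{S}^{\mathcal{J}}_{i_1,i_2}$) is the delicate part, and I would handle it by an explicit, piecewise description of a model pair $(\mathrm{w}_\alpha,\mathrm{w}_\beta)$ built gap-by-gap.
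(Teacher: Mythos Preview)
Your partition-and-count strategy is essentially the one the paper uses: split $(E^\alpha_{i_1},E^\alpha_{i_2})$ at the points $E^\beta_{j_1}<\dots<E^\beta_{j_n}$ and bound the number of jumps on each of the resulting $n+1$ subintervals via Proposition~\ref{virtual proposition}. The one ingredient you leave vague is exactly how property~(6) ``pins down the arithmetic''. The paper makes this explicit as a \emph{halving} step: on each subinterval one counts the total number of connected components of $\mathrm{supp}(\widehat\mu_\alpha)$ available to $\mathrm w_\beta$ \emph{plus} the total number of components of $\mathrm{supp}(\widehat\mu_\beta)$ available to $\mathrm w_\alpha$ (using the ranges $(\mathrm w_\beta,\mathrm w_\alpha)\to(E^\alpha_{i_1},\infty)\times(-\infty,E^\beta_{j_1})$ on the first piece, $\mathbb R\times(E^\beta_{j_k},E^\beta_{j_{k+1}})$ on middle pieces, etc., coming from Proposition~\ref{divergence subordination}), and then the coupling $F_{\mu_\alpha}(\omega_\beta(E))=F_{\mu_\beta}(\omega_\alpha(E))$ forces the number of jumps to be at most half that sum. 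That division by two is what produces, e.g., $\tfrac{(2(n_\alpha-i_1)-1)+(2j_1-1)}{2}=n_\alpha+j_1-i_1-1$ on the first piece, and the telescoping sum then gives the claimed formula. Your heuristic attribution of the three summands (``$n$ resets times $(n_\alpha-1)$'', etc.) is compatible with the end result but does not by itself yield the bound without this halving mechanism.

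On your ``main obstacle'': you are worrying about something the paper does not do. Although the statement is written with an equality, the proof in the paper only establishes the upper bound $\mathrm S_{i_1,i_2}^{\mathcal J}\le n(n_\alpha-1)+(n_\beta-1)+(i_2-i_1)$ on each piece, and that is all that is used downstream (Theorem~\ref{main theorem} only requires $\mathrm I_{\alpha\boxplus\beta}+\mathrm C_{\alpha\boxplus\beta}\le\sum\mathrm S_{i_1,i_2}^{\mathcal J}$). No realizability construction within $\mathcal F$ is given or needed, so you can drop that part of the program.
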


\begin{proof}
The proof is a consequence of Proposition \ref{virtual proposition}, where we showed that every connected component of $\mathrm{W}_{i_1,i_2}^{\mathcal{J}}$ corresponds to at least one connected component of the support of $\widehat{\mu}_\alpha$ or of the support of $\widehat{\mu}_\beta$. 
More precisely, by Proposition~\ref{divergence subordination}, $\big(\mathrm{w}_\beta(E_{i_1}^{\alpha}),\mathrm{w}_\beta(E_{j_1}^{\beta})\big)=(E_{i_1}^{\alpha},\infty)$ and there exist at most $2(n_{\alpha}-i_1)-1$ connected components of the support of $\widehat{\mu}_\alpha$ in the interval $(E_{i_1}^{\alpha},\infty)$. Indeed the support of $\widehat{\mu}_\alpha$ admits $n_\alpha-i_1$ disjoint intervals and at most $n_\alpha-i_1-1$ pure points in $(E_{i_1}^{\alpha},\infty)$. Identically, the support of $\widehat{\mu}_\beta$ admits at most $2j_1-1$ connected components in the interval $(E_{i_1}^{\alpha},E_{j_1}^{\beta})$, since $\big(\mathrm{w}_\alpha(E_{i_1}^{\alpha}),\mathrm{w}_\alpha(E_{j_1}^{\beta})\big)=(-\infty,E_{j_1}^{\beta})$ by Proposition~\ref{divergence subordination}. 
Therefore using Proposition \ref{comparaison alpha}, Proposition \ref{virtual proposition} and the condition $F_{\mu_\alpha}\big(\omega_\beta(E)\big)=F_{\mu_\beta}\big(\omega_\alpha(E)\big),\ E\in\mathbb{R}$ in~\eqref{freeadd2}, we observe that the number of connected components on which $\mathrm{Im}\,\mathrm{w}_\beta(E)$ is positive, whenever $E$ ranges in $(E_{i_1}^{\alpha},E_{j_1}^{\beta})$ is bounded by
\begin{align}
\frac{2(n_\alpha-i_1)-1+2j_1-1}{2}= n_\alpha+j_1-i_1-1.
\end{align}
Following the same reasoning, we observe that whenever $E$ ranges in $(E_{j_n}^{\beta},E_{i_2}^{\alpha})$, $\mathrm{Im}\,\mathrm{w}_\beta(E)$ is positive on at most
\begin{align}
\frac{2i_2-1+2(n_\beta-j_n)-1}{2}=n_\beta+i_2-j_n-1
\end{align}
connected components. 

In order to prove the proposition, we now need to find an upper bound on the number of connected components on which $\mathrm{Im}\,\mathrm{w}_\beta(E)$ is positive, whenever $E$ ranges in $(E_{j_k}^\beta,E_{j_{k+1}}^\beta)$, where $1\leq k\leq n-1$. By Proposition \ref{divergence subordination}, $\big(\mathrm{w}_\alpha(E_{j_k}^\beta),\mathrm{w}_\alpha(E_{j_{k+1}}^\beta)\big)=\mathbb{R}$ and there exist at most $2n_\alpha-1$ connected components in the support of $\widehat{\mu}_\alpha$. Moreover by Proposition \ref{divergence subordination}, $\big(\mathrm{w}_\beta(E_{j_k}^\beta),\mathrm{w}_\beta(E_{j_{k+1}}^\beta)\big)=(E_{j_k}^\beta,E_{j_{k+1}}^\beta)$, so the support of $\widehat{\mu}_\beta$ contains at most~$2(j_{k+1}-j_k)-1$~connected components that lie in the interval $(E_{j_k}^\beta,E_{j_{k+1}}^\beta)$. Proposition~\ref{comparaison alpha} and Proposition \ref{virtual proposition} therefore imply that $\mathrm{Im}\,\mathrm{w}_\beta(E)$ is positive on at most
\begin{align}
\frac{2n_\alpha-1+2(j_{k+1}-j_k)-1}{2}=n_\alpha+j_{k+1}-j_k-1
\end{align}
connected components, as $E$ ranges in $(E_{j_k}^\beta,E_{j_{k+1}}^\beta)$. Consequently,  
\begin{align}\nonumber
\mathrm{S}_{i_1,i_2}^{\mathcal{J}_{i_1,i_2}^{\beta}}
&=
n_\alpha+j_1-i_1-1
+
n_\beta+i_2-j_n-1
+
\sum_{k=1}^{n-1} n_\alpha+j_{k+1}-j_k-1\\
&=
n (n_\alpha-1) + (n_\beta -1) + (i_2 - i_1),
\end{align}
which proves the first claim of Proposition \ref{counting lemma}. 

The proof of the second claim is similar. If $\mathcal{J}_{i_1,i_2}^{\beta}$ is empty, then by Proposition~\ref{divergence subordination}, $\big(\mathrm{w}_\beta(E_{i_1}^{\alpha}),\mathrm{w}_\beta(E_{i_2}^{\alpha})\big)=(E_{i_1}^{\alpha},E_{i_2}^{\alpha})$ and the number of connected components in the support of~$\widehat{\mu}_\alpha$ that lie in the interval $(E_{i_1}^{\alpha},E_{i_2}^{\alpha})$ is bounded by $2(i_2-i_1)-1$. Moreover using Proposition~\ref{divergence subordination}, $\big(\mathrm{w}_\alpha(E_{i_1}^{\alpha}),\mathrm{w}_\alpha(E_{i_2}^{\alpha})\big)=\mathbb{R}$ and the number of connected components in the support of $\widehat{\mu}_\beta$ is bounded by $2n_\beta-1$. Therefore 
\begin{align}\nonumber
\mathrm{S}_{i_1,i_2}^{\emptyset}
=
\frac{2(i_2-i_1)-1+2n_\beta-1}{2}
=
(n_\beta-1)+(i_2-i_1).
\end{align}
This concludes the proof of this proposition.
\end{proof}
We can now use Proposition \ref{counting lemma} to determine an upper bound on $\mathrm{I}_{\alpha\boxplus\beta}+\mathrm{C}_{\alpha\boxplus\beta}$. By construction of the quantities $\mathrm{S}_{i_1,i_2}^{\mathcal{J}_{i_1,i_2}^{\beta}}$, where $(i_1,i_2)\in\mathcal{P}^{\alpha}$ and by Proposition \ref{virtual proposition}, 
\begin{align*}
\mathrm{I}_{\alpha\boxplus\beta}+\mathrm{C}_{\alpha\boxplus\beta}
\leq
\sum_{(i_1,i_2)\in\mathcal{I}^{\alpha}}\mathrm{S}_{i_1,i_2}^{\mathcal{J}_{i_1,i_2}^{\beta}}
=
\sum_{(i_1,i_2)\in\mathcal{I}^{\alpha}}\Big\lbrace\big|\mathcal{J}_{i_1,i_2}^{\beta}\big|(n_\alpha-1)+(n_\beta-1)+(i_2-i_1)\Big\rbrace.
\end{align*}
Remarking that by definition of $\mathcal{P}^{\beta}$, we have $\sum_{(i_1,i_2)\in\mathcal{I}^{\alpha}}\big|\mathcal{J}_{i_1,i_2}^{\beta}\big|=\big|\mathcal{P}^{\beta}\big|$, we conclude that 
\begin{align}
\mathrm{I}_{\alpha\boxplus\beta}+\mathrm{C}_{\alpha\boxplus\beta}\nonumber
&\leq
\big|\mathcal{P}^{\beta}\big|(n_\alpha-1)+\big(\big|\mathcal{P}^{\alpha}\big|+1\big)(n_\beta-1)+n_\alpha\\
&=
\big(\big|\mathcal{P}^{\beta}\big|+1\big)(n_\alpha-1)+\big(\big|\mathcal{P}^{\alpha}\big|+1\big)(n_\beta-1)+1.
\end{align}
This finishes the proof of Theorem \ref{main theorem}.
\end{proof}

\begin{remark}
We complement the proof of this theorem with the following remark. If we assume that for two given Borel probability measures $\mu_\alpha$ and $\mu_\beta$ on $\mathbb{R}$, we know the number of zeroes of their respective Cauchy-Stieltjes transforms and their positions on the real line, we can then apply the same argument as in the proof of Theorem \ref{main theorem} to find a more precise upper bound on the number of connected components in the support of their free additive convolution $\mu_\alpha\boxplus\mu_\beta$. The only difference is that this bound will depend not only on the number of such zeroes, but also on their localization. 
\end{remark}

\textit{Acknowledgement: }
This material is based upon work supported by the National Science Foundation under Grant No. DMS-1928930 while K.S participated in a program hosted by the Mathematical Sciences Research Institute in Berkeley, California, during the Fall 2021 semester.


\begin{thebibliography}{00}


\bibitem{AEK} Ajanki, O., Erd{\H o}s, L, Kr\"uger, T.: \emph{Quadratic
vector equations on complex upper half-plane},  Vol. 261. No. 1261. American Mathematical Society, 2019.

\bibitem{AEK18} Alt, J., Erd{\H o}s, L.,  Kr\"uger, T.: \emph{The Dyson
equation with linear self-energy:
spectral bands, edges and cusps}, Doc. Math. \textbf{25}, 1421-1539 (2020).


\bibitem{Akh65} Akhiezer, N.\ I.: \emph{The classical moment problem: and
some related questions in analysis}, Hafner Publishing Co., New York, 1965.

\bibitem{Bao20} Bao, Z., Erdős, L.,  Schnelli, K.: \emph{On the support of the free additive convolution}, JAMA 142, 323–348 (2020).

\bibitem{Bao17} Bao, Z.\ G., Erd\H{o}s, L., Schnelli, K.: \emph{Spectral
rigidity for addition of random matrices at the regular edge}, J.\ Funct. Anal.\ \textbf{279(5)}, 108639 (2020).
arXiv:1708.01597 (2017).


\bibitem{BES16} Bao, Z.\ G., Erd\H{o}s, L., Schnelli, K.:
\emph{Convergence rate for spectral distribution of addition of random
matrices}, Adv.\ Math.\ \textbf{319}, 251-291 (2017).

\bibitem{BES15b} Bao, Z.\ G., Erd\H{o}s, L., Schnelli, K.: \emph{Local
law of addition of Random Matrices on optimal scale},\\ Comm.\ Math.\
Phys.\ \textbf{349(3)}, 947-990 (2017).

\bibitem{Bel06} Belinschi, S.: \emph{A note on regularity for free
convolutions}, Ann. Inst. Henri Poincar\'{e} Probab. Stat.
\textbf{42(5)}, 635-648 (2006).

\bibitem{Bel1} Belinschi, S.: \emph{The Lebesgue decomposition of the
free additive convolution of two probability distributions}, Probab.\
Theory Related Fields \textbf{142(1-2)}, 125-150  (2008).

\bibitem{Bel2} Belinschi, S.: \emph{$\mathrm{L}^{\infty}$-boundedness of
density for free additive convolutions}, Rev.\ Roumaine Math.\ Pures
Appl. \textbf{59(2)}, 173-184 (2014).

\bibitem{BB-GG} Belinschi, S., Benaych-Georges, F., Guionnet, A.:
\emph{Regularization by free additive convolution, square and
rectangular cases}, Complex Analysis and Operator Theory \textbf{3(3)},
611 (2009).

\bibitem{Bel05} Belinschi, S., Bercovici, H.\emph{Partially defined semigroups relative to multiplicative free convolution.} IMRN. International Mathematics Research Notices. 2005.2, 65-101 (2005).

\bibitem{Bel Ber}Belinschi, S., Bercovici, H.: \emph{A new approach to
subordination results in free probability}, J. Anal.\ Math.\
\textbf{101.1}, 357-365 (2007).

\bibitem{Bel04} Belinschi, S., Bercovici, H. \emph{Atoms and regularity for measures in a partially defined free convolution semigroup.} Math.\ Z.\ \textbf{248},\ 665–674\ (2004). 


\bibitem{BeV98} Bercovici, H., Voiculescu, D.: \emph{Regularity
questions for free convolution, nonselfadjoint operator algebras,
operator theory, and related topics}, Oper.\ Theory Adv.\ Appl.\
\textbf{104}, 37-47 (1998).

\bibitem{Ber95} Bercovici, H., Voiculescu, D.: \emph{Superconvergence to
the central limit and failure of the Cram\'er theorem for free random
variables}, Prob.\ Theory Related Fields \textbf{103(2)}, 215-222 (1995).



\bibitem{B} Biane, P.: \emph{On the free convolution with a
semi-circular distribution}, Indiana Univ.\ Math.\ J.\ \textbf{46},
705-718 (1997).

\bibitem{Bia98} Biane, P.: \emph{Processes with free increments}, Math.\
Z.\ \textbf{227(1)}, 143-174 (1998).

\bibitem{Chis} Chistyakov, G.\ P., G\"{o}tze, F.: \emph{The arithmetic of
distributions in free probability theory}, Cent.\ Euro.\ J.\
Math.\ \textbf{9}, 997-1050 (2011).

\bibitem{Huang15} Huang, H.\ W.: {\it Supports of measures in a free
additive convolution semigroup}, Int.\ Math.\ Res.\ Notices
\textbf{2015(12)}, 4269-4292 (2014).

\bibitem{Hon21} Ji, H.\ C.: \emph{Regularity Properties of Free Multiplicative Convolution on the Positive Line}, International Mathematics Research Notices, \textbf{2021(6)}, 4522–4563 (2021)


\bibitem{Kargin2007} Kargin, V.: \emph{On superconvergence of sums of
free random variables}, Ann.\ Probab.\ \textbf{35(5)}, 1931-1949 (2007).




\bibitem{Maa92} Maassen, H.: \emph{Addition of freely independent random
variables}, J.\ Funct.\ Anal.\ \textbf{106}, 409-438 (1992).

\bibitem{Nic96} Nica, A., Speicher, R.: \emph{On the multiplication of
free N-tuples of noncommutative random variables}, Am.\ J.\ Math.\
\textbf{118(4)}, 799-837 (1996).


\bibitem{Voi83} Voiculescu, D.: \emph{Symmetries of some reduced free
product C*-algebras}, in Operator algebras and their connections with
topology and ergodic theory, 556-588 (1985).

\bibitem{Voiculescu 1} Voiculescu, D.: \emph{Addition of certain non-commuting
random variables}, J.\ Funct.\ Anal.\ \textbf{66(3)}, 323-346 (1986).

\bibitem{Voi91} Voiculescu, D.: \emph{Limit laws for random matrices and
free products}, Invent.\ Math.\ \textbf{104(1)}, 201-220 (1991).

\bibitem{Voi93} Voiculescu, D.: \emph{The analogues of entropy and of
Fisher's information theory in free probability theory, I}, Comm.\
Math.\ Phys.\ \textbf{155}, 71-92 (1993).

\bibitem{Wan10} Wang, J.-C.: \emph{Local limit theorems in free
probability theory}, Ann.\ Prob.\ \textbf{38(4)} 1492-1506 (2010).

\end{thebibliography}
\end{document}